\documentclass{article}
\usepackage{graphics}
\usepackage{graphicx}
 \usepackage[utf8]{inputenc}
 \usepackage[T1]{fontenc}
 \usepackage{lmodern}
 \usepackage[normalem]{ulem}
 \usepackage{verbatim}
 \usepackage{bbm}

 \usepackage{stmaryrd}

 \usepackage{amsmath}

 \usepackage{amssymb}
 \usepackage{dsfont}
\usepackage{amsthm}
\usepackage{pdfpages}
\usepackage{eso-pic}

%\usepackage{amsthm,amsmath,natbib}
%\RequirePackage[colorlinks,citecolor=blue,urlcolor=blue]{hyperref}

% provide arXiv number if available:

% put your definitions there:

\newtheorem{defi}{Definition}[section]
\newtheorem{theo}{Theorem}[section]%

\newtheorem{assum}{Assumption}[section]%
\newtheorem{cor}[theo]{Corollary}%
\newtheorem{lemma}[theo]{Lemma}%
\newtheorem{rem}[theo]{Remark}%
\newtheorem{prop}[theo]{Proposition}%

\def\mun{{\hat \mu_N}}
\def\ra{{\rightarrow}}
\newcommand{\arcosh}{arcosh}
\newcommand{\E}{\mathbb E}

\newcommand{\Pp}{\mathbb P}
\newcommand{\C}{\mathbb C}
\newcommand{\R}{\mathbb R}
\newcommand{\N}{\mathbb N}

\newcommand{\Ss}{\mathbb S}

\def\la{{\lambda}}

\setlength{\textheight}{22cm}
\setlength{\topmargin}{-.5cm}
\setlength{\evensidemargin}{0.5cm}
\setlength{\oddsidemargin}{0.5cm}
\setlength{\textwidth}{16cm}

\begin{document}
\pagestyle{myheadings}
\markright{\hfill Large deviations for matrices with variance profiles\hfill}
\title{Large deviations for the largest eigenvalue of matrices with variance profiles}
\author{Jonathan Husson}

% indicate corresponding author with \corref{}
% \author{\fnms{John} \snm{Smith}\corref{}\ead[label=e1]{smith@foo.com}\thanksref{t1}}
% \thankstext{t1}{Thanks to somebody} 
% \address{line 1\\ line 2\\ printead{e1}}
% \affiliation{Some University}
\maketitle

\begin{abstract}
In this article we consider Wigner matrices $(X_N)_{N \in \N}$ with variance profiles which are of the form $X_N(i,j) = \sigma(i/N,j/N) a_{i,j} / \sqrt{N}$ where $\sigma$ is a symmetric real positive function of $[0,1]^2$, either continuous or piecewise constant and where the $a_{i,j}$ are independent, centered of variance one above the diagonal. 
%In the spirit of \cite{GuHu18},
We prove a large deviation principle for the largest eigenvalue of those matrices under the condition that they have sharp sub-Gaussian tails and under some additional assumptions on $\sigma$. These sub-Gaussian bounds are verified for example for Gaussian variables, Rademacher variables or uniform variables on $[- \sqrt{3}, \sqrt{3}]$. This result is new even for Gaussian entries. 
\end{abstract}
\bigskip
 \noindent
 {\bf Keywords: }{60B20,60F10}

\section{Introduction}
One of the key results in  random matrix theory is Wigner's theorem: it establishes the convergence of the empirical measure of the eigenvalues of Wigner matrices towards the semi-circular measure \cite{Wig58}. These Wigner matrices are a model of real or complex self-adjoint random matrices with independent centered subdiagonal entries of variance $1/N$ and independent centered diagonal entries of variance $O(1/N)$. Later Füredi and Koml\' os proved that the largest eigenvalue of such matrices converges almost surely toward $2$ \cite{FuKo} under an assumption of boundedness on the moments of the entries. This moment hypothesis was then relaxed to an hypothesis of boundedness for the fourth moment by Vu in \cite{vu05} which was later proved to be necessary by Lee and Yin in \cite{LY}. Similar results also exist for Wishart matrices (that is matrices of the form $\frac{1}{M} X^* X$ where $X$ is a $M \times N$ random matrix with i.i.d. centered entries of variance $1$) and for matrices with variance profiles (that is self-adjoint random matrices whose diagonal and subdiagonal entries are independent centered but whose variance of the entries may not be constant up to a factor $1/N$). In that case the limit of the empirical measure depends on the profile \cite{Girko2001}. 

Once one knows the limits of the empirical measure and the largest eigenvalue, one can wonder how the probability that they are away from these limits behaves. These questions are of great importance for instance in mobile communication systems \cite{bianchi2009performance,Fey_2008} and in the study of the energy landscape of disordered systems \cite{arous2017landscape,maillard2019landscape}. In the case of matrices from the Gaussian Orthogonal Ensemble or the Gaussian Unitary Ensemble, thanks to the the orthogonal or unitary invariance of the distributions, the joint law of the eigenvalues is explicitly known (see for example \cite{mehta}) and the spectrum behaves like a so-called $\beta$-ensemble. By Laplace's principle, once one takes care of the singularities, those formulas lead to large deviation principles both for the empirical measure \cite{BAG} and the largest eigenvalue \cite{BADG01}. 
 
In the case of general distributions, since eigenvalues are complicated functions of the entries, large deviations remain mysterious. Concentration of measure results were obtained in compactly supported and log-Sobolev settings by Guionnet and Zeitouni \cite{GZ}. Several recent breakthroughs proved large deviation principles for matrices with entries with distributions whose tails are heavier than Gaussian both for the empirical measure and the largest eigenvalue respectively by Bordenave and Caputo and by Augeri \cite{fanny,BordCap}. Those results rely on the fact that the large deviation behaviour comes from a small number of large entries. These ideas are further generalized to the questions of subgraphs counts and eigenvalues of random graphs in \cite{Fa18, CoDe, BhGa}. In the case of sub-Gaussian entries, a large deviation principle for the largest eigenvalue of matrices with Rademacher-distributed entries was proved by Guionnet and the author in \cite{GuHu18} using the asymptotics of Itzykson-Zuber integrals computed by Guionnet and Maïda in \cite{GM}. Indeed, one obtains the large deviations by tilting the measure by spherical integrals. Under this tilted law, the matrix is roughly distributed as a sum $W_N + R$ where $W_N$ is a Wigner matrix and $R$ a deterministic matrix of rank one. Then, the largest eigenvalue of such a deformed model is well known and follows the phenomenon of BBP transition (coined after Bai, Ben Arous and Péché who observed it in the case of deformations of sample covariance matrices \cite{BBP}). Notably the rate function for the large deviation principle of the largest eigenvalue of a matrix with Rademacher entries is the same as for the GOE. The crucial hypothesis verified by the Rademacher law that assure that the upper and lower large deviations bounds both coincides with those of the Gaussian case is the so-called sharp sub-Gaussianity. This property of the Rademacher law is expressed in terms of its Laplace transform: 

\[ \forall t \in \R, \int \exp(tx) d\mu(x) \leq \frac{\exp( \mu(x^2) t^2)}{2} \]

   For distributions that are sub-Gaussian but not sharply so, large deviation lower and upper bounds were also proved by Augeri, Guionnet and the author in \cite{AuGuHu} for large values and values near the bulk of the limit measure. In this case though our rate function near infinity can be strictly smaller to the rate function for the GOE. 

	Wigner's original approach to determine the limit of the empirical measure was to estimate the trace of moments of Wigner matrices but a more modern approach is to estimate the resolvent $(z - W_N)^{-1}$ using the Schur complement formula. One then finds that the Stieltjes transform $m$ of the limit measure must be a solution of the so called Dyson equation: 
\[ \frac{1}{m(z)} = z  -  m(z), \forall z \in \C \setminus \R \]
with the convention that the Stieltjes transform of $\mu$ is $ z \mapsto \mu( (z -x)^{-1})$. Furthermore, if for $z \in \mathbb{H}^+$ (where $\mathbb{H}^+$ is $\{ z \in \C: \Im(z) > 0 \}$) we set the condition $\Im m(z) < 0$, then the only solution to this equation for $z \in \mathbb{H}^+$ is $m(z) = (z - \sqrt{z^2 - 4})/2 $ which is the Stieltjes transform of the semicircular measure $\sqrt{(4 -x^2)_+} dx / 2 \pi$ . In the case of matrices with variance profiles, it can be computed again by the Schur complement formula applied on the resolvant $ G(z) = (z - W_N)^{-1}$, which shows that up to an error term, its diagonal terms satisfies the following equation which admits only one solution of negative imaginary part: 

\[ \frac{1}{G_{i,i}(z)} =  z - \sum_j s_{i,j} G_{j,j}(z), \forall z \in \mathbb{H}^+ \]
where $s_{i,j} = N \E[ |X_N(i,j)|^2]$.

Then, using that the Stieltjes transform of the empirical measure is $N^{-1} \sum_{i} G_{i,i}(z)$ one can find the limit measure. This equation has been used to study those matrices for instance by Girko in \cite{Girko2001} by Khorunzy and Pastur in \cite{KhoPa}, Anderson and Zeitouni in \cite{AndZei} and Schlyakhtenko in $\cite{Schlya96}$. It was extensively studied in itself by Alt, Erdös, Ajanki, Kreuger and Schröder in a series of articles where it is used to prove local laws and universality of the local eigenvalue statistics both on the bulk, the cusp and the edge of the spectrum \cite{Aja15,Cusp1,Cusp2,AEK2,AEK18,Aji17,ErdosEdge}. One may want to look at \cite{erdos2019matrix} for a more thorough review on the subject.

In this article, we will use the techniques developed in \cite{GuHu18} and apply them to random matrices with variance profiles to prove a large deviation principle for the largest eigenvalue. We will place ourselves in the same context of entries with sharp sub-Gaussian law. Such a result is new, even for matrices with Gaussian entries and once again our rate function will not depend on the laws of the entries. We will consider a symmetric (or Hermitian)  matrix model $(X_N)_{N \in \N}$ with independent sub-diagonal entries with a variance profile $\Sigma_N(i,j) =N^{1/2} \sqrt { \E[ |X_N(i,j)|^2]}$. We will consider a piecewise constant case where $\Sigma_N$ is equal to some $\sigma_{k,l}$ on squares of the form $I_k^{(N)} \times I_l^{(N)}$ for $k,l = 1,...,n$ where $(I_k^{(N)})_{1 \leq k \leq n}$ is a collection of disjoint intervals covering $\llbracket 1,N \rrbracket$ and such that $I_k^{(N)}/N$ converges to some non-trivial interval $I_k$ of $[0,1]$. In this case we will define $\sigma$ to be the piecewise constant function equal to $\sigma_{k,l}$ on $I_k \times I_l$. We will also consider the case of a variance profile which converges toward a continuous function $\sigma$ in the sense that $\lim_N \sup_{i,j} | \Sigma_N(i,j) - \sigma(i/N,j/N)| =0$. In both cases the empirical measure converges to a measure $\mu_{\sigma}$ characterized by the fact that its Stieltjes transform $m$ is equal to $m(z) = \int_0^1 \mathfrak{m}(x,z)dx$ where $\mathfrak{m}$ is the only solution of the equation (see \cite{Girko2001}): 

\[ \frac{1}{\mathfrak{m}(x,z)} = z - \int \sigma^2(y,x) \mathfrak{m}(y,z) dy, \forall z \in \mathbb{H}^+ \]

 with the condition that $\Im \mathfrak{m}(x,z) < 0$ for for all $x\in [0,1], z \in \mathbb{H}^+$.  
We will find then that large deviations of the largest eigenvalue occur when we tilt our measure so that $X_N$ has roughly the same law than  $\tilde{X}_N + R$ where $\tilde{X}_N$ is a random matrix with the same variance profile as $X_N$ and where $R$ is deterministic and of finite rank. Since $\tilde{X}_N$ will not be a Wigner matrix, finding the correct tilt will be more involved than in the classical case and will require some additional hypothesis on the variance profiles in order for the tilt to yield the desired lower bound.  

First, in section \ref{rate} we will introduce the rate function and the assumption on the variance profile we will need in order for our large deviation lower bound to coincide with our upper bound. In sections \ref{Scheme} to \ref{LDPlbBM} we will treat the case of matrices with piecewise constant variance profile which bears the most similarities with the models treated in \cite{GuHu18}. In these sections we will insist on the differences with \cite{GuHu18} while redirecting the reader to it for the parts of the proofs that stay the same. We will first prove a large deviations upper bound using an annealed spherical integral in section \ref{rfBM}. We will then tilt our initial measure to prove the lower bound in section \ref{LDPlbBM}. There we will use the assumption made in section \ref{rate} to prove we can find a good tilt. In section \ref{Approx} we will approximate the case of a continuous variance profile using piecewise constant ones. We will have to prove the convergence of the rate functions of the approximations. Since the approximations will only satisfy our lower bound up to an error term, we will also prove that this error can ultimately be neglected. In section \ref{2by2} we will illustrate the cases where our result applies in the simple context of a piecewise constant variance profile with four blocks. In the same section we will illustrate the limits of our approach and the necessity to make some assumptions concerning the variance profiles, with an example of a matrix whose variance profile does not satisfy our assumptions and such that the rate function for the large deviations of the largest eigenvalue does not match our rate function. Finally, in section \ref{ratefunc} we will discuss the explicit value of the rate function and in particular we will present a condition that when verified assures us that the rate function does depend on the variance profile only through the limit measure of the matrix model. %First we will recall in section \ref{Emp} the convergence results of the empirical measure we will need during the proof of the large deviation principle.
\subsection*{Acknowledgement}
The author would like to thank Alice Guionnet for her help proofreading this article and particularly the introduction, Ion Nechita for bringing to his attention the example in Remark \ref{Ion} and the referees for their careful reading and sugeestions. This work was partially supported by the ERC Advanced Grant LDRaM (ID:884584).
\subsection{Variance profiles}\label{VarProf}
In the rest of the article, a real $x$ is said to be non-negative if $x \geq 0$, $\R^+$ is the set $\{ x \in \R: x\geq 0 \}$ and $\R^{+,*} = \R^+ \setminus \{0\}$. Our random matrix model will be of the form $W_N \odot \Sigma_N$, where $W_N$ is either a real or a complex Wigner matrix, $\Sigma_N$ is a real symmetric matrix and $\odot$ is the entrywise product. $\mathcal{P}(A)$, where $A$ is a measurable space, will denote the set of probability measures on $A$. We denote for $n\in\N$ and a set $A$, $\mathcal{S}_n(A)$ the set of symmetric matrices with entries in $A$. First of all, we describe the matrices $\Sigma_N$ we will be using. These matrices will converge as piecewise constant functions of $[0,1]^2$ to some function $\sigma$ on $[0,1]^2$ called the variance profile. We will consider here two cases: the case where $\sigma$ is piecewise constant and the case where it is continuous.

\textbf{Piecewise constant variance profile}:
We consider a variance profile piecewise constant on rectangular blocks. Let $n\in \N^*$, $\Sigma = ( \sigma_{i,j})_{i,j\in \llbracket 1,n \rrbracket }$ a real symmetric $n \times n$ matrix with non-negative coefficients and $\vec{\alpha} = (\alpha_1,...,\alpha_n) \in \R^n$ such that for every $i$, $\alpha_i >0$ and $\alpha_1 + ... + \alpha_n = 1$. In this context  we will consider $\Sigma_N$ defined by block by:

\[ \Sigma_N(i,j) = \sigma_{k,l} \text{ if } i \in I^{(N)}_k \text{ and } j \in I^{(N)}_l \]
where for all $N \in \N$, $\{ I_1^{(N)},..., I_n^{(N)} \}$ is a partition of $\llbracket 1, N \rrbracket$ such that for all $j \in \llbracket 1,n \rrbracket$, $I^{(N)}_j = \llbracket a^N_j +1, a^N_{j+1} \rrbracket $ where for every $N$, the $a_j^N$ are such that: 
\[ 0 = a_1^N \leq a_2^N ... \leq a_{n+1}^N = N \]  
and such that for $j \in \llbracket 1,n \rrbracket $:
\[ \lim_N \frac{|I^{(N)}_j|}{N} = \alpha_j \]
We then define $(\gamma_i)_{1\leq i\leq n}$ and $(I_i)_{1\leq i\leq n}$ by:
\[ \gamma_0 = 0 \text{ and } \forall j \neq 0,\gamma_j := \sum_{i=1}^j \alpha_i \text{ and } I_i = [ \gamma_{i-1}, \gamma_i[ .\]

We shall also denote $\sigma : [0,1]^2 \to \R^+$ the piecewise constant function  defined by 

\[ \sigma(x,y) = \sigma_{k,l} \text{ if } (x,y) \in I_k \times I_l .\]

This setting will be referred as the case of a piecewise constant variance profile associated to the parameters $\Sigma$ and $\vec{\alpha}$.

\textbf{Continuous variance profile}:
In this case, we will consider a real non-negative symmetric continuous function $\sigma : [0,1]^2 \to \R^+$ and for every $N$, we will consider a symmetric matrix with non negative entries $\Sigma_N$ such that the sequence $\Sigma_N$ satisfies:  

\[ \lim_{N \to \infty} \sup_{1 \leq i,j \leq N} \Big| \Sigma_N(i,j) - \sigma \left( \frac{i}{N}, \frac{j}{N} \right) \Big| = 0   .\]
In both cases, we will call $\sigma$ the variance profile of the matrix model.  
\subsection{The generalized Wigner matrix model}
For the Wigner matrix $W_N$, we will consider two cases, a real symmetric one when $\beta=1$ and a complex Hermitian one when $\beta=2$. For every $N$, we will consider a family of independent random variables $(a^{(\beta)}_{i,j})_{1 \leq i \leq j \leq N}$ such that $a^{(\beta)}_{i,j}$ has the distribution $\mu_{i,j}^N$. For $\beta =1$, $ a^{(\beta)}_{i,j}$ is a real random variable for all $i,j$ and for $\beta = 2$, $a^{(\beta)}_{i,j}$ will be a complex random variable for $i \neq j$ and a real random variable for $i=j$. These will be the unrenormalized entries of $W_N$.
We will assume that all the $\mu_{i,j}^N$ are centered: 

\[ \int x d \mu_{i,j}^N (x) = 0, \forall 1 \leq i \leq j \leq N .\] 

For $\beta = 1$ we assume that off-diagonal entries have variance $1$ and diagonal entries have variance $2$: 

\[\mu^N_{i,j}(x^{2})=\int_{\R} x^{2 } d\mu_{{i,j}}^N(x)=1, \forall 1\le i<j\le N,\qquad \mu^N_{i,i}(x^{2})=2, \quad \forall 1\le i\le N\, . \]

For $\beta = 2$, if we denote $x$ the function $z \mapsto \Re z $ and $y$ the function $z \mapsto \Im z $, we assume the following conditions on the variances of the entries:

\[ \mu_{i,j}^N (x^2) = \mu_{i,j}^N(y^2) = \frac{1}{2} , \, \mu^N_{i,j}(xy) = 0, \,  \forall 1 \leq 1 < j \leq N, \qquad \mu_{i,i}^N (x^2) = 1,\, \forall 1 \leq i \leq N . \]

If $\mu$ is a probability measure on some $\R^d$ with a covariance matrix $C$, we say that $\mu$ has a \emph{sharp sub-Gaussian Laplace transform} if 

\[ \forall u \in \R^d, T_{\mu}(u) := \int_{\R^d} \exp( \langle u, x \rangle) d \mu(x) \leq \exp \frac{ \langle u, C u \rangle}{2} .\]

For $\mu$ a measure on $\C$, we can identify $\R^2$ and $\C$ and then the Laplace transform $T_{\mu}$ can be expressed for $z \in \C$ as 
\[ T_{\mu}(z) = \int_{\C} \exp( \Re \bar{z} x ) d \mu(x) .\]

We will need to make the following assumption on the $\mu_{i,j}^N$: 

\begin{assum}\label{A0BM}
	For every $N \in \N$ and $1 \leq i \leq j \leq N$, the distribution  $\mu_{i,j}^N$ has sub-Gaussian Laplace transforms. 
	\end{assum}
In particular, we can notice that in the complex case, it implies that for $i \neq j$, $\forall z \in \C$: 
\[ T_{\mu_{i,j}^N}(z) \leq \exp \Big( \frac{|z|^2}{4} \Big) .\]
Examples of distributions that satisfy this sharp sub-Gaussian bound in $\R$ are the (centered) Gaussian laws the Rademacher laws $\frac{1}{2}(\delta_{-1} + \delta_1)$ and the uniform law on a centered interval. On $\C$, if $X$ is a random variable such that $\Re(X)$ and $\Im(X)$ are independent and have sharp sub-Gaussian Laplace transform, then $X$ has a sharp sub-Gaussian Laplace transform.   
\begin{rem}\label{remOfer}
From the sharp sub-Gaussian bound, we have the following bound on the moments of $\mu_{i,j}^{N}$ if Assumption \ref{A0BM} is verified for $\beta = 1$ and $X$ is a random variable of distribution $\mu_{i,j}^N$: 

\[ \E[X^{2k}] \leq (2 k)! (T_{\mu_{i,j}^N}(- 1)+ T_{\mu_{i,j}^N}(1))/2 \leq (2 k)! e^{\mu_{i,j}^N(x^{2})/2} \]
and 
\[ \E[|X|^{2k+1}] \leq \E[X^{2k+2}]^{2k+1/2k+2} \leq ((2 k+2)! e^{\mu_{i,j}^N(x^{2})/2})^{2k+1/2k+2}.\]
We have a bound of the form:
\[ \E[|X|^{k}] \leq C k! \]
for some universal constant $C$. From this bound, we have that for every $\delta>0$, there exists $\epsilon > 0$ that does not depend on the laws $\mu_{i,j}^N$ such that for $|t|\leq \epsilon$.
$$T_{\mu_{i,j}^N}(t)\ge  \exp\{\frac{(1-\delta) t^2 \mu_{i,j}^N(x^{2}) }{ 2} \}\,.$$
We have also that the $T_{\mu_{i,j}^N}$ are uniformly $C^3$ in a neighbourhood of the origin: for $\epsilon>0$ small enough $\sup_{|t|\le \epsilon}\sup_{i,j,N} |\partial_t^3\ln T_{\mu_{i,j}^N}(t)|$ is finite. In the complex case, with the same method we have a similar result, that is that for every $\delta >0$, there is $\epsilon > 0$ such that for every $z \in \C$ such that $|z|\leq \epsilon$: 
\[ T_{\mu_{i,j}^N}(z)\ge  \exp\{\frac{(1-\delta) |z|^2 }{4} \}\,. \]
for $i \neq j$. 
\end{rem}

For both those cases, we will need to use concentration inequalities to ensure that at the exponential scale we consider, the empirical measure of our matrices can be approximated by their typical value. To this we will need this classical assumption. 
\begin{assum}\label{ACBM}
There exists a compact set $K$ such that the support of all $\mu_{i,j}^N$ is included in $K$ for all $i,j\in\llbracket 1, N \rrbracket$ and all integer number $N$, or all $\mu_{i,j}^N$ satisfy a log-Sobolev inequality with the same constant $c$ independent of $N$. In the complex case, we will suppose also that for all $(i,j)$, if $Y$ is a random variable of law $\mu_{i,j}$, there is a complex $a \neq 0$ such that $\Re(aY)$ and $\Im(aY)$ are independent. 
\end{assum}

Now for $\beta=1$ or $2$ and $N \in \N$, given the family $(a^{(\beta)}_{i,j})$, we define the following Wigner matrices:

$$
W_N^{(\beta)}(i,j) ={\bigg\lbrace}
\begin{array}{l}
\frac{ a_{i,j}^{(\beta)}}{\sqrt{N}} \text{ when } i \leq j, \cr
\frac{ a_{j,i}^{(\beta)}}{\sqrt{N}} \text{ when } i > j\,.\cr
\end{array}$$

From these definitions we define $X_N^{(\beta)}$ a real (if $\beta =1$) or complex (if $\beta=2$) matrix with variance profile $\Sigma_N$ as:

\[ X_N^{(\beta)} := W_N^{(\beta)} \odot \Sigma_N \] 
where for two matrices $A= (a_{i,j})_{i,j\in \llbracket 1,n \rrbracket}, B= (b_{i,j})_{i,j\in \llbracket 1,n\rrbracket}$, $A \odot B$ is the matrix $(a_{i,j}b_{i,j})_{i,j \in \llbracket 1,n \rrbracket}$.

If $A$ is a self-adjoint $N \times N$ matrix, we denote  $\lambda_{\min}(A)=\lambda_1\le \lambda_2 \cdots\le \lambda_N=\lambda_{\rm max}(A)$ its eigenvalues and $\mu_A$ its empirical measure: 

\[ \mu_A = \frac{1}{N} \sum_{i=1}^N \delta_{\lambda_i} \]

For $X_N^{(\beta)}$, we will abbreviate $\mu_{X_N^{(\beta)}}$ in $\mun$ throughout the article. 
\subsection{Statement of the results}

First of all with this matrix model, we will state with the following theorem the existence of a limit in probability of the empirical measure $\mun$. This limit, which depends only on the limit $\sigma$ of the variance profile is described in more detail in the Appendix \ref{Emp} where this theorem is proved: 
\begin{theo}
Both in the piecewise constant and in the continuous case, the empirical measure $\mun$ converges weakly in probability toward a compactly supported measure $\mu_{\sigma}$ which only depends on $\sigma$.
\end{theo}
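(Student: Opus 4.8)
The plan is to establish convergence of $\mun$ via the resolvent / Stieltjes transform approach outlined in the introduction, reducing both the piecewise constant and the continuous case to the study of the Dyson-type vector equation $1/\mathfrak{m}(x,z) = z - \int \sigma^2(y,x)\mathfrak{m}(x,z)\,dy$. First I would fix $z \in \C\setminus\R$ and write $G_N(z) = (z - X_N^{(\beta)})^{-1}$ for the resolvent; the empirical Stieltjes transform is $m_N(z) = \frac{1}{N}\tr G_N(z) = \frac{1}{N}\sum_i (G_N)_{i,i}(z)$. Applying the Schur complement formula to the $i$-th diagonal entry of $G_N$ gives, up to a controllable error term, the approximate self-consistent relation $\frac{1}{(G_N)_{i,i}(z)} \approx z - \sum_j s^N_{i,j} (G_N)_{j,j}(z)$ with $s^N_{i,j} = N\,\E[|X_N^{(\beta)}(i,j)|^2] = \Sigma_N(i,j)^2$. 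The standard inputs here are: the trivial operator-norm bound $\|G_N(z)\| \le 1/|\Im z|$, which keeps all quantities bounded; a concentration estimate showing that the off-diagonal quadratic form $\sum_{k,l} \overline{X_{i,k}} (G_N^{(i)})_{k,l} X_{l,i}$ concentrates around its conditional mean $\sum_j s^N_{i,j}(G_N^{(i)})_{j,j}(z)$, where $G_N^{(i)}$ is the resolvent of the minor; and a resolvent-perturbation bound $|(G_N^{(i)})_{j,j} - (G_N)_{j,j}| = O(1/N)$. Concentration is exactly where Assumption \ref{ACBM} (compact support or uniform log-Sobolev) enters, giving a deviation bound strong enough to survive the union bound over $i \in \llbracket 1,N\rrbracket$.

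Next I would introduce the deterministic comparison object: let $\mathfrak{m}_N(i,z)$ (in the piecewise-constant case, a piecewise-constant-in-$i$ function $\mathfrak{m}(x,z)$ with $x = i/N$) be the unique solution with $\Im(z)\Im\mathfrak{m}_N > 0$ of the exact self-consistent system $\frac{1}{\mathfrak{m}_N(i,z)} = z - \sum_j \Sigma_N(i,j)^2\,\mathfrak{m}_N(j,z)$. Existence and uniqueness of such a solution, together with its stability under perturbations of the right-hand side, is the classical Dyson-equation theory (Girko \cite{Girko2001}; the quantitative version is in the Alt–Ajanki–Erdős–Krüger–Schröder line of work \cite{Aja15,AEK2}); I would invoke it rather than reprove it. A fixed-point / contraction argument — or the stability bound for the quadratic vector equation — then shows that the random vector $((G_N)_{i,i}(z))_i$ stays close to $(\mathfrak{m}_N(i,z))_i$ once the error terms above are shown to be small, so $m_N(z) - \frac{1}{N}\sum_i \mathfrak{m}_N(i,z) \to 0$ in probability. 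In the piecewise-constant case $\frac{1}{N}\sum_i \mathfrak{m}_N(i,z) \to \int_0^1 \mathfrak{m}(x,z)\,dx =: m_\sigma(z)$ because $\Sigma_N$ is block-constant and $|I_k^{(N)}|/N \to \alpha_k$; in the continuous case the uniform convergence $\sup_{i,j}|\Sigma_N(i,j) - \sigma(i/N,j/N)| \to 0$ together with the stability of the Dyson equation gives the same limit $m_\sigma(z) = \int_0^1 \mathfrak{m}(x,z)\,dx$. Pointwise convergence of Stieltjes transforms on $\C\setminus\R$ yields weak convergence of $\mun$ to the measure $\mu_\sigma$ whose Stieltjes transform is $m_\sigma$; since convergence of Stieltjes transforms at a single non-real point already fails to see mass escaping to infinity only if we also have tightness, I would note that tightness is immediate from a crude bound on $\frac{1}{N}\E\,\tr (X_N^{(\beta)})^2 = \frac{1}{N^2}\sum_{i,j}\Sigma_N(i,j)^2 \le \|\sigma\|_\infty^2 + o(1)$, which is bounded. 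Compact support of $\mu_\sigma$ follows either from the analysis of $\mathfrak{m}$ (it is analytic across $\R\setminus[-L,L]$ for $L$ depending only on $\|\sigma\|_\infty$) or, more simply, a posteriori from the almost-sure convergence $\lambda_{\max}(X_N^{(\beta)}) \to $ the right edge, which in any case is bounded by a constant times $\|\sigma\|_\infty$ via a moment/norm bound.

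Finally, I would upgrade convergence in probability from the expectation level: $\E[m_N(z)]$ converges to $m_\sigma(z)$ by the above, and $m_N(z) - \E[m_N(z)] \to 0$ by a second concentration argument — $m_N(z)$ is a $\frac{1}{N}$-Lipschitz function (in the appropriate entrywise norm) of the independent entries, so under Assumption \ref{ACBM} it concentrates at speed $e^{-cN}$, far faster than needed. The main obstacle, and the step I expect to require the most care, is the concentration and resolvent-perturbation control of the Schur-complement error terms uniformly in $i$ while only assuming Assumption \ref{ACBM} rather than bounded higher moments — in particular handling the quadratic-form fluctuation $\sum_{k,l}\overline{X_{i,k}}(G_N^{(i)})_{k,l}X_{l,i}$ when the variance profile is merely bounded and possibly vanishing on some blocks (so that some rows of $\Sigma_N$ are degenerate), which is precisely the regime where the standard "flatness" hypotheses of the local-law literature are not available and one must fall back on the softer global statement. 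Everything else is either classical Dyson-equation input that I would cite, or routine bookkeeping; accordingly I would relegate the full details to Appendix \ref{Emp} as the paper already announces.
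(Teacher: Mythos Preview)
Your proposal is correct and follows the same underlying strategy as the paper: identify the limit via the Dyson/quadratic vector equation, control the random Stieltjes transform against the deterministic solution, and pass to the continuous profile by stability of the equation. The execution, however, differs in emphasis. The paper does not redo the Schur-complement/concentration analysis you sketch; instead it invokes Girko's theorem \cite[Theorem 1.1]{Girko2001} as a black box to get $\mun - \mu_{\sigma_N} \to 0$ (where $\sigma_N$ is the step-function profile $\Sigma_N(\lceil Nx\rceil,\lceil Ny\rceil)$), then proves two stability lemmas for the canonical equation $K_\sigma$ (one for $L^\infty$-perturbations of $\sigma^2$, one for perturbations of the block weights $\alpha_i$) to conclude $\mu_{\sigma_N} \to \mu_\sigma$. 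Existence/uniqueness for $K_\sigma$ is likewise cited from \cite{Aja15}. The paper also records, as a remark, that a direct moment computation gives the same result.

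One point worth correcting: you make Assumption \ref{ACBM} do work it does not need to do here. The paper checks Girko's hypotheses using only the sharp sub-Gaussian bound (via Lindeberg's condition); compact support or log-Sobolev are reserved for the much stronger exponential concentration of $\mun$ in Lemma \ref{convmunBM}, not for the mere convergence in probability stated in this theorem. Your worry about degenerate rows of $\Sigma_N$ is real if one insists on local-law machinery, but the paper's soft route through Girko plus stability avoids it entirely.
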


This theorem is in fact an almost direct consequence from \cite[Theorem 1.1]{Girko2001}. It can also be obtained in the piecewise case and in the continuous case with the additional assumption that $\sigma$ is $1/2$-Hölder by applying Lemma 9.2 from \cite{AltErKr} which itself uses stability results for the Dyson equation. Since here the Dyson equation of our setting is simpler, we present a more elementary proof of this result using rougher stability results in Appendix \ref{Emp}. We denote by $r_{\sigma}$ the rightmost point of the support of  $\mu_{\sigma}$. First of all, we have the following result for the convergence of the largest eigenvalue of $X_N^{(\beta)}$.

\begin{theo}\label{CVlambdamax}
Suppose that Assumption \ref{A0BM} holds. Both in the piecewise constant case and the continuous case, we have that $\lambda_{\max}(X_N^{(\beta)})$ converges almost surely toward $r_{\sigma}$. 
\end{theo}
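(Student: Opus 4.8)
The plan is to prove $\liminf_N\lambda_{\max}(X_N^{(\beta)})\ge r_\sigma$ and $\limsup_N\lambda_{\max}(X_N^{(\beta)})\le r_\sigma$ almost surely; I would handle the lower bound uniformly for both models and deduce the continuous case of the upper bound from the piecewise constant one. \textbf{Lower bound.} Since $r_\sigma$ lies in the support of $\mu_\sigma$, fix $\epsilon>0$ and a smooth $f$ with $0\le f\le 1$, supported in $(r_\sigma-\epsilon,r_\sigma+\epsilon)$ and $c:=\int f\,d\mu_\sigma>0$. By the convergence in probability of $\mun$ towards $\mu_\sigma$ established above, $\tfrac1N\tr f(X_N^{(\beta)})\to c$ in probability, hence (being bounded) in $L^1$; to make this convergence almost sure I would use the bound $\Var\big(\tfrac1N\tr f(X_N^{(\beta)})\big)=O(N^{-2})$ — the usual $O(1)$ bound on the variance of a smooth linear eigenvalue statistic, available here from the sub-Gaussian moment estimates of Remark~\ref{remOfer} — together with Chebyshev and Borel--Cantelli, or alternatively truncate the entries at level $(\log N)^2$ (the operator-norm error being almost surely negligible, again by Remark~\ref{remOfer} and Borel--Cantelli) and apply a bounded-difference inequality to $\tfrac1N\tr f$ of the truncated matrix. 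Either way $\tfrac1N\tr f(X_N^{(\beta)})>c/2$ almost surely for all large $N$, so $X_N^{(\beta)}$ then has an eigenvalue in $(r_\sigma-\epsilon,r_\sigma+\epsilon)$; hence $\liminf_N\lambda_{\max}(X_N^{(\beta)})\ge r_\sigma-\epsilon$, and $\epsilon\downarrow 0$ gives the bound.

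\textbf{Upper bound, piecewise constant case.} This is the substantial part, where I would use the moment method. For an even integer $2k$,
\[
\E\big[\tr\big((X_N^{(\beta)})^{2k}\big)\big]=\sum_{\gamma}\E\Big[\prod_{t=1}^{2k}X_N^{(\beta)}(\gamma_{t-1},\gamma_t)\Big],
\]
the sum over closed walks $\gamma=(\gamma_0,\dots,\gamma_{2k}=\gamma_0)$ on $\llbracket 1,N\rrbracket$; each expectation factorises over distinct edges, an edge $\{a,b\}$ traversed $m$ times contributing $\sigma_{k_a,k_b}^{m}N^{-m/2}\E[(a^{(\beta)}_{a,b})^{m}]$, where $k_a$ is the block index of $a$ and $|\E[(a^{(\beta)}_{a,b})^{m}]|\le C\,m!$ by Remark~\ref{remOfer}. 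As in Füredi--Komlós \cite{FuKo}, the leading contribution comes from the tree walks — every edge used exactly twice, the walk tracing out a rooted plane tree with $k$ edges, counted by the Catalan number $C_k$ up to the choice of $k+1$ vertex labels — and summing the weight $\prod_{\text{edges}}\sigma_{k_a,k_b}^{2}/N$ over labellings gives, as $N\to\infty$, $N$ times the $2k$-th moment of $\mu_\sigma$ (the limit being identified through the Dyson equation that characterises $\mu_\sigma$), which is at most $N\,r_\sigma^{2k}$ by symmetry of $\mu_\sigma$. Diagonal steps, repeated vertices and edges of multiplicity $\ge 3$ each cost at least a power of $N$; letting $k=k_N$ grow slowly — say $k_N=\lceil(\log N)^{2}\rceil$, so the factorial growth $(2k_N)!$ of the worst entry moments stays negligible against $N^{k_N}$ — one obtains $\E[\tr((X_N^{(\beta)})^{2k_N})]\le N(r_\sigma+\epsilon)^{2k_N}$ for $N$ large. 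Markov's inequality then gives
\[
\Pp\big(\lambda_{\max}(X_N^{(\beta)})>r_\sigma+2\epsilon\big)\le N\Big(\frac{r_\sigma+\epsilon}{r_\sigma+2\epsilon}\Big)^{2k_N},
\]
summable because $k_N\gg\log N$, so Borel--Cantelli and $\epsilon\downarrow 0$ conclude (the case $\sigma\equiv 0$ being trivial). The same estimate run for $W_N^{(\beta)}\odot\Theta_N$ with $|\Theta_N(i,j)|\le\tau$ for all $i,j$ yields the byproduct $\limsup_N\|W_N^{(\beta)}\odot\Theta_N\|\le 2\tau$ almost surely.

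\textbf{Upper bound, continuous case.} Given $\delta>0$, uniform continuity of $\sigma$ gives a piecewise constant $\hat\sigma$ with $\|\sigma-\hat\sigma\|_\infty\le\delta$ and, via $\sup_{1\le i,j\le N}|\Sigma_N(i,j)-\sigma(i/N,j/N)|\to 0$, a piecewise constant $\hat\Sigma_N$ with $\|\Sigma_N-\hat\Sigma_N\|_\infty\le 2\delta$ for $N$ large; set $\hat X_N:=W_N^{(\beta)}\odot\hat\Sigma_N$ with the same Wigner matrix. Then $X_N^{(\beta)}-\hat X_N=W_N^{(\beta)}\odot(\Sigma_N-\hat\Sigma_N)$ has a variance profile bounded by $2\delta$, so $\limsup_N\|X_N^{(\beta)}-\hat X_N\|\le 4\delta$ almost surely by the byproduct, while $\lambda_{\max}(\hat X_N)\to r_{\hat\sigma}$ almost surely by the piecewise constant case; hence almost surely $r_{\hat\sigma}-4\delta\le\liminf_N\lambda_{\max}(X_N^{(\beta)})\le\limsup_N\lambda_{\max}(X_N^{(\beta)})\le r_{\hat\sigma}+4\delta$. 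It remains to see $r_{\hat\sigma}\to r_\sigma$ as $\delta\to 0$: the left inequality together with the lower bound already proved gives $r_{\hat\sigma}\le r_\sigma+4\delta$, and continuity in $\sigma$ of the solution of the Dyson equation gives $\mu_{\hat\sigma}\Rightarrow\mu_\sigma$, whence $\mu_{\hat\sigma}\big((r_\sigma-\epsilon,\infty)\big)\to\mu_\sigma\big((r_\sigma-\epsilon,\infty)\big)>0$ forces $r_{\hat\sigma}>r_\sigma-\epsilon$ for $\delta$ small. Intersecting the almost sure events over a sequence $\delta\to 0$ then gives $\lambda_{\max}(X_N^{(\beta)})\to r_\sigma$ almost surely.

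\textbf{Main obstacle.} The crux is the moment bound $\E[\tr((X_N^{(\beta)})^{2k_N})]\le N(r_\sigma+\epsilon)^{2k_N}$: one must simultaneously identify the leading tree-walk sum — weighted by the blocks $\sigma_{k,l}$, which is exactly where the variance profile enters and where the link to $\mu_\sigma$ through its Dyson equation is made — and show all other walks are of strictly lower order, over a range of $k_N$ broad enough to make the Markov bound summable yet narrow enough to swallow the merely factorial growth of the entry moments allowed by sub-Gaussianity. The concentration/truncation input for the lower bound and the approximation argument in the continuous case (including the continuity of $\sigma\mapsto r_\sigma$) are comparatively routine.
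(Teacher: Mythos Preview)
Your approach is sound and genuinely different from the paper's. For the upper bound in the piecewise constant case the paper does \emph{not} run a F\"uredi--Koml\'os moment computation: it imports the local law of Ajanki--Erd\H{o}s--Kr\"uger \cite{Aji17} (packaged as Theorem~\ref{Convergence} in the appendix), which already yields $\Pp[\lambda_{\max}\ge r_\sigma+\delta]\le N^{-D}$ for every $D$ when the profile is strictly positive; the non-negative and continuous cases are then obtained by approximation exactly as you do, with Lemma~\ref{conv} supplying $r_{\sigma^n}\to r_\sigma$. The paper even concedes that the local law is overkill for this statement alone but justifies it because the anisotropic version is needed later in Section~\ref{LDPlbBM}. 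For the lower bound the paper goes through Lemma~\ref{convmunBM} (exponential concentration of $\mun$), which in fact uses the extra Assumption~\ref{ACBM}; your route via a variance bound on a smooth linear statistic sits better with the hypotheses of Theorem~\ref{CVlambdamax} as stated. So you trade a heavy black box for an honest combinatorial estimate; the paper trades that work for modularity, since the black box is on the shelf anyway.

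One small wrinkle in your continuous-case reduction: the clause ``the left inequality together with the lower bound already proved gives $r_{\hat\sigma}\le r_\sigma+4\delta$'' does not parse. From your sandwich $r_{\hat\sigma}-4\delta\le\liminf_N\lambda_{\max}(X_N^{(\beta)})\le\limsup_N\lambda_{\max}(X_N^{(\beta)})\le r_{\hat\sigma}+4\delta$ and the continuous lower bound $\liminf\ge r_\sigma$ you only extract $r_{\hat\sigma}\ge r_\sigma-4\delta$, and weak convergence $\mu_{\hat\sigma}\Rightarrow\mu_\sigma$ also gives only the \emph{lower} semicontinuity of the right edge. The missing direction is exactly what Lemma~\ref{conv} supplies: on the event $\{\|X_N^{(\beta)}-\hat X_N\|\le\epsilon\}$ the ordered eigenvalues differ by at most $\epsilon$, so the limit CDFs satisfy $F_{\hat\sigma}(t-\epsilon)\le F_\sigma(t)\le F_{\hat\sigma}(t+\epsilon)$, which forces $|r_{\hat\sigma}-r_\sigma|\le\epsilon$. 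You already have all the ingredients for this (your byproduct bound on $\|W_N^{(\beta)}\odot\Theta_N\|$ and the convergence of both empirical measures), so the fix is cost-free.
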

This theorem is a generalization of the result of convergence of the largest eigenvalue toward $2$ in the Wigner case which was proved by Füredi and Koml\'os \cite{FuKo} for distributions with moments such that $\E[|a_{i,j}^{(\beta)}|^k ] \leq k^{Ck}$ for some $C > 0$ and then by Vu for distributions with finite fourth moment \cite{vu05}. For this result, we need only to have a bound of the form $\E[  |a_{i,j}^{(\beta)}|^k] \leq r_k$ for some sequence $(r_k)_{k \in \N}$ (this hypothesis is automatically verified with our sharp sub-Gaussian bound). There are numerous similar results of convergence for the largest eigenvalue in the literature for models similar to this one, unfortunately, to the author knowledge none seem to quite correspond to the level of generality we are going for in this paper (the most similar to our model would be Theorem 2.7 from \cite{AltLocation} but here we would like to allow for rectangular blocks). Therefore we will be using here a stronger kind of results, which are  the local law results from \cite{Aji17} (corollary 2.10) in the case of a positive piecewise variance profile. The non-negative case as well as the continuous case will be proven by approximation, the only technicality is to prove that when we approximate a variance profile $\sigma$ by a sequence of variance profiles $(\sigma_n)$, the rightmost point of the support of $\mu_{\sigma_n}$ converges toward the rightmost point of the support of $\mu_{\sigma}$ (see Lemma \ref{conv}). Although using the local law may seem excessive for the purpose of proving the convergence of the largest eigenvalue, its anisotropic version will end up being used in the large deviation lower bound in section \ref{LDPlbBM}.
%We will remind this result in Theorem \ref{Convergence}. The general result will be proved as a consequence of Lemma .

For the following theorem, which states a large deviation principle for $\lambda_{\max}(X_N^{(\beta)})$, we will need Assumptions \ref{ArgMaxDis} and \ref{ArgMaxCont} respectively for the case of a continuous variance profile and for the case of a piecewise constant variance profile. These assumptions are more thoroughly discussed in section \ref{rate}. Assumption \ref{ArgMaxCont} states that the following optimization problem for $\psi \in \mathcal{P}([0,1])$: 

\[ \sup_{\psi \in \mathcal{P}([0,1])} \left\{ \frac{\theta^2}{\beta} \int_{[0,1]^2} \sigma^2(x,y) d\psi(x) d \psi(y) -\frac{\beta}{2} D( Leb ||\psi) \right\} \]
has a determination of its maximum argument that is continuous in $\theta$. 

Similarly, Assumption \ref{ArgMaxDis} states that the following optimization problem for $\psi \in (\R^+)^n$ such that $\sum \psi_i =1$: 

\[ \sup_{ \psi \in (\R^+)^n, \sum \psi_i =1} \left\{ \frac{\theta^2}{\beta} \sum_{i,j=1}^n \sigma^2_{i,j} \psi_i \psi_j + \frac{\beta}{2} \sum_{i=1}^n \alpha_i \left( \log\psi_i - \log\alpha_i \right) \right\} \]
has a determination of its maximum argument that is continuous in $\theta$. Both assumptions are necessary to obtain the large deviation lower bound.

\begin{theo} \label{maintheowBM} \label{maintheow2BM} Suppose Assumptions \ref{A0BM}, \ref{ACBM} hold. Furthermore suppose that Assumption \ref{ArgMaxDis} holds in the piecewise constant case or that Assumption \ref{ArgMaxCont} holds in the continuous case. Then, 
the law of the largest eigenvalue  $\lambda_{\rm max}(X_N^{(\beta)})$ of $X^{(\beta)}_N$ satisfies a large deviation principle with speed $N$ and good rate function  $I^{(\beta)}$ which is infinite on $(-\infty,r_{\sigma})$.

In other words, for any closed subset $F$ of $\mathbb R$,
$$\limsup_{N\rightarrow \infty }\frac{1}{N}\log \Pp \left(\lambda_{\rm max}(X_N^{(\beta)})\in F\right)\le -\inf_{F}I^{(\beta)}\,,$$
whereas for any open subset $O$ of $\mathbb R$,
$$\liminf_{N\rightarrow \infty }\frac{1}{N}\log \Pp \left(\lambda_{\rm max}(X_N^{(\beta)})\in O\right)\ge -\inf_{O}I^{(\beta)}\,.$$
The same result holds for the opposite of the smallest eigenvalue $-\lambda_{min}(X_{N}^{(1)})$. Furthermore $I^{(2)}= 2 I^{(1)}$

\end{theo}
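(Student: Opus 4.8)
The plan is to follow the tilting-by-spherical-integrals strategy of \cite{GuHu18}, adapted to the variance-profile setting, and to pass from the piecewise constant case to the continuous case by approximation. For the \textbf{upper bound}, I would first reduce to controlling, for each fixed $\theta \ge 0$, the exponential moment $\E[\exp(N\theta\,\mathbf{1}_{\lambda_{\max}\ge x}\cdot(\ldots))]$ via an \emph{annealed spherical integral}: one introduces $I_N(\theta) = \frac{1}{N}\log \int \exp(N\theta\langle e, X_N^{(\beta)} e\rangle)\,d\mathfrak{m}_{N}(e)$ over the unit sphere (with the Haar measure appropriate to $\beta$), and uses the sharp sub-Gaussian bound (Assumption \ref{A0BM}) to show that $\E[\exp(N\theta\langle e, X_N e\rangle)] \le \exp(\frac{N\theta^2}{\beta}\sum_{i,j} \Sigma_N(i,j)^2 e_i^2 e_j^2)$ exactly — this is where sharp sub-Gaussianity is essential, since it makes the annealed bound coincide with the Gaussian one. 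Optimizing the quadratic form $\sum_{i,j}\sigma^2(i/N,j/N) e_i^2 e_j^2$ over the sphere produces precisely the variational problem in Assumption \ref{ArgMaxDis}/\ref{ArgMaxCont} (with $\psi$ the ``profile of mass'' $\psi(x) = $ limiting density of $N e_{\lfloor Nx\rfloor}^2$), and a Laplace/Varadhan argument combined with the Guionnet–Maïda asymptotics for spherical integrals of rank-one deformations \cite{GM} identifies the limit with the candidate rate function $I^{(\beta)}$. Then a standard Chebyshev/exponential-Markov argument gives $\limsup_N \frac1N\log\Pp(\lambda_{\max}\ge x) \le -I^{(\beta)}(x)$; infiniteness of $I^{(\beta)}$ on $(-\infty, r_\sigma)$ follows from Theorem \ref{CVlambdamax}.

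For the \textbf{lower bound} in the piecewise constant case, I would tilt the law of $X_N^{(\beta)}$ by $\exp(N\theta\langle v, X_N v\rangle)$ for $v$ an optimizer of the variational problem, chosen \emph{block-constant} according to the maximizing $\psi$ from Assumption \ref{ArgMaxDis}. Under the tilted measure the entries $a_{i,j}$ acquire a drift, and one shows $X_N^{(\beta)}$ is approximately distributed as $\tilde X_N + R$ where $\tilde X_N$ has the \emph{same} variance profile $\Sigma_N$ (the tilt only shifts means, not variances, because the sub-Gaussian bound is sharp so the log-Laplace transform is quadratic to leading order near $0$ — this is the content of Remark \ref{remOfer}) and $R$ is a finite-rank deterministic matrix whose nonzero structure is dictated by $\theta$, $\sigma$ and $\psi$. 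The continuity-in-$\theta$ of the argmax (Assumption \ref{ArgMaxDis}) is what guarantees the family of tilts is well-behaved enough to sweep out the whole interval $[r_\sigma, \infty)$ continuously. The largest eigenvalue of $\tilde X_N + R$ is then computed via a BBP-type analysis: using the anisotropic local law for variance-profile matrices (\cite{Aji17}, invoked already for Theorem \ref{CVlambdamax}) one controls $\langle u, (z - \tilde X_N)^{-1} u\rangle$ for the relevant vectors $u$, and the outlier location solves the appropriate secular equation involving $\mathfrak{m}$. A Laplace-method lower bound on the tilted probability, together with matching the exponential cost of the tilt to $I^{(\beta)}$, closes the argument.

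For the \textbf{continuous case}, I would approximate $\sigma$ uniformly by piecewise constant profiles $\sigma_n$ (on a refining grid), apply the piecewise constant result to each $\sigma_n$ — noting the approximations only satisfy the lower bound up to an error term — and then pass to the limit. The key inputs here are Lemma \ref{conv} (convergence of $r_{\sigma_n} \to r_\sigma$) and the convergence of the rate functions $I^{(\beta)}_{\sigma_n} \to I^{(\beta)}_\sigma$, which reduces to a continuity/$\Gamma$-convergence statement for the variational problems; Assumption \ref{ArgMaxCont} ensures the limiting optimizer is continuous in $\theta$ so that the limiting tilt is admissible. One then checks that the approximation errors in the lower bound vanish as $n\to\infty$. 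Finally, the statements about $-\lambda_{\min}(X_N^{(1)})$ follow by applying the result to $-X_N^{(1)}$, which has the same variance profile (symmetry of $\sigma$ is preserved), and the identity $I^{(2)} = 2I^{(1)}$ comes from tracking the factor $\beta$ through the spherical integral asymptotics and the variational problem — the $\beta$ in the exponent $\exp(N\theta\langle e, Xe\rangle)$ versus the $\frac{1}{\beta}$ normalization of the quadratic term.

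The step I expect to be the main obstacle is the lower bound: verifying that under the tilted measure $X_N^{(\beta)}$ genuinely looks like $\tilde X_N + R$ with an \emph{unchanged} variance profile (requiring careful control of the higher-order corrections to the log-Laplace transform via Remark \ref{remOfer} and Assumption \ref{ACBM}), and then extracting the precise outlier location of $\tilde X_N + R$ through the anisotropic local law — the deformation $R$ is not rank-one in general and its interaction with the non-constant profile makes the secular equation substantially more delicate than in \cite{GuHu18}. The role of Assumptions \ref{ArgMaxDis} and \ref{ArgMaxCont} is precisely to tame this: without a continuous selection of optimizers, the tilts cannot be organized into a continuous family covering all target values of $\lambda_{\max}$, and the lower bound would only be obtained on a sub-optimal set (as the counterexample in Remark \ref{Ion} shows can genuinely happen).
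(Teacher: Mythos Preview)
Your plan follows the paper's strategy closely and is essentially correct in outline: annealed spherical integral for the upper bound, tilting for the lower bound, piecewise-to-continuous by approximation. Two points in the lower bound are imprecise enough to be genuine gaps as stated, however.

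First, the tilt is \emph{not} by a single vector $v$ ``chosen block-constant''. The paper tilts by $\exp(N\theta\langle e, X_N e\rangle)$ and then \emph{integrates over $e$ on the sphere}, restricting to the set $W_N$ of $e$'s whose block norms $\|e^{(i)}\|^2$ are close to the optimizing $\psi^\theta_i$. This is essential: the entropy term $\frac{\beta}{2}\sum_i \alpha_i\log(\psi_i/\alpha_i)$ in $F(\sigma,\theta)$ comes precisely from $\Pp_e[e\in W_N]$, via the Dirichlet large deviations of the block norms (Lemma~\ref{LDPDir}). If you used a single deterministic $v$, you would recover only $\theta^2 P(\sigma,\psi)/\beta$ in the exponent and lose the entropy, so the cost of the tilt would not match $F(\sigma,\theta)$ and the lower bound would be off. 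Moreover, the $J(\mu_\sigma,\theta,x)$ term in the rate function arises from $\sup_{X\in\mathcal A}J_N(X,\theta)$, which requires the full spherical integral $I_N(X,\theta)$, not $\theta\langle v,Xv\rangle$ for one $v$.

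Second, you omit the \emph{delocalization} constraint $V_N^\epsilon=\{e:|e_i|\le \sqrt{\psi_j}N^{-1/4-\epsilon}\}$, and your claim that ``the tilt only shifts means, not variances'' is not correct. The tilt \emph{does} perturb the variances; the paper writes $X^{(e),N}=\widetilde X^{(e),N}+ESE^*+\Delta^{(e),N}+Y^{(e),N}$ where $Y^{(e),N}$ explicitly corrects for the variance change via the matrix $V^{(e),N}$. What makes $\Delta^{(e),N}$ and $Y^{(e),N}$ negligible in operator norm is that $\sqrt{N}\,e_i\bar e_j\to 0$ uniformly on $V_N^\epsilon$, so the Taylor expansion of $L_{\mu_{i,j}^N}$ is controlled by the third-derivative bound in Remark~\ref{remOfer}. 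Sharp sub-Gaussianity alone does not give this; delocalization is what makes the argument of the Laplace transform small. A block-constant $v$ would satisfy delocalization, but the argument must run uniformly over all $e\in V_N^\epsilon\cap W_N$, since those are the vectors carrying the mass in the spherical integral.

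A minor correction: the upper bound for $x<r_\sigma$ (i.e.\ $I^{(\beta)}=+\infty$ there) does not follow from Theorem~\ref{CVlambdamax} (almost-sure convergence), but from the super-exponential concentration of the empirical measure in Lemma~\ref{convmunBM}: a deviation $\lambda_{\max}<r_\sigma-\delta$ forces $d(\hat\mu_N,\mu_\sigma)$ to be macroscopically large.
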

%The following theorem is the analog for the complex case : 
%\begin{theo}  Assume that Assumptions \ref{A0cBM} and \ref{ACBM} hold. Furthermore suppose that Assumption \ref{ArgMaxDis} holds in the piecewise constant case or that Assumption \ref{ArgMaxCont} holds in the continuous case. Then, 
%the law of the largest eigenvalue  $\lambda_{\rm max}(X_N^{(2)})$ of $X_N^{(2)}$ satisfies a large deviation principle with speed $N$ and good rate function  $I^{(2)}$ which is infinite on $(-\infty,r_{\sigma})$. Furthermore $I^{2} = 2 I^{1}$. 
%\end{theo}

The rate functions $I^{(\beta)}$ are defined in section \ref{rate}. Examples of variance profiles that satisfy our Assumptions \ref{ArgMaxDis} and \ref{ArgMaxCont} are also given in section \ref{rate}.

%\begin{theo} \label{Stab}
%If $\sigma$ is a variance profile verifying \ref{Hol} and \ref{Pos} then we have that if $m$ is the solution to $K_{\sigma}$ then we have that  : 

%\[ \sup_{z} \max_{t\in [0,1]} | m_{\lfloor tN \rfloor ,N}(z) - m_{t}(z) | \underset{N \to \infty}{\longrightarrow} 0 \]

%\end{theo}
%\begin{proof}

%We use \cite{Aji17} Corollary 1.8.

%\end{proof}

\section{The rate function}\label{rate}

We will now define the rate function $I^{(\beta)}$ in Theorem \ref{maintheowBM}. This is in fact done the same way as in \cite{GuHu18} with the supremum $\sup_{\theta \geq 0} ( J(\mu_{\sigma},\theta,x) - F(\theta) )$.

In this formula, $J(\mu_{\sigma},\theta,x)$ is the limit of $N^{-1}\log \E[\exp(N \theta \langle e, A_N e \rangle)]$ where $e$ is a unitary vector taken uniformly on the sphere and $A_N$ is a sequence of matrices such that the empirical measures converge weakly to $\mu_{\sigma}$ and such that the sequence of the largest eigenvalues of $A_N$ converges to $x$. $F(\theta)$ is the limit of $N^{-1} \log \E[\exp(N\theta \langle e, X_N e \rangle)]$ where the expectation is taken both in $X_N$ and $e$. We will first describe the quantity $F(\theta)$. 

\subsection{The asymptotics of the annealed spherical integral}

For $\sigma : [0,1]^2 \to \R^+$ a bounded measurable function and $\psi$ a probability measure on $[0,1]$, let us denote:

\[ P( \sigma, \psi) := \int_0^1 \int_0^1 \sigma^2(x,y) d \psi(x) d \psi(y) \]
and for $\theta > 0$:

\[ \Psi(\theta, \sigma,\psi) := \frac{\theta^2}{\beta} P( \sigma, \psi) - \frac{\beta}{2} D( Leb || \psi) \]
where $D(. || . )$ is the Kullback-Leibler divergence, that is for $\lambda,\mu \in \mathcal{P}([0,1])$: 

\[
D( \lambda||  \mu) = \begin{cases} \int_0^1 \log \left( \frac{d \lambda}{d \mu}(x) \right) d\lambda(x) \text{ if } \lambda \text{ is absolutely continuous with respect to } \mu \\
 + \infty \text{ if this is not the case} \end{cases}
\]
and $Leb$ is the Lebesgue measure on $[0,1]$.

%\begin{lemma} 
%The function : 

%\[ \mu \mapsto D(\text{Leb}||\mu) \]
%is upper semi-continuous. 
%\end{lemma}
%\begin{proof}
%This is due to the fact that the function $(\lambda,\mu) \mapsto D(\lambda||\mu)$ is convex in $(\lambda,\mu)$. 
%\end{proof}
We consider here the following optimization problem with parameter $\theta >0$ on the set $\mathcal{P}([0,1])$:

\begin{eqnarray} \label{EqMax}
 F(\sigma, \theta) := \sup_{\mu \in \mathcal{P}([0,1]) }\left\{ \frac{\theta^2}{\beta} P( \sigma,\mu ) - \frac{\beta}{2} D(\text{Leb}||\mu) \right\}.
\end{eqnarray}
First, let us study this problem with the following lemma: 

\begin{lemma}
If $\sigma$ is bounded and continuous, the supremum is achieved in \eqref{EqMax}. Furthermore, in both the continuous and the piecewise cases, the function $F$ is continuous in $\theta$. 
\end{lemma}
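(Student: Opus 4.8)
The statement has two parts: (1) existence of a maximizer in \eqref{EqMax} when $\sigma$ is bounded and continuous, and (2) continuity of $\theta \mapsto F(\sigma,\theta)$ in both the continuous and piecewise-constant cases. For (1), the natural strategy is the direct method of the calculus of variations. I would equip $\mathcal{P}([0,1])$ with the weak-$*$ topology, under which it is compact (as $[0,1]$ is compact). The functional $\psi \mapsto \Psi(\theta,\sigma,\psi) = \frac{\theta^2}{\beta}P(\sigma,\psi) - \frac{\beta}{2}D(\mathrm{Leb}\|\psi)$ is a sum of two terms: $\psi \mapsto P(\sigma,\psi) = \int\int \sigma^2(x,y)\,d\psi(x)\,d\psi(y)$ is continuous for the weak-$*$ topology precisely because $\sigma^2$ is bounded and continuous on the compact $[0,1]^2$ (so $\psi^{\otimes 2} \mapsto \int \sigma^2 d\psi^{\otimes 2}$ is weak-$*$ continuous, the product topology being compatible); and $\psi \mapsto D(\mathrm{Leb}\|\psi)$ is well known to be lower semicontinuous for the weak-$*$ topology (it is a relative entropy, jointly l.s.c. in its arguments — one gets this from the variational/Donsker–Varadhan representation $D(\lambda\|\mu) = \sup_{f \in C_b} \{\int f\,d\lambda - \log\int e^f d\mu\}$ as a sup of continuous functions). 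Hence $-\frac{\beta}{2}D(\mathrm{Leb}\|\psi)$ is upper semicontinuous, so $\Psi(\theta,\sigma,\cdot)$ is u.s.c.\ on a compact set and therefore attains its supremum. Note the supremum is finite: taking $\psi = \mathrm{Leb}$ gives a finite lower bound $\frac{\theta^2}{\beta}\|\sigma\|_\infty^{-2}$-type quantity wait — simply $\frac{\theta^2}{\beta}\int\int\sigma^2 d\mathrm{Leb}^{\otimes 2} \geq 0$, and an upper bound comes from $P(\sigma,\psi) \le \|\sigma^2\|_\infty$ while $-D(\mathrm{Leb}\|\psi) \le 0$.

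**Continuity in $\theta$.** For part (2), I would observe that for each fixed $\psi$, the map $\theta \mapsto \Psi(\theta,\sigma,\psi) = \frac{\theta^2}{\beta}P(\sigma,\psi) - \frac{\beta}{2}D(\mathrm{Leb}\|\psi)$ is a polynomial (degree $\le 2$) in $\theta$ with coefficient $P(\sigma,\psi)/\beta \in [0,\|\sigma^2\|_\infty/\beta]$ bounded uniformly in $\psi$. Therefore the family $\{\theta \mapsto \Psi(\theta,\sigma,\psi)\}_\psi$ is locally equi-Lipschitz in $\theta$: on a compact interval $[0,T]$, $|\Psi(\theta_1,\sigma,\psi) - \Psi(\theta_2,\sigma,\psi)| \le \frac{\|\sigma^2\|_\infty}{\beta}\,|\theta_1^2 - \theta_2^2| \le \frac{2T\|\sigma^2\|_\infty}{\beta}|\theta_1 - \theta_2|$. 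Taking the supremum over $\psi$ preserves this Lipschitz bound, so $F(\sigma,\cdot)$ is locally Lipschitz on $[0,\infty)$, hence continuous. This argument works verbatim in the piecewise-constant case, where $\mathcal{P}([0,1])$ is replaced by the simplex $\{\psi \in (\R^+)^n : \sum \psi_i = 1\}$ and $P(\sigma,\psi)$ by $\sum_{i,j}\sigma_{i,j}^2\psi_i\psi_j$ — still bounded by $\max_{i,j}\sigma_{i,j}^2$ — and the relative-entropy term by $\sum_i \alpha_i(\log\psi_i - \log\alpha_i) \le 0$. So the only structural input needed for continuity is the uniform boundedness of the quadratic coefficient, which holds because $\sigma$ is bounded.

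**Main obstacle.** The genuinely delicate point is the lower semicontinuity of $\psi \mapsto D(\mathrm{Leb}\|\psi)$ for the weak-$*$ topology (and, relatedly, making sure the direct method applies to the correct sign: we need the entropy term, which enters with a minus sign, to be u.s.c.). This is standard but worth stating carefully: I would cite the Donsker–Varadhan / Gibbs variational representation, which exhibits $D(\lambda\|\mu)$ as a supremum of weak-$*$ continuous affine functionals of $(\lambda,\mu)$, hence jointly l.s.c.; specializing $\lambda = \mathrm{Leb}$ gives l.s.c.\ in $\mu = \psi$. Everything else — compactness of $\mathcal{P}([0,1])$, weak-$*$ continuity of $\psi \mapsto P(\sigma,\psi)$ for continuous bounded $\sigma$, and the elementary equi-Lipschitz estimate in $\theta$ — is routine. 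One subtlety: in the piecewise-constant case $\sigma$ is only piecewise constant, hence not continuous, so the existence-of-maximizer part of the lemma is claimed only for continuous $\sigma$; but the continuity-in-$\theta$ part needs only boundedness and so covers both cases, which is exactly what the lemma asserts.
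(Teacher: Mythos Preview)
Your proposal is correct and follows essentially the same route as the paper: existence of a maximizer via compactness of $\mathcal{P}([0,1])$ for the weak topology, weak continuity of $\psi\mapsto P(\sigma,\psi)$ when $\sigma$ is continuous, and lower semicontinuity of $\psi\mapsto D(\mathrm{Leb}\|\psi)$; continuity in $\theta$ via the uniform bound $|\Psi(\theta,\sigma,\psi)-\Psi(\theta',\sigma,\psi)|\le \|\sigma^2\|_\infty|\theta^2-\theta'^2|/\beta$ passed through the supremum. The paper's proof is the same argument, written with a maximizing sequence rather than the abstract ``u.s.c.\ on compact'' formulation.
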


\begin{proof}
Let us take $\mu_n$ a sequence of measures such that $\frac{\theta^2}{\beta} P( \sigma, \mu_n ) - \frac{\beta}{2}D(\text{Leb}||\mu_n)$ converges toward $F(\sigma,\theta)$. By compactness of $\mathcal{P}([0,1])$ for the weak topology we can assume that this sequence converges weakly to some $\mu$. Since we assume $\sigma$ continuous, $P(\sigma,.)$ is continuous for the weak topology and so, $\lim_n P(\sigma,\mu_n) = P(\sigma,\mu)$. Furthermore, since $(\lambda,\mu) \mapsto D(\lambda||\mu)$ is lower semi-continuous, we have $\liminf_n D(\text{Leb}||\mu_n) \geq D(\text{Leb}||\mu)$ so that 
\[ \frac{\theta^2}{\beta} P(\sigma, \mu) - \frac{\beta}{2} D(\text{Leb}||\mu) \geq \limsup_n \left\{ \frac{\theta^2}{\beta} P(\sigma, \mu_n ) - \frac{\beta}{2} D(\text{Leb}||\mu_n)\right\} = F(\sigma,\theta)  .\]
Furthermore, we have for every $\mu \in \mathcal{P}([0,1])$, $|\Psi(\theta,\sigma,\mu) - \Psi(\theta',\sigma,\mu)| \leq ||\sigma^2||_{\infty} |\theta^2 - \theta'^2|/\beta$ and so $|F(\sigma,\theta) - F(\sigma,\theta')| \leq ||\sigma^2||_{\infty} |\theta^2 - \theta'^2|/\beta$.

\end{proof}
In section \ref{rfBM} we will prove that the following limit: 

\[ \lim_{N \to \infty} N^{-1} \log \E_{e,X^{(\beta)}_N}[ \exp( N \theta \langle e, X^{(\beta)}_N e \rangle)] =F(\sigma, \theta) \]
holds in the piecewise constant case. %In the following subsection, we will discuss the simplifications that occurs in the expression of $F(\sigma,\theta)$ in this case. 

%\subsection{Simplifications for the piecewise constant case}
In the piecewise constant case, that is when $\sigma$ is defined with a matrix $(\sigma_{i,j})_{1 \leq i,j \leq n}$ and parameters $\vec{\alpha}$, the optimization problem that defines $F$ is a simpler one. Indeed, if we denote for $\vec{\psi}=(\psi_1,...,\psi_n) \in \R$:

%\begin{prop}
%We denote by $\vec{P}(\sigma,.)$ the following quadratic function on $\R^n$ :
\[ \vec{P}( \sigma, \psi_1,...,\psi_n) = \sum_{i,j=1}^n \sigma_{i,j}^2 \psi_i \psi_j \]
and 

\[ \vec{\Psi}(\theta,\sigma,\vec{\psi}) := \frac{\theta^2}{\beta} \vec{P}(\sigma, \psi_1,...,\psi_n) + \frac{\beta}{2} \left( \sum_{i=1}^n \alpha_i \log \psi_i - \sum_{i=1}^n \alpha_i \log \alpha_i \right) .\]
We have easily, replacing $\mu$ by $ \sum_{i=1}^n \alpha_i^{-1}\mu(I_i) Leb_{I_i}$ that 

\begin{eqnarray} \label{EqMaxDis}
 F( \sigma, \theta) = \max_{\psi_i \geq 0, \sum_1^n \psi_i =1 } \vec{\Psi}(\theta,\sigma,\vec{\psi})
\end{eqnarray}
where $Leb_{I_i}$ is the Lebesgue measure restricted to the interval $I_i$. 

%\end{prop}
%\begin{proof}
%Let $\psi$ be a probability measure on $[0,1]$ and let's define for every $i \in [1,n]$:
%\[ \psi_i := \psi( I_i) \]
%then we have: 

%\[ P(\sigma,\psi) = \vec{P} ( \sigma ,\psi_1,...,\psi_n) .\]
%For every $i \in [1,n]$, by concavity of $\log$:

%\[ \log \left( \alpha_i^{-1} \int_{I_i} \frac{ d \psi}{dx}(x) dx \right) \geq \alpha_i^{-1} \int_{I_i} \log \left(\frac{ d \psi}{dx}(x) \right)dx. \]
%Multiplying by $\alpha_i$ and summing over $i$, we get : 

%\[ D(Leb||\psi) \leq \sum_{i=1}^n \alpha_i \log \psi_i - \sum_{i=1}^n \alpha_i \log \alpha_i\]
%so we have the inequality $\leq$. 

%Then if $\psi_1,...\psi_n \in \R^+$ are such that $\psi_1+...+\psi_n=1$ we define $\psi$ the probability measure defined by its density :

%\[ \frac{d \psi}{dx}(x) = \alpha_i^{-1} \psi_{i} \text{ for } x \in  I_i .\]
%Then we have :

%\[ P(\sigma,\psi) = \vec{P} ( \sigma ,\psi_1,...,\psi_n) \]
%and 
%\[ D(Leb||\psi) = \sum_{i=1}^n \alpha_i \log \psi_i - \sum_{i=1}^n \alpha_i \log \alpha_i.\]
%and so we have the inequality $\geq$ 
%\end{proof}

%The function $\vec{\Psi}(\sigma,\theta,.)$ we seek to maximize tends to $- \infty$ at the boundary of our domain so this problem has a solution.

\subsection{Definition of the rate functions}
Now, in order to introduce our rate functions we need first to introduce the function $J$. This function is linked to the asymptotics of the following spherical integrals:

$$I_N(X, \theta)=\E_e[ e^{\theta N\langle e, X e\rangle}]$$
where the expectation holds over $e$ which follows the uniform measure on the sphere $\mathbb S^{\beta N-1}$ of radius one (taken in $\R^N$ when $\beta=1$ and $\C^N$ when $\beta=2$).
Denoting $J_N$ the following quantity:  
$$J_N(X,\theta)=\frac{1}{N}\log I_N(X,\theta)$$ 
the following theorem was proved in \cite{GM}: 
\begin{theo}\cite[Theorem 6]{GM}\label{mylBM}

If $(E_N)_{N \in \N}$ is a sequence of $N \times N$ real symmetric matrices when $\beta=1$ and complex Hermitian matrices when $\beta=2$ such that:
\begin{itemize}
\item The sequence of the empirical measures $\mu_{E_N}$ of $E_N$ weakly converges to a compactly supported measure $\mu$,
\item There are two reals $\lambda_{\min}(E), \lambda_{\max}(E)$ such that $\lim_{N \to \infty} \lambda_{\min}(E_N) = \lambda_{\min}(E)$ and $\lim_{N \to \infty} \lambda_{\max}(E_N) = \lambda_{\max}(E)$,
\end{itemize}
and $\theta \geq 0$, then: 
\[ \lim_{N \to \infty} J_N(E_N,\theta) = J(\mu,\theta, \lambda_{\max}(E)) \]
\end{theo}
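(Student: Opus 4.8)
The plan is to follow the route of \cite{GM}: turn the spherical integral into an explicit integral over $\R^{N}$ (or $\C^{N}$), recognise the integrand as $e^{N\Phi_{N}}$ with $\Phi_{N}$ built from the spectral data of $E_{N}$, let $N\to\infty$ using the two hypotheses, and extract the limit by a Laplace / steepest-descent analysis whose critical value is the claimed $J(\mu,\theta,\lambda_{\max}(E))$.

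\textbf{Gaussianisation and reduction to i.i.d.\ weights.} Writing $e=g/\|g\|$ with $g$ a standard real ($\beta=1$) or complex ($\beta=2$) Gaussian vector gives the uniform law on $\Ss^{\beta N-1}$. Diagonalising $E_{N}$ and using rotational invariance of $g$,
\[
 I_{N}(E_{N},\theta)=\E\!\left[\exp\!\left(N\theta\,\frac{\sum_{i}\lambda^{(N)}_{i}|g_{i}|^{2}}{\sum_{j}|g_{j}|^{2}}\right)\right],
\]
where $\lambda^{(N)}_{1}\le\dots\le\lambda^{(N)}_{N}$ are the eigenvalues of $E_{N}$ and the $|g_{i}|^{2}$ are i.i.d.\ positive variables ($\chi^{2}_{1}$ when $\beta=1$, exponential when $\beta=2$). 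The problem is now about sums of i.i.d.\ variables weighted by deterministic numbers.

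\textbf{Decoupling the ratio and an $e^{N\Phi_{N}}$ representation.} I would insert $1=\int_{\R}\delta\!\big(s-\tfrac1N\sum_{j}|g_{j}|^{2}\big)\,ds$ and represent the Dirac mass along a vertical contour, $\delta(u)=\tfrac{N}{2\pi i}\int e^{N\alpha u}\,d\alpha$. On the event $\tfrac1N\sum_{j}|g_{j}|^{2}=s$ the exponent is linear in the (independent) $|g_{i}|^{2}$, and the chi-square / exponential moment generating functions are explicit, so
\[
 I_{N}(E_{N},\theta)=\frac{N}{2\pi i}\int_{s>0} ds\int d\alpha\ \exp\!\Big(N\alpha s+\sum_{i}\log\E\big[e^{(\theta\lambda^{(N)}_{i}/s-\alpha)|g_{i}|^{2}}\big]\Big).
\]
For $\beta=1$ the integrand is $\exp\big(N\Phi_{N}(s,\alpha)\big)$ with
\[
 \Phi_{N}(s,\alpha)=\alpha s-\tfrac12\int\log\!\big(1-2\theta\lambda/s+2\alpha\big)\,d\mu_{E_{N}}(\lambda),
\]
on the region $s>0$, $\Re\alpha>\theta\lambda_{\max}(E_{N})/s-\tfrac12$; the $\beta=2$ case is identical except that the number of degrees of freedom of the weights changes, which is what ultimately produces $I^{(2)}=2I^{(1)}$.

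\textbf{Limit and saddle point.} The hypotheses $\mu_{E_{N}}\to\mu$ weakly and $\lambda_{\min}(E_{N})\to\lambda_{\min}(E)$, $\lambda_{\max}(E_{N})\to\lambda_{\max}(E)$ ensure that no eigenvalue escapes to infinity and that $\lambda\mapsto\log(1-2\theta\lambda/s+2\alpha)$ is bounded and continuous on the support, uniformly for $(s,\alpha)$ in the relevant region; hence $\Phi_{N}\to\Phi$ locally uniformly, $\Phi$ being the same expression with $\mu$ in place of $\mu_{E_{N}}$. Deforming the $\alpha$-contour through the saddle and estimating the resulting one-dimensional Laplace integrals, one gets
\[
 \lim_{N\to\infty}J_{N}(E_{N},\theta)=\sup_{s>0}\ \inf_{\alpha\in\R}\ \Phi(s,\alpha),
\]
and solving the critical-point equations ($s=\int(1-2\theta\lambda/s+2\alpha)^{-1}d\mu$, together with the equation in $s$) identifies this with the explicit $J(\mu,\theta,\lambda_{\max}(E))$ of Section~\ref{rate}. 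The dependence on $\lambda_{\max}(E)$ comes from a boundary effect: when $2\theta$ (more precisely $2\theta/\beta$) exceeds $\int(\lambda_{\max}(E)-\lambda)^{-1}d\mu(\lambda)$ the interior critical $\alpha$ no longer exists and the optimiser is pinned at the endpoint $\alpha=\theta\lambda_{\max}(E)/s-\tfrac12$; this is the Bai--Ben Arous--P\'ech\'e transition. Below that threshold the critical point is interior and the limit depends on $\mu$ alone.

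\textbf{Main obstacle.} I expect the delicate points to be (i) making the two-dimensional steepest-descent estimate rigorous --- deforming the contour, bounding the tails of both integrals and controlling $\Phi_{N}$ uniformly near the boundary $s=2\theta\lambda_{\max}$ where the logarithm degenerates --- and (ii) the case analysis at the transition, which is exactly where the spectral-edge convergences are genuinely used: a few eigenvalues of $E_{N}$ lagging behind, or the top of $\mathrm{supp}\,\mu$ not being attained by $\lambda_{\max}(E)$, would change the boundary behaviour and hence the value of the limit. The case $\theta=0$ is immediate, and since $\theta\mapsto J_{N}(E_{N},\theta)$ is convex (H\"older), convergence on a dense set of $\theta$ can be bootstrapped to the full statement if convenient.
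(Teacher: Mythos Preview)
The paper does not prove this theorem: it is quoted verbatim from \cite[Theorem 6]{GM} and used as a black box, so there is no ``paper's own proof'' to compare against. Your sketch is in the spirit of the original Guionnet--Ma\"ida argument --- Gaussianisation of the uniform measure on the sphere followed by a Laplace/steepest-descent analysis of the resulting integral --- and the case split according to whether $2\theta/\beta$ exceeds $G_\mu(\lambda_{\max}(E))$ is indeed where the edge hypothesis enters. As you yourself flag, the real work (uniform control near the logarithmic boundary, rigorous contour deformation, the transition analysis) is only indicated, not carried out; but since the present paper treats the result as an import, a full reproduction is not expected here.
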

The limit $J$ is defined as follows.  For a compactly supported probability measure we define its Stieltjes transform $G_\mu$ by
\[ G_{\mu}(z) := \int_{\R} \frac{1}{z-t} d \mu(t) \]

We assume hereafter that $\mu$ is supported on a compact $[a,b]$. Then  $G_{\mu}$ is a bijection from $\R \setminus [a,b]$ to $] G_{\mu}(a), G_{\mu}(b) [ \setminus \{ 0 \}$ where $G_{\mu}(a), G_{\mu}(b)$ are taken as the limits of $G_{\mu}(t)$ when $t \to a ^{-}$ and $t \to b ^{+}$. We denote by $K_{\mu}$ its inverse and let $R_{\mu}(z) := K_{\mu}(z) - 1/z$ be its $R$-transform as defined by Voiculescu in \cite{FreeEntropyV} (both defined on $] G_{\mu}(a), G_{\mu}(b) [ $ and $G_{\mu}(a)$ and/or $G_{\mu}(b)$ if they are finite). Let us denote by $r(\mu)$ the right edge of the support of $\mu.$
$J$ is defined for any $\theta \ge 0,$ and $\lambda \ge r(\mu)$ by,
$$
J( \mu, \theta, \la):= \theta v(\theta, \mu, \la) -\frac{\beta}{2} \int \log\left(1+\frac{2}{\beta} \theta v(\theta, \mu, \la) - \frac{2}{\beta}\theta y\right) d\mu( y),
$$
with 
$$
 v(\theta, \mu, \la) := \left\{\begin{array}{ll}
                                R_\mu(\frac{2}{\beta} \theta), & \textrm{if } 0 \le \frac{2 \theta}{\beta} \le G_{\mu}(\lambda),\\
                                 \lambda - \frac{\beta }{2\theta}, & \textrm{if }\frac{2 \theta }{\beta}>  G_{\mu}(\lambda),
                                \end{array}
\right.
$$
%We refer to the Theorem \ref{myl} of the preceding section for the definition of $J$.
In  both the piecewise constant and the continuous  case, we introduce our rate function $I^{(\beta)}$ as 

\[ I^{(\beta)}(\sigma,x) = - \infty \text{ for } x \in ]- \infty , r_{\sigma}[ \]
and 

\[ I^{(\beta)}(\sigma,x) =  \max_{\theta \geq 0} \left( J(\mu_{\sigma},\theta,x) - F(\sigma,\theta) \right) \]
where $\mu_{\sigma}$ is the limit measure of $X_N^{(\beta)}$, our Wigner matrix whose variance profile converges toward $\sigma$. 
\begin{lemma}
For $\beta =1,2$, $I^{(\beta)}(\sigma,.)$ is a good rate function. Furthermore $I^{(2)}(\sigma,.) = 2 I^{(1)}(\sigma,.)$
\end{lemma}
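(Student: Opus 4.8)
The plan is to treat the two assertions separately, reducing everything to the explicit formulas for $J$ and $F$ and to the properties of $F$ established in the preceding lemma (namely: $F(\sigma,\theta)$ is finite, continuous in $\theta$, and $F(\sigma,0)=\sup_{\mu}\{-\tfrac{\beta}{2}D(\text{Leb}||\mu)\}=0$, attained at $\mu=\text{Leb}$), together with the fact — immediate from the Dyson equation characterising $\mu_\sigma$ — that $\mu_\sigma$ and its right edge $r_\sigma$ do not depend on $\beta$.

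For the claim that $I^{(\beta)}(\sigma,\cdot)$ is a good rate function I would proceed in three steps. First, non-negativity: since $J(\mu,0,\lambda)=-\tfrac{\beta}{2}\int\log 1\,d\mu=0$ for every $\mu$ and $\lambda$, the term $\theta=0$ in the supremum gives $J(\mu_\sigma,0,x)-F(\sigma,0)=0$, so $I^{(\beta)}(\sigma,x)\geq 0$ on $[r_\sigma,\infty)$, while $I^{(\beta)}(\sigma,\cdot)\equiv+\infty$ on $(-\infty,r_\sigma)$. Second, lower semicontinuity: I would write $I^{(\beta)}(\sigma,\cdot)$ as the supremum over $\theta\geq 0$ of the maps $x\mapsto J(\mu_\sigma,\theta,x)-F(\sigma,\theta)$, extended by $+\infty$ below $r_\sigma$; a supremum of lower semicontinuous functions is lower semicontinuous, so it suffices to check that $x\mapsto J(\mu_\sigma,\theta,x)$ is continuous (with values in $(-\infty,+\infty]$) on $[r_\sigma,\infty)$ for each fixed $\theta$. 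This is read off from the explicit formula for $J$; the only points needing care are the continuity of $v(\theta,\mu_\sigma,\cdot)$ at the BBP transition value of $\lambda$ — where $G_{\mu_\sigma}(\lambda)=\tfrac{2\theta}{\beta}$ and the two branches $R_{\mu_\sigma}(\tfrac{2\theta}{\beta})$ and $\lambda-\tfrac{\beta}{2\theta}$ coincide because $K_{\mu_\sigma}\circ G_{\mu_\sigma}=\mathrm{id}$ off the support — and the behaviour at $\lambda=r_\sigma$, where the $\log$-integrand in $J$ degenerates; both are handled exactly as in \cite{GuHu18, GM}. Third, compactness of the level sets: each $\{I^{(\beta)}(\sigma,\cdot)\leq c\}$ is closed by lower semicontinuity and is contained in $[r_\sigma,\infty)$; to bound it above, fix any $\theta_0>0$, observe that $G_{\mu_\sigma}(x)\downarrow 0$ so that for $x$ large $v(\theta_0,\mu_\sigma,x)=x-\tfrac{\beta}{2\theta_0}$ and
\[ J(\mu_\sigma,\theta_0,x)=\theta_0 x-\frac{\beta}{2}-\frac{\beta}{2}\log\frac{2\theta_0}{\beta}-\frac{\beta}{2}\int\log(x-y)\,d\mu_\sigma(y), \]
which, since $\mu_\sigma$ is compactly supported, equals $\theta_0 x-\tfrac{\beta}{2}\log x+O(1)$ and hence tends to $+\infty$ as $x\to\infty$; as $F(\sigma,\theta_0)$ is a fixed finite number, $I^{(\beta)}(\sigma,x)\geq J(\mu_\sigma,\theta_0,x)-F(\sigma,\theta_0)\to+\infty$, so every level set is bounded, hence compact.

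For $I^{(2)}=2I^{(1)}$ the plan is a scaling argument: substitute $t=\tfrac{2\theta}{\beta}$. The quantity $v(\theta,\mu,\lambda)$ depends on $(\theta,\beta)$ only through $t$ — call the common value $w(t,\mu,\lambda)$, equal to $R_\mu(t)$ for $0\leq t\leq G_\mu(\lambda)$ and to $\lambda-\tfrac1t$ otherwise — and plugging $\theta=\tfrac{\beta t}{2}$ into the formulas gives
\[ J\big(\mu,\tfrac{\beta t}{2},\lambda\big)=\frac{\beta}{2}\Big( t\,w(t,\mu,\lambda)-\int\log\big(1+t\,w(t,\mu,\lambda)-ty\big)\,d\mu(y)\Big)=:\frac{\beta}{2}\tilde J(t,\mu,\lambda) \]
and, since $\tfrac1\beta\big(\tfrac{\beta t}{2}\big)^2=\tfrac{\beta}{4}t^2$,
\[ F\big(\sigma,\tfrac{\beta t}{2}\big)=\sup_{\mu}\Big\{\frac{\beta}{4}t^2 P(\sigma,\mu)-\frac{\beta}{2}D(\text{Leb}||\mu)\Big\}=\frac{\beta}{2}\sup_{\mu}\Big\{\frac{t^2}{2}P(\sigma,\mu)-D(\text{Leb}||\mu)\Big\}=:\frac{\beta}{2}\tilde F(\sigma,t), \]
with $\tilde J$ and $\tilde F$ independent of $\beta$. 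Since $\theta\mapsto t$ is a bijection of $[0,\infty)$, for $x\geq r_\sigma$
\[ I^{(\beta)}(\sigma,x)=\max_{t\geq 0}\frac{\beta}{2}\big(\tilde J(t,\mu_\sigma,x)-\tilde F(\sigma,t)\big)=\frac{\beta}{2}\,\max_{t\geq 0}\big(\tilde J(t,\mu_\sigma,x)-\tilde F(\sigma,t)\big), \]
and the last maximum is $\beta$-free because $\mu_\sigma$ and $r_\sigma$ are; since both functions are also $+\infty$ on $(-\infty,r_\sigma)$, this gives $I^{(2)}(\sigma,\cdot)=2I^{(1)}(\sigma,\cdot)$.

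I expect the only real obstacle to be the second step of the first part: making rigorous the continuity (with values in $(-\infty,+\infty]$) and the finiteness of $J(\mu_\sigma,\theta,\cdot)$ at the edge $r_\sigma$ and at the BBP transition point — in particular controlling $\int\log(\,\cdot\,)\,d\mu_\sigma$ near $r_\sigma$. This is routine given the explicit form of $J$ and the regularity of the edge of $\mu_\sigma$, and is carried out verbatim as in \cite{GuHu18}; everything else follows directly from the formulas and from the properties of $F$ recorded above.
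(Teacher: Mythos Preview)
Your proof is correct and follows essentially the same approach as the paper: lower semicontinuity from a supremum representation, compact level sets via $J(\mu_\sigma,\theta_0,x)\to+\infty$ for a fixed $\theta_0>0$, and the relation $I^{(2)}=2I^{(1)}$ via the substitution $t=2\theta/\beta$ (the paper phrases this as $\theta'=\theta/2$ in the $\beta=2$ case). Your treatment is in fact considerably more detailed than the paper's, which disposes of the lemma in four sentences --- it simply asserts that $I^{(\beta)}$ is a supremum of continuous functions without discussing the edge or the BBP transition point, and it does not explicitly address non-negativity; so the care you take in step two, and your check that the $\theta=0$ term gives $I^{(\beta)}\ge 0$, go beyond what the paper writes out.
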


\begin{proof}
As a supremum of continuous functions, $I^{(\beta)}(\sigma,.)$ is lower semi-continuous. 
We want to prove that the level sets of $I^{(\beta)}(\sigma,.)$ that is the $\{ x \in \R : I^{(\beta)}(\sigma,x) \leq M \}$ are compact. It is sufficient to show that  $\lim_{x \to + \infty} I^{(\beta)}(\sigma, x ) = +\infty$. For any fixed $\theta >0$, we have $\lim_{x \to \infty}J(\mu_{\sigma},\theta,x) = + \infty$. And so since we have $I^{(\beta)}(\sigma,x) \geq J(\mu_{\sigma},\theta,x) - F(\sigma,\theta)$, $I^{(\beta)}$ is a good rate function. With the change of variables $\theta' = \theta/2$ in the case $\beta =2$, we have that $I^{(2)}(\sigma,.) = 2 I^{(1)}(\sigma,.)$.
\end{proof}

\subsection{Assumptions on the variance profile $\sigma$}
	In order to prove the large deviation lower bound in the piecewise constant case, we will need the following assumption on $\sigma$: 
\begin{assum} \label{ArgMaxDis}
There exists some continuous $\theta \mapsto (\psi_{i}^{\theta})_{i \in \llbracket 1,n \rrbracket}$ with values in $(\R^+)^n \cap \{ \psi : \psi_1+...+\psi_n =1 \}$ such that $\psi^{\theta}$ is a maximal argument of the equation \ref{EqMaxDis}, that is:
\[ \frac{\theta^2}{\beta} \vec{P}( \sigma, \psi_{1}^{\theta},...,\psi_{n}^{\theta}) + \frac{\beta}{2} \left( \sum_{i=1}^n \alpha_i \log \psi^{\theta}_i - \sum_{i=1}^n \alpha_i \log \alpha_i \right) = F(\sigma,\theta). \]
\end{assum}

As a more practical example, the following assumption implies Assumption \ref{ArgMaxDis}:
\begin{assum}\label{ConcaveDis}
%The function $\vec{P}(\sigma,.)$ restricted to the hyperplane $\{ \psi_1+\psi_2 + ... + \psi_n =0 \}$ is concave. 
The function $\psi \mapsto \langle \psi, \sigma^2 \psi \rangle$ is concave on the set of $\psi \in \R^n$ such that $\sum_{i=1}^{n}\psi_i = 1$. Equivalently, for all $\psi \in \R^n$ such that $\sum_{i=1}^n \psi_i = 0$, $\langle \psi, \sigma^2 \psi \rangle \leq 0$ (where $\sigma^2$ is the matrix $(\sigma_{i,j}^2)_{1 \leq i,j \leq n}$).
\end{assum}

\begin{rem}\label{Ion}
Examples of variance profiles that satisfies this assumption are the variance profiles  associated to some parameters $(\alpha_1,...,\alpha_n) \in (\R^{+,*})^n$ and $\sigma_{i,j} = \mathds{1}_{i \neq j}$ (where $\mathds{1}_{i \neq j}$ is the indicator function equal to $1$ if $i \neq j$ and $0$ if $i = j$). In the case $n=2$ this a linearization of a Wishart matrix as in \cite{GuHu18}.
\end{rem}

%Indeed, if such is the case then Assumption \ref{ArgMaxDis} holds : 

\begin{lemma}
Assumption \ref{ConcaveDis} implies Assumption \ref{ArgMaxDis}.
\end{lemma}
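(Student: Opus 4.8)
The plan is to observe that Assumption \ref{ConcaveDis} makes the functional $\psi \mapsto \vec{\Psi}(\theta,\sigma,\psi)$ \emph{strictly} concave on the simplex for every fixed $\theta \ge 0$, and to read off from this both the existence of a unique maximiser in \eqref{EqMaxDis} and its continuous dependence on $\theta$ by a standard compactness argument.

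First I would record the strict concavity. Writing $\Delta := \{\psi \in (\R^+)^n : \psi_1+\cdots+\psi_n = 1\}$ and $\Delta^{\circ}$ for its relative interior, the term $\frac{\theta^2}{\beta}\vec{P}(\sigma,\psi) = \frac{\theta^2}{\beta}\langle \psi, \sigma^2\psi\rangle$ is concave on the hyperplane $\{\sum_i \psi_i = 1\}$ — this is precisely the content of Assumption \ref{ConcaveDis} — while the entropy term $\frac{\beta}{2}\sum_i \alpha_i \log\psi_i$ has Hessian $-\frac{\beta}{2}\,\mathrm{diag}(\alpha_i/\psi_i^2)$, which is negative definite on $(\R^{+,*})^n$ because all $\alpha_i > 0$. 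Hence $\vec{\Psi}(\theta,\sigma,\cdot)$ is strictly concave on $\Delta^{\circ}$ and tends to $-\infty$ at $\partial\Delta$. Since it is upper semi-continuous on the compact set $\Delta$ and finite somewhere in $\Delta^{\circ}$ (for instance at $\vec{\alpha}$, where it equals $\frac{\theta^2}{\beta}\langle\vec{\alpha},\sigma^2\vec{\alpha}\rangle \ge 0$), it attains its maximum $F(\sigma,\theta)$; the maximum cannot lie on $\partial\Delta$, and strict concavity on $\Delta^{\circ}$ then forces it to be attained at a single point $\psi^{\theta}$ — which is therefore the desired maximal argument, and equals $\vec{\alpha}$ when $\theta = 0$.

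Next I would prove continuity of $\theta \mapsto \psi^{\theta}$. Here the only point needing care is that the objective equals $-\infty$ on $\partial\Delta$, so Berge's maximum theorem on $\Delta$ does not apply directly; I would first show the maximisers stay uniformly away from the boundary on compact $\theta$-ranges. Fixing $T > 0$ and combining $\vec{\Psi}(\theta,\sigma,\psi^{\theta}) \ge \vec{\Psi}(\theta,\sigma,\vec{\alpha}) \ge 0$ with the crude upper bound
\[ \vec{\Psi}(\theta,\sigma,\psi) \le \frac{T^2}{\beta}\max_{i,j}\sigma_{i,j}^2 + \frac{\beta}{2}\Big(\sum_{i}\alpha_i\log\psi_i - \sum_{i}\alpha_i\log\alpha_i\Big), \quad \theta \in [0,T], \]
valid on $\Delta$ since $\vec{P}(\sigma,\psi) \le \max_{i,j}\sigma_{i,j}^2$ there, and using that each $\alpha_i\log\psi_i \le 0$ on $\Delta$, one obtains $\log\psi_j^{\theta} \ge -c(T)$ uniformly in $j$ and in $\theta \in [0,T]$; that is, $\psi^{\theta} \in \Delta_{\epsilon} := \{\psi \in \Delta : \psi_i \ge \epsilon\ \forall i\}$ for some $\epsilon = \epsilon(T) > 0$. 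On the compact set $\Delta_{\epsilon} \subset \Delta^{\circ}$ the map $(\theta,\psi) \mapsto \vec{\Psi}(\theta,\sigma,\psi)$ is jointly continuous, so the usual subsequence argument goes through: if $\theta_k \to \theta_0$ in $[0,T]$, any limit point $\bar\psi \in \Delta_{\epsilon}$ of $(\psi^{\theta_k})$ satisfies $\vec{\Psi}(\theta_0,\sigma,\bar\psi) \ge \vec{\Psi}(\theta_0,\sigma,\phi)$ for every $\phi \in \Delta^{\circ}$ (and trivially for $\phi \in \partial\Delta$), hence $\bar\psi$ attains $F(\sigma,\theta_0)$ and so $\bar\psi = \psi^{\theta_0}$ by uniqueness; therefore $\psi^{\theta_k} \to \psi^{\theta_0}$. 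Since $T$ is arbitrary, $\theta \mapsto \psi^{\theta}$ is continuous on $\R^+$, which is exactly Assumption \ref{ArgMaxDis}.

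The main obstacle — to the extent there is one — is precisely this boundary behaviour: the logarithmic penalty makes $\vec{\Psi}$ take the value $-\infty$ on $\partial\Delta$, so continuity of the arg-max is not an off-the-shelf consequence of the maximum theorem and requires the uniform interior estimate above. Once that is in place, the rest is routine convexity and compactness.
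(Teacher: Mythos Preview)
Your proof is correct but proceeds by a different route than the paper's. Both arguments begin the same way, deducing from Assumption \ref{ConcaveDis} that $\vec{\Psi}(\theta,\sigma,\cdot)$ is strictly concave on the simplex and blows down to $-\infty$ at the boundary, hence has a unique maximiser $\psi^{\theta}$ in the interior. The divergence is in how continuity of $\theta\mapsto\psi^{\theta}$ is obtained. You establish a uniform interior bound on $\psi^{\theta}$ for $\theta$ in compact intervals and then run a Berge-type subsequence argument on the resulting compact $\Delta_{\epsilon}$; this is elementary and uses nothing beyond continuity of the objective. The paper instead writes down the first-order optimality condition $\Pi f(\psi)=0$ on the hyperplane $\{\sum_i\psi_i=1\}$ and applies the implicit function theorem: Assumption \ref{ConcaveDis} is used again to show that $\langle u, df_{\psi}(u)\rangle<0$ for $u\perp(1,\dots,1)$, so the linearisation is invertible and $\theta\mapsto\psi^{\theta}$ is in fact analytic. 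Your approach is cleaner for the statement at hand and sidesteps the differential-geometric bookkeeping; the paper's approach costs a little more but delivers analyticity of the maximiser, which is reused later (Section \ref{ratefunc}) to conclude that $F(\sigma,\cdot)$ itself is analytic under Assumption \ref{ConcaveDis}.
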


\begin{proof}
The function $\vec{\psi} \mapsto \frac{\theta^2}{\beta} \vec{P}(\sigma,\vec{\psi}) + \frac{\beta}{2}\sum_{i=1}^n \alpha_i \log \psi_i$ is strictly concave and since it tends to $- \infty$ on the boundary of the domain, it admits a unique maximal argument $\psi^{\theta}$ which is also the unique solution to the following critical point equation:

\[ f(\psi)= \frac{2 \theta^2}{\beta} \left( \sum_{j=1}^n \sigma_{i,j}^2 \psi_j \right)_{i=1,...,n} + \frac{\beta}{2}\left( \frac{\alpha_i}{\psi_i}\right)_{i=1,...,n} \in \text{Vect}(1,...,1) \]
where $\text{Vect}(1,...,1)$ is the subspace of $\R^n$ spanned by the vector whose coordinates are all $1$ .
We now want to apply the implicit function theorem to prove that $\theta \mapsto \psi^{\theta}$ is analytic. First of all, the equation above can be rewritten $\Pi f(\psi) =0$ where $\Pi$ is the orthogonal projection on $\text{Vect}(1,...,1)^{\bot}$.We have that for every $u \in \R^n$:

\[ \forall i =1, ...,n, (d f_{\psi}(u))_i = \frac{ 2 \theta^2}{\beta} (Su)_i - \frac{\beta u_i}{2\psi_i^2}. \]
where we denote $S = (\sigma^2_{i,j})_{1 \leq i,j \leq n}$. 

It suffices to show that $d( \Pi f)_{\psi}(u) = \Pi d f_{\psi} (u) \neq 0$ for $u \in Vect(1,...,1)^{\bot}$, that is $d f_{\psi} (u) \notin \text{Vect}(1,...,1)$. For such a $u$, we have 

\[ \langle u, d f_{\psi} (u) \rangle = \frac{ 2 \theta^2}{\beta} \langle u,Su \rangle - \frac{\beta}{2} \sum_{j=1}^n \frac{u_j^2}{\psi_j^2} \]

Since $u \in \text{Vect}(1,...,1)^{\bot}$ we have by Assumption \ref{ConcaveDis} $\langle u,Su \rangle \leq 0$ and therefore $\langle u, d f_{\psi} (u) \rangle < 0$. So $d f_{\psi} (u) \notin \text{Vect}(1,...,1)$ and we can apply the implicit function theorem. 
\end{proof}
  
Examples of variance profiles that satisfies Assumption \ref{ArgMaxDis} but not Assumption \ref{ConcaveDis} are provided in section \ref{2by2}. In the same section, we will also show that without any assumptions on $\sigma$, the method used in this article may fail as we can have a large deviation principle but with a rate function different from $I$. 

In the continuous case, we will need the following assumption:
 \begin{assum} \label{ArgMaxCont}
There exists some continuous $\theta \mapsto \psi^{\theta}$ (for the weak topology) from $\R^+$ to $\mathcal{P}([0,1])$ such that $\psi^{\theta}$ is a maximal argument of \ref{EqMax} that is: 

\[ F(\sigma,\theta) = \Psi(\theta,\sigma,\psi^{\theta}) \]
\end{assum}

As for the piecewise constant case, the following assumption implies \ref{ArgMaxCont}

\begin{assum} \label{ConcaveCont}
The function $P(\sigma,.)$ is concave on the set $\mathcal{P}([0,1])$ of probability measures on $[0,1]$. 
\end{assum}

\begin{lemma}
Assumption \ref{ConcaveCont} implies \ref{ArgMaxCont}. 
\end{lemma}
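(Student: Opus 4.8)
The plan is to show that, under Assumption \ref{ConcaveCont}, the optimization problem \eqref{EqMax} has a \emph{unique} maximizer for every $\theta \geq 0$, and then argue that this unique maximizer depends continuously on $\theta$ for the weak topology, which is exactly Assumption \ref{ArgMaxCont}. First I would establish existence: the preceding lemma already gives that the supremum in \eqref{EqMax} is attained when $\sigma$ is bounded and continuous, so there is at least one maximizer $\psi^\theta$. For uniqueness, I would use that $\psi \mapsto \frac{\theta^2}{\beta} P(\sigma,\psi)$ is concave (Assumption \ref{ConcaveCont}) while $\psi \mapsto -\frac{\beta}{2} D(\mathrm{Leb}\|\psi)$ is \emph{strictly} concave on $\mathcal P([0,1])$ — strict convexity of relative entropy in its second argument along the line segment between two distinct probability measures. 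Hence $\Psi(\theta,\sigma,\cdot)$ is strictly concave on the convex set $\mathcal P([0,1])$ and its maximizer is unique. (For $\theta = 0$ the maximizer is just $\mathrm{Leb}$.)

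Next I would prove continuity of $\theta \mapsto \psi^\theta$. Fix $\theta_0 \geq 0$ and let $\theta_k \to \theta_0$. Since $\mathcal P([0,1])$ is weakly compact, any subsequence of $(\psi^{\theta_k})$ has a further subsequence converging weakly to some $\psi_\infty$; it suffices to show $\psi_\infty = \psi^{\theta_0}$, for then the whole sequence converges. To identify the limit, I would use the same semicontinuity argument as in the preceding lemma: $P(\sigma,\cdot)$ is weakly continuous (as $\sigma$ is bounded continuous) and $D(\mathrm{Leb}\|\cdot)$ is weakly lower semicontinuous, so along the convergent subsequence
\[
\Psi(\theta_0,\sigma,\psi_\infty) \geq \limsup_k \Psi(\theta_k,\sigma,\psi^{\theta_k}),
\]
using also that $\Psi(\theta_k,\sigma,\cdot) - \Psi(\theta_0,\sigma,\cdot)$ is uniformly small by the Lipschitz estimate $|\Psi(\theta,\sigma,\mu) - \Psi(\theta',\sigma,\mu)| \leq \|\sigma^2\|_\infty |\theta^2 - \theta'^2|/\beta$ from the earlier lemma. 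On the other hand, by optimality of $\psi^{\theta_k}$ and continuity of $F(\sigma,\cdot)$, $\limsup_k \Psi(\theta_k,\sigma,\psi^{\theta_k}) = \lim_k F(\sigma,\theta_k) = F(\sigma,\theta_0)$. Combining, $\Psi(\theta_0,\sigma,\psi_\infty) \geq F(\sigma,\theta_0)$, so $\psi_\infty$ is a maximizer at $\theta_0$, and by the uniqueness just established $\psi_\infty = \psi^{\theta_0}$. This gives weak continuity of $\theta \mapsto \psi^\theta$, hence Assumption \ref{ArgMaxCont}.

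I expect the main obstacle to be the low-regularity subtlety at $D(\mathrm{Leb}\|\psi) = +\infty$: the strict concavity argument for uniqueness must handle maximizers where the divergence is finite (the interesting case, since otherwise $\Psi = -\infty$ which is never optimal for $\theta$ in a range where some absolutely continuous $\psi$ does better), and one should check that strict convexity of $t \mapsto D(\mathrm{Leb}\|(1-t)\mu_0 + t\mu_1)$ genuinely holds when $\mu_0 \neq \mu_1$ are both absolutely continuous with finite divergence — this follows from strict convexity of $u \mapsto -\log u$ applied to the Radon–Nikodym derivatives, but deserves a line. The continuity part is then a routine compactness-plus-uniqueness argument once the relevant (semi)continuity properties of $P$ and $D$, already recorded in the earlier lemma, are invoked.
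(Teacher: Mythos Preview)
Your argument is correct. The paper actually does not supply a proof of this lemma at all; it is stated and then the text moves directly on to a remark. The only guidance the paper offers is the proof of the discrete analog (that Assumption~\ref{ConcaveDis} implies Assumption~\ref{ArgMaxDis}), which proceeds differently: there the paper writes down the finite-dimensional critical-point equation, checks that the Jacobian is nonsingular using the concavity assumption, and applies the implicit function theorem to obtain analyticity of $\theta\mapsto\psi^\theta$.

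Your compactness-plus-uniqueness route is a natural replacement in the infinite-dimensional setting $\mathcal P([0,1])$, where an implicit function theorem would require identifying a suitable Banach-space structure and checking Fr\'echet differentiability of the relative-entropy term. What you lose is regularity (you get continuity rather than analyticity), but Assumption~\ref{ArgMaxCont} only asks for weak continuity, so nothing is missing. The two ingredients you flag---strict concavity of $\mu\mapsto -D(\mathrm{Leb}\|\mu)$ on the set where it is finite, and the upper-semicontinuity estimate $\Psi(\theta_0,\sigma,\psi_\infty)\ge\limsup_k\Psi(\theta_k,\sigma,\psi^{\theta_k})$ obtained from weak continuity of $P$ and lower semicontinuity of $D$---are exactly what is needed, and both are already available from the earlier lemma on the existence of maximizers.
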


%\begin{proof}
%First, since $\mu \mapsto D(Leb|| \mu)$ is strictly concave so is $\Psi(\theta,\sigma,.)$ and so the maximum argument $\psi_{\theta}$ is unique. Let us prove now that it is continuous for the weak topology. Let $\theta > 0$, $\mathcal{N}_{\epsilon} := \{ \psi \in \mathcal{P}([0,1]) | \Psi(\theta,\sigma,\mu) \geq F(\sigma,\theta) - \epsilon \}$. Since $\mu \to D(Leb||\mu)$ is upper semi-continuous, $\Psi(\theta,\sigma,.)$ is upper semi-continuous and so $\mathcal{N}_{\epsilon}$ is a compact for the weak topology. Let $\mathcal{V}$ a neighborhood of $\psi^{\theta}$. Then since $\bigcap_{\epsilon > 0}\mathcal{N}_{\epsilon} = \{ \psi^{\theta}\}$ there is an $\epsilon' > 0$ such that $\mathcal{N}_{\epsilon'} \subset \mathcal{V}$. We have $| \Psi(\theta',\sigma,\mu) - \Psi(\theta,\sigma,\mu)| \leq \frac{| \theta'^2 - \theta^2|}{\beta} \max_{x,y} \sigma^2(x,y)$. So for $|\theta'^2 - \theta^2|\leq \beta \epsilon' / 3 ( \max \sigma^2(x,y))$, $\Psi(\theta',\sigma,\psi^{\theta}) \geq F(\theta) - \epsilon'/3$ and on $\mathcal{N}_{\epsilon'}^{c}, \Psi(\theta',\sigma,\mu) \leq F(\theta) - 2 \epsilon'/3$. Necessarily, $\psi^{\theta'} \in \mathcal{N}_{\epsilon'} \subset \mathcal{V}$ and so we have the continuity. 
%\end{proof}

\begin{rem}
A family of $\sigma$ satisfying Assumption \ref{ConcaveCont} is given by $\sigma^2(x,y) = | f(x)- f(y) | + C$ where $f$ is an increasing continuous function and $C\in \R^+$. Indeed, if $f$ is an increasing and continuous function on $[0,1]$, there is a positive measure $\nu$ on $[0,1]$ such that $f(x) - f(0) = \int_0^x d\nu(t)$ and we have $\sigma^2(x,y) = C + |\int_x^y d \nu(t)| = \int_0^1 \tau_t(x,y) d \nu(t) + C$ where $\tau_t (x,y) = \mathds{1}_{ x \leq  t < y} + \mathds{1}_{ y \leq  t < x}$ and so 
\[ P( \sigma,\psi) = \int_0^1 P(\tau_t,\psi) d\nu(t) + C . \]
Since $P(\tau_t,\psi) = 2 \psi([0,t[) (1 - \psi([0,t[)$, $P(\tau_t,. )$ is concave and so is $P(\sigma,.)$.
\end{rem}

\section{Scheme of the proof}\label{Scheme}

The proof of Theorem \ref{maintheowBM} will follow a path similar to \cite{GuHu18} for the piecewise constant case and then for $\sigma$ continuous, we will approximate it by a sequence of piecewise constant profiles. In the piecewise constant case, we will insist on the differences with \cite{GuHu18} and novelties brought by the introduction of a variance profile and we will refer the reader to the relevant parts of \cite{GuHu18} for further details on the proofs that stay similar. 
First of all, we will prove that the sequence of distributions of the largest eigenvalue of $X_N^{(\beta)}$ is exponentially tight.

\subsection{Exponential tightness}\label{exptightsecBM}
We will prove the following lemma of exponential tightness:
\begin{lemma}\label{exptightBM} For $\beta=1,2$, assume that the distribution of the entries $a^{(\beta)}_{i,j}$ satisfy Assumption \ref{A0BM}. Then:

\[ \lim_{K \to + \infty} \limsup_{N \to \infty} \frac{1}{N} \log \Pp[ \lambda_{\max}(X_N^{(\beta)}) > K] = - \infty .\]
Similar results hold for $\lambda_{\min}(X_N^{(\beta)})$.
\end{lemma}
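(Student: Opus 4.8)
The plan is to bound $\lambda_{\max}(X_N^{(\beta)})$ by a quantity whose exponential moments we can control directly, namely the operator norm, and to estimate $\|X_N^{(\beta)}\|$ via a net argument together with the sharp sub-Gaussian Laplace transform. First I would observe that since $\sigma$ is bounded — in the piecewise constant case by $\max_{k,l}\sigma_{k,l}$, in the continuous case by $\|\sigma\|_\infty$ — we have $\Sigma_N(i,j)\le \|\sigma\|_\infty + o(1)$ uniformly, so that entrywise $X_N^{(\beta)} = W_N^{(\beta)}\odot\Sigma_N$ has entries that are still sub-Gaussian with a variance proxy bounded by a constant $C_0$ independent of $N$. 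Hence it suffices to prove the statement for a Wigner-type matrix $Y_N$ with independent (up to symmetry) sub-Gaussian entries $Y_N(i,j) = b_{i,j}/\sqrt N$, where each $b_{i,j}$ has a sub-Gaussian Laplace transform with proxy at most $C_0$, and to bound $\lambda_{\max}(Y_N)\le \|Y_N\|$.

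Next I would carry out the standard $\varepsilon$-net estimate. Fix $\varepsilon = 1/4$ and let $\mathcal N$ be an $\varepsilon$-net of the unit sphere $\mathbb S^{\beta N -1}$ of cardinality at most $9^{\beta N}$ (or $C^N$ for a universal $C$; in the complex case one identifies $\C^N\simeq\R^{2N}$). A classical comparison gives $\|Y_N\|\le 2\sup_{u\in\mathcal N}\langle u, Y_N u\rangle$ (or $2\sup_{u,v\in\mathcal N}\langle u,Y_N v\rangle$), so
\[
\Pp[\lambda_{\max}(Y_N) > K] \le \Pp\Big[\sup_{u\in\mathcal N}\langle u,Y_N u\rangle > K/2\Big] \le C^N \sup_{u\in\mathcal N}\Pp[\langle u, Y_N u\rangle > K/2].
\]
For a fixed unit vector $u$, the quadratic form $N\langle u, Y_N u\rangle = \sum_{i,j} \sqrt N\, u_i u_j Y_N(i,j)$ is a weighted sum of the independent centered sub-Gaussian variables $b_{i,j}$ with weights $w_{i,j} = \sqrt N\, u_i u_j$ (counting the pair $(i,j)$ and $(j,i)$ together for $i\ne j$). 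Using Assumption \ref{A0BM} and the product structure of the Laplace transform, for any $t>0$,
\[
\E\big[\exp\big(t N\langle u, Y_N u\rangle\big)\big] \le \exp\Big(\tfrac{C_0 t^2}{2}\sum_{i,j} N u_i^2 u_j^2\Big) = \exp\Big(\tfrac{C_0 t^2 N}{2}\Big),
\]
since $\sum_{i,j} u_i^2 u_j^2 = \|u\|^4 = 1$. A Chernoff bound with $t$ chosen proportional to $K$ then yields $\Pp[\langle u, Y_N u\rangle > K/2] \le \exp(-c K^2 N)$ for a universal $c>0$ (uniformly in $u$ and $N$). Combining with the net bound gives $\frac1N\log\Pp[\lambda_{\max}(X_N^{(\beta)})>K] \le \log C - cK^2 + o(1)$, and letting $N\to\infty$ and then $K\to\infty$ proves the claim. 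The statement for $\lambda_{\min}$ follows by applying the same argument to $-X_N^{(\beta)}$, which has the same variance profile and entry distributions with the same sub-Gaussian bounds.

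The main obstacle — really the only point requiring care — is the bookkeeping in the quadratic-form Laplace transform: one must correctly group the $(i,j)$ and $(j,i)$ entries (they are equal, not independent), handle the diagonal terms (whose variance is $2$ for $\beta=1$, but this only changes constants), and check that the resulting variance proxy after the $\odot\Sigma_N$ multiplication is uniformly bounded. None of this is deep, but it is the step where the sharp sub-Gaussian hypothesis of Assumption \ref{A0BM} is genuinely used; everything else is the textbook net argument. (Alternatively, one could invoke the moment bound from Remark \ref{remOfer} together with the Füredi–Komlós-type estimate, but the direct net argument is cleaner here.)
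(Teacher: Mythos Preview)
Your proposal is correct and follows essentially the same route as the paper: a net argument on $\mathbb S^{\beta N-1}$, a union bound of size $C^N$, and then a Chernoff bound on $\langle u, Y_N v\rangle$ (or $\langle u, Y_N u\rangle$) using the sharp sub-Gaussian Laplace transform of the entries. The paper in fact packages this as the slightly stronger Lemma~\ref{exptight2} (uniform over all entrywise contractions $A\odot W_N^{(\beta)}$ with $|A(i,j)|\le 1$), uses the bilinear form with two net vectors, and simply fixes the tilt parameter at $1$ rather than optimizing, obtaining a bound of order $\exp(N(C-K))$ instead of your $\exp(-cK^2N)$; both suffice for exponential tightness.
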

We will in fact prove a stronger and slightly more quantitative result that will also be useful when we will approximate continuous variance profiles using piecewise constant ones (we recall that $\odot$ is the entrywise product of matrices):  

\begin{lemma}\label{exptight2}
Let $\beta=1,2$ and let us assume that the distribution of the entries $a^{(\beta)}_{i,j}$ satisfy Assumption \ref{A0BM}. Let $\mathcal{A}_N$ be the following subset of symmetric matrices: 
\[  \mathcal{A}_N := \{ A \in \mathcal{S}_N(\R^+) | \forall i,j \in \llbracket 1, N \rrbracket, A(i,j) \leq 1 \} .\]
For every $M>0$ there exists $B >0$ such that: 
\[ \limsup_N \frac{1}{N} \sup_{A \in \mathcal{A}_N}\log \Pp[ || A \odot W^{(\beta)}_N|| \geq B]  \leq -M .\]

\end{lemma}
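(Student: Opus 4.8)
The plan is to obtain a uniform-in-$A$ exponential bound on the operator norm of $A\odot W_N^{(\beta)}$ via a net argument on the sphere combined with the sharp sub-Gaussian Laplace transform of the entries. First I would reduce the operator norm to a supremum over a fixed finite $\varepsilon$-net $\mathcal{N}_\varepsilon$ of the unit sphere $\mathbb S^{\beta N-1}$: a standard argument gives $\|M\|\le (1-2\varepsilon)^{-1}\sup_{u\in\mathcal N_\varepsilon}|\langle u, Mu\rangle|$ for self-adjoint $M$, and $|\mathcal N_\varepsilon|\le (C/\varepsilon)^{\beta N}$, so by a union bound it suffices to control $\Pp[\langle u, (A\odot W_N^{(\beta)})u\rangle \ge B']$ for a single fixed unit vector $u$, uniformly over $A\in\mathcal A_N$, and then absorb the $e^{O(N)}$ cardinality factor by taking $B$ large.

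Next I would estimate the Laplace transform $\E[\exp(\lambda N\langle u,(A\odot W_N^{(\beta)})u\rangle)]$ for $\lambda>0$. Writing $\langle u,(A\odot W_N^{(\beta)})u\rangle = N^{-1/2}\sum_{i\le j} c_{ij}\, a_{ij}^{(\beta)}$ (with $c_{ij}$ linear combinations of the $u_iu_j$ weighted by $A(i,j)$ and an extra factor $2$ off-diagonal in the real case), independence of the $a_{ij}^{(\beta)}$ factorizes the expectation into a product of $T_{\mu_{i,j}^N}$'s. Assumption \ref{A0BM} then bounds each factor by $\exp(\tfrac14 |\text{coeff}|^2)$ or $\exp(\tfrac12\text{coeff}^2\,\mu_{i,j}^N(x^2))$; since $A(i,j)\le 1$ and $\mu_{i,j}^N(x^2)\le 2$, summing the exponents gives a bound of the form $\exp\big(C\lambda^2 N \sum_{i,j} u_i^2 u_j^2\big)\le \exp(C\lambda^2 N)$ because $\sum_i u_i^2=1$ (here one must be slightly careful with the $\sqrt N$ normalization and with the diagonal, but the key point is that the bound is \emph{uniform in $A\in\mathcal A_N$ and in $u$}, with no dependence on the laws beyond the sharp sub-Gaussian constant). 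A Chernoff bound then yields $\Pp[\langle u,(A\odot W_N^{(\beta)})u\rangle\ge t]\le \exp(N(C\lambda^2-\lambda t))$, optimized in $\lambda$ to $\exp(-Nt^2/4C)$.

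Combining the two steps: with $B'=(1-2\varepsilon)B$,
\[
\Pp\big[\|A\odot W_N^{(\beta)}\|\ge B\big]\le (C/\varepsilon)^{\beta N}\exp\!\big(-N B'^2/4C\big)=\exp\!\Big(N\big(\beta\log(C/\varepsilon)-B'^2/4C\big)\Big),
\]
uniformly over $A\in\mathcal A_N$, so taking $\varepsilon=1/4$ say and then $B$ large enough that $B'^2/4C - \beta\log(4C)\ge M$ gives $\limsup_N N^{-1}\sup_{A\in\mathcal A_N}\log\Pp[\|A\odot W_N^{(\beta)}\|\ge B]\le -M$, which is Lemma \ref{exptight2}. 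Lemma \ref{exptightBM} follows immediately, since $X_N^{(\beta)}=W_N^{(\beta)}\odot\Sigma_N$ and $\Sigma_N/\|\sigma\|_\infty$ (or any fixed bound on $\Sigma_N$, which holds by boundedness of $\sigma$ and the convergence hypotheses) lies in $\mathcal A_N$ up to a constant rescaling, and $\lambda_{\max}\le\|\cdot\|$; the statement for $\lambda_{\min}$ is identical since $-W_N^{(\beta)}$ has the same type of entries.

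The main obstacle I anticipate is purely bookkeeping rather than conceptual: making the Laplace-transform estimate genuinely uniform over all of $\mathcal A_N$ and over the net simultaneously, and correctly tracking the factor-of-$2$ on off-diagonal real entries and the different diagonal variance, so that the quadratic coefficient in $\lambda$ really is bounded by an absolute constant times $\sum_i u_i^2\sum_j u_j^2 = 1$ with no hidden $N$-dependence. One should also double-check that in the log-Sobolev (non-compact) alternative of Assumption \ref{ACBM} the sharp sub-Gaussian Laplace transform from Assumption \ref{A0BM} is still all that is used here — it is, since the whole argument only invokes \ref{A0BM}, and \ref{ACBM} is not needed for exponential tightness.
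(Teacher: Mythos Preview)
Your proposal is correct and follows essentially the same path as the paper's proof: a net on the sphere, a union bound, and then the sharp sub-Gaussian Laplace transform of the entries to get an exponential tail on $\langle u,(A\odot W_N^{(\beta)})v\rangle$ uniformly in $A$ and in the net point(s). The only cosmetic differences are that the paper uses the bilinear form $\langle Y_N u,v\rangle$ with two net vectors (and a fixed $1/2$-net of cardinality $\le 9^{\beta N}$) rather than the quadratic form $\langle u,Mu\rangle$, and it simply takes the tilt parameter equal to $1$ instead of optimizing to get the Gaussian-type tail $\exp(-Nt^2/4C)$; neither difference affects the argument.
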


\begin{proof}
We will use a standard net argument that we recall here for the sake of completeness. Let us denote:

\[ Y_N^{(\beta)} := A \odot W^{(\beta)}_N .\]
Where  $A \in \mathcal{A}_N$. If $R_N$ is a $1/2$-net of $\Ss^{\beta N -1}$ for the classical Euclidian norm, using a classical argument (see the proof of Lemma 1.8 from \cite{GuHu18}), we have:

%For $N \in \N$, let $R_N$ be a $1/2$-net of the sphere (i.e. a subset of the sphere $\mathbb{S}^{\beta N-1}$ such as for all $u \in \mathbb{S}^{\beta N-1}$ there is $v \in R_N$ such that $||u - v||_2 \leq 1/2$. Here the sphere is inside $\R^{N}$ for $\beta=1$ and $\C^{N}$ for $\beta=2$). We know that we can take $R_N$ with cardinality smaller than  ${3}^{\beta N}$. As in the proof of the exponential tightness in \cite{GuHu18}, we  notice that for $M >0$

\begin{equation}\label{b2BM}\Pp[ ||Y_N^{(\beta)}|| \ge 4 K] \leq  9^{\beta N} \sup_{u,v \in R_{N}}\Pp[\langle Y_N^{(\beta)} u, v \rangle \ge  K  ]. \end{equation}
%Indeed, if we denote, for $v\in\mathbb S^{\beta N-1}$, $u
%_{v}$ to be an element of $R_{N}$ such that $\|u_{v}-v\|_{2}\le 1/2$,

%$$\|Y_{N}^{(\beta)}\|=\sup_{v\in\mathbb S^{ \beta N-1}}\|Y_{N}^{(\beta)}v\|_{2}\le 
%\sup_{v\in\mathbb S^{\beta N-1}}(\|Y_{N}^{(\beta)}u_{v}\|_{2} +\frac{1}{2}\|Y_{N}^{(\beta)}\|)$$
%so that 
%\begin{equation}\label{b3BM} \|Y_{N}^{(\beta)}\|\le 2 \sup_{u\in R_{N}}\|Y_{N}^{(\beta)}u\|_{2}.\end{equation}
%Similarly, taking $v= \frac{Y_{N}^{(\beta)}u}{\|Y_{N}^{(\beta)}u\|_{2}}$, we find 
%$$\|Y_{N}^{(\beta)}u\|_{2}=\langle v,Y_{N}^{(\beta)}u\rangle\le  \langle u_{v},Y_{N}^{(\beta)}u\rangle+\|v-u_{v}\|_{2}\|Y_{N}^{(\beta)}v\|_{2}$$
%from which  we deduce that 
%$$ \|Y^{N}_{\beta}\|\le 4 \sup_{u,v\in R_{N}} \langle Y_N^{(\beta)} u, v \rangle $$
%and  \eqref{b2BM} follows.
We next bound the probability of deviations of $ \langle Y_N^{(\beta)} v, u \rangle$ by using Tchebychev's inequality.
For $\theta\ge 0$ we indeed have

\begin{eqnarray}
\Pp[\langle Y_N^{(\beta)} u, v \rangle \ge   K  ] &\leq& \exp\{- N K\} \E[ \exp\{N \langle Y_N^{(\beta)} u, v \rangle\} ] \nonumber\\
& \leq & \exp\{- N K\} \E[ \exp \left\{
\sqrt{N}\left(2 \sum_{i<j} \Re (A(i,j) a_{i,j}^{(\beta)}u_i \bar v_j ) + \sum_i A(i,i) a_{i,i} u_i v_i \right)\right\}]\nonumber\\
& \leq & \exp\{- N  K\} \exp \left(\frac{\ N}{\beta} (2\sum_{i<j} |u_{i}|^{2} |v_{j}|^{2}+\sum_i  |u_i |^{2}|v_i|^2) \right)  \\ 
&\leq & \exp \left( N \left( \frac{1}{\beta } - K \right) \right) 
\label{qwBM}
\end{eqnarray}
where we used that the entries have a sharp sub-Gaussian Laplace transform and that $|A(i,i)| \leq 1$. This complete the proof of the Lemma with \eqref{b2BM}.
\end{proof}
With this result, we conclude that the sequence of the distributions of the largest eigenvalue of $X_N^{(\beta)}$ in Lemma \ref{exptightBM} is indeed  exponentially tight. Therefore it is enough to prove a weak large deviation principle. In the following we summarize the  assumptions on the distribution of the entries as follows:
\begin{assum} \label{assBM} 
Either the $\mu_{i,j}^{N}$ are uniformly compactly supported in the sense that there exists a compact set $K$ such that the support of all $\mu_{i,j}^N$ is included in $K$, or the  $\mu_{i,j}^N$ satisfy a uniform log-Sobolev inequality in the sense that there exists a constant $c$ independent of $N$ such that for all smooth function $f$:
$$\int f^{2}\log\frac{f^{2}}{\mu_{i,j}^{N}(f^{2})}d\mu^{N}_{i,j}\le c\mu^{N}_{i,j}(\|\nabla f\|_2^{2})\,.$$
 Additionally, the $\mu^{N}_{i,j}$ satisfy Assumption \ref{A0BM}.  % Moreover, for $\epsilon>0$ small enough $\sup_{|t|\le \epsilon}\sup_{i,j,N}| \partial_t^3\ln T_{\mu_{i,j}^N}(t)|$ is finite.
\end{assum}

%The first step of the proof will be to establish upper and lower bounds for matrices with a piecewise constant variance profile, a proof that will follow very closely \cite{GuHu18}. For the continuous case, in Section \ref{Approx} we will approximate our continuous variance profiles with piecewise constant variance profiles. 

\subsection{Large deviation upper and lower bounds}
To use the result of Lemma \ref{Convergence} in appendix \ref{Emp} which states convergence of the largest eigenvalue toward the edge of the support, as well as the isotropic local laws we will need the following positivity assumption (which is mainly technical and will be relaxed later by approximation): 
\begin{assum} \label{Pos}
In the piecewise constant case, $\forall i,j \in \llbracket 1,n \rrbracket , \sigma_{i,j} > 0$. 
\end{assum}

We shall first prove that we have a weak large deviation upper bound similar to theorem 1.9 in \cite{GuHu18}:
\begin{theo}\label{theowldubBM} Assume that we have a piecewise constant variance profile $\sigma$ and that Assumption \ref{assBM} holds. Let $\beta=1,2$. Then,  for any real number $x$,
$$\lim_{\delta\ra 0}\limsup_{N\ra\infty} \frac{1}{N}\log \Pp\left(\left|\lambda_{\rm max }(X_N^{(\beta)})-x\right|\le\delta\right)\le -I^{(\beta)}(\sigma,x).$$
\end{theo}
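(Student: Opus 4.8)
The plan is to follow the strategy of Theorem 1.9 in \cite{GuHu18}, adapting it to the variance-profile setting via the annealed spherical integral $F(\sigma,\theta)$ in place of the scalar $\theta^2/\beta$ (or $\theta^2/2\beta$) appearing for ordinary Wigner matrices. The starting point is the exponential Chebyshev/tilting inequality: for any $\theta\ge 0$,
\[
\Pp\left(\lambda_{\max}(X_N^{(\beta)})\in[x-\delta,x+\delta]\right)
\le \frac{\E_{X_N}\left[\mathbf 1_{\lambda_{\max}(X_N^{(\beta)})\in[x-\delta,x+\delta]}\, I_N(X_N^{(\beta)},\theta)\right]}{\inf_{\lambda\in[x-\delta,x+\delta]}\inf_{\mu:\,r(\mu)\le \lambda}\exp\!\big(N J_N^{\mathrm{lb}}\big)}\,,
\]
but more cleanly: since $\lambda_{\max}\le \lambda$ implies a lower bound on $I_N(X,\theta)$ in terms of $J(\mu_{X},\theta,\lambda)$ only when $\mu_X$ is close to $\mu_\sigma$, the real structure is $\Pp(\lambda_{\max}\approx x)\le e^{-N\inf J}\,\E[I_N(X_N^{(\beta)},\theta)]$ restricted to the event that $\mu_N$ is close to $\mu_\sigma$ and $\lambda_{\max}\approx x$. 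First I would invoke the concentration of $\mun$ around $\mu_\sigma$ (Theorem for the empirical measure, plus Assumption \ref{assBM} giving a log-Sobolev or compact-support concentration at speed $N^2\gg N$) and the convergence of $\lambda_{\max}(X_N^{(\beta)})$ to $r_\sigma$ from Theorem \ref{CVlambdamax} under Assumption \ref{Pos}, so that up to an event of probability $e^{-o(N)}$ we may freely assume $\mun$ is within $\varepsilon$ of $\mu_\sigma$ in a metrizing distance and $\lambda_{\min},\lambda_{\max}$ are within $\varepsilon$ of the edges of $\mu_\sigma$; call this good event $\mathcal G_N$.

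Next I would use the lower bound on the spherical integral: on $\mathcal G_N$, by the Guionnet--Maïda lower-semicontinuity of $J_N$ and Theorem \ref{mylBM} (applied to the deterministic part, with care since $E_N=X_N^{(\beta)}$ is random but its spectral data are pinned down on $\mathcal G_N$), one gets for every fixed $\theta\ge0$
\[
I_N(X_N^{(\beta)},\theta)\ \ge\ \exp\!\big(N\,(J(\mu_\sigma,\theta,x)-o_\varepsilon(1)-o_N(1))\big)
\quad\text{on }\ \{\lambda_{\max}(X_N^{(\beta)})\ge x-\delta\}\cap\mathcal G_N\,,
\]
using that $J(\mu,\theta,\lambda)$ is monotone in $\lambda$, jointly continuous in $(\mu,\lambda)$ in the relevant regime, and $\lambda_{\max}\ge x-\delta$. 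Rearranging,
\[
\Pp\big(\lambda_{\max}(X_N^{(\beta)})\in[x-\delta,x+\delta]\big)
\le e^{-N(J(\mu_\sigma,\theta,x)-o_\varepsilon(1)-o_\delta(1)-o_N(1))}\,
\E_{e,X_N^{(\beta)}}\!\big[\exp(N\theta\langle e,X_N^{(\beta)}e\rangle)\big]\ +\ e^{-\omega(N)}\,,
\]
where the expectation over $e$ is harmless since it is exactly $\E_{X_N}[I_N(X_N^{(\beta)},\theta)]$. Then the key analytic input is the identification of the annealed spherical integral limit proved (later) in section \ref{rfBM}:
\[
\lim_{N\to\infty}\frac1N\log \E_{e,X_N^{(\beta)}}\big[\exp(N\theta\langle e,X_N^{(\beta)}e\rangle)\big]=F(\sigma,\theta)\,,
\]
which in the piecewise-constant case is the finite-dimensional optimization \eqref{EqMaxDis}; here is where the variance profile genuinely enters, replacing $\theta^2/\beta$ by $\sup_{\psi}\{\frac{\theta^2}{\beta}\vec P(\sigma,\psi)+\frac\beta2\sum\alpha_i\log(\psi_i/\alpha_i)\}$. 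Plugging in gives, for every fixed $\theta\ge0$,
\[
\limsup_{N}\tfrac1N\log\Pp\big(|\lambda_{\max}(X_N^{(\beta)})-x|\le\delta\big)\le -J(\mu_\sigma,\theta,x)+F(\sigma,\theta)+o_\varepsilon(1)+o_\delta(1)\,,
\]
and taking $\delta\to0$, then $\varepsilon\to0$, then optimizing over $\theta\ge0$ yields $-\sup_{\theta\ge0}(J(\mu_\sigma,\theta,x)-F(\sigma,\theta))=-I^{(\beta)}(\sigma,x)$, which is the claim. For $x<r_\sigma$ the bound is vacuous since $I^{(\beta)}(\sigma,x)=-\infty$; for $x=r_\sigma$ it is automatic from $\lambda_{\max}\to r_\sigma$; the content is $x>r_\sigma$.

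The main obstacle, as in \cite{GuHu18}, is making the passage from the random matrix $X_N^{(\beta)}$ to its deterministic spectral profile rigorous inside the expectation: one must show that the exponential moment $\E[I_N(X_N^{(\beta)},\theta)\mathbf 1_{\mathcal G_N^c}]$ is negligible, i.e. that large deviations of $\mun$ or of $\lambda_{\max}$ away from $(\mu_\sigma,[\lambda_{\min}(\mu_\sigma),r_\sigma])$ cannot be compensated by the growth $e^{N\theta\langle e,Xe\rangle}$ — this needs a uniform a priori bound on $I_N(X,\theta)$ (e.g. $J_N(X,\theta)\le \theta\|X\|$, controlled by the exponential tightness Lemma \ref{exptight2} since on $\mathcal G_N^c\cap\{\|X\|\le B\}$ the integral is at most $e^{N\theta B}$ while $\Pp(\mathcal G_N^c)$ decays super-exponentially, and the tail $\{\|X\|>B\}$ is handled by Lemma \ref{exptight2} with $M$ large) together with a Cauchy--Schwarz or Hölder split against the sub-Gaussian moment bounds of Remark \ref{remOfer}. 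A secondary subtlety is the matching of the finite-$N$ spherical integral $J_N$ with its limit $J(\mu_\sigma,\theta,x)$: one needs uniformity in the (bounded, on $\mathcal G_N$) spectral data and the correct one-sided continuity of $J$ at the edge, exactly as handled in \cite{GuHu18}; I would cite the relevant lemmas there rather than redo them, flagging only that here $\mu_\sigma$ and $r_\sigma$ depend on $\sigma$ and that Assumption \ref{Pos} is what makes the local law and edge convergence of section \ref{Emp}/\cite{Aji17} available to pin down $\lambda_{\min},\lambda_{\max}$.
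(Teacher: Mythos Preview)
Your approach is essentially the paper's: restrict to the good event $\mathcal A_{x,\delta}^M$ via exponential tightness and concentration of $\mun$, lower-bound $I_N(X_N^{(\beta)},\theta)$ there using the continuity of $J_N$ (the paper's Lemma \ref{contBM}), upper-bound the numerator by the full annealed integral, invoke Theorem \ref{rftheoBM} to identify its limit as $F(\sigma,\theta)$, and optimize over $\theta$. Two small corrections are worth making.

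First, the case $x<r_\sigma$ is \emph{not} vacuous: despite the sign typo in the paper's display, the rate function is $+\infty$ there (cf.\ Theorem \ref{maintheowBM}), so one must actually show $\limsup N^{-1}\log\Pp(|\lambda_{\max}-x|\le\delta)=-\infty$. This is exactly what the concentration Lemma \ref{convmunBM} gives, since $\lambda_{\max}\le x+\delta<r_\sigma$ forces $d(\mun,\mu_\sigma)$ to be bounded below; the paper handles it this way and you already have the ingredient on the table.

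Second, your ``main obstacle'' paragraph is a self-imposed difficulty. You never need to control $\E[I_N(X_N^{(\beta)},\theta)\mathbf 1_{\mathcal G_N^c}]$: once you have split off $\Pp(\mathcal G_N^c)=e^{-\omega(N)}$ \emph{before} tilting, the remaining term is
\[
\Pp\big(\{\,|\lambda_{\max}-x|\le\delta\,\}\cap\mathcal G_N\big)
=\E\Big[\tfrac{I_N(X_N^{(\beta)},\theta)}{I_N(X_N^{(\beta)},\theta)}\,\mathbf 1_{\mathcal A_{x,\delta}^M}\Big]
\le \E[I_N(X_N^{(\beta)},\theta)]\cdot e^{-N\inf_{\mathcal A_{x,\delta}^M}J_N},
\]
and the full expectation $\E[I_N]$ is what Theorem \ref{rftheoBM} computes directly; no Cauchy--Schwarz split against the bad set is required. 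Also note that Assumption \ref{Pos} is not needed for this upper bound---it enters only in the lower bound via the anisotropic local law.
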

 The lower bound will however be slightly different since we need to take into account the error term $E$.  

\begin{theo}\label{lbBM}

Assume that we are in the case of a piecewise constant variance profile $\sigma$ and that assumptions \ref{assBM} and \ref{Pos} hold. Let $E : \R^+ \to \R^+$ be a non-negative function. Suppose that there exist continuous functions $\theta \mapsto ( \psi_i^{E,\theta})_{i \in [1,n]}$ such that: 

\[ \vec{\Psi}( \theta, \sigma, \psi^{E,\theta}) \geq F(\sigma,\theta) - E(\theta) \]

then, if we let $\tilde{I}(\sigma, x) := \sup_{\theta \geq 0} \Big[ J( \mu_{\sigma}, \theta,x) - F(\sigma,\theta) + E( \theta) \Big]$, we have for every $x \geq r_{\sigma}$ and any $\delta > 0$: 
\[ \liminf_{N\ra\infty} \frac{1}{N} \log \Pp[ |\lambda_{\rm max}(X_N^{(\beta)}) - x | \leq \delta] \geq -  \tilde{I}( \sigma, x). \]

\end{theo}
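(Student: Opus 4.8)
The plan is to follow the tilting strategy of \cite{GuHu18}: to produce the event $\{|\lambda_{\max}(X_N^{(\beta)})-x|\le\delta\}$ with the right exponential cost, one changes the law of $X_N^{(\beta)}$ by a spherical-integral tilt of parameter $\theta$, shows that under the tilted law $X_N^{(\beta)}$ behaves like $\tilde X_N^{(\beta)}+R_N$ where $\tilde X_N^{(\beta)}$ has essentially the same variance profile and $R_N$ is a deterministic rank-one perturbation, and finally invokes the BBP-type behaviour of the largest eigenvalue of such a deformed model together with Theorem \ref{mylBM} to evaluate the asymptotics. Concretely, fix $x\ge r_\sigma$ and let $\theta$ be a near-maximizer of $\theta\mapsto J(\mu_\sigma,\theta,x)-F(\sigma,\theta)+E(\theta)$. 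Using the hypothesis, pick the continuous family $\psi^{E,\theta}$ with $\vec\Psi(\theta,\sigma,\psi^{E,\theta})\ge F(\sigma,\theta)-E(\theta)$; this $\psi^{E,\theta}$ prescribes the (block-constant) profile of the vector along which we should tilt, i.e.\ it tells us on which blocks the localized eigenvector of the deformed matrix should put mass $\psi_i^{E,\theta}/\alpha_i$ per coordinate.

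The steps, in order. First, introduce the tilted measure: write $\widetilde{\mathbb P}_N$ with Radon--Nikodym derivative proportional to $\exp(N\theta\langle e_N, X_N^{(\beta)} e_N\rangle)$ where $e_N$ is an auxiliary vector chosen with coordinates constant on each block $I_k^{(N)}$, equal to $\sqrt{\psi_k^{E,\theta}/|I_k^{(N)}|}$, so that $\|e_N\|=1$ asymptotically; the normalizing constant is $\E[\exp(N\theta\langle e_N,X_N^{(\beta)}e_N\rangle)]$, which by an explicit Laplace-transform computation (using sharp sub-Gaussianity, as in the proof that $F$ is the annealed limit) behaves like $\exp(N\vec\Psi(\theta,\sigma,\psi^{E,\theta})+o(N))\ge\exp(N(F(\sigma,\theta)-E(\theta))+o(N))$. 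Second, identify the law of $X_N^{(\beta)}$ under $\widetilde{\mathbb P}_N$: the tilt only shifts the means of the entries, so $X_N^{(\beta)}\overset{d}{=}\tilde X_N^{(\beta)}+R_N$ where $\tilde X_N^{(\beta)}$ has a variance profile with the same block structure (up to vanishing corrections coming from the quadratic-vs-exponential discrepancy in the Laplace transform — here sharp sub-Gaussianity is what keeps these corrections negligible) and $R_N=\frac{2\theta}{\beta}(\Sigma_N^{\odot 2}\,\mathrm{diag}(e_N)^2)$-type rank-one matrix built from $e_N$; its nonzero eigenvalue is governed by $\vec P(\sigma,\psi^{E,\theta})$. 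Third, apply the concentration inputs (Assumption \ref{assBM}, log-Sobolev or compact support) to show $\mu_{\tilde X_N^{(\beta)}}\to\mu_\sigma$ and, crucially, use the \emph{anisotropic} local law from \cite{Aji17} (valid under Assumption \ref{Pos}) to control $\langle e_N, G_{\tilde X_N^{(\beta)}}(z) e_N\rangle$; this is what lets one conclude, via the standard deformed-model computation, that $\lambda_{\max}(\tilde X_N^{(\beta)}+R_N)$ converges to the value $x(\theta)$ solving the BBP fixed-point equation, and that for the chosen $\theta$ one has $x(\theta)=x$ (or can be tuned to land in $[x-\delta,x+\delta]$). Fourth, put it together:
\[
\mathbb P[|\lambda_{\max}(X_N^{(\beta)})-x|\le\delta]\ \ge\ \E_{\widetilde{\mathbb P}_N}\!\Big[e^{-N\theta\langle e_N,X_N^{(\beta)}e_N\rangle}\Big]\cdot Z_N\cdot \widetilde{\mathbb P}_N[|\lambda_{\max}-x|\le\delta],
\]
and on the tilted event $\langle e_N,X_N^{(\beta)}e_N\rangle\to v(\theta,\mu_\sigma,x)$ while $Z_N\ge e^{N(F(\sigma,\theta)-E(\theta))+o(N)}$ and $\widetilde{\mathbb P}_N[|\lambda_{\max}-x|\le\delta]\to1$; combining and using $J(\mu_\sigma,\theta,x)=\theta v(\theta,\mu_\sigma,x)-\frac\beta2\int\log(1+\frac2\beta\theta v-\frac2\beta\theta y)d\mu_\sigma(y)$ (the $\log$ term being precisely the second-order correction from $\E_{\widetilde{\mathbb P}_N}[e^{-N\theta\langle e_N,\cdot\,e_N\rangle}]$), one obtains $\liminf\frac1N\log\mathbb P\ge J(\mu_\sigma,\theta,x)-F(\sigma,\theta)+E(\theta)$; optimizing over the near-maximizing $\theta$ gives $-\tilde I(\sigma,x)$.

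The main obstacle is the third step: controlling the largest eigenvalue of the deformed matrix $\tilde X_N^{(\beta)}+R_N$ when $\tilde X_N^{(\beta)}$ is \emph{not} a Wigner matrix but only a matrix with a variance profile. Unlike in \cite{GuHu18}, the rank-one perturbation $R_N$ interacts with the inhomogeneous profile, so the relevant quantity is not a scalar Stieltjes transform but the bilinear form $\langle e_N, G_{\tilde X_N^{(\beta)}}(z)e_N\rangle$, which one can only evaluate using the anisotropic local law and the solution $\mathfrak m(x,z)$ of the vector Dyson equation — this is exactly why Assumption \ref{ArgMaxDis}/the continuity of $\theta\mapsto\psi^{E,\theta}$ is needed (to keep the tilt direction, hence the perturbation, varying continuously so that $x(\theta)$ sweeps out $[r_\sigma,\infty)$) and why the positivity Assumption \ref{Pos} is imposed (to have the local law at the edge). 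A secondary technical point is verifying that the subleading corrections to the entrywise Laplace transforms — the gap between $T_{\mu_{i,j}^N}(t)$ and $\exp(\mu_{i,j}^N(x^2)t^2/2)$ — contribute $o(N)$ to $Z_N$ and do not perturb the variance profile of $\tilde X_N^{(\beta)}$ at leading order; this is handled by the uniform $C^3$ control and the lower bound $T_{\mu_{i,j}^N}(t)\ge\exp((1-\delta)t^2\mu_{i,j}^N(x^2)/2)$ recorded in Remark \ref{remOfer}, exactly as in \cite{GuHu18}.
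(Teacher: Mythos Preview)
There is a genuine gap in your approach, and it comes from replacing the spherical integral by a tilt along a single deterministic vector $e_N$. This change breaks the argument at the final step.

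Concretely: your displayed inequality
\[
\Pp[|\lambda_{\max}-x|\le\delta]\ \ge\ \E_{\widetilde{\Pp}_N}\!\big[e^{-N\theta\langle e_N,X_N^{(\beta)}e_N\rangle}\big]\cdot Z_N\cdot \widetilde{\Pp}_N[|\lambda_{\max}-x|\le\delta]
\]
is not a valid lower bound; indeed $Z_N\cdot\E_{\widetilde{\Pp}_N}[e^{-N\theta\langle e_N,X e_N\rangle}]=\E_{\Pp_N}[1]=1$ identically, so you are asserting $\Pp[A]\ge \widetilde{\Pp}_N[A]$, which is false in general. If instead you run the honest Cram\'er bound $\Pp[A]\ge Z_N e^{-N\theta(v+\epsilon)}\widetilde{\Pp}_N[A\cap\{|\langle e_N,Xe_N\rangle-v|\le\epsilon\}]$, then under $\widetilde{\Pp}_N$ one has $\langle e_N,X_N e_N\rangle\to \tfrac{2\theta}{\beta}\vec P(\sigma,\psi^{E,\theta})$ (the deterministic mean shift), \emph{not} $v(\theta,\mu_\sigma,x)$; and $\tfrac{1}{N}\log Z_N\to \tfrac{\theta^2}{\beta}\vec P(\sigma,\psi^{E,\theta})$. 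You end up with $\liminf\tfrac1N\log\Pp[A]\ge -\tfrac{\theta^2}{\beta}\vec P(\sigma,\psi^{E,\theta})$, which is strictly weaker than $-(J(\mu_\sigma,\theta,x)-F(\sigma,\theta)+E(\theta))$ already in the Wigner case (check $x\to 2^+$). The logarithmic integral in $J$ does \emph{not} arise as a ``second-order correction'' of $\E_{\widetilde{\Pp}_N}[e^{-N\theta\langle e_N,\cdot\,e_N\rangle}]$; it is the asymptotic value of the spherical integral $\tfrac1N\log\E_e[e^{N\theta\langle e,Xe\rangle}]$ on the event $\{\lambda_{\max}\approx x\}$ (Theorem~\ref{mylBM}), and this average over $e$ is precisely what you have discarded.

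The paper's route keeps the full spherical integral $I_N(X,\theta)=\E_e[e^{N\theta\langle e,Xe\rangle}]$ as the tilting function, so that $J(\mu_\sigma,\theta,x)$ enters as $\sup_{X\in\mathcal A_{x,\delta}^M}J_N(X,\theta)$ via Lemma~\ref{contBM}. The numerator $\E_X[\mathds{1}_A I_N(X,\theta)]=\E_e\big[\Pp_N^{(e,\theta)}[A]\,\E_X[e^{N\theta\langle e,Xe\rangle}]\big]$ is then lower-bounded by restricting the $e$-integral to the event $V_N^\epsilon\cap W_N$ where the block masses $\|e^{(i)}\|^2$ lie near $\psi_i^{E,\theta}$; the large-deviation cost of that restriction is the Dirichlet entropy (Lemma~\ref{LDPDir}), and entropy plus the quadratic term recombine into $\vec\Psi(\theta,\sigma,\psi^{E,\theta})\ge F(\sigma,\theta)-E(\theta)$. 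This is where the hypothesis of the theorem is actually used. The continuity of $\theta\mapsto\psi^{E,\theta}$ is invoked only later, to solve the determinantal BBP equation for $\theta_x$ by the intermediate value theorem.

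A secondary issue: the mean shift is $(R_N)_{i,j}=\tfrac{2\theta}{\beta}\Sigma_N^2(i,j)e_i\bar e_j$, which has rank $n$ (the paper writes it as $ESE^*$ with $E\in\mathcal M_{N,n}$), not rank one; the relevant BBP object is therefore the $n\times n$ matrix $E^*(z-\tilde X)^{-1}ES$, controlled coordinatewise by the anisotropic local law, and the outlier equation $\det(I_n-\tfrac{2\theta}{\beta}SD(\theta,z))=0$ is genuinely $n$-dimensional.
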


Then, we will show that when Assumption \ref{ArgMaxDis} is verified, we can take $E = 0$ and the main theorem follows. However, when we deal with the continuous case, we will approximate $\sigma$ by piecewise constant functions $\sigma^n$. But for $\sigma^n$ Assumption \ref{ArgMaxDis} will be verified only up to an error term $E^n$ that can be neglected when $n$ tends to infinity.

%To prove Theorem \ref{theowldubBM}, we first show that the rate function is infinite below the right edge of the support of the limiting spectral distribution. To this end, we use that the spectral measure $\mun$ converges towards its limit with much larger probability. We let $d$ denote the Dudley distance:
% $$d(\mu,\nu)=\sup_{\|f\|_L\le 1}\left|\int f(x) d\mu (x) -\int f(x) d\nu(x)\right|\,,$$
% where $\|f\|_L=\sup_{x\neq y}\left|\frac{f(x)-f(y)}{x-y}\right| +\sup_x|f(x)|\,.$

Proving that Lemma \ref{theowldubBM} is verified for $x < r_{\sigma}$ is done as in \cite[Corollary 1.12]{GuHu18} using the following Lemma and saying that a deviation of $\lambda_{\max}( X_N^{(\beta)})$ below $r_{\sigma}$ imply a deviation of $\mun$ (which cannot occur with probability larger than the exponential scale we are interested in).   
 
\begin{lemma}\label{convmunBM}  Assume  that the 
$\mu_{i,j}^N$ are uniformly compactly supported or satisfy a uniform log-Sobolev inequality. 
Then,  for $\beta=1,2$, there exists some sequence $\xi(N)$ converging to $0$ such that

\[ \limsup_{N \to \infty} \frac{1}{N} \log \Pp\left( d( \mun, \mu_{\sigma}) > \xi(N) \right) = - \infty \,.\]
with $d$ the Dudley distance

\end{lemma}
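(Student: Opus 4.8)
\emph{Proof idea.} I would follow the classical route for concentration of the spectral measure (as in \cite{GZ} and the analogous lemma of \cite{GuHu18}): reduce the control of the Dudley distance to the concentration of $\int f\, d\mun$ for a single bounded Lipschitz test function, then glue finitely many such test functions together with a net argument, using the exponential tightness of Lemma \ref{exptightBM} to confine the spectrum of $X_N^{(\beta)}$ to a fixed compact interval outside an event of probability $e^{-c_M N}$.

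Fix a $1$-Lipschitz $f$ with $\|f\|_\infty\le 1$. Note first that $\Sigma_N$ is uniformly bounded: in the piecewise constant case its entries lie in the finite set $\{\sigma_{k,l}\}$, and in the continuous case $\sup_N\sup_{i,j}|\Sigma_N(i,j)|<\infty$ since $\Sigma_N\to\sigma$ uniformly with $\sigma$ continuous on $[0,1]^2$. Using $|\tr f(A)-\tr f(B)|\le \sqrt N\,\|A-B\|_{HS}$ for $1$-Lipschitz $f$, together with $\|X_N^{(\beta)}-(X_N^{(\beta)})'\|_{HS}\le \|\Sigma_N\|_\infty\,\|W_N^{(\beta)}-(W_N^{(\beta)})'\|_{HS}$ and the fact that the latter is $O(N^{-1/2})$ times the Euclidean distance between the vectors of unrenormalized entries, the map $(a^{(\beta)}_{i,j})\mapsto \frac1N\tr f(X_N^{(\beta)})$ is $C\|\Sigma_N\|_\infty/N$-Lipschitz. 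In the log-Sobolev case, tensorization gives a uniform log-Sobolev inequality for the joint law of the entries and Herbst's argument yields
\[ \Pp\Big(\big|\tfrac1N\tr f(X_N^{(\beta)})-\E[\tfrac1N\tr f(X_N^{(\beta)})]\big|>\epsilon\Big)\le 2\exp\big(-c\,N^2\epsilon^2\big). \]
In the uniformly compactly supported case the same Gaussian bound follows from the concentration results of Guionnet and Zeitouni \cite{GZ} (via Talagrand's inequality for product measures). Finally, by the convergence in probability of $\mun$ toward $\mu_\sigma$ and boundedness of $f$ one has $\E[\frac1N\tr f(X_N^{(\beta)})]\to\int f\, d\mu_\sigma$, so that for $N$ large $\Pp(|\int f\, d\mun-\int f\, d\mu_\sigma|>2\epsilon)\le 2\exp(-c\,N^2\epsilon^2)$.

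To pass to the Dudley distance, fix $M$ and let $\mathcal L_M$ be the set of $1$-Lipschitz functions bounded by $1$ on $[-M,M]$; by Arzel\`a--Ascoli it is compact for the uniform norm, hence admits a finite $\epsilon$-net $f_1,\dots,f_{M_\epsilon}$ whose cardinality does not depend on $N$. On the event $\{\lambda_{\rm max}(X_N^{(\beta)})\le M,\ \lambda_{\rm min}(X_N^{(\beta)})\ge -M\}$, whose complement has probability at most $e^{-c_M N}$ by Lemma \ref{exptightBM} with $c_M\to\infty$ as $M\to\infty$, and since $\mu_\sigma$ is compactly supported, any admissible test function in the definition of $d$ can be replaced by some $f_k$ up to an error $2\epsilon$, giving $d(\mun,\mu_\sigma)\le 2\epsilon+\max_k|\int f_k\, d\mun-\int f_k\, d\mu_\sigma|$. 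A union bound then yields, for $N$ large,
\[ \Pp\big(d(\mun,\mu_\sigma)>4\epsilon\big)\le e^{-c_M N}+M_\epsilon\,2\exp\big(-c\,N^2\epsilon^2\big), \]
so $\limsup_N\frac1N\log\Pp(d(\mun,\mu_\sigma)>4\epsilon)\le -c_M$ for every $M$, hence equals $-\infty$ on letting $M\to\infty$. A standard diagonalization over $\epsilon=1/m$ then produces a sequence $\xi(N)\downarrow 0$ with $\Pp(d(\mun,\mu_\sigma)>\xi(N))\le e^{-m(N)N}$ for some $m(N)\to\infty$, which is the assertion. The only genuinely delicate point is the concentration estimate in the compactly supported regime, where one cannot use a bounded-differences inequality directly (it would only give concentration at speed $\epsilon^2$) and must instead invoke Talagrand's inequality through the convex/Lipschitz structure to obtain the Gaussian rate $N^2\epsilon^2$; the remaining steps are a routine net-and-union-bound argument.
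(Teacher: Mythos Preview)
Your proof is correct and follows the same two-step skeleton as the paper: first concentrate $\mun$ around its expectation, then use that $\E[\mun]\to\mu_\sigma$ (which both you and the paper deduce from the convergence in probability of $\mun$). The difference is only in how the concentration step is carried out. The paper simply quotes the off-the-shelf result from \cite{GZ} for the Dudley distance directly, namely
\[
\limsup_{N\to\infty}\frac{1}{N^{7/6}}\log\Pp\big(d(\mun,\E[\mun])>N^{-1/6}\big)<0,
\]
and then combines it with $d(\E[\mun],\mu_\sigma)\to 0$ by the triangle inequality. You instead rebuild this concentration by hand: Herbst/Talagrand for a single bounded Lipschitz test function, an Arzel\`a--Ascoli $\epsilon$-net on $[-M,M]$ together with the exponential tightness of Lemma~\ref{exptightBM} to localize the spectrum, and a final diagonalization to produce $\xi(N)$. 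Your route is longer but more self-contained and makes the $N^2\epsilon^2$ rate explicit; the paper's route is a one-line citation. Nothing essential distinguishes the two arguments.
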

The sequence $\xi(N)$ in this lemma depends on the the quantities $|I_i^{(N)}|/N$ and on how fast they converge to $\alpha_i$.  
The proof of this lemma is in Appendix \ref{Emp}. %As a consequence, we deduce that the extreme eigenvalues can not deviate towards a point inside the support of the limiting spectral measure with probability greater than $o(e^{-CN})$ for arbitrarily large $C$. 
%And therefore:
%\begin{cor} Under the assumption of Lemma \ref{convmunBM},  For $\beta=1,2$ let $x$ be a real number in $(-\infty,r_{\sigma})$. 
%Then, for $\delta>0$ small enough,
%$$\limsup_{N\ra\infty}\frac{1}{N}\log \Pp\left(| \lambda_{\rm max}(X_{N}^{(\beta)})-x|\le \delta\right)=-\infty\,.$$
%\end{cor}
%Indeed,  as soon $\delta>0$ is small enough so that   $x+\delta$ is smaller than $2-\delta$ for $\beta=1,2$,   $d(\mun, \mu_{\sigma})$ is bounded below by some $\kappa(\delta)>0$ on $|\lambda_{\max}(X_N^{(\beta)})-x|\le \delta$. Hence, Lemma \ref{convmunBM} implies the Corollary.
%The weak large deviation lower bound will be also obtained by tilting by :
%$$I_N(X, \theta)=\E_e[ e^{\theta N\langle e, X e\rangle}]$$
%where the expectation holds over $e$ which follows the uniform measure on the sphere $\mathbb S^{\beta N-1}$ with radius one. 
The asymptotics of 
$$J_N(X,\theta)=\frac{1}{N}\log I_N(X,\theta)$$ 
are given by Theorem \ref{mylBM}. We will also need the following lemma, which is a result of continuity for the $J_N$ and where we denote by $\|A\|$ the operator norm of the matrix $A$ given by
$\|A\|=\sup_{\|u\|_2=1}\| Au\|_2$ where $\|u\|_2=\sqrt{\sum |u_i|^2}$.

%\begin{prop} \label{1BM}
%For every $\theta >0$, every $\kappa \in ]0, 1/2[$, every $M>0$, there exists a function $g_{\kappa} : \R^+ \to \R^+$ going to 0 at 0 such that
%for any $\delta > 0$ and $N$ large enough, with $B_N$ and $B'_N$ such that $d(\hat \mu^{N}_{B_N},\hat \mu^{N}_{B'_N}) < N^{- \kappa}$, $ |\lambda_{\rm max}(B_N) - \lambda_{\rm max}(B_N) | < \delta$ and $\sup_{N} ||B_N|| \le M $, $\sup_{N} ||B'_N|| \le M$ :
%\[ |J_N(B_N, \theta) - J_N(B'_N, \theta)| < g_{\kappa}(\delta) \,.\] 
%\end{prop}
%From Theorem \ref{mylBM} and Proposition \ref{1BM}, we  deduce that: 

\begin{lemma}\label{contBM}
	Given $\mu \in \mathcal{P}([0,1])$ a compactly supported probability measure, $\xi : \N \to \R^+$ an arbitrary sequence tending to $0$, for every $\theta >0$, every $M>0$, every $\rho \geq r(\mu)$ there exists a function $g : \R^+ \to \R^+$ going to 0 at 0 such that for any $\delta > 0$,  if we denote by  $\mathcal{B}_N$ the set of real symmetric or complex Hermitian matrices $B_N$ such that $d( \mu_{B_N}, \mu) < \xi(N)$, $ |\lambda_{\rm max}(B_N) - \rho | < \delta$, and $\sup_{N} ||B_N|| \le M $,   for $N$ large enough, we have: 

\[ \limsup_{N \to \infty} \sup_{B_N \in \mathcal{B}_N} |J_N(B_N, \theta) - J(\mu,\theta, \rho) | \leq  g(\delta). \]

\end{lemma}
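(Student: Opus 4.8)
\emph{Proof sketch.} The plan is to bracket every matrix of $\mathcal{B}_N$ between two \emph{deterministic} matrix sequences to which Theorem~\ref{mylBM} applies directly, and then to let the parameters of that bracketing degenerate, controlling the limits through the continuity of $J$. I first record two elementary facts about $J_N(B,\theta)=\frac{1}{N}\log\E_e[e^{\theta N\langle e,Be\rangle}]$. By rotation invariance of the uniform law on $\Ss^{\beta N-1}$, $J_N(B,\theta)$ depends on $B$ only through its spectrum: diagonalising $B=U\,\mathrm{diag}(\lambda_1\le\cdots\le\lambda_N)\,U^{*}$ gives $\langle e,Be\rangle=\sum_i\lambda_i|(U^{*}e)_i|^{2}$, and the law of $(|(U^{*}e)_i|^{2})_i$ is exchangeable and independent of $U$. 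Moreover $J_N(\cdot,\theta)$ is monotone for the partial order ``the increasingly-sorted spectra dominate termwise'': if the sorted eigenvalues $(\lambda_i)$ of $A$ and $(\lambda_i')$ of $A'$ satisfy $\lambda_i\le\lambda_i'$ for all $i$, then $\langle e,Ae\rangle\le\langle e,A'e\rangle$ pointwise in $e$, whence $J_N(A,\theta)\le J_N(A',\theta)$ since $\theta\ge0$. This monotonicity plays the role of a quantitative stability estimate, unavailable here because $\mathcal{B}_N$ is described only through the weak topology.

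I then build the bracketing sequences. Fix $\eta\in(0,\delta/2)$. Translating $d(\mu_{B_N},\mu)<\xi(N)$ into an inequality between cumulative distribution functions (via the Lévy distance, up to a universal constant) yields, uniformly over $B_N\in\mathcal{B}_N$, the quantile bounds $q_{i-\xi(N)N}(\mu)-\xi(N)\le\lambda_i(B_N)\le q_{i+\xi(N)N}(\mu)+\xi(N)$ for $1\le i\le N$, where $q_\bullet(\mu)$ is the quantile function of $\mu$; testing $d(\mu_{B_N},\mu)$ against a suitable bounded Lipschitz function also gives $\#\{i:\lambda_i(B_N)>r(\mu)+2\eta\}\le(1+\eta^{-1})\xi(N)N+1=:k_N(\eta)=o(N)$. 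Let $E_N^{\pm}$ be the diagonal matrices whose top $\max(k_N(\eta),1)$ eigenvalues are frozen at $\rho\pm\delta$ and whose remaining eigenvalues are the $\mu$-quantiles shifted by $\pm(\eta+\xi(N))$ and truncated at $\rho\pm\delta$. The quantile bounds, combined with $\rho-\delta<\lambda_{\max}(B_N)<\rho+\delta$ and $\|B_N\|\le M$, make the termwise domination $E_N^{-}\preceq B_N\preceq E_N^{+}$ hold for every $B_N\in\mathcal{B}_N$ and $N$ large: the truncation at $\rho\pm\delta$ is invoked only at indices where the $\mu$-quantiles already pin $\lambda_i(B_N)$ on the right side, and the frozen block absorbs the at most $k_N(\eta)$ outlier eigenvalues of $B_N$.

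Because $k_N(\eta)=o(N)$ and $\xi(N)\to0$, the empirical measures $\mu_{E_N^{\pm}}$ converge weakly to the push-forward $\nu_\eta^{\pm}$ of $\mu$ by $x\mapsto x\pm\eta$ (truncated at $\rho\pm\delta$), while $\lambda_{\max}(E_N^{\pm})\to\rho\pm\delta\ge r(\nu_\eta^{\pm})$. Theorem~\ref{mylBM} gives $\lim_N J_N(E_N^{\pm},\theta)=J(\nu_\eta^{\pm},\theta,\rho\pm\delta)$, so by monotonicity $J(\nu_\eta^{-},\theta,\rho-\delta)\le\liminf_N\inf_{B_N\in\mathcal{B}_N}J_N(B_N,\theta)\le\limsup_N\sup_{B_N\in\mathcal{B}_N}J_N(B_N,\theta)\le J(\nu_\eta^{+},\theta,\rho+\delta)$. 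Letting $\eta\to0$, $\nu_\eta^{+}$ converges weakly to $\mu$ with right edge $\to\rho+\delta$, and $\nu_\eta^{-}$ converges weakly to $\mu$ truncated at $\rho-\delta$ — which equals $\mu$ if $\rho-\delta\ge r(\mu)$ and otherwise differs from $\mu$ only by an atom at $\rho-\delta$ (of mass vanishing as $\delta\to0$) — with right edge $\to\max(\rho-\delta,r(\mu))$. Invoking the continuity of $(\nu,\lambda)\mapsto J(\nu,\theta,\lambda)$ under weak convergence together with convergence of right edges (a consequence of the continuity of the Stieltjes and $R$-transforms under these hypotheses) and the continuity of $J(\mu,\theta,\cdot)$ on $[r(\mu),\infty)$, one concludes that both one-sided bounds lie within a quantity $g(\delta)\to0$ of $J(\mu,\theta,\rho)$, which is the assertion.

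The main obstacle is precisely the uniform construction of the bracket: $\mathcal{B}_N$ controls $\mu_{B_N}$ only in the weak topology, which governs neither the quantiles across flat stretches of $F_\mu$ nor isolated outlier eigenvalues, so one must separately absorb the at most $o(N)$ eigenvalues of $B_N$ escaping a fixed neighbourhood of $\mathrm{supp}\,\mu$ into the frozen block at $\rho\pm\delta$ — and this is exactly where the hypotheses $|\lambda_{\max}(B_N)-\rho|<\delta$ and $\sup_N\|B_N\|\le M$ are genuinely used. A secondary technical point is verifying that $J(\nu_\eta^{-},\theta,\rho-\delta)$ stays finite and varies continuously in the degenerate case $\rho=r(\mu)$, where the limiting measure acquires an atom at its right edge.
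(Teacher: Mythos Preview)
Your bracketing strategy is sound in outline but carries a real gap, and it is considerably heavier than what the paper does.

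\textbf{The gap.} Your lower bracket $E_N^{-}$ freezes only a \emph{top} block at $\rho-\delta$ and fills the rest with downward-shifted $\mu$-quantiles. But the hypothesis $d(\mu_{B_N},\mu)<\xi(N)$ does not prevent $B_N$ from having up to $o(N)$ eigenvalues near $-M$, far below $\mathrm{supp}\,\mu\subset[0,1]$; your quantile bound $\lambda_i(B_N)\ge q_{(i/N)-c\xi(N)}(\mu)-c\xi(N)$ is vacuous for small $i$ because the right-hand side is undefined. For those indices the shifted quantile (near $l(\mu)-\eta\ge-\eta$) \emph{exceeds} $\lambda_i(B_N)\approx-M$, so the termwise domination $E_N^{-}\preceq B_N$ fails and the inequality $J_N(E_N^{-},\theta)\le J_N(B_N,\theta)$ is not justified. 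The fix is symmetric to what you do at the top: freeze the bottom $o(N)$ eigenvalues of $E_N^{-}$ at $-M$. This does not alter the limiting empirical measure, and Theorem~\ref{mylBM} still applies (it requires $\lambda_{\min}(E_N^{-})$ to converge, but the limit $J$ does not depend on it for $\theta\ge0$). No analogous patch is needed for $E_N^{+}$, since its smallest eigenvalue already dominates $-M$.

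\textbf{Comparison with the paper.} The paper bypasses all of this with a three-line compactness argument: if the conclusion failed there would be a sequence $A_N\in\mathcal{B}_N$ with $|J_N(A_N,\theta)-J(\mu,\theta,\rho)|>g(\delta)$; since $\|A_N\|\le M$ one extracts a subsequence along which $\lambda_{\max}(A_N)\to\rho'\in[\rho-\delta,\rho+\delta]$ and $\lambda_{\min}(A_N)$ converges, and since $\mu_{A_N}\to\mu$ automatically, Theorem~\ref{mylBM} gives $J_N(A_N,\theta)\to J(\mu,\theta,\rho')$, contradicting the choice of $g$ from the continuity of $x\mapsto J(\mu,\theta,x)$. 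This uses only continuity of $J$ in its last variable, whereas your route needs either joint continuity in $(\nu,\lambda)$ or the shift identity $J(\mu+\eta,\theta,\lambda+\eta)=\theta\eta+J(\mu,\theta,\lambda)$ plus a separate treatment of the atom at $\rho-\delta$ when $\rho=r(\mu)$ (which you flag but do not carry out). What your approach would buy, if the constants were tracked, is an explicit modulus $g(\delta)$; the paper's argument is purely qualitative.
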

\begin{proof}
	Given that $x \mapsto J(\mu,\theta,x)$ is continuous on $[r(\mu), + \infty[$ (for fixed $\theta$ and $\mu$), we can choose $g(\delta) > 0$ such that for $|\rho - \rho'| \leq \delta$, $|J(\mu,\theta,\rho) - J(\mu,\theta,\rho')| \leq g(\delta)/2$ and $\lim_{\delta \to 0}g(\delta) = 0 $. Then, if we assume by the absurd that the lemma is false, there exists a sequence of matrices $(A_N)_{N \in \N}$ such that $||A_N|| \leq M$, $d(\mu_{A_N}, \mu_{\sigma}) \leq \xi(N)$, $|\lambda_{\max}(A_N) - \rho | \leq  \delta$ and $| J_N(A_N,\theta) - J(\mu,\theta,\rho) | > g(\delta)$. But then by compactness, we can assume that up to extraction $\lim_{N \to \infty}\lambda_{\max}(A_N) = \rho' \in [ \rho - \delta, \rho+ \delta]$,$\lim_{N \to \infty}\lambda_{\min}(A_N) = \rho'' \geq - M $  and then applying Theorem \ref{mylBM}, we have that $\lim_{N \to \infty}J_N(A_N, \theta) = J(\mu_{\sigma}, \theta, \rho')$, implying $|J(\mu,\theta, \rho) - J(\mu,\theta, \rho') | \geq g(\delta)$ which is absurd. 
	\end{proof}  
Using Lemma \ref{exptightBM} and Lemma \ref{convmunBM}, and defining 
$$\mathcal A_{x,\delta}^{M}=\lbrace X: \left|\lambda_{\rm max}(X)-x\right|<\delta\rbrace\cap \{X: d(\mun, \mu_\sigma)< \xi(N)\} \cap \{ X:\|X\|\le M\}\,, $$
we have that for any $L>0$,  for $M$ large enough and for $N$ large enough
\[ \Pp\left[\left|\lambda_{\rm max}(X_N^{(\beta)})-x\right|<\delta \right]=\Pp\left[X_N^{(\beta)}\in \mathcal A_{x,\delta}^{M}\right]+ O(e^{-N L})\,. \]

Therefore it is enough to study the probability of deviations on the set where $J_N$ is continuous. The last item we need to this end is the asymptotics of the annealed version of the spherical integral defined by 

\[ F_N( \theta,\beta) = \frac{1}{N} \log \E_{X_{N}^{(\beta)}} \E_{e}[ \exp(N \theta \langle e, X_{N}^{(\beta)} e \rangle) ] \]
where $e$ is a unit vector independent of $X_N^{(\beta)}$ and where we take both the expectation $\E_{e}$ over $e$ and the expectation $\E_{X_{N}^{(\beta)}}$ over  $X_{N}^{(\beta)}$.
\begin{theo}\label{rftheoBM} Suppose Assumption \ref{assBM} holds and that $\sigma$ is a piecewise constant variance profile. 
	For $\beta=1,2$ and $\theta\ge 0$,
	$$ \lim_{N\ra\infty}F_N(\theta,\beta) = F(\sigma,\theta)$$
	where we recall that $F(\sigma, \theta)$ is defined in equation \eqref{EqMax}.  
	
\end{theo}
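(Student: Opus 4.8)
The plan is to integrate out the random matrix first and then reduce everything to a finite‑dimensional Laplace (Varadhan) computation on the simplex $\Delta_n := \{\vec\psi\in(\R^+)^n : \psi_1+\dots+\psi_n=1\}$. By Fubini (the integrand is positive) I would write $F_N(\theta,\beta)=\frac1N\log\E_e[\E_{X_N^{(\beta)}}[\exp(N\theta\langle e,X_N^{(\beta)}e\rangle)\mid e]]$, fix a unit vector $e$, and expand $N\theta\langle e,X_N^{(\beta)}e\rangle=\theta\sqrt N(\sum_i\Sigma_N(i,i)a_{i,i}^{(\beta)}|e_i|^2+2\sum_{i<j}\Sigma_N(i,j)\Re(\bar e_i a_{i,j}^{(\beta)}e_j))$. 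Using independence of the entries and the sharp sub‑Gaussian bound of Assumption \ref{A0BM} on each factor $T_{\mu_{i,j}^N}$, and recalling that in the piecewise constant case $\sum_{i,j}\Sigma_N(i,j)^2|e_i|^2|e_j|^2=\sum_{k,l}\sigma_{k,l}^2 L_kL_l=\vec P(\sigma,\vec L)$ with $\vec L:=(\sum_{i\in I_k^{(N)}}|e_i|^2)_{k=1,\dots,n}\in\Delta_n$ the vector of block masses of $e$, I expect the clean bound $\E_{X_N^{(\beta)}}[\exp(N\theta\langle e,X_N^{(\beta)}e\rangle)\mid e]\le\exp(\frac{\theta^2}{\beta}N\vec P(\sigma,\vec L))$. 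For the reverse inequality I would restrict to the event $\mathcal E_N:=\{\max_i|e_i|\le N^{-1/2+\epsilon}\}$ with $\epsilon<1/4$ fixed: there all arguments of the $T_{\mu_{i,j}^N}$ are uniformly $O(N^{-1/2+2\epsilon})=o(1)$, so the near‑matching lower bound of Remark \ref{remOfer} applies factor by factor and gives, for every $\delta>0$ and $N$ large, $\E_{X_N^{(\beta)}}[\exp(N\theta\langle e,X_N^{(\beta)}e\rangle)\mid e]\ge\exp((1-\delta)\frac{\theta^2}{\beta}N\vec P(\sigma,\vec L))$ on $\mathcal E_N$.

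Next I would treat the remaining $e$‑integral, which now depends on $e$ only through $\vec L\in\Delta_n$. Realising $e$ as $g/\|g\|$ with $g$ a standard real (resp. complex) Gaussian vector shows $\vec L$ has a Dirichlet law with parameters $(\beta|I_k^{(N)}|/2)_k$, and a Stirling estimate (using $|I_k^{(N)}|/N\to\alpha_k$) gives that $\vec L$ satisfies a large deviation principle at speed $N$ with good rate function $\mathcal I(\vec\psi)=-\frac\beta2\sum_k\alpha_k\log(\psi_k/\alpha_k)\ge 0$ on $\Delta_n$; note that $\vec\Psi(\theta,\sigma,\cdot)=\frac{\theta^2}{\beta}\vec P(\sigma,\cdot)-\mathcal I(\cdot)$. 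Applying Varadhan's lemma (here just Laplace's method, since $\Delta_n$ is compact and $\vec P(\sigma,\cdot)$ bounded and continuous) to the upper bound yields $\limsup_N F_N(\theta,\beta)\le\sup_{\Delta_n}\vec\Psi(\theta,\sigma,\cdot)=F(\sigma,\theta)$ by \eqref{EqMaxDis}.

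For the lower bound, fix $\delta>0$ and let $\vec\psi^\theta$ be a maximiser of $\vec\Psi(\theta,\sigma,\cdot)$ on $\Delta_n$; since $\mathcal I$ blows up at $\partial\Delta_n$ while $\vec P(\sigma,\cdot)$ stays bounded, $\vec\psi^\theta$ is interior and $\mathcal I(\vec\psi^\theta)<\infty$. Restricting the $e$‑integral to $\mathcal E_N\cap\{|\vec L-\vec\psi^\theta|<\eta\}$, using the conditional lower bound above and continuity of $\vec P(\sigma,\cdot)$, I get $e^{NF_N(\theta,\beta)}\ge\exp((1-\delta)\frac{\theta^2}{\beta}N(\vec P(\sigma,\vec\psi^\theta)-C\eta))\,\Pp(\mathcal E_N\cap\{|\vec L-\vec\psi^\theta|<\eta\})$ for a constant $C=C(\sigma)$. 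The key point is that conditionally on $\{|\vec L-\vec\psi^\theta|<\eta\}$ the blocks of $e$ are independent and uniform on their respective spheres, so $\Pp(\mathcal E_N^c\mid |\vec L-\vec\psi^\theta|<\eta)\lesssim Ne^{-cN^{2\epsilon}}\to0$; hence that probability is $\ge\frac12\Pp(|\vec L-\vec\psi^\theta|<\eta)$ for $N$ large, and the LDP lower bound for $\vec L$ gives $\liminf_N\frac1N\log\Pp(|\vec L-\vec\psi^\theta|<\eta)\ge-\mathcal I(\vec\psi^\theta)$. Letting $\eta\to0$ and then $\delta\to0$ yields $\liminf_N F_N(\theta,\beta)\ge\vec\Psi(\theta,\sigma,\vec\psi^\theta)=F(\sigma,\theta)$, which together with the upper bound proves the statement; the case $\theta=0$ is trivial, and the complex case $\beta=2$ is identical using the complex versions of Assumption \ref{A0BM} and Remark \ref{remOfer} together with the structure provided by Assumption \ref{ACBM}.

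I expect the \emph{main obstacle} to be exactly this lower bound: one must confine the spherical integral to vectors $e$ with small sup‑norm so that the near‑optimal lower bound on $T_{\mu_{i,j}^N}$ near the origin is available, while simultaneously checking that this restriction is free at exponential scale. This works precisely because the optimal block masses $\vec\psi^\theta$ are bounded away from $0$, so that conditioning on $\{|\vec L-\vec\psi^\theta|<\eta\}$ makes $\mathcal E_N$ overwhelmingly likely; everything else is routine. This argument is the analogue of the annealed spherical integral computation in \cite{GuHu18}, the only genuinely new ingredient being the bookkeeping of the variance profile through $\vec P(\sigma,\cdot)$ and the accompanying block decomposition of $e$.
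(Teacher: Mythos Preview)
Your proposal is correct and follows essentially the same route as the paper: integrate out $X_N^{(\beta)}$ using the sharp sub-Gaussian upper bound and the near-matching lower bound of Remark~\ref{remOfer} on delocalized vectors, then apply the Dirichlet large deviation principle (your $\mathcal I$, the paper's Lemma~\ref{LDPDir}) together with Varadhan. The only minor technical difference is in the handling of the delocalization constraint for the lower bound: the paper defines its set $V_N^\epsilon$ blockwise via $|e_i|\le \sqrt{\psi_k}\,N^{-1/4-\epsilon}$, which is \emph{independent} of $\vec\psi$ and thus factors out of the Varadhan integral directly, whereas you condition on $\{\vec L\approx\vec\psi^\theta\}$ first and then argue that $\mathcal E_N$ holds with high conditional probability because $\vec\psi^\theta$ is interior; both arguments are valid, and the paper's choice has the side benefit of being reused verbatim in Section~\ref{LDPlbBM}.
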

This counterpart to \cite[Theorem 1.17]{GuHu18} will be proven in section \ref{rfBM}.
%\begin{cor} \label{appBM} Suppose Assumption \ref{ACBM} holds. For $\delta>0$, take a real number $x$
%and set  for $M$ large (larger than $x+\delta$ in particular), $\mathcal A_{x,\delta}^{M}$ to be the set of $N\times N$ self-adjoint matrices given by
%$$\mathcal A_{x,\delta}^{M}=\lbrace X: \left|\lambda_{\rm max}(X)-x\right|<\delta\rbrace\cap \{X: d(\mun, \mu_\sigma)< N^{-\kappa'}\} \cap \{ X:\|X\|\le M\}\,, $$
%where $\kappa'$ is  chosen as in Lemma \ref{convmunBM}. 
%Let $x$ be a real number, $\delta>0$. Then, for any $L>0$,  for $M$ large enough
%, we have for $N$ large enough that  \[ \Pp\left(\left|\lambda_{\rm max}(X_N^{(\beta)})-x\right|<\delta \right)=\Pp\left(X_N^{(\beta)}\in \mathcal A_{x,\delta}^{M}\right)+ O(e^{-N L})\,. \]
%\end{cor} 
We are now in position to get an upper bound for $\Pp\left[ X_N^{(\beta)}\in \mathcal A_{x,\delta}^{M}\right]$. 
In fact, using the result of Lemma \ref{contBM}, for any $\theta\ge 0$, 
\begin{eqnarray}
\Pp\left[ X_N^{(\beta)}\in \mathcal A_{x,\delta}^{M}\right]&=&\E\left[ \frac{I_N(X_N^{(\beta)},\theta)}{I_{N}(X_{N}^{(\beta)} ,\theta)}\mathds{1}_{ X_N^{(\beta)} \in 
 \mathcal A_{x,\delta}^{M}
}\right]\nonumber\\
&\le &\E[ I_N(X_N^{(\beta)},\theta)] \exp\{ - N\inf_{X\in \mathcal A_{x,\delta}^{M}} J_N(X,\theta)\}\nonumber\\
&\le&  \exp \{N(F(\sigma, \theta) -  J(\mu_{\sigma}, \theta, x) + g(\delta) + o(1))\} \label{jhBM}
\end{eqnarray}

(where $o(1)$ is some quantity converging to $0$ as $N \to + \infty$). Taking the $\log$, dividing by $N$ and optimizing in $\theta \geq 0$ then gives Lemma \ref{theowldubBM}. 

To prove the complementary lower bound, we shall prove the following limit:
\begin{lemma} \label{crucBM}
For $\beta=1,2$, with the assumptions and notations of Theorem \ref{lbBM}, for any $x>r_{\sigma}$ , there exists $\theta=\theta_x\ge 0$ such that for any $\delta>0$
and $N$ large enough,

$$\liminf_{N\ra\infty} \frac{1}{N} \log \frac{   \E[ \mathds{1}_{ X_N^{(\beta)}\in \mathcal A_{x,\delta}^{M}}
I_N(X_N^{(\beta)},\theta)]}{ \E[ I_N(X_N^{(\beta)},\theta)]}\ge -E(\theta_x)\,.$$
\end{lemma}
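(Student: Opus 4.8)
The plan is to follow the tilting strategy of \cite{GuHu18}: the measure $d\tilde\Pp \propto I_N(X_N^{(\beta)},\theta)\,d\Pp$ is, up to normalization, a tilt of the law of $X_N^{(\beta)}$ by the spherical integral, and after integrating in $e$ first it becomes (conditionally on $e$) a product measure in which each entry $a_{i,j}^{(\beta)}$ is tilted by a term of the form $\exp(2\theta\sqrt N \,\Sigma_N(i,j)\,\Re(\bar u_i u_j a_{i,j}))$ where $u$ is the (now $\tilde\Pp$-distributed) spherical vector. So the first step is to write
\[
\frac{\E[\mathds{1}_{X_N^{(\beta)}\in\mathcal A_{x,\delta}^M}I_N(X_N^{(\beta)},\theta)]}{\E[I_N(X_N^{(\beta)},\theta)]}
=\tilde\Pp\big[X_N^{(\beta)}\in\mathcal A_{x,\delta}^M\big]
\]
and to identify the law of $X_N^{(\beta)}$ under $\tilde\Pp$. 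Using the sharp sub-Gaussian assumption, the tilted entry distribution has mean approximately $2\theta\,\Sigma_N(i,j)^2\,u_i\bar u_j/\sqrt N$ (with an error controlled by Remark \ref{remOfer}, the uniform $C^3$ control of $\ln T_{\mu_{i,j}^N}$), so conditionally on $u$,
\[
X_N^{(\beta)}\stackrel{d}{\approx}\tilde X_N + \theta\,R_N(u),\qquad R_N(u)_{i,j}=\tfrac{2}{\beta}\Sigma_N(i,j)^2 u_i\bar u_j,
\]
where $\tilde X_N$ has the same variance profile as $X_N^{(\beta)}$ and $R_N(u)$ is (essentially) rank one once $u$ concentrates on a limiting density.

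The second step is to understand the law of $u$ under $\tilde\Pp$. Marginalizing out $X_N^{(\beta)}$, the density of $u$ is proportional to $\E_{X_N^{(\beta)}}[\exp(N\theta\langle u,X_N^{(\beta)}u\rangle)]=\prod_{i\le j}T_{\mu_{i,j}^N}(\cdots)$, which by the sub-Gaussian bound is comparable to $\exp(\theta^2 N\sum_{i,j}\Sigma_N(i,j)^2|u_i|^2|u_j|^2/\beta)$. Hence, writing $\psi$ for the empirical distribution of $\{N|u_k|^2\}$ pushed through the block structure (i.e. $\psi_k=\sum_{i\in I_k^{(N)}}|u_i|^2$ in the piecewise constant case), the density of the "profile of $u$" is, on the exponential scale $N$, governed by $\exp(N\vec\Psi(\theta,\sigma,\vec\psi))$ — the $\frac{\beta}{2}\sum\alpha_i\log\psi_i$ term being the entropy cost of localizing the uniform spherical vector to have mass $\psi_k$ on block $k$. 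By Laplace's principle the profile of $u$ concentrates near the maximizer $\psi^{E,\theta}$ supplied by the hypothesis of Theorem \ref{lbBM}, and the normalizing constant $\E[I_N(X_N^{(\beta)},\theta)]$ is $\exp(N(F(\sigma,\theta)+o(1)))$ by Theorem \ref{rftheoBM}, while the restricted integral to a neighborhood of $\psi^{E,\theta}$ loses at most $E(\theta)$ on the exponential scale (this is exactly where the error function $E$ enters: $\vec\Psi(\theta,\sigma,\psi^{E,\theta})\ge F(\sigma,\theta)-E(\theta)$).

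The third step is the BBP-type computation: on the event that $u$ has profile close to $\psi^{E,\theta}$, we must check that $\lambda_{\max}(\tilde X_N+\theta R_N(u))$ is close to $x$ with $\tilde\Pp$-probability not exponentially small, for the right choice $\theta=\theta_x$. Here I would use that $\tilde X_N$ has the same limiting spectral measure $\mu_\sigma$ (so the local law from \cite{Aji17}, in its anisotropic/isotropic form, controls $\langle v,(z-\tilde X_N)^{-1}v\rangle$ for $v$ related to $u$), and that a finite-rank perturbation shifts the top eigenvalue according to $1/G_{\mu_\sigma}$. The value $\theta_x$ is chosen so that the BBP outlier sits exactly at $x$; concretely $\theta_x$ is the maximizer in $\max_{\theta\ge0}(J(\mu_\sigma,\theta,x)-F(\sigma,\theta)+E(\theta))$, and one checks the first-order/BBP condition matches $v(\theta_x,\mu_\sigma,x)$ from the definition of $J$. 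Combining: $\tilde\Pp[X_N^{(\beta)}\in\mathcal A_{x,\delta}^M]\ge\exp(-N(E(\theta_x)+o(1)))$, which is the claim. (For the continuous case one runs the same argument with the continuous optimization problem \eqref{EqMax} in place of \eqref{EqMaxDis}, $\psi^{E,\theta}\in\mathcal P([0,1])$, and the isotropic local law.)

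The main obstacle is the third step: making rigorous that, under the tilted measure, the deformation $\theta R_N(u)$ genuinely behaves like a deterministic finite-rank perturbation of an independent matrix $\tilde X_N$ with profile $\sigma$ — in particular that $u$ and $\tilde X_N$ decouple sufficiently, that the non-constant variance profile of $\tilde X_N$ does not spoil the BBP shift formula (this is where the anisotropic local law of \cite{Aji17} and the positivity Assumption \ref{Pos} are needed), and that the profile $\psi^{E,\theta}$ of $u$ is precisely the one feeding the correct direction into the rank-one perturbation so that the resulting outlier location is $x$. Handling the error term $E(\theta)$ cleanly throughout — it must be carried through both the Laplace estimate on the law of $u$ and the final optimization — is the bookkeeping price of working with approximate maximizers, but it is routine once the decoupling is in place.
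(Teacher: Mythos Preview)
Your overall strategy coincides with the paper's: restrict $e$ to the delocalized set $V_N^\epsilon$ and to a window $W_N=\{e:\,|\,\|e^{(i)}\|^2-\psi_i^{E,\theta}|\le\delta_N\}$ around the profile $\psi^{E,\theta}$, use that on $V_N^\epsilon\cap W_N$ the inner expectation $\E_X[\exp(N\theta\langle e,Xe\rangle)]$ equals $\exp(N\theta^2\vec P(\sigma,\psi^{E,\theta})/\beta+o(N))$, and pick up the entropy $\frac{\beta}{2}\sum\alpha_i(\log\psi_i^{E,\theta}-\log\alpha_i)$ from $\Pp[e\in W_N]$ (via the Dirichlet LDP). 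The sum is $\vec\Psi(\theta,\sigma,\psi^{E,\theta})\ge F(\sigma,\theta)-E(\theta)$, which is exactly the loss $-E(\theta)$. Then under $\Pp_N^{(e,\theta)}$ one writes $X^{(e),N}=\tilde X^{(e),N}+\tfrac{2\theta}{\beta}ESE^*+\mbox{(negligible)}$ and runs a BBP argument using the anisotropic local law. So far, so good.

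Two points need correction. First, the deformation $R_N(u)$ is \emph{not} rank one: since $\Sigma_N(i,j)^2$ is block-constant, $(R_N(u))_{i,j}=\Sigma_N(i,j)^2 u_i\bar u_j$ equals $ESE^*$ with $E$ the $N\times n$ matrix whose $k$-th column is $e^{(k)}$ padded with zeros, so it has rank $n$. The BBP equation is then the $n\times n$ determinantal equation $\det(I_n-\theta'\sqrt{D(\theta,z)}\,S\,\sqrt{D(\theta,z)})=0$ with $D(\theta,z)=\mathrm{diag}(m_i(z)\psi_i^{E,\theta})$, and the outlier is the largest $z>r_\sigma$ solving $\theta'\rho(\theta,z)=1$, where $\rho(\theta,z)$ is the top eigenvalue of $\sqrt{D}S\sqrt{D}$.

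Second, and more importantly, your choice of $\theta_x$ as the maximizer of $J(\mu_\sigma,\theta,x)-F(\sigma,\theta)+E(\theta)$ is not how the paper proceeds, and it is not clear that this optimizer has any direct relation to the BBP equation once the profile $\psi^{E,\theta}$ itself depends on $\theta$. The paper instead argues existence of $\theta_x$ by the intermediate value theorem: $\theta\mapsto\theta'\rho(\theta,x)$ is continuous (because $\theta\mapsto\psi^{E,\theta}$ is continuous by hypothesis), vanishes at $\theta=0$, and tends to $+\infty$ as $\theta\to\infty$ (using $\max_i\psi_i^{E,\theta}\ge 1/n$ and $\min_{i,j}\sigma_{i,j}^2>0$ from Assumption~\ref{Pos}). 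This is precisely where the continuity of $\theta\mapsto\psi^{E,\theta}$, assumed in Theorem~\ref{lbBM}, is used; without it one cannot guarantee a $\theta_x$ making the outlier sit at $x$. Your write-up should make this dependence explicit rather than appealing to an optimizer.
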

This lemma is proved by showing in section \ref{FinitePerturb} that the matrix whose law has been tilted by the spherical integral is approximately a finite rank perturbation of a matrix with the same variance profile, from which we can use the techniques developped to study the famous BBP transition \cite{BBP}. The conclusion follows since then
\begin{eqnarray*}
\Pp\left[ X_N^{(\beta)}\in \mathcal A_{x,\delta}^{M}\right]&\ge &
\frac{   \E[ \mathds{1}_{ X_N^\delta\in \mathcal A_{x,\delta}^{M}}
I_N(X_N^{(\beta)},\theta_{x})]}{ \E[ I_N(X_N^{(\beta)},\theta_x)]} \E[ I_N(X_N^{(\beta)},\theta_x)]
 \exp\{ - N\sup_{X\in \mathcal A_{x,\delta}^{M}} J_N(X,\theta_{x})\}  \nonumber\\
&\ge& \exp\{N(g(\delta)+F(\theta_{x},\beta) - E(\theta_x)- J(\mu_{\sigma}, \theta_{x}, x)+o(1))\} \\
&\ge& \exp\{-N( \tilde{I}_{\beta }(x)+ o(1))\} \\
\end{eqnarray*}
where we used Theorem \ref{rftheoBM} and Lemma \ref{crucBM}. The Theorem \ref{maintheowBM} follows in the case of piecewise constant variance profile satisfying Assumption \ref{Pos} by noticing that if Assumption \ref{ArgMaxDis} is verified then we can choose $E =0$. We will relax the Assumption \ref{Pos} by approximation in the same time we will treat the continuous case.

\section{Proof for the asymptotics of the annealed integral in Theorem \ref{rftheoBM}}\label{rfBM}

 In this section we determine that the limit of $F_{N}(\theta,\beta)$ is $F(\sigma,\theta)$ as in Theorem \ref{rftheoBM}. In fact, we prove the following refinement of this theorem  which shows that with our assumption
 of sharp sub-Gaussian tails, the vectors $e$ that make the dominant contributions are delocalized.
 \begin{prop}\label{tyuiBM}
Suppose Assumption \ref{A0BM} holds. For $j \in \llbracket 1, n \rrbracket$, let $\psi_j := \sum_{i\in I^{(N)}_j} |e_i|^2$  and  $V_{N}^{\epsilon}=\{ e \in \mathbb{S}^{\beta N-1} \, : \, \forall j \in \llbracket 1,n \rrbracket,\forall i \in I^{(N)}_j, | e_i |  \leq \sqrt{\psi_j} N^{-1/4 - \epsilon} \} $. Then, for 
$\epsilon\in (0,\frac{1}{4})$,
\begin{eqnarray*}
F(\sigma, \theta)&=&\lim_{N \to + \infty}  F_N( \theta,\beta)\\
& =&\lim_{N\ra\infty}\frac{1}{N} \log  \E_e[\mathds{1}_{e\in V_N^\epsilon}\E_{X_{N}^{(\beta)}} [\exp(N \theta \langle e, X_{N}^{(\beta)} e \rangle)] ] .\\
\end{eqnarray*}
\end{prop}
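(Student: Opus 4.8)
The plan is to compute the annealed spherical integral by integrating first over the matrix entries and then over the vector $e$, using the sharp sub-Gaussian bound to get an upper bound that matches the obvious lower bound coming from restriction to the delocalized set $V_N^\epsilon$. Fix $e \in \mathbb{S}^{\beta N-1}$. Since the entries of $X_N^{(\beta)}$ are independent, $\E_{X_N^{(\beta)}}[\exp(N\theta\langle e, X_N^{(\beta)} e\rangle)]$ factors over pairs $(i,j)$, and writing $\langle e, X_N^{(\beta)} e\rangle = N^{-1/2}\big(2\sum_{i<j}\Re(\Sigma_N(i,j) a_{i,j}^{(\beta)} \bar e_i e_j) + \sum_i \Sigma_N(i,i) a_{i,i} |e_i|^2\big)$, each factor is a Laplace transform $T_{\mu_{i,j}^N}$ evaluated at an argument of size $O(\sqrt N\, |e_i||e_j|)$. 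The sharp sub-Gaussian bound (Assumption \ref{A0BM}) gives, summing the exponents over $i<j$ and $i$, an upper bound
\[
\E_{X_N^{(\beta)}}[\exp(N\theta\langle e, X_N^{(\beta)} e\rangle)] \le \exp\Big( \frac{N\theta^2}{\beta}\sum_{i,j} \Sigma_N(i,j)^2 |e_i|^2 |e_j|^2 + o(N)\Big),
\]
where the diagonal terms contribute only $O(1)$ and can be absorbed. Since $\Sigma_N(i,j) = \sigma_{k,l}$ for $i\in I_k^{(N)}, j\in I_l^{(N)}$, the quadratic form equals $\sum_{k,l}\sigma_{k,l}^2 \psi_k \psi_l = \vec P(\sigma,\vec\psi)$ with $\psi_j = \sum_{i\in I_j^{(N)}}|e_i|^2$. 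Hence $F_N(\theta,\beta) \le \frac1N\log\E_e[\exp(N\frac{\theta^2}{\beta}\vec P(\sigma,\vec\psi))] + o(1)$.

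\textbf{From the vector integral to the variational formula.} It remains to evaluate $\frac1N\log\E_e[\exp(N\frac{\theta^2}{\beta}\vec P(\sigma,\vec\psi))]$ and show it converges to $F(\sigma,\theta)$, and to prove the matching lower bound with the extra indicator $\mathds{1}_{e\in V_N^\epsilon}$. For this I would use the well-known large deviation principle for $\vec\psi = (\psi_1,\dots,\psi_n)$ when $e$ is uniform on $\mathbb{S}^{\beta N-1}$: the vector of mass fractions $\vec\psi$ satisfies an LDP at speed $N$ with rate function (up to constants and the normalization $\sum\psi_i=1$) given by $\frac{\beta}{2}\sum_i \alpha_i(\log\alpha_i - \log\psi_i) = \frac\beta2 D_{\vec\alpha}(\vec\psi)$, because $\psi_i$ concentrates at $\alpha_i$ and the coordinates on $\mathbb{S}^{\beta N -1}$ are distributed as normalized sums of i.i.d.\ squared Gaussians (for $\beta=1$, chi-squared with $|I_i^{(N)}|\approx N\alpha_i$ degrees of freedom). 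This LDP follows from Cramér's theorem / Sanov-type arguments and is exactly the computation carried out in \cite{GuHu18} for the two-block case. Varadhan's lemma then gives
\[
\lim_N \frac1N\log\E_e\Big[\exp\Big(\tfrac{N\theta^2}{\beta}\vec P(\sigma,\vec\psi)\Big)\Big] = \sup_{\psi_i\ge0,\ \sum\psi_i=1}\Big\{\tfrac{\theta^2}{\beta}\vec P(\sigma,\vec\psi) + \tfrac\beta2\sum_i\alpha_i(\log\psi_i-\log\alpha_i)\Big\} = F(\sigma,\theta),
\]
the last equality being \eqref{EqMaxDis}. To handle the restriction to $V_N^\epsilon$ for the lower bound, I would note that conditionally on $\vec\psi$, the vector $e$ restricted to each block $I_j^{(N)}$ is uniform on a sphere of radius $\sqrt{\psi_j}$ in dimension $\beta|I_j^{(N)}|$, so $\Pp(e\notin V_N^\epsilon\mid\vec\psi)$ is super-exponentially small (the probability that a single coordinate of a uniform vector on $\mathbb{S}^{\beta m-1}$ exceeds $m^{-1/4-\epsilon}$ times the radius decays like $e^{-cm^{1/2-2\epsilon}}$, hence $e^{-cN^{1/2-2\epsilon}}$ after a union bound over $N$ coordinates), which is negligible at speed $N$; thus restricting to $V_N^\epsilon$ changes neither the value of the variational problem nor its solution. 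Combining the upper bound from the first paragraph with this lower bound sandwiches $F_N(\theta,\beta)$ and $\frac1N\log\E_e[\mathds{1}_{e\in V_N^\epsilon}\E_{X_N^{(\beta)}}[\cdots]]$ between quantities converging to $F(\sigma,\theta)$.

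\textbf{Main obstacle.} The delicate point is the matching \emph{lower} bound on the annealed integral: the sub-Gaussian inequality is an upper bound on each $T_{\mu_{i,j}^N}$, so for the lower bound one needs the near-matching lower bounds on the Laplace transforms near the origin recorded in Remark \ref{remOfer}, namely $T_{\mu_{i,j}^N}(t)\ge\exp\{(1-\delta)t^2\mu_{i,j}^N(x^2)/2\}$ for $|t|\le\epsilon$. This is precisely where the delocalization set $V_N^\epsilon$ is needed: on $V_N^\epsilon$ every argument $\sqrt N\,\theta\,|e_i||e_j|$ fed into a Laplace transform is $O(\theta N^{-2\epsilon})$, hence eventually within the window $|t|\le\epsilon$ where the lower bound applies, so that $\E_{X_N^{(\beta)}}[\exp(N\theta\langle e, X_N^{(\beta)}e\rangle)]\ge\exp(\frac{(1-\delta)N\theta^2}{\beta}\vec P(\Sigma_N,\vec\psi) + o(N))$ uniformly over $e\in V_N^\epsilon$; letting $\delta\to0$ at the end recovers the full constant. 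Assembling the uniform control over $e\in V_N^\epsilon$ together with the block-sphere LDP for $\vec\psi$ — and checking that the off-diagonal cross terms $\Re(\bar e_i e_j)$ rather than $|e_i||e_j|$ appearing in the true quadratic form only help (one can symmetrize or simply note the sub-Gaussian bound is insensitive to the phase) — is the bulk of the technical work, and is carried out in close parallel to \cite[Theorem 1.17]{GuHu18}, the only genuinely new feature being the block-dependent variance weights $\sigma_{k,l}^2$ which is what turns the single constraint $D(\mathrm{Leb}\|\psi)$ into the finite-dimensional $\vec\Psi$.
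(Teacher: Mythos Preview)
Your proposal is correct and follows essentially the same route as the paper: sub-Gaussian upper bound on the Laplace transforms to reduce the $X_N^{(\beta)}$-integral to $\exp(N\theta^2\beta^{-1}\vec P(\sigma,\vec\psi))$, then the block-Dirichlet large deviation principle for $\vec\psi$ plus Varadhan's lemma for the upper bound, and for the lower bound the near-Gaussian Taylor expansion of Remark~\ref{remOfer} on $V_N^\epsilon$ together with the observation that, conditionally on $\vec\psi$, each rescaled block $e^{(j)}/\sqrt{\psi_j}$ is uniform on its own sphere --- which is exactly the paper's independence argument between $\{e\in V_N^\epsilon\}$ and $\vec\psi$. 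The only cosmetic difference is that the paper states merely $\Pp[e\in V_N^\epsilon]\to 1$ and invokes independence to factor the expectation, whereas you give the stronger (also true) quantitative bound $\Pp[e\notin V_N^\epsilon\mid\vec\psi]\le e^{-cN^{1/2-2\epsilon}}$; either suffices.
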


\begin{proof} There again, the proof is very similar to the proof of Theorem 1.17 in \cite{GuHu18}. 
By denoting $L_{\mu} = \log T_{\mu}$,  we have with $e \in \mathbb{S}^{\beta N -1}$ fixed and by expanding the scalar product $\langle e, X_N^{(\beta)} e \rangle$: 
\begin{eqnarray*}
	\E_{X_{N}^{(\beta)}}[ \exp(N \theta \langle e, X_N^{(\beta)} e \rangle) ] 
&= & 
\exp\{ \sum_{i<j} L_{\mu_{i,j}^{N}}(2 \Sigma_N(i,j) \theta \bar e_i e_j \sqrt{N}) + \sum_i L_{\mu_{i,i}^{N}}( \Sigma_N(i,i) \theta |e_i|^2 \sqrt{N} )\}  \\
 & \leq&   \exp\{\frac{2 N\theta^{2}}{\beta} \sum_{i<j} \Sigma_N(i,j)^2 |e_{i}|^{2}|e_{j}|^{2} 
+\frac{N\theta^{2}}{\beta} \sum_{i} \Sigma_N(i,i)^2 |e_{i}|^{4}\}
\end{eqnarray*}
where we used the independence of the $(a_{i,j}^{(\beta)})_{i\le j}$ and their sub-Gaussian character. Let us recall $\psi_j = \sum_{i \in I^{(N)}_j} |e_i|^2 $ and 
\[ \vec{P}(\sigma, \vec{\psi}) = \sum_{i,j =1}^n \sigma_{i,j}^2 \psi_i \psi_j.\]We deduce:
\begin{eqnarray*}
\E_{X_{N}^{(\beta)}}[ \exp(N \theta \langle e, X_N^{(\beta)} e \rangle) ] & \leq & \exp \left( N\frac{\theta^2}{\beta} \vec{P}(\sigma,\psi_1,...,\psi_n) \right) .
\end{eqnarray*}

But since $e$ is taken uniformly on the sphere, the vector $\psi = (\psi_1,...,\psi_n)$ follows a Dirichlet law of parameters $\left( \frac{\beta \alpha_1 N }{2},...,\frac{\beta \alpha_n N }{2} \right) + o(N)$. We have the following large deviation principle for this Dirichlet law: 
\begin{lemma}\label{LDPDir}
Let $n\in \N^*$, and $(\alpha_N)_{N \geq 0} = (\alpha_{1,N},...,\alpha_{n,N})_{N \geq 0} \in ((\R^{+,*})^n)^{\N}$ be a sequence of vectors such that $ \lim_{N \to \infty} \frac{\alpha_N}{N} = ( \alpha_1,...,\alpha_n)$ and $\alpha_i > 0$ for all $i \in \llbracket 1, n \rrbracket$. The sequence of Dirichlet laws $Dir_N = Dir(\alpha_{1,N},...,\alpha_{n,N})$ satisfies a large deviation principle with good rate function $I(x_1,...,x_n) = \sum_{i=1}^n \alpha_i (\log x_i - \log \alpha_i)$.
\end{lemma}

\begin{proof}
We denote $f_N$ and $f$ the functions defined on $D = \{ x \in (\R^{+,*})^n : x_1+...+x_n =1 \}$ by 

\[ f_N(x) = \sum_{i} \frac{\alpha_{i,N} -1}{N} \log x_i \]
\[ f(x) = \sum_{i} \alpha_i \log x_i .\]
For $x \in D$, let's denote $\tilde{x}= (x_1,...,x_{n-1})$ and $\tilde{D}$ the image of $D$ by this application (so that $\tilde{D} = \{ x \in (\R^{+,*})^{n-1} : x_1+...+x_{n-1} \leq 1 \}$).
We have $ Dir_N(d \tilde{x}) = Z^{-1}_N \exp(N f_N(x_1,..., x_{n-1},(1- x_1 -...-x_{n-1}))) d\tilde{x}$ where 
\[ Z_N= \int_{\tilde{D}} \exp(N f_N(x_1,..., x_{n-1},(1- x_1 -...-x_{n-1}))) dx_1 ... dx_{n-1} .\]

We have that on every compact of $\tilde{D}$, $f_N(\tilde{x}, 1 - \sum_{i=1}^{n-1} x_i)$ converges uniformly toward $f(\tilde{x}, 1 - \sum_{i=1}^{n-1} x_i)$ (which is continuous) and furthermore, for every $M > 0$ there is a compact $K$ of $\tilde{D}$ such that for $N$ large enough $f_N(\tilde{x}, 1 - \sum_{i=1}^{n-1} {x_i}) \leq - M$ for $x \notin K$. With both those remarks we deduce via a classical Laplace method that 

\[\lim_{N \to \infty} \frac{1}{N} \log Z_N = \max_{x \in D} f(x) = \sum_{i=1}^n \alpha_i \log \alpha_i .\]

Using again classical Laplace methods and the fact that $x \mapsto \tilde{x}$ is a homeomorphism between $D$ and $\tilde{D}$, we have that the uniform convergence of $f_N$ and the continuity of the limit $f$ gives a weak large deviation principle with rate function $f(x) - \sum_{i=1}^n \alpha_i \log \alpha_i$ and the bound outside compacts gives the exponential tightness. The large deviation principle is proved.
\end{proof}

Using Lemma \ref{LDPDir} and Varadhan's lemma, we have since $\vec{P}$ is continuous that: 

\begin{multline*}
\lim_{N \to \infty} \frac{1}{N} \log \E_{e} \Big[ \exp \left( N \frac{\theta^2}{\beta} \vec{P}(\sigma,\psi_1,...,\psi_n) \right) \Big] = \\
\sup_{\psi_1,...,\psi_n \in [0,1], \psi_1+...\psi_n =1}\Big\{ \frac{2\theta^{2}}{\beta} \vec{P}(\sigma,\psi_1,...,\psi_n) - \sum_{i=1}^n \frac{\beta \alpha_i }{2}\log(\psi_i)- \sum_{i=1}^n \frac{\beta \alpha_i }{2}\log(\alpha_i)\Big\}
= F(\sigma, \theta)
\end{multline*}
so that we have proved the following upper bound: 
\begin{equation}\label{ubBM}
 \limsup_{N\ra\infty} F_N( \theta,\beta) \le
 F(\sigma,\theta). \end{equation}
 For the lower bound, we then again follow \cite{GuHu18} and we use that if  $e \in V_N^{\epsilon}$ then: 
 \[ \lim_{N \to + \infty} \sup_{i,j \in \llbracket 1,N \rrbracket} \sup_{e \in V^{\epsilon}_N} |2 \sqrt{N} \Sigma_N(i,j) \theta e_i e_j| = 0 .\]
 We can then use the Taylor expansions of $L_{\mu_{i,j}}$ near $0$ to prove that for any $\delta >0$:
 
 \begin{align}
 \E_e[\E_{X_N^{(\beta)}} [ \exp (  N \theta \langle e, X_N^{\beta}  e \rangle ) ]  ]&\geq  
 \E_e[\mathds{1}_{e \in V^{\epsilon}_N} e^{N \frac{\theta^2}{\beta} \vec{P} (\sigma,\psi_1,...,\psi_n)   (1- \delta)} ]\,. \label{mn0BM}
 \end{align}  
% We next prove the corresponding lower bound. The idea is that the expectation over the vector $e$ concentrates on delocalized eigenvectors with entries so that $\sqrt{N} e_{i }\bar e_{j}$ is going to zero for all $i,j$. As a consequence we will be able to use the uniform lower bound on the Laplace transform to lower bound $F_{N}(\theta,\beta)$. We have that:  
%
%\begin{multline*}
%\E [ \exp ( N \theta \langle e, X^N_\beta e \rangle ) ]  \geq   \E_{e}  [ \mathds{1}_{e\in V^{\epsilon}_N} \prod_{i < j} \exp\{L_{\mu^{N}_{i,j}}( 2 \sqrt{N} \theta \Sigma_N(i,j) \bar e_i e_j )\}  \prod_{i} \exp\{  L_{\mu^{N}_{{i,i}}}( \Sigma_N(i,i)\sqrt{N} \theta |e_i|^2 )\}] .
%\end{multline*}

We shall then use the following lemma:
\begin{lemma} \label{vecBM}For any  $\epsilon\in (0,1/4)$ we have
\[ \lim_{N \to \infty} \Pp_{e}[ e \in V^{\epsilon}_N ] =1 \]\,
\end{lemma}
and that the event $\{ e \in V^{\epsilon}_N \}$ is independent of the vector $(\psi_1,...,\psi_n)$. As a consequence, we deduce from \eqref{mn0BM} that for any $\delta>0$ and $N$ large enough
\[ \liminf_{N\to \infty} F_N( \theta,\beta) \geq \liminf_{N \to \infty} \frac{1}{N} \log \E_e \left[ \exp \left((1 - \delta) N \frac{\theta^2}{\beta} \vec{P}(\sigma,\psi_1,...,\psi_n)\right) \right]. \]

and then we let $\delta$ tends to $0$. In order to see that the event $\{ e \in V_N^{\epsilon} \}$ is independent of $\psi$ and to prove Lemma \ref{vecBM}, we say that if we denote $e^{(j)} = (e_i)_{i \in I^{(N)}_j}$, $f^{(j)} := e^{(j)}/ ||e^{(j)}||$ is a uniform unit vector on the sphere of dimension $\beta|I^{(N)}_j| -1$. Furthermore all these  $f^{(j)}$ together with the random vector $(\psi_1,...,\psi_n)$ form a family of independent variables. Indeed, if we construct $e$ as a renormalized standard Gaussian variable, that is $e = g / ||g||$ then one can see that $ f^{(j)} = g^{(j)}/ ||g^{(j)}||$ and $\psi_j = ||g^{(j)}||^2/||g||^2$ where $g^{(j)}$ is defined from $g$ the same way as $e^{(j)}$ is defined from $e$. The independence of the $f^{(j)}$ and $\psi$ then comes from a classical change of variables. We notice then that in term of the $f^{(j)}$, we have:
\[ \{ e \in V_N^{\epsilon} \} = \bigcap_{j=1}^n \{ \forall i \in I_j^{(N)}, |f_i^{(j)}| \leq N^{-1/4 - \epsilon} \}. \]

 Then using the independence of the $f^{(j)}$:  

\[ \Pp_e[ e \in V^{\epsilon}_N] = \prod_{j=1}^{n} \Pp[ \forall i \in I^{(N)}_j, |f^{(j)}_i| \leq N^{-1/4 - \epsilon} ] .\]

The result follows since each of these terms converges to $1$.

\end{proof}

\section{Large deviation lower bound} \label{LDPlbBM}

We will now prove Theorem \ref{lbBM}. For a  vector $e$ of the sphere $\mathbb S^{\beta N-1}$ and $X$ a random symmetric or Hermitian matrix, we denote by $\Pp^{(e,\theta)}_N$ the tilted probability measure defined by: 

\[ d \Pp_N^{(e,\theta)}(X) = \frac{ \exp( N \theta \langle X  e,e \rangle) }{ \E_{X_N^{(\beta)}} [\exp( N \theta \langle X_N^{(\beta)} e,e \rangle)]} d \Pp_N(X)\]
where  $\Pp_N$ is the law of $X_{N}^{(\beta)}$.
Let us show that we only need to prove the following lemma: 

\begin{lemma}\label{prelim}Let us assume that the hypotheses of Theorem \ref{lbBM} hold.
Let $(\delta_N)_{N \in \N}$ be some arbitrary sequence of positive real numbers converging to $0$, $W_N$ the subset of the sphere $\mathbb S ^{\beta N -1}$defined by: 
\[ W_N := \{ e \in \mathbb S ^{\beta N -1} : \forall i,  \left| ||e^{(i)}||^2 - \psi_i^{E, \theta} \right| \leq \delta_N \} \]
where $E$ and $\psi^{E,\theta}$ are as in the hypotheses of Theorem \ref{lbBM} and $e^{(i)}=(e_j)_{j \in I_i^{(N)}}$ is the $i$-th block of entries of $e$.
For any $x \geq r_{\sigma}$, there is $\theta_x$ such that: 

\[ \lim_{N \to \infty} \inf_{e \in V_N^{\epsilon} \cap W_N} \Pp^{(e, \theta_x)}[ X_N^{(\beta)}\in \mathcal A_{x,\delta}^{M}] =1 \]

where we recall that 

$$\mathcal A_{x,\delta}^{M}=\lbrace X: \left|\lambda_{\rm max}(X)-x\right|<\delta\rbrace\cap \{ X : d(\mun, \mu_{\sigma})< \xi(N)\} \cap \{ X: \|X\|\le M\}\,. $$

\end{lemma}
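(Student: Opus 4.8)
The plan is to analyze the matrix $X_N^{(\beta)}$ under the tilted measure $\Pp_N^{(e,\theta_x)}$ and show that it behaves, up to a negligible error, like a rank-one (or finite-rank) perturbation of a matrix with the same variance profile $\Sigma_N$; then invoke the BBP-type analysis together with the isotropic local law to pin down the top eigenvalue. Concretely, I would first compute the mean of the entries under the tilt: differentiating the sub-Gaussian-saturated Laplace transform, one finds that for $e \in V_N^\epsilon$ (so all entries $e_i$ are small enough that the Taylor expansion of $\ln T_{\mu_{i,j}^N}$ to second order is accurate), the tilted law of $a_{i,j}^{(\beta)}$ is approximately that of the original variable shifted by $\approx 2\theta\sqrt{N}\,\Sigma_N(i,j)^2 \bar e_i e_j / \beta$. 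Hence, writing $D = \mathrm{diag}(\Sigma_N)$-weighted structure, the tilted matrix decomposes as $X_N^{(\beta)} \approx \tilde X_N + R_N$ where $\tilde X_N$ has the \emph{same} variance profile $\Sigma_N$ (the tilt only shifts means, not variances, to leading order) and $R_N$ is a deterministic matrix with $(R_N)_{i,j} = \tfrac{2\theta}{\beta}\Sigma_N(i,j)^2 \bar e_i e_j$. The block structure of $\Sigma_N$ and the fact that $e \in W_N$ (so $\|e^{(i)}\|^2 \approx \psi_i^{E,\theta}$) make $R_N$ close to a rank-one matrix $\tfrac{2\theta}{\beta}\,\xi\xi^*$ only after replacing $\sigma_{k,l}^2 e_i e_j$ by its block-averaged value; more precisely $R_N e \approx \tfrac{2\theta}{\beta}\big(\sum_l \sigma_{k,l}^2 \psi_l^{E,\theta}\big) e$ on block $k$, so $e$ is an approximate eigenvector of $R_N$.

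The second step is to control $\|X_N^{(\beta)} - \tilde X_N - R_N\|$ and show it is $o(1)$ (or at least that it does not move the top eigenvalue by more than the allowed $\delta$), with probability tending to $1$ under the tilt. This uses the net/Tchebychev estimate of Lemma \ref{exptight2} applied to the centered fluctuation matrix under the tilted law (whose entries still have uniformly sub-Gaussian tails because a bounded shift of a compactly-supported or log-Sobolev law preserves these bounds with controlled constants), plus the higher-order Taylor remainder bounds from Remark \ref{remOfer} on the smallness of $\partial_t^3 \ln T_{\mu_{i,j}^N}$. The constraint $e\in V_N^\epsilon$ is exactly what guarantees the cubic-and-higher terms of $\sum_{i,j} \ln T_{\mu_{i,j}^N}(2\theta\sqrt N \Sigma_N(i,j)\bar e_i e_j)$ are summably small, which both justifies the "means only" decomposition and bounds the operator norm of the remainder.

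The third step is the deterministic-perturbation / BBP analysis: given that $\tilde X_N$ has variance profile converging to $\sigma$, its empirical spectral measure converges to $\mu_\sigma$ and its largest eigenvalue converges to $r_\sigma$ (Theorem \ref{CVlambdamax}, Lemma \ref{Convergence}), and moreover the \emph{anisotropic} local law of \cite{Aji17} controls bilinear forms $\langle u, (z-\tilde X_N)^{-1} v\rangle$. Feeding $R_N\approx \tfrac{2\theta}{\beta}\theta$-scaled rank-one into the second-resolvent (Woodbury) identity, the top eigenvalue $\lambda_{\max}(\tilde X_N + R_N)$ is the solution of a scalar self-consistent equation whose solution is exactly $x$ when $\theta = \theta_x$; here $\theta_x$ is chosen (as in the definition of $\tilde I$, i.e. as a maximizer of $\theta\mapsto J(\mu_\sigma,\theta,x)-F(\sigma,\theta)+E(\theta)$) so that $v(\theta_x,\mu_\sigma,x)$ matches the effective shift $\tfrac{2\theta_x}{\beta}\sum_l \sigma_{k,l}^2\psi_l^{E,\theta_x}$ dictated by $\psi^{E,\theta}$. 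The local law also yields that no other eigenvalue escapes, so $|\lambda_{\max}(X_N^{(\beta)}) - x| < \delta$ with tilted-probability $\to 1$; and $d(\mun,\mu_\sigma)<\xi(N)$, $\|X_N^{(\beta)}\|\le M$ follow from Lemma \ref{convmunBM} and exponential tightness, noting these hold under the tilt since the tilt changes the measure only by a factor $e^{O(N)}$ and the complementary events have probability $e^{-o(N)\cdot N}$... more carefully, one re-runs the concentration arguments directly under $\Pp^{(e,\theta)}_N$ using that the shifted entries still satisfy Assumption \ref{assBM}-type hypotheses.

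The main obstacle I expect is the passage from the genuine tilted matrix $R_N$ — whose entries are $\Sigma_N(i,j)^2 \bar e_i e_j$ and hence not exactly rank one because $\sigma^2$ is not of the form $f(x)f(y)$ — to an effective finite-rank operator for which the BBP/resolvent computation applies. This is precisely where Assumption \ref{ArgMaxDis} (equivalently the existence of the continuous family $\psi^{E,\theta}$) enters: restricting $e$ to $W_N$ forces the block masses $\|e^{(i)}\|^2$ onto the optimizer $\psi_i^{E,\theta}$, so that $R_N$ acts on the relevant subspace like multiplication by the block-constant vector $(\tfrac{2\theta}{\beta}\sum_l\sigma_{k,l}^2\psi_l^{E,\theta})_k$, and one must show (via the delocalization $e\in V_N^\epsilon$ and the anisotropic local law, which controls $\langle \mathbf{1}_{I_k^{(N)}}, G(z)\,\mathbf{1}_{I_l^{(N)}}\rangle$-type quantities) that the off-diagonal "noise" in $R_N$ and the interaction of $R_N$ with $\tilde X_N$ contribute negligibly to the top eigenvalue. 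Getting the self-consistent equation to close with the correct $F$-derivative, so that the chosen $\theta_x$ indeed produces $\lambda_{\max}=x$ rather than some profile-dependent shift, is the delicate bookkeeping step, and it is where the annealed asymptotics $F(\sigma,\theta)$ from Theorem \ref{rftheoBM} must be matched against the quenched spherical-integral rate $J(\mu_\sigma,\theta,x)$.
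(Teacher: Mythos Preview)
Your decomposition and the role of the anisotropic local law are essentially right, but there is a genuine gap in how you handle the deterministic perturbation and how you select $\theta_x$.

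The perturbation $R_N$ with entries $\tfrac{2\theta}{\beta}\Sigma_N(i,j)^2 \bar e_i e_j$ is \emph{exactly} rank $n$, not approximately rank one. Writing $E$ for the $N\times n$ matrix whose $k$-th column is the block vector $\tilde e^{(k)} = (\mathds{1}_{j\in I_k^{(N)}}e_j)_j$ and $S=(\sigma_{i,j}^2)$, one has $R_N = \tfrac{2\theta}{\beta} ESE^*$. The paper does \emph{not} try to reduce this to rank one plus negligible noise; instead it keeps the full rank-$n$ structure and uses the determinant identity $\det(z-\tilde X - \theta' ESE^*)=\det(z-\tilde X)\det(I_n-\theta' E^*(z-\tilde X)^{-1}ES)$. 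The anisotropic local law is then applied not to a scalar $\langle e,(z-\tilde X)^{-1}e\rangle$ but to each of the $n^2$ bilinear forms $(\tilde e^{(i)})^*(z-\tilde X)^{-1}\tilde e^{(k)}$, giving $(E^*(z-\tilde X)^{-1}ES)_{i,j}\to \sigma_{i,j}^2\psi_j^{E,\theta}m_j(z)$ where the $m_j$ are the block values of the solution to the canonical equation $K_\sigma$. The outlying eigenvalue equation is therefore the $n\times n$ determinantal equation $\det(I_n-\theta'\sqrt{D(\theta,z)}S\sqrt{D(\theta,z)})=0$, not a scalar equation.

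The second gap concerns the choice of $\theta_x$. You propose to take $\theta_x$ as a maximizer of $\theta\mapsto J(\mu_\sigma,\theta,x)-F(\sigma,\theta)+E(\theta)$ and then argue that $v(\theta_x,\mu_\sigma,x)$ matches the shift; but it is not at all clear that this $\theta_x$ makes the $n\times n$ determinant vanish at $z=x$, since $J$ sees only $\mu_\sigma$ while the determinantal equation involves the individual block Stieltjes transforms $m_j(z)$ and the full matrix $S$. The paper instead lets $\rho(\theta,z)$ be the largest eigenvalue of $\sqrt{D(\theta,z)}S\sqrt{D(\theta,z)}$ and finds $\theta_x$ by the intermediate value theorem applied to $\theta\mapsto \theta'\rho(\theta,x)$: this map is $0$ at $\theta=0$, tends to $+\infty$ as $\theta\to\infty$ (using $\min_{i,j}\sigma_{i,j}^2>0$ from Assumption \ref{Pos} and $\max_i\psi_i^{E,\theta}\ge 1/n$), and is \emph{continuous} precisely because $\theta\mapsto\psi^{E,\theta}$ is assumed continuous. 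This last point is where the hypothesis of Theorem \ref{lbBM} really enters; it is not used to ``match $v$'' but to guarantee that the IVT can be applied to a function of $\theta$ that depends on $\theta$ through $\psi^{E,\theta}$.
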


\begin{proof}[Proof that Lemma \ref{prelim} implies Theorem \ref{lbBM}]
In the rest of the proof, we will abbreviate $\E_{X_N^{(\beta)}}$ in $\E_X$. We only need to prove that if there exists $E, \psi_i^{E,\theta}$ that satisfy the hypotheses of Theorem \ref{lbBM}, for every $x \geq r_{\sigma}$, there exists $\theta_x \geq 0$ such that:

$$\liminf_{N\ra\infty} \frac{1}{N} \log \frac{   \E_X[ \mathds{1}_{ X_N^{(\beta)}\in \mathcal A_{x,\delta}^{M}}
I_N(X_N^{(\beta)},\theta_x)]}{ \E_X[ I_N(X_N^{(\beta)},\theta_x)]}\ge - E(\theta_x)\,.$$

We have
\begin{eqnarray*}
\E_X[ \mathds{1}_{ X_N^{(\beta)}\in \mathcal A_{x,\delta}^{M}}
I_N(X_N^{(\beta)},\theta_x)]&=&\E_{e}[ \Pp_{N}^{(e,\theta_x)}[X_N^{(\beta)}\in \mathcal A_{x,\delta}^{M}] \E_X [\exp( N \theta_x \langle X_N^{(\beta)}e,e \rangle)]] \label{tyuBM}.\end{eqnarray*}
%where we recall that $V_{N}^{\epsilon}=\{e\in \mathbb S^{N-1}:|e_{i}|\le N^{-1/4-\epsilon} \sqrt{\psi_i}\}$. 
For $\delta >0$, let: 

\[ W_N^{\delta} := \{ e \in \mathbb S^{\beta N-1} : \forall i, \left||| e^{(i)} ||^2 - \psi_i^{\theta_x,E}\right|\leq \delta \} .\]

We have, using Lemma \ref{LDPDir} that: 

\[ \lim_{\delta \to 0} \liminf_{N \to \infty} \frac{1}{N} \log \Pp[ e \in W_N^{\delta}]  =  \frac{2}{\beta}\sum_{i=1}^n \alpha_i (\log \psi_i^{\theta,E} - \log \alpha_i).
\]

Let $(\delta_N)_{N\in \N}$ be a sequence converging to $0$ such that:

 \[ \liminf_{N \to \infty} \frac{1}{N} \log \Pp[ e \in W_N^{\delta_N}] \geq \frac{2}{\beta}\sum_{i=1}^n \alpha_i (\log\psi_i^{\theta_x,E} - \log\alpha_i) \]

and let: 
\[  W_N := \{ e \in \mathbb S^{\beta N-1} : \forall i, \left||| e^{(i)} ||^2 - \psi_i^{\theta_x,E}\right|\leq \delta_N \}. \]

We have then, using the Taylor expansions of the $L_{\mu_{i,j}^N}$ and that $\delta_N \to 0$ as in equation \eqref{mn0BM}, the following limit uniformly in $e\in W_N\cap V_N^{\epsilon}$:

\[ \lim_{N \to \infty} \sup_{e \in W_N\cap V_N^{\epsilon}}\Big| \frac{1}{N} \log \E_X [\exp( N \theta_x \langle X_N^{(\beta)}e,e \rangle)]]  -  \theta_x^2 \vec{P}(\sigma,\psi^{E, \theta_x})\Big|= 0.\]

Then we have:

\begin{eqnarray*}
\E_{e}[ \Pp_{N}^{(e,\theta_x)}[X_N^{(\beta)}\in \mathcal A_{x,\delta}^{M}] \E_X [\exp( N \theta_x \langle X_N^{(\beta)}e,e \rangle)]] &\ge & 
\E_{e}[ \mathds{1}_{e \in V_N^{\epsilon} \cap W_N} \Pp_{N}^{(e,\theta_x)}[X_N^{(\beta)}\in \mathcal A_{x,\delta}^{M}] \E_X [\exp( N \theta_x \langle X_N^{(\beta)} e,e \rangle)]] \\ &\ge & \E_{e}[ \mathds{1}_{e \in V_N^{\epsilon} \cap W_N} \Pp_{N}^{(e,\theta_x)}[X_N^{(\beta)}\in \mathcal A_{x,\delta}^{M}] e^{-N \theta_x ^2 \vec{P}(\sigma,\psi^{E, \theta_x})+ o(N)}]
\end{eqnarray*}

so we have that: 
\begin{align*}
\frac{   \E_X[ \mathds{1}_{ X_N^{(\beta)}\in \mathcal A_{x,\delta}^{M}}
I_N(X_N^{(\beta)},\theta_x)]}{ \E_X[ I_N(X_N^{(\beta)},\theta_x)]} &\geq \E_{e}[ \mathds{1}_{e \in V_N^{\epsilon} \cap W_N}\Pp^{(e,\theta_x)}[X_N^{(\beta)}\in \mathcal A_{x,\delta}^{M}] ]e^{-N (\theta_x^2 \vec{P}(\sigma,\psi^{E, \theta_x}) - F(\theta_x) + o(1))} \\
& \geq \Pp_e[e \in V_N^{\epsilon} \cap W_N] \inf_{f \in V_N^{\epsilon} \cap W_N} \Pp^{(f,\theta_x)}[X_N^{(\beta)}\in \mathcal A_{x,\delta}^{M}] e^{-N (\theta_x^2 \vec{P}(\sigma, \psi^{E, \theta_x}) - F(\theta_x) + o(1))} \\
& \geq \Pp_e[ e \in V_N^{\epsilon}] \Pp_e[ e \in W_N] \inf_{f \in V_N^{\epsilon} \cap W_N} \Pp^{(f,\theta_x)}[X_N^{(\beta)}\in \mathcal A_{x,\delta}^{M}] e^{-N (\theta_x^2 \vec{P}(\sigma, \psi^{E, \theta}) - F(\theta) + o(1))} \\
& \geq e^{-N(E(\theta_x)+ o(1))}
\end{align*}
where we used that $\{ e \in V_N^{\epsilon} \}$ and $\{e \in W_N\}$ are independent (since $\{ e \in V_N^{\epsilon} \}$ only depends on the $f^{(j)}$ and $W_N$ only depends on the $\psi_i$) and that $\frac{1}{N} \log \inf_{e \in V_N^{\epsilon} \cap W_N} \Pp^{(e,\theta_x)}[X_N^{(\beta)}\in \mathcal A_{x,\delta}^{M}]$ converges to $0$. So we have our lower bound. 
\end{proof}

And so it remains to prove the Lemma \ref{prelim}.  
More precisely, we will show that  for $\epsilon \in (\frac{1}{8},\frac{1}{4})$, for any $x>r_{\sigma}$ (the rightmost point of the support of $\mu_{\sigma}$) and $\delta>0$ we can find $\theta_x\ge 0$ so that for $M$ large enough,
\begin{equation}\label{lbwBM} \lim_{N \to \infty} \inf_{e \in V^\epsilon_N \cap W_N}   \Pp_{N}^{(e,\theta_{x})}[X_N^{(\beta)} \in \mathcal A_{x,\delta}^{M}]= 1 \,.\end{equation}
To prove \eqref{lbwBM}, we have to show that uniformly on $e \in V_N^{\epsilon} \cap W_N$ we still have that $ \lim_{N \to \infty}\Pp_N^{(e), \theta}[ d( \mun, \mu_{\sigma}) \geq \xi(N) ] = 0$ and that for $K$ large enough $\lim_{N \to \infty} \Pp^{(e,\theta)}_N[ ||X_N^{(\beta)}|| \ge K ]= 0$. This is done as in Lemma 5.1 in \cite{GuHu18}. %\begin{lemma}\label{facBM} Take $\epsilon\in (0,\frac{1}{4})$. There exists $\kappa >0$ , such that for any $\theta \geq 0$,
Hence, the main point of the proof is to show that:
\begin{lemma}\label{difBM} Pick $\epsilon\in ]\frac{1}{8},\frac{1}{4}[$. For any $x>r_{\sigma}$, there exists $\theta_x$ such that 
for every $\eta >0$, 
\[ \lim_{N \to \infty} \sup_{e \in V^\epsilon_N \cap W_N} \Pp^{(e,\theta_x)}_N[ |\lambda_{\rm max} - x| \geq \eta ] = 0. \]
\end{lemma}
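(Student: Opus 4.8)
The plan is to show that, under the tilted measure $\Pp_N^{(e,\theta)}$ restricted to $e\in V_N^{\epsilon}\cap W_N$, the matrix $X_N^{(\beta)}$ is, up to an error negligible in operator norm, of the form $\tilde X_N+M_N(e)$, where $\tilde X_N$ is a centered random matrix whose variance profile still converges uniformly to $\sigma$ (so that, by Theorem \ref{CVlambdamax} and the local laws of \cite{Aji17}, $\mu_{\tilde X_N}\to\mu_{\sigma}$ and $\lambda_{\max}(\tilde X_N)\to r_{\sigma}$), while $M_N(e)$ is a \emph{deterministic} Hermitian matrix of rank at most $n$ and of bounded norm. One then chooses $\theta_x$ so that the BBP outlier produced by $M_N(e)$ sits exactly at $x$, and concludes by the concentration of $\lambda_{\max}$ around its deterministic limit. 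This is the same strategy as in \cite{GuHu18}, the novelty being the block structure of $M_N(e)$ and the use of the \emph{anisotropic} local law in place of the explicit perturbation formulas available for genuine Wigner matrices.

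\textbf{Step 1 (the tilted model).} Write $L_{i,j}=\log T_{\mu_{i,j}^{N}}$. Under $\Pp_N^{(e,\theta)}$ the $a_{i,j}^{(\beta)}$ remain independent, the law of $a_{i,j}^{(\beta)}$ ($i<j$) being tilted by $x\mapsto\exp\big(2\theta\sqrt N\,\Sigma_N(i,j)\,\Re(\bar e_i e_j\, x)\big)$, and similarly for the diagonal. For $e\in V_N^{\epsilon}$ one has $\sup_{i,j}\big|2\theta\sqrt N\,\Sigma_N(i,j)\bar e_i e_j\big|\le C\theta N^{-2\epsilon}\to0$, so the uniform $C^{3}$ control of $L_{i,j}$ near the origin (Remark \ref{remOfer}) gives, uniformly,
$$\E_N^{(e,\theta)}\big[X_N^{(\beta)}(i,j)\big]=2\theta\,\Sigma_N(i,j)^{2}\,\bar e_i e_j+O\!\big(N^{-1/2-4\epsilon}\big),\qquad \mathrm{Var}_N^{(e,\theta)}\big(a_{i,j}^{(\beta)}\big)=1+O(N^{-2\epsilon}),$$
the tilted laws remaining sub-Gaussian with constants converging to the original ones. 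Setting
$$M_N(e):=2\theta\sum_{k,l=1}^{n}\sigma_{k,l}^{2}\,e^{(k)}(e^{(l)})^{*},\qquad \tilde X_N:=X_N^{(\beta)}-\E_N^{(e,\theta)}\big[X_N^{(\beta)}\big],$$
we get $X_N^{(\beta)}=\tilde X_N+M_N(e)+\mathcal E_N$ under $\Pp_N^{(e,\theta)}$, with $\mathcal E_N:=\E_N^{(e,\theta)}[X_N^{(\beta)}]-M_N(e)$ (the diagonal discrepancy is itself a diagonal matrix of norm $\to 0$). The matrix $M_N(e)$ is Hermitian (since $\sigma_{k,l}=\sigma_{l,k}$), of rank $\le n$, and of bounded norm because $\|e^{(k)}\|^{2}\le1$; meanwhile $\|\mathcal E_N\|\le\|\mathcal E_N\|_{\mathrm{HS}}=O\!\big(N^{1/2-4\epsilon}\big)\to0$, and \emph{this is precisely where $\epsilon>1/8$ is used}. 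The centered matrix $\tilde X_N$ has independent entries, a variance profile converging uniformly to $\sigma$, and sub-Gaussian tails, hence satisfies the hypotheses of Theorem \ref{CVlambdamax} and of the local laws of \cite{Aji17} (positivity holding by Assumption \ref{Pos}).

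\textbf{Step 2 (spectral data of $M_N(e)$).} Write $M_N(e)=2\theta\,\mathcal U\, D\,\mathcal U^{*}$ with $\mathcal U=[\,e^{(1)}|\cdots|e^{(n)}\,]\in\C^{N\times n}$ and $D=(\sigma_{k,l}^{2})_{1\le k,l\le n}$. On $W_N$ one has $\|e^{(k)}\|^{2}=\psi_k^{E,\theta}+o(1)$, and on $V_N^{\epsilon}$ the normalized blocks $e^{(k)}/\|e^{(k)}\|$ are delocalized. By the anisotropic local law for $\tilde X_N$ (cf. \cite[Cor.~2.10]{Aji17} and its anisotropic counterpart), with $\tilde X_N$-probability tending to one, uniformly for $z$ in a neighbourhood of any fixed $z_0>r_{\sigma}$ and uniformly over $e\in V_N^{\epsilon}\cap W_N$,
$$\big(\mathcal U^{*}G_{\tilde X_N}(z)\,\mathcal U\big)_{k,l}=\langle e^{(k)},G_{\tilde X_N}(z)e^{(l)}\rangle\;\longrightarrow\;\delta_{k,l}\,\psi_l^{E,\theta}\,\mathfrak{m}_l(z),$$
where $\mathfrak{m}_k(z)$ is the common value of $\mathfrak m(\cdot,z)$ on $I_k$ (constant there, $\sigma$ being piecewise constant), solving $1/\mathfrak{m}_k(z)=z-\sum_{l}\alpha_l\sigma_{l,k}^{2}\mathfrak{m}_l(z)$ with $\sum_k\alpha_k\mathfrak{m}_k(z)=G_{\mu_{\sigma}}(z)$. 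Thus the resolvent data entering the perturbation depend on $e\in V_N^{\epsilon}\cap W_N$ only through the pinned block masses $\psi_l^{E,\theta}$.

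\textbf{Step 3 (BBP outlier, choice of $\theta_x$, conclusion).} Outside a neighbourhood of $\mathrm{supp}\,\mu_{\sigma}$ the eigenvalues of $\tilde X_N+M_N(e)$ are the zeros of $z\mapsto\det\big(I_N-G_{\tilde X_N}(z)M_N(e)\big)$, which by Sylvester's identity equals $\det\big(I_n-2\theta\,D\,\mathcal U^{*}G_{\tilde X_N}(z)\mathcal U\big)$; by Step 2 this converges to
$$\Delta_{\theta}(z):=\det\Big(\delta_{k,l}-2\theta\,\sigma_{k,l}^{2}\,\psi_l^{E,\theta}\,\mathfrak{m}_l(z)\Big)_{1\le k,l\le n}.$$
The matrix inside has non-negative entries for $z>r_{\sigma}$; since each $\mathfrak{m}_l$ is analytic, positive and strictly decreasing to $0$ on $(r_{\sigma},\infty)$ with finite edge value, its Perron eigenvalue $\rho_\theta(z)$ decreases on $(r_{\sigma},\infty)$, so $\Delta_{\theta}$ has a root $z>r_{\sigma}$ exactly when $\theta$ is supercritical, and then a unique one, $\Lambda(\theta)$, characterised by $\rho_\theta(\Lambda(\theta))=1$. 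Using Assumption \ref{ArgMaxDis} (continuity of $\theta\mapsto\psi^{\theta}$) one checks $\theta\mapsto\Lambda(\theta)$ is continuous, non-decreasing and tends to $+\infty$; hence for every $x>r_{\sigma}$ there is $\theta_x$ with $\Lambda(\theta_x)=x$ (it will coincide with the maximiser of $\theta\mapsto J(\mu_{\sigma},\theta,x)-F(\sigma,\theta)$, consistently with the rate function). Then $\lambda_{\max}(\tilde X_N+M_N(e))\to x$ uniformly over $e\in V_N^{\epsilon}\cap W_N$, and since $\|\mathcal E_N\|\to0$, Weyl's inequality gives $\lambda_{\max}(X_N^{(\beta)})\to x$; combined with the exponential concentration of $\lambda_{\max}$ provided by Assumption \ref{assBM}, this yields $\sup_{e\in V_N^{\epsilon}\cap W_N}\Pp_N^{(e,\theta_x)}\big[\,|\lambda_{\max}-x|\ge\eta\,\big]\to0$, which is Lemma \ref{difBM}, hence \eqref{lbwBM}.

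\textbf{Main obstacle.} The delicate point is Step 3, namely transplanting the deformed-Wigner/BBP analysis to the variance-profile setting: one must replace explicit outlier formulas by the $n\times n$ nonlinear equation $\Delta_{\theta}(z)=0$ extracted from the anisotropic local law, show that its largest root as a function of $\theta$ is continuous and sweeps out all of $(r_{\sigma},\infty)$ — this is where Assumption \ref{ArgMaxDis} genuinely enters — and verify that the post-tilt matrix $\tilde X_N$, though not the original model, still obeys Theorem \ref{CVlambdamax} and the local law because its variance profile and tails converge to the required ones. Step 1 is otherwise a routine adaptation of the computation in \cite{GuHu18}, the only new bookkeeping being the operator-norm bound on $\mathcal E_N$, which forces $\epsilon>1/8$.
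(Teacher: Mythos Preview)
Your proposal is essentially correct and follows the same route as the paper: tilt, decompose the tilted matrix as a centered model with variance profile close to $\sigma$ plus a finite-rank deterministic perturbation plus a negligible remainder, apply the anisotropic local law to reduce the outlier equation to the $n\times n$ determinant $\det(I_n-\theta'\,S\,\mathrm{diag}(m_l(z)\psi_l^{\theta}))=0$, and use the continuity of $\theta\mapsto\psi^{E,\theta}$ together with the intermediate value theorem to produce $\theta_x$.

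Two small discrepancies are worth flagging. First, your mean is off by a factor $1/\beta$: the paper gets $\E[X^{(e),N}_{i,j}]=\tfrac{2\theta}{\beta}\Sigma_N(i,j)^2 e_i\bar e_j+\ldots$, so $M_N(e)=\tfrac{2\theta}{\beta}ESE^*$, not $2\theta$. Second, the paper organizes the error differently: it renormalizes the centered tilted matrix so that $\tilde X^{(e),N}$ has \emph{exactly} the original variance profile $\Sigma_N$ (independent of $e$), and puts the leftover into a separate random piece $Y^{(e),N}$ whose operator norm is shown to vanish via the arguments of \cite[Section 5.1]{GuHu18}. Your version keeps the slightly perturbed, $e$-dependent variance profile in $\tilde X_N$ and argues the local law still applies; this is plausible but the uniformity in $e\in V_N^{\epsilon}\cap W_N$ then needs to be checked, whereas the paper's renormalization sidesteps that issue entirely. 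Finally, your side remarks that $\Lambda(\theta)$ is non-decreasing and that $\theta_x$ coincides with the maximiser of $J-F$ are neither proved nor needed; the paper only uses continuity and the limits at $0$ and $\infty$.
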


\begin{proof}\label{FinitePerturb}

For $e \in V^\epsilon_N$ fixed, let $X^{(e),N}$ be a matrix with  law $\Pp^{(e,\theta)}_N$. We will prove that $X^{(e),N}$ can be written as an additive perturbation of a random self-adjoint matrix $\tilde{X}^{(e),N}$ with independent sub-diagonal entries with the same variance profile as $X_N^{(\beta)}$: 

\[ X^{(e),N} = \tilde{X}^{(e),N} + ESE^* + \Delta^{(e),N} + Y^{(e),N}. \]

Here $ \Delta^{(e),N} = \E[ X^{(e),N}] - ESE^* $ and $ Y^{(e),N} = ( X^{(e),N} -\E[ X^{(e),N}])  \odot  ( I_N -  V^{(e),N} ) $ where for all $1 \leq i,j \leq N$:
\[V^{(e),N}_{i,j} = \frac{(1 + \mathds{1}_{i=j, \beta=1}) \Sigma_N(i,j)}{ \sqrt{N Var( X^{(e),N}_{i,j})}} . \]
The deterministic matrix $S$ is defined by $S := ( \sigma^2_{i,j})_{1 \leq i,j \leq n} $ and $E$ is the $ N \times n$ matrix defined by:
\[ E = \begin{pmatrix} & 0 & \cdots & 0 \\ e^{(1)} & \vdots &  & \vdots \\
& 0 & \cdots & 0 \\

0 &  & \cdots & 0 \\
\vdots & e^{(2)} & \hdots & \vdots \\
0 &  & \cdots & 0 \\
\vdots & \vdots & \ddots & \vdots \\
0 & 0 & \cdots & \\
\vdots & \vdots & & e^{(n)} \\
0 & 0 & \hdots & 
\end{pmatrix} \]
where we recall the the $e^{(i)}$ are the vectors defined by $ e^{(i)} = (e_j)_{j \in I_{j}^{(N)}}$. In particular, one can notice that the entries of $ESE^*$ are given by $(ESE^*)_{i,j} = \Sigma^2_N(i,j) e_i \bar{e}_j$. 
Furthermore the terms $\Delta^{(e),N}$ and $ Y^{(e),N}$ are negligible in operator norm for large $N$ in the sense that:

\begin{itemize}
\item There is a constant $C$ such that $|| \Delta^{(e),N}|| \leq C N^{- 1/2 - 4 \epsilon}$. 
\item For every $\delta>0$: 
\[ \lim_{N \to + \infty} \sup_{e \in V^\epsilon_N} \Pp_N^{(e),N}[ || Y^{(e), N} || > \delta] =  0. \]
\end{itemize}
Those estimates revolve around the Taylor expansion of the $L_{\mu_{i,j}}$ and the uniform bound on their derivatives near $0$ given by Remark \ref{remOfer}. Here we will only expose the computation justifying that the entries of $ \Delta^{(e),N}$ and $Y^{(e),N}$ tend to $0$. For how to refine this estimates and obtain that $ \Delta^{(e),N}$ and $Y^{(e),N}$ are negligible in operator norm, we refer the reader to 
the subsection 5.1 of \cite{GuHu18}.

%We have:
%
%\[ X^{(e),N} = \E[ X^{(e),N} ] + (X^{(e),N} - \E[ X^{(e),N} ]) \]
%where $\E[X]$ denotes the matrix with entries given by the expectation of the entries of the matrix $X$.
%We first show that  $\E[ X^{(e),N} ]$ is approximately a finite rank matrix and $X^{(e),N} - \E[ X^{(e),N} ]$ is approximately a Wigner matrix with variance profile $\sigma$.
%
%\begin{lemma}\label{bv0BM}
%For $\epsilon\in ]\frac{1}{8},\frac{1}{4}[$, there exists $\kappa(\epsilon)>0$ so that for $ e \in V^\epsilon_N$: 
%\[ \E[ X^{(e),N} ] = \frac{2 \theta}{\beta} E S E^* + \Delta^{(e),N} \]
%where, $S := ( \sigma^2_{i,j})_{i,j \in [1,n]} $ and $E$ is the $ N \times n$ matrix defined by:
%\[ E = \begin{pmatrix} & 0 & \cdots & 0 \\ e^{(1)} & \vdots &  & \vdots \\
%& 0 & \cdots & 0 \\
%
%0 &  & \cdots & 0 \\
%\vdots & e^{(2)} & \hdots & \vdots \\
%0 &  & \cdots & 0 \\
%\vdots & \vdots & \ddots & \vdots \\
%0 & 0 & \cdots & \\
%\vdots & \vdots & & e^{(n)} \\
%0 & 0 & \hdots & 
%\end{pmatrix} \]
%where we recall the the $e^{(i)}$ are the vectors defined by $ e^{(i)} = (e_j)_{j \in I_{j}^{(N)}}$. 
%The spectral radius of $\Delta^{(e),N}$ is bounded by  $N ^{- \kappa(\epsilon)}$ uniformly on $ e \in V^\epsilon_N$. \end{lemma}

%\begin{proof}[Proof of the lemma] 

We can express the density of $\Pp_N^{(e,\theta)}$ as the following product: 

\[ \frac{ d \Pp_N^{(e,\theta)} }{d \Pp_{N}}(X) = \prod_{i \le  j} \exp( 2^{1_{i\neq j}} \theta \Sigma_N(i,j) \sqrt{N} \Re(e_i \bar{e_j} a^{(\beta)}_{i,j}) - L_{\mu^N_{i,j}}( 2^{1_{i\neq j}} \theta \Sigma_N(i,j) \sqrt{N} e_i \bar e_j ))\]% \prod_i \exp( \theta \sqrt{N} |e_i|^2 a_{i,i} - L_{\mu_{i,i}^N}(\theta \sqrt{N} |e_i|^2)) \]
where  the $a^{(\beta)}_{i,j}$ are defined as in the introduction. Since the $a^{(\beta)}_{i,j}$ independent (for $i \leq j$), the entries $X^{(e),N}_{i,j}$ remain so and their mean is given by the derivative of $L_{\mu_{i,j}}$:

\begin{eqnarray*} (\E[ X^{(e),N}])_{i,j} &=& \frac{ \Sigma_N (i,j) L_{\mu^N_{i,j}}'( 2 \sqrt{N} \Sigma_N(i,j) \theta e_i \bar e_j )}{\sqrt{N}} \\ &=& \frac{2 \theta }{\beta} \Sigma^2_N(i,j) e_i \bar e_j + \delta(2 \Sigma_N(i,j) \sqrt{N} \theta e_i \bar e_j)\frac{ \Sigma^3_N(i,j)  N \theta^2 |e_i|^2 |e_j|^2}{ \sqrt{N}} 
\end{eqnarray*}
if $i \neq j$, and  if $i=j$

\begin{eqnarray*} (\E[ X^{(e),N} ])_{i,i} &=& \frac{ \Sigma_N(i,i) L_{\mu_{i,i}^N}'( \sqrt{N} \Sigma_N(i,i) \theta |e_i|^2 )}{\sqrt{N}} \\ &=&
\frac{2 \theta}{\beta} \Sigma^2_N(i,i)e_i \bar e_i+\delta( \sqrt{N} \Sigma_N(i,i) \theta |e_i|^2) \frac{\Sigma^3_N(i,i)  N \theta^2 |e_i|^4}{ \sqrt{N}} 
\end{eqnarray*}
where we used that by centering and variance one,  $L_{\mu^N_{i,j}}'(0) = 0$, $ Hess  L_{\mu^N_{i,j}}(0) = \frac{1}{\beta} Id$ for all $i\neq j,N$,  $L''_{\mu^N_{i,i}}(0)=\frac{2}{\beta}$ for all $i,N$,  and where 
$$ |\delta(t)| \leq  4 \sup_{|u| < |t|}\max_{i,j,N} \{|L_{\mu^N_{i,j}}^{(3)}(u)|\}\,.$$
In the complex case, the notation $|L^{(3)}|$ just means 
\[ |L^{(3)}(u)| = \max_{|z| = 1}  \Big| \frac{d^{3}}{dt^3} L(u + tz)_{t=0} \Big| .\]
Hence, we have
$$ \Delta^{(e),N}_{i,j}={ \Sigma^3_N(i,j) \delta(2 \sqrt{N} \Sigma_N(i,j) \theta e_i \bar e_j) \sqrt{N} \theta^2 |e_i|^2 |e_j|^2}, 1\le i,j\le N\,.$$

Furthermore, when we identify $\C$ to $\R^2$ when $X^{(e),N}_{i,j}$ is a complex variable the covariance matrix of $X^{(e),N}_{i,j}$ is given by the Hessian of $L_{\mu_{i,j}}$ so that the variance of $X^{(e),N}_{i,j}$ is given by the Laplacian of $L_{\mu_{i,j}}$ (i.e. $\partial_z\partial_{\bar z}L_{\mu_{i,j}}$):

\[ Var(X^{(e),N}_{i,j}) = \frac{\Sigma_N(i,j)^2 }{N} \partial_z\partial_{\bar z} L_{\mu_{i,j}^N}( \theta \Sigma_N(i,j) e_i \bar e_j \sqrt{N})  = \Big( \frac{2}{\beta} \Big) ^{\mathds{1}_{i = j} }\frac{  \Sigma_N(i,j)^2 }{N}\Big( 1  + \sqrt{N} \Sigma_N(i,j) |e_ie_j| \delta( \theta \sqrt{N} \Sigma_N(i,j)  |e_ie_j|)  \Big)\]

And so : 

\[ (I_N - V^{(e),N})_{i,j} = 1 - \frac{1}{\sqrt{ 1 + \sqrt{N} \Sigma_N(i,j) |e_ie_j| \delta( \theta \sqrt{N} \Sigma_N(i,j)  |e_ie_j|)}} \]
So that $\lim_{N \to \infty} \max_{i,j} (I_N - V^{(e),N})_{i,j} = 0$. 

%
%Next, we have that for all $\delta> 0$: 
%\begin{equation}\label{bvBM} \lim_{N \to + \infty} \sup_{e \in V^\epsilon_N} \Pp[ || Y^{(e), N} || > \delta] =  0. \end{equation}
%This follows from Lemma \ref{exptight2}. Hence, since: 
%$$X^{(e),N} =\widetilde{X}^{(e),N} + \frac{2\theta}{\beta} E S E^{*} + \Delta^{(e),N}+Y^{(e), N}\,,$$
%we conclude by combining \eqref{bvBM} and Lemma \ref{bv0BM} that
% for  $\epsilon\in ]1/4,1/8[$ and all $\delta >0$
%
%\begin{equation}\label{bv3BM} \lim_{N \to \infty} \sup_{e \in V^\epsilon_N} \Pp_N^{(e,\theta)}[||X^{(e),N} - ( \widetilde{X}^{(e),N} + \frac{2\theta}{\beta} E S E^{*}) ||> \delta ] = 0\end{equation}
%since all estimates were clearly uniform on $e\in V^{\epsilon}_{N}$.

 %We have that $\Pp^{\theta}[ e \notin V^{\epsilon,\theta}_N ] \to 0$ due to \ref{vecBM}.
 And so, to conclude we need only to identify the limit of $\lambda_{\rm max}(\widetilde{X}^{(e),N} + \frac{2\theta}{\beta} E S E^{*})$. The eigenvalues of  
$\widetilde{X}^{ (e),N} + \frac{2\theta}{\beta} ES E ^{*}$ satisfy the following equation in $z$
$$0=\det (z- \widetilde{X}^{(e),N} - \frac{2\theta}{\beta}  E S E^{*})=\det (z- \widetilde{X}^{(e),N} )\det (1- \frac{2\theta}{\beta} (z- \widetilde{X}^{ (e),N} )^{-1}ES E^{*})$$ 
and therefore $z$ is an eigenvalue away from the  spectrum  of $ \widetilde{X}^{(e),N} $ if and only if

\[ \det (1- \frac{2\theta}{\beta} (z- \widetilde{X}^{ (e),N} )^{-1}ES E^{*}) = 0. \]
Recall that if $\mathbb{K}$ is a field and $A,B$ are two matrices  respectively in $ \mathcal{M}_{n,p}(\mathbb{K})$ and $ \mathcal{M}_{p,n}(\mathbb{K})$ then we have $\det(I_n + AB) = \det(I_p + BA)$. Using this, we have that the preceding equality is equivalent to

\[ \det(I_n - \theta' E^* (z - \widetilde{X}^{ (e),N})^{-1} E S  ) = 0\]
 
 where $\theta' = 2 \theta/ \beta$. 
\begin{lemma}
For $i,j \in \llbracket 1,n \rrbracket$, $\eta > 0$, $a > r_{\sigma}$, we have:

\[ \sup_{ e\in V^{\epsilon}_N\cap W_N} \Pp\left[\sup_{z \geq a} \left|(E^* (z - \widetilde{X}^{ (e),N})^{-1}  E S)_{i,j}-\sigma^2_{i,j} \psi_j^{\theta}m_j(z) \right|  \geq \eta \right] \to 0. \]
Where $m$ is the solution of the canonical equation $K_{\sigma}$ and $m_i$ the value taken by $m$ on the interval $I_i$ (see Appendix A for the definition of $K_{\sigma}$ and $m$). 
\end{lemma}

\begin{proof}
Let $G_N(z) := (z - \widetilde{X}^{ (e),N})^{-1}$, $M_N(z) = diag(m_{1,N}(z),....,m_{N,N}(z))$, where $m^N_x := \sum_{i=1}^{N} \mathds{1}_{Nx \in ]{i-1},i]}m_{i,N}$ and $m^N$ is the solution of $K_{\sigma_N}$  (the canonical equation defined in Appendix \ref{Emp} with $\sigma_N$ being the approximation of $\sigma$ defined in Theorem \ref{Girko}).
If we denote $\tilde{e}^k = ( \mathds{1}_{j \in I^{(N)}_k} e_j)_{j=1,...,N}$ the vector $e$ where we replaced all entries by $0$ except for the $k$-th block. 
\[ (E^* (z - \widetilde{X}^{ (e),N})^{-1} E S )_{i,j} = \sum_{k = 1}^n (\tilde{e}^i)^* G_N(z) \sigma^2_{k,j} \tilde{e}^k \]
So since $\lim_{N \to \infty} \sup_{e \in V^{\epsilon}_N\cap W_N}  \left| ||\tilde{e}^k||^2 - \psi_k^{\theta} \right| =0$ we only need to prove that for $k,l\in \llbracket 1,n \rrbracket$: 

\[ \lim_{N \to \infty} \sup_{e\in \mathbb{S}^{\beta N-1}} \sup_{z \geq a} \Pp[ |(\tilde{e}^{k})^{*} G_N(z) \tilde{e}^l -  \delta_{k,l} m_k(z) \psi_k^{\theta}| \geq \eta ] =0 .\]

To that end, we want to apply the anisotropic local law from \cite{Aji17} but in order to do so, we need to check its assumptions. (A) is verified since the variance profiles are uniformly bounded. (B) is verified with the Assumption \ref{Pos}. (D) is verified with the sub-Gaussian bound. To verify (C), we apply \cite[Theorem 6.1]{Aja15}.
Thanks to \cite[Theorem 1.13]{Aji17}, if we fix some $\gamma >0 $, $D >0$, $\epsilon >0$, for N large enough: 

\[ \sup_{e,f \in \mathbb{S}^{\beta N-1}} \sup_{z \in \C, \Im z \geq N^{\gamma -1}} \Pp[ |e^{*} G_N(z) f -  e^{*} M_N(z)f| \geq N^{-1/10} ] \leq N^{-D} .\]

Furthermore following Theorem \ref{Convergence}, we have that for $a' \in  ] r_{\sigma},a[ $, $D>0$, N large enough 

\[ \Pp[ \lambda_{\max}(\widetilde{X}^{ (e),N}) \geq a' ] \leq N^{-D} .\]

Let $e,f \in \mathbb{S}^{\beta N -1}$ and $h:z \mapsto e^{*} G_N(z) f$ and $k:z \mapsto e^{*} M_N(z) f$.
On the event $\{ \lambda_{\max}(\widetilde{X}^{ (e),N}) < a' \}$, we have that $|h(z)|,|k(z)|\leq \frac{1}{(\Re z - a')}$ and $|h'(z)|,|k'(z)| \leq \frac{1}{(\Re z -a')^2}$ for $\{ z :\Re z > a' \}$ and therefore, for $\gamma < 1/10$, we can in fact assume that our bound holds for any $z$ such that $\Re(z) > a$ and in particular for $z$ real (up to some multiplicative constant $C$ before the $N^{-1/10}$). Let 
\[ A_N := \{ a + k/N : k \in [0, N^2] \}.\]

By union bound, we have that for $N$ large enough:

\[ \Pp[ \sup_{z \in A_N }|(e^{*} G_N(z) f -  e^{*} M_N(z)f)| \geq C N^{-1/10} ] \leq N^{-D +2} .\]

Combining this again with the bounds of the derivative of $h$ and $k$ and the bound in modulus that is derived from the bound on $ \lambda_{\max}(\widetilde{X}^{ (e),N})$, we get for some $C' > 0$: 

\[ \Pp[ \sup_{z > a} |e^{*} G_N(z) f -  e^{*} M_N(z)f| \geq C'  N^{-1/10} ] \leq N^{-D +2} \]
 for $N$ large enough. Furthermore, this bound is uniform in $e$ and $f$. We then use Theorem \ref{Stab2} and the bounds one the derivatives of $M_N(z)$ the same way to conclude that for any $\eta > 0$,  for $N$ large enough and $e \in V_N^{\epsilon} \cap W_N$ we have that (for $N$ large enough): 

\[ \Pp[ \sup_{z > a} | (\tilde{e}^{k})^{*} (G_N(z))\tilde{e}^{l} - \delta_{k,l} \psi_{k}^{\theta} m_k(z)| \geq \eta |] \leq N^{-D+2} \]
where $m$ is the solution of $K_{\sigma}$ and $m_i$ is the value taken by $m$ on the interval $I_i$. And so we have: 

\[ \Pp[ \sup_{z > a}| (E^* G_N(z) SE)_{i,j} - m_j(z) \sigma^2_{i,j} \psi_j^{\theta} | > \eta ] \leq N^{-D+2}\text{ for every } i,j \in \llbracket 1,n \rrbracket. \]

\end{proof}
Let's denote  $D(\theta,z)$ the diagonal $n \times n$ matrix $\text{diag}(m_1(z)\psi_1^{\theta},..., m_n(z)\psi_n^{\theta})$, we have that the above limit can be rewritten $S D( \theta,z)$. From the preceding lemma we have that for $\eta > 0$  uniformly in $e \in \mathbb{S}^{\beta N -1}$ that 
\[ \Pp[ \sup_{z > a}| \det(I_n - \theta' E^* (z - \widetilde{X}^{ (e),N})^{-1} E S  ) - \det( I_n - \theta' S D(\theta,z))| \geq \eta] \leq N^{-D} \]
for $N$ large enough. 

So since $\lim_{z \to \infty} \det( I_n - \theta' S D(\theta,z)) =1$, all that remains is to solve the determinantal equation: 
\[ \det( I_n - \theta ' S D(\theta,z)) =0 \]
and the largest solution $z > r_{\sigma}$, if it exists,  will be the the limit of $\lambda_{\max}$.  We can rewrite this equation: 
\begin{equation} \label{Deter}
\det(I_n - \theta' \sqrt{D(\theta,z)}S\sqrt{D(\theta,z)}) =0.
\end{equation}

Let $\rho(\theta,z)$ be the largest eigenvalue of $\sqrt{D(\theta,z)}S\sqrt{D(\theta,z)}$. Then, the largest $z$ solution of equation (\ref{Deter}) is the unique solution of: 

\begin{equation}\label{rho}
 \theta' \rho(\theta,z) = 1
\end{equation}
 
one $]r_{\sigma}, + \infty[$. Indeed, with $\theta$ fixed, if $\theta' \rho(\theta,z) = 1$ then $z$ is a solution of \eqref{Deter}. Since the $z \mapsto m_i(z) $ are strictly decreasing, so is $\rho(\theta,.)$. So for $z' > z$, $\theta '\rho(\theta,z') < 1 $ and so $z'$ cannot be solution of \eqref{Deter} for the same $\theta$. Similarly, if $z$ is a solution of (\ref{Deter}) then $\theta' \rho(\theta,z) \geq 1$. If $\theta' \rho(\theta,z) >  1$ then since $z \mapsto \theta' \rho(\theta,z)$ is continuous and decreasing toward 0, there exists $z' >z$ such that $\theta '\rho(\theta,z') = 1$ and $z'$ is therefore a solution of (\ref{Deter})  strictly larger than $z$. 

Therefore, it suffices to prove that for any $x > r_{\sigma}$ there is at least one $\theta_x$ such that 
\[ \theta_x' \rho(\theta_x,x) = 1 \]
Here, the Assumption \ref{ArgMaxDis} is crucial. Indeed, we need this assumption to suppose that the function $\theta \mapsto D(\theta,z)$ is continuous. This continuity implies the continuity of $\theta \mapsto \rho(\theta,z)$. For $\theta = 0$ the lefthand side is 0 and for $\theta \to \infty$, since $\max_i \psi_i ^{\theta} \geq n^{-1}$ we have that 

\[\rho(\theta,z) \geq  \max_{i,j}(\sqrt{D(\theta,z)}S\sqrt{D(\theta,z)})_{i,j} \geq n^{-1} (\min_i m_i(z)) (\min_{i,j} \sigma^2_{i,j}) \]

Therefore since $M := n^{-1} (\min_i m_i(x)) (\min_{i,j} \sigma^2_{i,j})$ is such that $\rho(\theta,x) \geq M$, we have $ \theta' \rho(\theta, x) \underset{\theta \to \infty}{\longrightarrow} + \infty$. By continuity, there is at least one $\theta_x$ such that $\theta_x' \rho(\theta_x,x)=1$ and so Theorem \ref{lbBM} is proved. 
\end{proof}

\section{Approximation of continuous and non-negative variance profiles}\label{Approx}

We now choose $\sigma :[0,1]^2 \mapsto \R^{+}$ continuous and symmetric and consider the random matrix model $X_N^{(\beta)} := \Sigma_N \odot W_N^{(\beta)}$ where 

\[ \lim_{N \to \infty} \Big| \Sigma_N(i,j) -  \sigma \left( \frac{i}{N},\frac{j}{N} \right) \Big| =0 \]

%The empirical measure of $X_N^{(\beta)}$ converges to the measure $\mu_{\sigma}$. We let $I_{\beta}$ be the following function : 

%\[ I_{\beta}(x) := \sup_{\theta \geq 0} \left( J(\mu_{\sigma}, \theta, x) - F(\theta) \right) \]

In order to prove a large deviation principle for $X_N$, we will approximate the variance profile by a piecewise constant $\sigma^n$. Namely, for $n \in \N$ we let $\sigma^n$ be the following $n \times n$ matrix: 
\[ \sigma^n_{i,j} = n^2  \int_{\frac{i -1}{n}}^{\frac{i}{n}}  \int_{\frac{j -1}{n}}^{\frac{j}{n}} \sigma(x,y) dx dy  + \frac{1}{n+1}.\]

The term $1/(n+1)$ is here to ensure that the approximating variance profile is positive so that Assumption \ref{Pos} is satisfied. Let's denote $X_N^{(\beta),n} = \Sigma^{(n)}_N \odot W_N^{(\beta)}$ the random matrix constructed with the same family of random variables $a^{(\beta)}_{i,j}$ but with the piecewise constant variance profile associated with the matrix $\sigma^n$ and the vector of parameters $(\frac{1}{n},...,\frac{1}{n})$. Let $F^n = F( \sigma^n,.)$, $\mu^n := \mu_{\sigma^n}$. We will also denote $F = F(\sigma,.)$ and $I = I^{(\beta)}(\sigma,.)$. Even if we suppose that Assumption \ref{ArgMaxCont} holds in the case of the continuous variance profile $\sigma$, we don't necessarily have Assumption \ref{ArgMaxDis} for the variance profiles $\sigma^n$ and so we don't necessarily have a sharp lower bound. To this end we will need to introduce an error term $E^n$ that will be negligible as $n$ tends to $\infty$. 

In the first subsection, we will prove that there exist for every $n$ a function from $\R^+$ to itself $E^n$ and a function $\theta \mapsto \psi^{\theta,E^n}$ from $\R^+$ to $\{ x \in  (\R^+)^n : x_1 +... +x_n =1\}$ such that:

\[ \vec{\Psi}(\sigma^n,\theta, \psi^{\theta,E^n})= F^{n}(\theta) - E^{n}(\theta) \]
and such that $\lim_{n \to \infty} \sup_{\theta \geq 0} E^n / \theta^2 = 0$. In the second subsection, we will prove that the upper and lower large deviation bounds we get for $X_N^{(\beta),n}$ from Theorems \ref{theowldubBM} and \ref{lbBM} (which will be denoted respectively $I^{(n)}$ and $\tilde{I}^{(n)}$) both converges toward the rate function defined in section \ref{rate}.

\subsection{Existence of an error negligible toward infinity}

\begin{lemma}
Recalling the definition for $F$: 
\[ F(\theta) = F(\sigma, \theta) = \sup_{\mu \in \mathcal{P}([0,1]) }\left\{ \frac{\theta^2}{\beta} P( \sigma,\mu ) - \frac{\beta}{2} D(\text{Leb}||\mu) \right\}. \]
and recalling that $F^n$ is defined the same way by replacing $\sigma$ by $\sigma^n$, we have the following limit. 
\[ \lim_{n \to \infty} \sup_{\theta >0 } \frac{ |F^{n}(\theta) - F(\theta)|}{\theta^2} = 0 \]

\end{lemma}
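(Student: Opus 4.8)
The plan is to control the difference $F^n(\theta) - F(\theta)$ uniformly in $\theta$ after dividing by $\theta^2$, using the variational characterisation of both quantities. Recall that $F(\theta) = \sup_{\mu} \{ \frac{\theta^2}{\beta} P(\sigma,\mu) - \frac{\beta}{2} D(\mathrm{Leb}||\mu) \}$ and $F^n(\theta)$ is the analogous supremum with $\sigma$ replaced by $\sigma^n$ (where in the piecewise-constant case, by \eqref{EqMaxDis}, the supremum over $\mathcal{P}([0,1])$ restricted to measures constant on each $I_i$ gives the same value, so both $F$ and $F^n$ are sups of the \emph{same} penalisation functional $-\frac{\beta}{2} D(\mathrm{Leb}||\cdot)$ against two different quadratic forms $P(\sigma,\cdot)$ and $P(\sigma^n,\cdot)$). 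The first step is the elementary inequality: for any fixed $\mu$, $|\Psi(\theta,\sigma,\mu) - \Psi(\theta,\sigma^n,\mu)| \le \frac{\theta^2}{\beta} |P(\sigma,\mu) - P(\sigma^n,\mu)|$, and taking sups on both sides yields
\[ |F^n(\theta) - F(\theta)| \le \frac{\theta^2}{\beta} \sup_{\mu \in \mathcal{P}([0,1])} |P(\sigma,\mu) - P(\sigma^n,\mu)|. \]
Hence it suffices to show $\sup_{\mu} |P(\sigma,\mu) - P(\sigma^n,\mu)| \to 0$ as $n \to \infty$, with no $\theta$ left to worry about.

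The second step is to bound this sup over measures. Write $P(\sigma,\mu) - P(\sigma^n,\mu) = \int\int (\sigma^2 - (\sigma^n)^2)(x,y)\, d\mu(x)\, d\mu(y)$, so that $\sup_\mu |P(\sigma,\mu) - P(\sigma^n,\mu)| \le \|\sigma^2 - (\sigma^n)^2\|_\infty$ where $\sigma^n$ is here understood as the piecewise constant function on $[0,1]^2$ taking value $\sigma^n_{i,j}$ on $I_i \times I_j$ with $I_i = [(i-1)/n, i/n)$. So the whole lemma reduces to showing $\|\sigma^2 - (\sigma^n)^2\|_\infty \to 0$. Since $\sigma$ is continuous on the compact $[0,1]^2$, it is bounded and uniformly continuous; the averaged values $n^2\int\int_{I_i\times I_j}\sigma$ differ from $\sigma(x,y)$ for $(x,y)\in I_i\times I_j$ by at most the modulus of continuity $\omega_\sigma(\sqrt{2}/n)$, and the extra additive term $1/(n+1)$ tends to $0$; hence $\|\sigma - \sigma^n\|_\infty \le \omega_\sigma(\sqrt 2/n) + 1/(n+1) \to 0$. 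Then $\|\sigma^2 - (\sigma^n)^2\|_\infty \le \|\sigma - \sigma^n\|_\infty (\|\sigma\|_\infty + \|\sigma^n\|_\infty)$, and since $\|\sigma^n\|_\infty$ is bounded uniformly in $n$ (again by $\|\sigma\|_\infty + 1$), this goes to $0$.

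Putting these together, $\sup_{\theta>0} |F^n(\theta) - F(\theta)|/\theta^2 \le \frac{1}{\beta}\|\sigma^2 - (\sigma^n)^2\|_\infty \to 0$, which is the claim. There is no real obstacle here — the argument is entirely soft, the only mild point being to make sure one may interchange the roles of the sup over $\mu$ and the pointwise comparison of the integrands, which is immediate because the penalisation term $D(\mathrm{Leb}||\mu)$ is identical in $F$ and $F^n$ and thus cancels when one compares the two functionals at the same $\mu$. I would write this out in three or four lines. The only thing worth stating carefully is the definition of $\sigma^n$ as a function on $[0,1]^2$ and the use of uniform continuity of $\sigma$; everything else is the two displayed inequalities above.
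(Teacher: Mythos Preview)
Your proof is correct and follows essentially the same approach as the paper: bound $|F^n(\theta)-F(\theta)|$ by $\frac{\theta^2}{\beta}\sup_{\mu}|P(\sigma,\mu)-P(\sigma^n,\mu)|$ (since the Kullback--Leibler term is common to both functionals), then control this sup by $\|\sigma^2-(\sigma^n)^2\|_\infty$, which tends to $0$ by uniform continuity of $\sigma$. The paper's proof is terser and leaves the uniform-continuity step implicit, but the argument is identical.
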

\begin{proof}
\begin{multline*}
\Big|\int_0^1 \int_0^1 (\sigma^n)^2(x,y) d \mu(x) d \mu(y) - \int_0^1 \int_0^1 \sigma^2(x,y) d \mu(x) d \mu(y) \Big| \\ \leq \int_0^1 \int_0^1 | (\sigma^n)^2(x,y)  -  \sigma^2(x,y) | d \mu(x) d \mu(y).
\end{multline*}
Since $\lim_{n \to \infty} \sup_{x,y} | (\sigma^n)^2(x,y)  -  \sigma^2(x,y) | =0$, we have $\lim_{n \to \infty} \sup_{\psi} | \vec{P}(\sigma,\psi)- \vec{P}(\sigma^n,\psi)| =0$. The result follows easily by the definitions of $F^n$ and $F$. 
\end{proof}

\begin{lemma}
	Let us recall the definition of $\vec{\Psi}$: 
\[  \vec{\Psi}(\theta,\sigma,\vec{\psi}) := \frac{\theta^2}{\beta} \vec{P}(\sigma, \psi_1,...,\psi_n) + \frac{\beta}{2} \left( \sum_{i=1}^n \alpha_i \log \psi_i - \sum_{i=1}^n \alpha_i \log \alpha_i \right) \]	
If the Assumption \ref{ArgMaxCont} is true, then for every $\epsilon > 0$, there is a sequence of functions $E^n$ and continuous $\theta \mapsto (\psi_i^{\theta,E^n})_{i\in[1,n]}$ such that: 

\[ \vec{\Psi}(\sigma^n,\theta, \psi^{\theta,E^n})= F^{n}(\theta) - E^{n}(\theta) \]

and there is a $n_0$ such that for $n \geq n_0$: 

\[ \forall \theta > 0, E^{n}(\theta) \leq \epsilon \theta^2 .\]

\end{lemma}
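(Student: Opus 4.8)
The plan is to obtain the continuous family $\theta\mapsto\psi^{\theta,E^n}$ by discretizing the continuous-problem maximizer $\theta\mapsto\psi^\theta$ furnished by Assumption \ref{ArgMaxCont}, and then simply to \emph{define} $E^n(\theta):=F^n(\theta)-\vec\Psi(\sigma^n,\theta,\psi^{\theta,E^n})$. Because $F^n(\theta)$ is, by \eqref{EqMaxDis}, the maximum of $\vec\Psi(\sigma^n,\theta,\cdot)$ over $\{\psi\in(\R^+)^n:\sum_i\psi_i=1\}$ and $\psi^{\theta,E^n}$ will be an admissible point of that problem, this $E^n$ automatically takes values in $\R^+$; it is continuous as soon as the selection is (being then a difference of continuous functions of $\theta$). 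What remains is to check that (i) $\theta\mapsto\psi^{\theta,E^n}$ is continuous with strictly positive entries, and (ii) $E^n(\theta)\le\epsilon\theta^2$ for every $\theta>0$ once $n$ is large.

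For (i) I would fix, for each $n$, a continuous partition of unity $(f_i^{(n)})_{1\le i\le n}$ on $[0,1]$ subordinate to slight enlargements of the intervals $I_i=[\tfrac{i-1}{n},\tfrac in)$ — concretely, trapezoidal functions equal to $1$ on the bulk of $I_i$, supported in $[\tfrac{i-1}{n}-\tfrac1{n^2},\tfrac in+\tfrac1{n^2}]\cap[0,1]$, and tuned so that $\int_0^1 f_i^{(n)}=\tfrac1n$ — and set $\psi_i^{\theta,E^n}:=\int_0^1 f_i^{(n)}\,d\psi^\theta$. Then $\sum_i\psi_i^{\theta,E^n}=\int_0^1\big(\sum_i f_i^{(n)}\big)\,d\psi^\theta=1$, and $\theta\mapsto\psi_i^{\theta,E^n}$ is continuous because $\theta\mapsto\psi^\theta$ is weakly continuous and $f_i^{(n)}$ is bounded and continuous. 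Strict positivity holds because a maximizer of \eqref{EqMax} has finite $D(\text{Leb}||\psi^\theta)$ — it beats the choice $\psi=\text{Leb}$, whose value is $\ge 0$, while $P(\sigma,\psi^\theta)\le\|\sigma^2\|_\infty<\infty$ — so $\text{Leb}\ll\psi^\theta$, hence $\psi^\theta$ charges every subinterval and $\int_0^1 f_i^{(n)}\,d\psi^\theta>0$. One could instead take the cruder $\psi_i^{\theta,E^n}=\psi^\theta(I_i)$, but then continuity of $\theta\mapsto\psi^\theta(I_i)$ under weak convergence requires $\psi^\theta(\partial I_i)=0$, i.e.\ the maximizers to be atomless; this is in fact true — the first-order stationarity condition for \eqref{EqMax} forces the density of a maximizer to equal $\tfrac{\beta/2}{\lambda-(2\theta^2/\beta)\int\sigma^2(\cdot,y)d\psi^\theta(y)}$, a positive continuous function — but the smoothed partition makes the point moot.

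For (ii) I would set $\tilde\psi^{\theta,n}:=\sum_i\tfrac1{\alpha_i}\psi_i^{\theta,E^n}\,\text{Leb}_{I_i}\in\mathcal P([0,1])$, so that, exactly as in the derivation of \eqref{EqMaxDis}, $\vec\Psi(\sigma^n,\theta,\psi^{\theta,E^n})=\tfrac{\theta^2}{\beta}\vec P(\sigma^n,\psi^{\theta,E^n})-\tfrac\beta2 D(\text{Leb}||\tilde\psi^{\theta,n})$, and I would compare the two pieces with $\Psi(\theta,\sigma,\psi^\theta)=F(\theta)$. The entropy piece comes for free: the map $\psi\mapsto\sum_i\tfrac1{\alpha_i}\big(\int f_i^{(n)}d\psi\big)\text{Leb}_{I_i}$ is the action $\psi\mapsto\psi K_n$ of the Markov kernel $K_n(x,\cdot)=\sum_i n f_i^{(n)}(x)\,\text{Leb}_{I_i}$ (it has mass $\sum_i f_i^{(n)}(x)=1$), and $\text{Leb}\,K_n=\text{Leb}$ precisely because $\int f_i^{(n)}=\tfrac1n$, so the data-processing inequality for relative entropy gives, with no loss, $D(\text{Leb}||\tilde\psi^{\theta,n})=D(\text{Leb}\,K_n\,||\,\psi^\theta K_n)\le D(\text{Leb}||\psi^\theta)$. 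The quadratic piece is close: writing $\vec P(\sigma^n,\psi^{\theta,E^n})=\int\!\!\int g_n(x,y)\,d\psi^\theta(x)\,d\psi^\theta(y)$ with $g_n(x,y)=\sum_{k,l}(\sigma^n_{k,l})^2 f_k^{(n)}(x)f_l^{(n)}(y)$, and using that $\sigma$ is uniformly continuous on $[0,1]^2$, that $\sigma^n_{k,l}$ is the average of $\sigma$ on $I_k\times I_l$ plus $\tfrac1{n+1}$, and that $f_k^{(n)}(x)f_l^{(n)}(y)\ne 0$ forces $(x,y)$ to lie within distance $O(1/n)$ of $I_k\times I_l$, one gets $\tau_n:=\|g_n-\sigma^2\|_\infty\to0$ and hence $|\vec P(\sigma^n,\psi^{\theta,E^n})-P(\sigma,\psi^\theta)|\le\tau_n$. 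Combining, $\vec\Psi(\sigma^n,\theta,\psi^{\theta,E^n})\ge F(\theta)-\tfrac{\theta^2}{\beta}\tau_n$; and the previous lemma gives $F^n(\theta)\le F(\theta)+\tfrac{\theta^2}{\beta}\eta_n$ with $\eta_n:=\|(\sigma^n)^2-\sigma^2\|_\infty\to0$. Hence $0\le E^n(\theta)\le\tfrac{\theta^2}{\beta}(\eta_n+\tau_n)$ for all $\theta>0$, and it suffices to take $n_0$ with $\eta_n+\tau_n\le\beta\epsilon$ for $n\ge n_0$ (in fact the same sequence $E^n$ then works for every $\epsilon$).

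The one genuinely delicate point is securing continuity of the discrete selection: the naive endpoint discretization breaks down if a continuous-problem maximizer concentrates mass at dyadic rationals, and the smoothed partition of unity above is what circumvents it. Everything else is soft — the entropy error is annihilated exactly by data processing, and the whole remaining discrepancy is the explicit quantity $\tfrac{\theta^2}{\beta}(\eta_n+\tau_n)$, which vanishes with $n$ purely through the uniform continuity of $\sigma$ and the $\tfrac1{n+1}$ correction in the definition of $\sigma^n$.
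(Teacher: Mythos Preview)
Your argument is correct and follows the same overall strategy as the paper --- discretize the continuous maximizer $\psi^\theta$ from Assumption~\ref{ArgMaxCont}, define $E^n$ as the resulting defect, and control it through a uniform $O(\theta^2)$ bound --- but the technical implementation differs in two places. First, to secure continuity of the discrete selection, the paper smooths $\psi^\theta$ by convolution with a triangular kernel $\tau_{\epsilon'}$ and folds back into $[0,1]$ via the map $K$, and only then takes the naive block masses $\psi^{\theta,\epsilon'}([(i-1)/n,i/n])$; you instead integrate $\psi^\theta$ directly against a continuous partition of unity $(f_i^{(n)})$ with $\int f_i^{(n)}=1/n$, which merges the smoothing and discretization into a single step. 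Second, for the entropy comparison the paper applies Jensen's inequality twice (once for the convolution, once for the block averaging), whereas you invoke the data-processing inequality for relative entropy with respect to the Markov kernel $K_n(x,\cdot)=\sum_i n f_i^{(n)}(x)\,\mathrm{Leb}_{I_i}$, using $\mathrm{Leb}\,K_n=\mathrm{Leb}$; this is exactly the abstract version of the paper's Jensen argument. A small bonus of your route is that no auxiliary smoothing scale $\epsilon'$ is chosen as a function of $\epsilon$, so the resulting $E^n$ is independent of $\epsilon$ and satisfies $E^n(\theta)\le \tfrac{\theta^2}{\beta}(\eta_n+\tau_n)$ for every $\theta>0$, with $\eta_n,\tau_n\to 0$.
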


\begin{proof}
Since assumption \ref{ArgMaxCont} is verified, there is some measure valued continuous function $\theta \mapsto \psi^{\theta}$ such that $F(\theta) = \theta^2 P(\sigma,\psi^{\theta})/ \beta - \beta D( Leb||\psi^{\theta})/2$. Let $\psi^{\theta,\epsilon} := K_{\star}( \psi^{\theta} * \tau_{\epsilon})$ where $*$ is the convolution, $K_{\star}$ is the push-forward by the application $K$, $\tau_{\epsilon}$ the probability measure whose density is a triangular function of support $[-\epsilon, \epsilon]$ and $K$ the function defined by $K(x) = x $ if $x \in [0,1]$, $K(x) = -x$ if $x \in [-1,0]$ and $K(x)=2-x$ if $x \in [1,2]$. The function $K$ is needed here in order for the support of $\psi^{\theta, \epsilon}$ to still be $[0,1]$ since the convolution with $\tau_{\epsilon}$ enlarges the support to $[-\epsilon, 1+ \epsilon]$. Let us now denote

\[ \psi_i^{\theta,\epsilon,n} := \psi^{\theta,\epsilon} \left( \left[ \frac{i -1}{n}, \frac{i}{n} \right] \right) \]
We have that for $i = 1,..., n$:
\[ \psi_i^{\theta,\epsilon,n} := \int_{\R} (\mathds{1}_{[(i-1)/n , i/n]}+ \mathds{1}_{[-i/n , (1-i)/n]}+ \mathds{1}_{[2-i/n , 2- (1-i)/n]}) (x) d (\psi^{\theta}* \tau_{\epsilon})(x) .\]

So for $i =1, ..., n$:
\[ \psi_i^{\theta,\epsilon,n} := \int_{\R} (\mathds{1}_{[(i-1)/n , i/n]}+ \mathds{1}_{[-i/n , (1-i)/n]}+ \mathds{1}_{[2-i/n , 2- (1-i)/n]})* \tau_{\epsilon} (x) d \psi^{\theta}(x) \]

Since  $x \mapsto (\mathds{1}_{[(i-1)/n , i/n]}+ \mathds{1}_{[-i/n , (1-i)/n]}+ \mathds{1}_{[2-i/n , 2- (1-i)/n]})* \tau_{\epsilon} (x)$ is continuous and $\theta \mapsto \psi^{\theta}$ is continuous for the weak topology then $\theta \mapsto \psi_i^{\theta,\epsilon,n}$ is continuous for $i = 1,..., n$. 

Let us prove the following lemma:

\begin{lemma}
For every $\eta > 0$, there is $\epsilon >0, n_0> 0$ such that for every $\theta > 0, n \geq n_0$

\[ \Big|\sum_{i,j=1}^n (\sigma^n_{i,j})^2 \psi_i^{\theta,\epsilon,n} \psi_j^{\theta,\epsilon,n} - \int_0^1 \int_0^1 \sigma^2(x,y) d \psi^{\theta}(x) d \psi^{\theta}(y) \Big| \leq \eta \]
and,

\[ \frac{1}{n} \sum_i^n \left( \log \psi_i^{\theta,\epsilon,n} - \log n \right) \geq   - D( Leb || \psi^{\theta}). \]

\end{lemma}
\begin{proof}[Proof of the lemma]
Let $\eta > 0$ and let us find $\epsilon > 0$ such that: 

\[ |\int_0^1 \int_0^1 \sigma^2(x,y) d \psi^{\theta,\epsilon}(x) d \psi^{\theta,\epsilon}(y) - \int_0^1 \int_0^1 \sigma^2(x,y) d \psi^{\theta}(x) d \psi^{\theta}(y)| \leq  \eta .\]

Let us take $X,Y,U_{\epsilon}$, $V_{\epsilon}$ independent random variables of law respectively, $\psi^{\theta}, \psi^{\theta}$ and $\tau_{\epsilon},\tau_{\epsilon}$. Then we have
\begin{multline*} |\int_0^1 \int_0^1 \sigma^2(x,y) d \psi^{\theta,\epsilon}(x) d \psi^{\theta,\epsilon}(y) - \int_0^1 \int_0^1 \sigma^2(x,y) d \psi^{\theta}(x) d \psi^{\theta}(y)| = \\
|\E[\sigma^2( K(X+U_{\epsilon}),K(Y+V_{\epsilon})) - \sigma^2( X,Y)]|.
\end{multline*}
Using the uniform continuity of $\sigma^2$, and that $|K(X+U_{\epsilon}) - X|, |K(Y+V_{\epsilon}) - Y| \leq \epsilon $ almost surely, we have that there exists an $\epsilon > 0$ such that the difference is smaller than $\eta$. This bound does not depend on $\theta$.

Now, let us find $n_0$ such that for $n\geq n_0$, 

\[ |\sum_{i,j=1}^n (\sigma^n_{i,j})^2 \psi_i^{\theta,\epsilon,n} \psi_j^{\theta,\epsilon,n} - \int_0^1 \int_0^1 \sigma^2(x,y) d \psi^{\theta,\epsilon}(x) d \psi^{\theta,\epsilon}(y)| \leq \eta .\]

We have 
\begin{multline*}
|\sum_{i,j=1}^n (\sigma^n_{i,j})^2 \psi_i^{\theta,\epsilon,n} \psi_j^{\theta,\epsilon,n} - \int_0^1 \int_0^1 \sigma^2(x,y) d \psi^{\theta,\epsilon}(x) d \psi^{\theta,\epsilon}(y)| \\ 
\leq \int_0^1 \int_0^1 |(\sigma^n(x,y))^2 - \sigma^2(x,y)| d \psi^{\theta,\epsilon}(x) d \psi^{\theta,\epsilon}(y)
\end{multline*}
where we recall that $ (x,y) \mapsto \sigma^n(x,y)$ is the discretized version of $\sigma$. There again, using the uniform continuity of $\sigma$, we have for every $\epsilon > 0$ the existence of $n_0$ such that for $n \geq n_0$, for all $x,y \in [0,1], |(\sigma^n(x,y))^2 - \sigma^2(x,y)|\leq \eta$. Combining these two inequalities we get the first point. 
Then let us show that:

\[ - D( Leb ||\psi^{\theta,\epsilon}) \geq-  D( Leb ||\psi^{\theta}) .\]
Let $f_{\epsilon}(x) = \max \{ 0, \epsilon^{-1} - \epsilon^{-2} |x| \}$ and 

\[ g_{\epsilon}(x,y) =  f_{\epsilon}(x - y ) + f_{\epsilon}(y +x ) + f_{\epsilon}(2 - x + y ). \]
We have that: 

\[ \frac{d \psi^{\theta,\epsilon}}{dx}(x) = \int_{[0,1]} g_{\epsilon}(x,y) d \psi^{\theta}(y) . \]
Let us notice that $\int_{[0,1]} g_{\epsilon}(x,y) dy = \int_{[0,1]} g_{\epsilon}(y,x) dy =1$. We have

\begin{align*}
 - D( Leb ||\psi^{\theta,\epsilon})& = \int_0^1 \log \left( \frac{d \psi^{\theta,\epsilon}}{dx} \right) dx \\
& = \int_0^1 \log \left( \int_{0}^1  g_{\epsilon}(x,y) d \psi^{\theta}(y) \right) dx \\
&\geq \int_0^1 \log \left( \int_{0}^1 g_{\epsilon}(x,y) \frac{d \psi^{\theta}(y)}{dx} dy \right) dx \\
&\geq \int_0^1  \int_{0}^1  g_{\epsilon}(x,y) \log \left(\frac{d \psi^{\theta}(y)}{dx}  \right)dy dx \\
& \geq - D( Leb || \psi^{\theta}) \\
\end{align*}
 where we used the concavity of $\log$. 
 Finally, using again the concavity, we have for every $i \in [1,n]$

\begin{align*}
 n \int_{(i-1)/n}^{i/n} \log \left( \frac{d \psi^{\theta,\epsilon}(x)}{dx} \right) dx &\leq \log \left(   n \int_{(i-1)/n}^{i/n} \frac{d \psi^{\theta,\epsilon}(x)}{dx} \right) \\
& \leq \log \left(   n \psi_{i}^{\epsilon,\theta} \right).
\end{align*}
Summing over $i$ gives us the result.

%\begin{align*}
 %\int_0^1 \ln \left( \frac{d \psi^{\theta} * \tau_{\epsilon}}{dx} \right) dx &=  \ln \left( \int_{-\epsilon}^{\epsilon} \frac{\psi^{\theta}(x -y)}{dx} \frac{\tau_{\epsilon}(y)}{dx} dy  \right) dx \\
%& \geq \int_0^1   \int_{-\epsilon}^{\epsilon} \ln\left(\frac{\psi^{\theta}(x -y)}{dx} \frac{\tau_{\epsilon}(y)}{dx}\right)  dy  dx
%& 

\end{proof}
Thererefore, using this lemma for $\epsilon > 0$, there is $\epsilon' > 0$ such that

\[ \inf_{\theta \geq 0} \frac {  \vec{\Psi}(\theta,\sigma^n, \psi^{\theta,\epsilon',n}) -F(\theta)  }{\theta^2} >  - \epsilon \]
 for $n$ large enough and so: 
\[ \inf_{\theta \geq 0} \frac {  \vec{\Psi}(\theta,\sigma^n, \psi^{\theta,\epsilon',n}) - F^n(\theta)  }{\theta^2} >  - \epsilon .\]

For $n$ large enough. Therefore, taking 
\[ E^{n}(\theta) := F^n(\theta) - \vec{\Psi}(\theta,\sigma^n, \psi^{\theta,\epsilon',n}) \]
our result is proven.
\end{proof}

\subsection{Convergence of large deviation bounds toward the rate function}

We can now introduce $I_{\beta}^{n}$ and  $\tilde{I}_{\beta}^{n}$ defined on $[r_{\sigma},+ \infty[$ the rate functions for the upper and lower bound of the piecewise constant approximations

\[ I_{\beta}^{n}(x) := \sup_{\theta\geq 0} \left[ J(\mu_{\sigma^n},\theta,x) - F^n(\theta)  \right] \]
\[ \tilde{I}_{\beta}^{n}(x) := \sup_{\theta \geq 0} \left[ J(\mu_{\sigma^n},\theta,x) - F^n(\theta) + E^n(\theta) \right]. \]
To prove that those two functions converge toward $I(x)= I^{(\beta)}(\sigma,.)$ we will need the following result:

\begin{lemma}\label{conv}
	We recall that by definition $\mu^n = \mu_{\sigma^n}$ and $\mu = \mu_{\sigma}$. Then 

\[ \lim_{n\to \infty} \mu^{n} = \mu \]
and if we denote $r^{(n)}$ the upper bound of the support of $\mu^n$ and $l^{(n)}$ its lower bound, we have: 

\[  \lim_{n\to \infty} r^{(n)} = r_{\sigma} \]

\[  \lim_{n\to \infty} l^{(n)} = l_{\sigma} .\]

\end{lemma}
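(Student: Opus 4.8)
The plan is to prove both assertions of Lemma \ref{conv} at once by controlling the Wasserstein--$\infty$ distance $W_\infty(\mu^n,\mu)$, exploiting that $X_N^{(\beta),n}$ and $X_N^{(\beta)}$ are built from the \emph{same} Wigner matrix $W_N^{(\beta)}$. Write $c_n:=\omega(\sqrt 2/n)+\tfrac1{n+1}$, where $\omega$ is the modulus of continuity of the (uniformly continuous) function $\sigma$ on $[0,1]^2$, so $c_n\to 0$. Since the block value $\sigma^n_{k,l}$ is the average of $\sigma$ over a square of diameter $\sqrt 2/n$ plus $\tfrac1{n+1}$, one has $|\Sigma_N^{(n)}(i,j)-\sigma(i/N,j/N)|\le \omega(\sqrt 2/n+O(1/N))+\tfrac1{n+1}$, and by hypothesis $|\Sigma_N(i,j)-\sigma(i/N,j/N)|\le\epsilon_N$ with $\epsilon_N\to 0$; hence $\|\Sigma_N^{(n)}-\Sigma_N\|_{\max}\le c_n+\epsilon'_N$ with $\epsilon'_N\to 0$.

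Then I would bound $\|X_N^{(\beta),n}-X_N^{(\beta)}\|=\|(\Sigma_N^{(n)}-\Sigma_N)\odot W_N^{(\beta)}\|$: splitting $\Sigma_N^{(n)}-\Sigma_N$ into positive and negative parts, rescaling each by $\|\Sigma_N^{(n)}-\Sigma_N\|_{\max}$ so that it lies in $\mathcal A_N$, applying Lemma \ref{exptight2} to each and using Borel--Cantelli, one gets a constant $B$ (depending only on the desired decay rate) with $\|X_N^{(\beta),n}-X_N^{(\beta)}\|\le B(c_n+\epsilon'_N)$ eventually almost surely. The same lemma applied to $\Sigma_N^{(n)}/\|\Sigma_N^{(n)}\|_{\max}$ and to $\Sigma_N/\|\Sigma_N\|_{\max}$, together with $\|\sigma\|_\infty<\infty$, shows that all the empirical measures $\mu_{X_N^{(\beta),n}},\mun$, hence their weak limits $\mu^n,\mu$, are supported in a fixed compact $[-R,R]$ with $R$ independent of $n$. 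By Weyl's inequality, matching the ordered eigenvalues of the two matrices gives a coupling of $\mu_{X_N^{(\beta),n}}$ and $\mun$ moving every atom by at most $\|X_N^{(\beta),n}-X_N^{(\beta)}\|$, so $W_\infty(\mu_{X_N^{(\beta),n}},\mun)\le B(c_n+\epsilon'_N)$ eventually almost surely.

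Now by the first theorem of the introduction (Girko's convergence of the empirical measure, applied once to the piecewise constant profile $\sigma^n$ and once to $\sigma$), $\mu_{X_N^{(\beta),n}}\to\mu^n$ and $\mun\to\mu$ weakly; passing to a subsequence along which these convergences are almost sure, let $N\to\infty$ and use that $W_\infty$ is lower semicontinuous under weak convergence on the fixed compact $[-R,R]$: extracting (Prokhorov) a weak limit of the couplings realizing $W_\infty(\mu_{X_N^{(\beta),n}},\mun)$ produces a coupling of $(\mu^n,\mu)$ carried by the closed set $\{|x-y|\le Bc_n\}$, whence $W_\infty(\mu^n,\mu)\le Bc_n$. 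Letting $n\to\infty$ gives $W_\infty(\mu^n,\mu)\le Bc_n\to 0$, which is precisely $\mu^n\to\mu$ weakly; and since $W_\infty(\mu^n,\mu)\le\varepsilon$ forces the Hausdorff distance between $\mathrm{supp}\,\mu^n$ and $\mathrm{supp}\,\mu$ to be $\le\varepsilon$ (the optimal coupling places, near every point of either support, mass of the other measure within distance $\varepsilon$), we conclude $|r^{(n)}-r_\sigma|\le Bc_n\to 0$ and $|l^{(n)}-l_\sigma|\le Bc_n\to 0$.

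The delicate point — and the only one — is the convergence of the \emph{edges} rather than just weak convergence: weak convergence alone cannot prevent an arbitrarily small amount of mass of $\mu^n$ from escaping far to the right of $r_\sigma$ and creating a spurious edge, so one genuinely needs a distance that transports \emph{every} bit of mass by a controlled amount, namely $W_\infty$, and the only input supplying such control is the elementary eigenvalue-perturbation bound between the coupled matrices. It should be stressed that this argument uses only Girko's weak-convergence theorem and Lemma \ref{exptight2}, and not the local law for the continuous profile (which would be circular, Theorem \ref{CVlambdamax} for continuous $\sigma$ being deduced from this very lemma); the weak-convergence statement alone could alternatively be derived from the stability of the Dyson equation under $\|(\sigma^n)^2-\sigma^2\|_\infty\to 0$.
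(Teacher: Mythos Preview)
Your proof is correct and follows essentially the same route as the paper: couple $X_N^{(\beta),n}$ and $X_N^{(\beta)}$ through the same Wigner matrix, bound $\|X_N^{(\beta),n}-X_N^{(\beta)}\|$ via Lemma~\ref{exptight2}, apply Weyl's inequality, and pass the resulting eigenvalue comparison to the limit in $N$. The paper phrases this last step as the CDF sandwich $F^{(n)}(t-2\epsilon)\le F(t)\le F^{(n)}(t+2\epsilon)$ (and cites Theorem~\ref{Stab1} separately for the weak convergence), while you package the same content through $W_\infty$ and its lower semicontinuity under weak limits; these are equivalent formulations on $\R$.
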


\begin{proof}

The first point is a consequence of Theorem $\ref{Stab1}$. Let $\Delta^{n}_N := X^{(\beta)}_N - X_N^{(\beta),n}$. We have $ \Delta^{n}_N := (\Sigma_N -  \Sigma^{(n)}_N) \odot W^{(\beta)}_N$. Using Lemma \ref{exptight2} and the fact that 
\[ \limsup_N \max_{i,j} |(\Sigma_N -  \Sigma^{(n)}_N)_{i,j}| \underset{n \to \infty}{\longrightarrow} 0 \]
we have that for every $\epsilon >0$ there is $n_0$ such that for any $n \geq n_0$: 

\[ \Pp[ || \Delta^{n}_N || > \epsilon ] \underset{N \to \infty}{\longrightarrow} 0 \]

In particular if  we denote $\lambda_{1,N}< ...<\lambda_{N,N}$ the eigenvalues of $X^{(\beta)}_N$ and $\lambda_{1,N}^{(n)}< ...< \lambda_{N,N}^{(n)}$ these of $X^{(\beta),n}_N$, on the event $\{ || \Delta^{n}_N || \leq  \epsilon \}$ we have $\max_{i=1}^N | \lambda_{i,N} - \lambda_{i,N}^{(n)} | \leq \epsilon$. And so, on this event, we have that for any $a \in \R$: 
\[\# \{ i \in \llbracket 1, N \rrbracket : \lambda^{(n)}_{i,N} < a - \epsilon \} \leq \# \{ i \in \llbracket 1, N \rrbracket : \lambda_{i,N} < a \} \leq \# \{ i \in \llbracket 1, N \rrbracket : \lambda^{(n)}_{i,N} < a + \epsilon \} \]
If we denote for $t \in \R$, $F(t) := \mu( ] - \infty, t])
$ and $F^{(n)}(t) := \mu^n( ] - \infty, t])
$, using the convergence in probability of the eigenvalue distribution, this implies that for every $t$ in $\R$: 

\[ F^{(n)}( t - 2 \epsilon) \leq F(t) \leq F^{(n)}(t + 2 \epsilon).\]
 This then easily implies that $|r^{(n)} - r_{\sigma}| \leq 2\epsilon$.

%So, we have $\liminf_{n} r^{(n)} \geq r_{\sigma}$. Let us show that $\limsup_n  r^{(n)} \leq r_{\sigma}$. Suppose that $r := \limsup_n  r^{(n)} >  r_{\sigma}$, then up to extraction in $n$ there exists $\epsilon >0$, $n_0$ such that $r^{(n)} > r_{\sigma}+ \epsilon$. We can now choose $n_1$ such that: 
%\[ \Pp[ || \Delta^{n_1}_N || \leq  \epsilon /3] \underset{N \to \infty}{\longrightarrow} 1 \]
%
%On the event $\{ || \Delta^{n_1}_N || \leq  \epsilon /3 \}$, we have that $\max_{i=1}^N | \lambda_{i,N} - \lambda_i^{(n)} | \leq \epsilon/3$. On this event, we can say that 
%\[ \# \{ i | \lambda_{i,N} > r^{(n_1)} - 2 \epsilon/3 \} \geq  \# \{ i | \lambda_{i,N}^{(n_1)} > r^{(n_1)} - \epsilon/3 \} \]
%
%Let's now denote $A_N,B_N,C_N$ the following events: 
%\[ A_N := \{ || \Delta^{n_1}_N || \leq  \epsilon /3 \} \]
%\[ B_N := \{ \frac{\# \{ i | \lambda_{i,N}^{(n_1)} > r^{(n_1)} - \epsilon/3 \} }{N} \geq \mu^{(n_1)}([r^{(n_1)} - \epsilon/4, + \infty]) \} \]
%\[ C_N := \{ \frac{\# \{ i | \lambda_{i,N} > r^{(n_1)} - 2 \epsilon/3 \}}{N} \geq \mu^{(n_1)}([r^{(n_1)} - \epsilon/4, + \infty]) \}.\]
%
%We have $A_N \cap B_N \subset C_N $. Since $\hat{\mu}_{X_N^{(n_1)}}$ converge in probability to $\mu^{(n_1)}$, $\Pp[B_N]$ converges to $1$. Furthermore since $\mun$ converge in probability to $\mu_{\sigma}$, $\mu^{(n_1)}([r^{(n_1)} - \epsilon/4, + \infty] )>0$ and by hypothesis $r^{(n_1)} - 2 \epsilon/3 > r_{\sigma}$, we have that $\Pp[C_N]$ converges to 0 which is absurd since $\Pp[C_N] \geq \Pp[A_N \cap B_N]$ and $\Pp[A_N]$ and $\Pp[B_N]$ converges to 1. We prove the third point identically. 
\end{proof}

This result enables us to finally prove the complete version of Theorem \ref{CVlambdamax}. Indeed using the Theorem \ref{convmunBM} we have that for every $\epsilon > 0$, $\Pp[ \lambda_{\max}(X_N^{(\beta)}) \leq r_{\sigma}- \epsilon] = o(e^{-N})$. It suffices to show that for all $\epsilon > 0$, $\Pp[ \lambda_{\max}(X_N^{(\beta)}) \geq r_{\sigma}+ \epsilon] = O( N^{-2})$. In both the continuous and the piecewise constant case that does not satisfy Assumption \ref{Pos}, we can approximate $\sigma$ by $\sigma^n$ positive. And so the results of Lemma \ref{conv} hold, that is for $n$ large enough, we have $r^{(n)} \leq r_{\sigma}+ \epsilon/2$.  For $n$ large enough, we have $\Pp[ ||\Delta_N^{n}||\geq \epsilon /2] = o( \exp (-N))$. So we have: 
\begin{align*}
 \Pp[ \lambda_{\max}(X_N^{(\beta)}) \geq r_{\sigma}+ \epsilon] &\leq \Pp[ ||\Delta_N^{n}||\geq \epsilon /2] + \Pp[ \lambda_{\max}(X_N^{(n)}) \geq r_{\sigma} + \epsilon/2] \\
&= O(N^{-2})
\end{align*}
where we used Theorem \ref{Convergence} for $X_N^n$. And so Theorem \ref{CVlambdamax} is proved.

\begin{lemma}

\begin{itemize}

\item For every $x > r_{\sigma}$, the function $\theta \mapsto  J( \mu^{n}, \theta, x)$ converges uniformly on all compact sets of $\R^+$ towards $\theta \mapsto  J( \mu, \theta, x)$.

\item For every $x > r_{\sigma}$ \[ I_{\beta}^n(x) \to I(x) \]. 
\[ \tilde{I}_{\beta}^n(x) \to I(x). \]
where we recall that $I$ is equal to $I^ {(\beta)}(\sigma,.)$. 
\end{itemize}
\end{lemma}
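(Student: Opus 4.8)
\emph{Part 1: locally uniform convergence of $J(\mu^n,\cdot,x)$.} The plan is to read everything off the explicit formula for $J$ together with Lemma \ref{conv}. Fix $x>r_\sigma$. By Lemma \ref{conv} the edges $r^{(n)},l^{(n)}$ converge, so for $n$ large the support of $\mu^n$ lies in a fixed compact $K$ with $x\notin K$, and $\mu^n\to\mu$ weakly; since all these measures live in $K$, Vitali's theorem gives $G_{\mu^n}\to G_\mu$ together with all derivatives, uniformly on compact subsets of $\C\setminus K$. In particular $G_{\mu^n}(x)\to G_\mu(x)$ and, inverting the Stieltjes transform on its decreasing positive branch to the right of the support, $K_{\mu^n}(s)\to K_\mu(s)$ and hence $R_{\mu^n}(s)\to R_\mu(s)$, uniformly for $s$ in compact subsets of $(0,G_\mu(x))$. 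From this I get $v(\theta,\mu^n,x)\to v(\theta,\mu,x)$ for each fixed $\theta\ge0$: for $2\theta/\beta<G_\mu(x)$ it is the convergence of the $R$-transform (valid once $n$ is large enough that $2\theta/\beta<G_{\mu^n}(x)$); for $2\theta/\beta>G_\mu(x)$ both sides equal $x-\beta/(2\theta)$ for $n$ large; and at the threshold $2\theta/\beta=G_\mu(x)$ one combines $G_{\mu^n}(x)\to G_\mu(x)$ with the continuity in $\theta$ of $v(\cdot,\mu^n,x)$ and $v(\cdot,\mu,x)$ (the two branches agree at $\tfrac\beta2 G_{\mu^n}(x)$, resp. $\tfrac\beta2 G_\mu(x)$). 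Plugging this into the formula for $J$, the logarithmic integrand converges uniformly in $y\in K$ and stays bounded away from $0$ (as in \cite{GM}), so weak convergence of $\mu^n$ gives $J(\mu^n,\theta,x)\to J(\mu,\theta,x)$ pointwise in $\theta$. To upgrade this to uniform convergence on each $[0,\Theta]$, I would use that $\partial_\theta J(\mu,\theta,x)=v(\theta,\mu,x)$ (a direct computation from the formula above, or the envelope theorem), which is bounded on $[0,\Theta]$ uniformly over measures with support in $K$; hence the functions $\theta\mapsto J(\mu^n,\theta,x)$ are uniformly Lipschitz on $[0,\Theta]$, and Arzel\`a--Ascoli together with the pointwise convergence finishes the argument.

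\emph{Part 2: convergence of the rate functions.} We may assume $\sigma\not\equiv0$, the opposite case being trivial. The key reduction is that all the suprema at hand are, uniformly in $n$ large, attained on one fixed compact $\theta$-interval. On one hand, testing \eqref{EqMaxDis} at the uniform vector $(1/n,\dots,1/n)$ (whose entropy term vanishes) gives $F^n(\theta)\ge\frac{\theta^2}{\beta n^2}\sum_{i,j}(\sigma^n_{i,j})^2$, and the Riemann sum $\frac1{n^2}\sum_{i,j}(\sigma^n_{i,j})^2$ tends to $\int_{[0,1]^2}\sigma^2>0$; so there are $c>0$ and $n_0$ with $F^n(\theta)\ge c\theta^2$ for all $\theta$ and all $n\ge n_0$, and likewise $F(\theta)\ge c\theta^2$. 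On the other hand, the explicit formula gives, for $\theta$ large, $J(\mu^n,\theta,x)=\theta x-\frac\beta2\log\theta+O(1)$ with the $O(1)$ uniform in $n\ge n_0$ (the supports of the $\mu^n$ and the quantities $\int\log(x-y)\,d\mu^n(y)$ being uniformly bounded by Lemma \ref{conv}). Hence $J(\mu^n,\theta,x)-F^n(\theta)\to-\infty$ as $\theta\to\infty$ uniformly in $n\ge n_0$, and since the supremum is $\ge0$ (value at $\theta=0$), there is a $\Theta$ such that for $n\ge n_0$
\[ I^{n}_\beta(x)=\sup_{0\le\theta\le\Theta}\bigl(J(\mu^n,\theta,x)-F^n(\theta)\bigr),\qquad I(x)=\sup_{0\le\theta\le\Theta}\bigl(J(\mu,\theta,x)-F(\theta)\bigr). \]
On $[0,\Theta]$, Part 1 gives $\sup_{[0,\Theta]}|J(\mu^n,\cdot,x)-J(\mu,\cdot,x)|\to0$, while the lemma on $F^n$ gives $\sup_{[0,\Theta]}|F^n-F|\le\Theta^2\sup_{\theta>0}|F^n-F|/\theta^2\to0$; since taking the supremum over $[0,\Theta]$ is $1$-Lipschitz for the uniform norm, $I^{n}_\beta(x)\to I(x)$.

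\emph{Part 3: the error term $\tilde I^n_\beta$.} Write $E^n(\theta)=\varepsilon_n(\theta)\,\theta^2$ with $\sup_\theta\varepsilon_n(\theta)\to0$ (the lemma just proved) and $E^n\ge0$, so $I^{n}_\beta(x)\le\tilde I^{n}_\beta(x)$. For the converse, $F^n(\theta)-E^n(\theta)\ge(c-\sup_k\varepsilon_n(k))\theta^2\ge\frac c2\theta^2$ for $n$ large, so the argument of Part 2 applies verbatim to $J(\mu^n,\cdot,x)-F^n+E^n$ and furnishes $\Theta_1$ (uniform in $n$ large) with $\tilde I^{n}_\beta(x)=\sup_{0\le\theta\le\Theta_1}(J(\mu^n,\theta,x)-F^n(\theta)+E^n(\theta))$; on $[0,\Theta_1]$ one has $E^n(\theta)\le\Theta_1^2\sup_k\varepsilon_n(k)\to0$, whence $\tilde I^{n}_\beta(x)\le I^{n}_\beta(x)+o(1)$ (replacing the cutoff $\Theta$ for $I^{n}_\beta$ by $\max(\Theta,\Theta_1)$, which is harmless by the uniform decay at infinity). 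Combined with Part 2 this yields $\tilde I^{n}_\beta(x)\to I(x)$ as well.

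\emph{Main obstacle.} The only genuinely delicate point is Part 1: carrying weak convergence of $\mu^n$ through the non-smooth two-regime definition of $v(\theta,\mu,x)$ and through the $R$-transform, whose very domain $(0,G_{\mu^n}(x))$ moves with $n$, while checking that the threshold $\theta=\tfrac\beta2 G_{\mu^n}(x)$ behaves continuously and that the logarithmic integrand stays uniformly positive. Once the locally uniform convergence $J(\mu^n,\cdot,x)\to J(\mu,\cdot,x)$ is secured, Parts 2 and 3 are a soft compactness-of-the-maximiser argument resting on the two quantitative inputs $\sup_\theta|F^n-F|/\theta^2\to0$ and $\sup_\theta E^n(\theta)/\theta^2\to0$ already established.
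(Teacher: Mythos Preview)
Your proof is correct and follows essentially the same route as the paper's: pointwise convergence of $v(\theta,\mu^n,x)$ by the three-case analysis, then of $J(\mu^n,\theta,x)$, then reduction of the suprema to a fixed compact interval via a lower bound $F^n(\theta)\gtrsim\theta^2$ and a linear/sublinear upper bound on $J$. The only noteworthy differences are in the upgrade from pointwise to locally uniform convergence of $J(\mu^n,\cdot,x)$: the paper invokes a Dini-type argument (using that $\theta\mapsto J(\mu^n,\theta,x)$ is monotone and the limit continuous), whereas you use the equicontinuity route via $\partial_\theta J=v$ together with the uniform bound $|v|\le\max(|l^{(n)}|,x)$ and Arzel\`a--Ascoli; and in Part~2 the paper uses the cruder bound $J(\mu^n,\theta,x)\le\theta\max(r^{(n)},x)$ instead of your explicit supercritical asymptotic. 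Both variants work; your equicontinuity argument is arguably cleaner since it does not rely on the monotonicity of $J$ in $\theta$ (which hinges on the symmetry of $\mu_\sigma$).
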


\begin{proof}

%\[ \int_0^1 \int_0^1 \sigma^n(x,y) d \mu(x) d \mu(y) = \int_0^1 \int_0^1 \sigma^n(x,y) d \mu^n(x) d \mu^n(y) = \sum_{i,j =1}^n \sigma^n_{i,j}\mu^n([(i-1)/n, i/n])\mu^n([(j-1)/n, j/n]) \]

%By Jensen Inequality we have that 

%\[ n \int_{i/n}^{(i+1)/n} \ln \left( \frac{d\mu^n}{dx} \right) dx \leq \ln \left( n  \mu([i/n;(i+1)/n]) \right) \]

%since : 

%\[ \int_{i/n}^{(i+1)/n} \frac{d\mu^n}{dx} dx \leq \mu([i/n,(i+1)/n]) \]

%And so, by summing over $i$, we have : 

%\[ \liminf_n \frac{\theta^2}{\beta} P( \mu^n) + \int_0^1 \ln\left( \frac{d\mu^n}{dx} \right) \geq \frac{\theta^2}{\beta} P( \mu) + \int_0^1 \ln\left( \frac{d\mu}{dx} \right) \]

For the first point of the lemma, let's first prove that for every $x \geq r_{\sigma}$, $\theta \mapsto J(\mu^n, \theta, x)$ converges uniformly on every compact towards the function $\theta \mapsto J(\mu, \theta, x)$.  Let $l<r$ be two reals. For $\mu$ a probability measure on $\R$ whose support is a subset of $]l, r[$, let $Q_{\mu}$ be the function defined on $\mathcal{D}_{r,\epsilon}= \{ ( \theta,u) \in \R^+ \times ]r, + \infty[ : \frac{2\theta}{\beta} (r - u) \leq 1 - \epsilon \}$ by 
\[ Q_{\mu}(\theta,u)= \int_{y \in \R} \log \left(  1 + \frac{2 \theta}{\beta} ( u - y) \right) d \mu(y) \]
$Q_{\mu}$ is continuous in $(\theta,u)$ and for $K \subset  \mathcal{D}_{r,\epsilon}$ a compact we have that the function $\mu \to {Q_{\mu}}_{| K}$ mapping $\mu$ to the restriction of $Q_{\mu}$ on $K$ is continuous in $\mu$ for the weak topology and $\mu$ such that their support is a subset of $]l, r[$ when the arrival space is the set of functions on $K$ endowed with the uniform norm (this is a consequence of Ascoli's theorem). 
Let $x >  r_{\sigma}$ and $r,l$ such that $l< l_{\sigma}< r_{\sigma}<r<x$. For $n$ large enough the support of $\mu^n$ is in $]l,r[$. We have that the sequence of functions $\theta \mapsto  v(\theta,\mu^n,x)$  converges to $\theta \mapsto  v(\theta,\mu,x)$. Indeed if $\frac{2 \theta}{\beta} > G_{\mu}(x)$, then since $\lim_{n \to \infty}G_{\mu^n}(x) = G_{\mu}(x)$, $\frac{2 \theta}{\beta} > G_{\mu^n}(x)$ for $n$ large enough and the result is immediate. 

If $\frac{2 \theta}{\beta} < G_{\mu}(x)$ then $\frac{2 \theta}{\beta} < G_{\mu^n}(x)$ for $n$ large enough. $G_{\mu^n}$ converge towards $G_{\mu}$ on $[r, + \infty[$, for $\epsilon > 0$, $K_{\mu^n}$ is defined on $]0, G_{\mu}(r) - \epsilon]$ for $n$ large enough and $K_{\mu^n}$ converges toward $K_{\mu}$ and therefore $R_{\mu^n}(\frac{2\theta}{\beta})$ converges towards $R_{\mu}(\frac{2\theta}{\beta})$. For $\theta = G_{\mu}(x)$ we use 
that $v(\theta, \mu, x) = R_{\mu^n}(\frac{2\theta}{\beta})$ if  $\frac{2 \theta}{\beta} \leq  G_{\mu^n}(x)$ and  $v(\theta, \mu, x) = (x - \frac{\beta}{2\theta})$ if $\frac{2 \theta}{\beta} \geq  G_{\mu^n}(x)$ and that the limits in both cases are $v(\theta,\lambda,x)$.

Then we have that for $ 2 \theta/\beta \leq G_{\mu}(x)$
\begin{align*}
 \frac{2\theta}{\beta} (r -v(\theta,\mu,x))  &= \frac{2\theta}{\beta}( r - R_{\mu}(2\theta/ \beta)) \\
&=  \frac{2\theta}{\beta}( r - K_{\mu}(\beta/2\theta) + \beta/ 2 \theta) \\
& \leq 1 -  \frac{2\theta}{\beta}( r - K_{\mu}(\beta/2\theta)). \\
\end{align*}
Writing $\frac{2\theta}{\beta} = G_{\mu}(y)$ with $y>x$ we have 
\[ \frac{2\theta}{\beta} (r -v(\theta,\mu,x)) \leq 1 - G_{\mu}(y)(y- r ) \leq 1 - G_{\mu}(x)(x- r ) \]
where we used that $ y \mapsto G_{\mu}(y)(y- r )$ is increasing.

For $ 2 \theta/\beta \geq G_{\mu}(x)$
\begin{align*}
 \frac{2\theta}{\beta} (r -v(\theta,\mu,x))  &= \frac{2\theta}{\beta}( r - x) + 1\\
&\leq  1 - G_{\mu}(x)( x - r ).
\end{align*}

Taking $\epsilon >0$ such that $  \epsilon < G_{\mu}(x)(x- r )$ and using the continuity in $\mu$ of $\mu \mapsto G_{\mu}(x)(x- r )$, we have for every compact $K' \subset \R^+$ and $n$ large enough:
\[ \sup_{\theta \in K'}\frac{2\theta}{\beta}(r -v(\theta,\mu^n,x)) \leq 1 - \epsilon .\]
Therefore, using the convergence of $v(\theta,\mu^n,x)$ and the uniform convergence of $Q_{\mu^n}$ on the compacts of $\mathcal{D}_{r, \epsilon}$, since:
 \[ J(\mu^n,\theta,x) = \theta v(\mu^n,\theta,x) - \frac{\beta}{2} Q_{\mu^n}(\theta,v(\mu^n,\theta,x)) \]
we have that $J(\mu^n,\theta,x)$ converges towards $J(\mu_{\sigma},\theta,x)$. Furthermore, since $\theta \mapsto J(\mu_{\sigma},\theta,x)$ are continuous increasing functions, by Dini's theorem the convergence is uniform on all compact.

%Let $(D_N)_{N \in \N}$ be a sequence of diagonal matrices such that $\mu_{D_N}$ converges to $\mu_{\sigma}$, $\lambda_{\max}(D_N)$ converges to $x$ and $(D'_N)_{N \in \N}$ another sequence a diagonal matrices such that $\mu_{D'_N}$ converges to $\mu^{(n)}$ and $\lambda_{\max}(D_N)$ converges to $x$. For $n$ large enough $x > r^{(n)}$ where $r^{(n)}$ is the upper bound of the support of $\mu^n$. For every $N \in \N$, let's denote $d_{1}^N < ....< d_N^N$. the elements on the diagonal of $D_N$ and ${d'}_{1}^N < ....< {d'}_{N}^N$ those on the diagonal of $D'_N$. 
%We have that $I_N(D_N,\theta) \to J(\mu,\theta,x)$ and $I_N(D'_N,\theta) \to J(\mu^n,\theta,x)$. Furthermore, for every $e \in \mathbb{S}^{\beta N -1}$. 

%\begin{align*} \exp(- N \theta M_N) \exp \left( N \theta \langle D_N e, D_N \rangle \right) &\leq \exp \left( N \theta \langle D_N e, D_N \rangle \right) \\ &\leq \exp(N \theta M_N) \exp \left( N \theta \langle D_N e, D_N \rangle \right) 
%\end{align*}

%Where 

%\[ M_N := \sup_{1 \leq i \leq  N} |d_{i}^N -{d'}_{i}^N|\] 

%For an appropriate choice of $d_i, {d'}_i$, we have that for every $\epsilon$and $n$ large enough, $M_N \leq \epsilon$ and so, taking the logarithm of the preceding equation and then the limit in $N$ for every $\theta \leq \theta_{\max}$, 
%\[ | J( \mu^{n}, \theta, x) - J( \mu, \theta, x)| \leq \epsilon \]

We now prove the convergence of $I^n$ towards $I$. Let us prove that there is $A> 0$ and $n_0 \in \N$ such that for $n \geq n_0$ and $\theta >0$

\[  F^{n}(\theta) - E^{n}(\theta) \geq A \theta^2 .\]

We have 

\begin{align*}
F^{n}( \theta) &\geq \frac{\theta^2}{\beta} P(\sigma^n,Leb) \\
&\geq \frac{\theta^2}{\beta} \int_0^1 \int_0^1 \sigma^2(x,y) dx dy
\end{align*}
 and $  - E^{n}(\theta) \geq \epsilon \theta^2$ for n large enough. Choosing $\epsilon < \int_0^1 \int_0^1 \sigma^2(x,y) dx dy$ we have our result. 

Then given that $J( \mu^n,\theta,x) \leq \theta \max( r_n,x)$ we have that for any $r > r_{\sigma}, x > r_{\sigma}, \theta > 0$ for $n$ large enough:

\[ J( \mu^n,\theta,x) - F^{n}(\theta) +  E^{n}(\theta) \leq  r\theta x - A \theta^2 \]

Since $\lim_{\theta \to \infty} r \theta x - A \theta^2 = - \infty$. and that $\theta \mapsto J( \mu^n,\theta,x) - F^{n}(\theta) +  E^{n}(\theta) $ converges toward $\theta \mapsto J( \mu_{\sigma},\theta,x) - F(\theta)$ on every compact of $\R^+$, we deduce that for every $x > r_{\sigma}$: 

\[ \lim_{n \to \infty} \tilde{I}_{\beta}^{n}(x) = I(x) \]

and in the same way  with $E^n = 0$:
\[ \lim_{n \to \infty} I_{\beta}^{n}(x) = I(x) .\]

\end{proof}

\subsection{Conclusion}

We will now prove that the difference between $X_N^{(\beta),n}$ and $X^{(\beta)}_N$ is negligible at the exponential scale. 

\begin{lemma}
For every $\epsilon >0$ and every $A>0$, there exists some $n_0 \in \N$ such that for $n \geq n_0$: 
\[ \limsup_{N} \frac{1}{N} \log  \Pp [ || X_N^{(\beta),n} - X^{(\beta)}_N || \geq \epsilon]  \leq - A \]

\end{lemma}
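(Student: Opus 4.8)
The plan is to reduce the statement to the exponential tightness estimate of Lemma~\ref{exptight2}. I would write $\Delta_N^n := X_N^{(\beta),n}-X_N^{(\beta)}=(\Sigma_N^{(n)}-\Sigma_N)\odot W_N^{(\beta)}$ and set $c_{n,N}:=\max_{1\le i,j\le N}\bigl|(\Sigma_N^{(n)}-\Sigma_N)_{i,j}\bigr|$ (if $c_{n,N}=0$ there is nothing to prove). After dividing by $c_{n,N}$, the matrix $A_N^n:=(\Sigma_N^{(n)}-\Sigma_N)/c_{n,N}$ has all entries bounded by $1$ in modulus, so $\Delta_N^n=c_{n,N}\,A_N^n\odot W_N^{(\beta)}$ is $c_{n,N}$ times a matrix of exactly the type controlled by Lemma~\ref{exptight2}; and since $c_{n,N}$ becomes arbitrarily small once $n$ is large, the event $\{\|\Delta_N^n\|\ge\epsilon\}$ is contained in $\{\|A_N^n\odot W_N^{(\beta)}\|\ge B\}$ for any prescribed threshold $B$.

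Two ingredients are needed. First, I would record that $\rho_n:=\limsup_{N\to\infty}c_{n,N}\to 0$ as $n\to\infty$; this is exactly the uniform approximation $\limsup_N\max_{i,j}|(\Sigma_N-\Sigma_N^{(n)})_{i,j}|\to 0$ already established (and used) in the proof of Lemma~\ref{conv}, which follows from the uniform continuity of $\sigma$, from $\lim_N\sup_{i,j}|\Sigma_N(i,j)-\sigma(i/N,j/N)|=0$, and from the fact that the $1/(n+1)$ correction term in $\sigma^n$ vanishes. Second, I would observe that Lemma~\ref{exptight2} holds, with the very same constant $B$, when the class $\mathcal{A}_N$ of non-negative matrices is enlarged to $\widetilde{\mathcal{A}}_N:=\{A\in\mathcal{S}_N(\R):\max_{i,j}|A(i,j)|\le 1\}$: its proof only uses the sharp sub-Gaussian Laplace transform of the entries together with $A(i,j)^2\le 1$, never the sign of $A(i,j)$. (Alternatively, one may absorb the symmetric sign pattern of $\Sigma_N^{(n)}-\Sigma_N$ into $W_N^{(\beta)}$, which leaves a Wigner matrix still satisfying Assumptions~\ref{A0BM} and~\ref{assBM}, since these are invariant under $x\mapsto-x$.)

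With these in hand the conclusion is immediate. Given $\epsilon>0$ and $A>0$, apply Lemma~\ref{exptight2} in this slightly enlarged form with $M=A$ to obtain $B>0$ such that $\limsup_N\frac{1}{N}\sup_{A'\in\widetilde{\mathcal{A}}_N}\log\Pp[\|A'\odot W_N^{(\beta)}\|\ge B]\le -A$, then pick $n_0$ with $\rho_n<\epsilon/B$ for $n\ge n_0$. For $n\ge n_0$ and $N$ large enough one has $c_{n,N}<\epsilon/B$, hence $\{\|\Delta_N^n\|\ge\epsilon\}=\{\|A_N^n\odot W_N^{(\beta)}\|\ge\epsilon/c_{n,N}\}\subseteq\{\|A_N^n\odot W_N^{(\beta)}\|\ge B\}$, and taking $\frac{1}{N}\log$ and $\limsup_N$ yields the claimed bound. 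The only genuinely delicate points are bookkeeping ones: the matrix $A_N^n$ depends on $N$, which is why the uniform supremum over $\widetilde{\mathcal{A}}_N$ in Lemma~\ref{exptight2} is essential, and the order of quantifiers must be kept straight ($B$ depends only on $A$, then $n_0$ on $\epsilon$ and $B$, then $N\to\infty$). I expect no real analytic obstacle here; the content is entirely carried by Lemma~\ref{exptight2}.
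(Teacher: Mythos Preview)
Your proposal is correct and follows essentially the same approach as the paper: write the difference as $(\Sigma_N^{(n)}-\Sigma_N)\odot W_N^{(\beta)}$, divide by the sup-norm of the variance-profile difference to land in the class covered by Lemma~\ref{exptight2}, and use that this sup-norm tends to $0$ as $n\to\infty$ to absorb the threshold. You are in fact slightly more careful than the paper on two points the paper glosses over: the dependence of $c_{n,N}$ on $N$ (handled via $\rho_n=\limsup_N c_{n,N}$), and the fact that $\Sigma_N^{(n)}-\Sigma_N$ need not have non-negative entries, so that Lemma~\ref{exptight2} must be applied on the enlarged class $\widetilde{\mathcal{A}}_N$ (which, as you note, its proof supports verbatim).
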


\begin{proof}
We can write that 

\[ X_N^{(\beta),n} - X^{(\beta)}_N = \Delta_N^{n} \odot W_N^{(\beta)} \]
where 
 \[  \Delta_N^n = \Sigma_N^{(n)} - \Sigma_N \]
Let
\[ M_n := \sup_{i,j} |(\Delta_N^n)_{i,j}| \]

We have that: 
\[ \lim_{n \to \infty} M_n = 0 \]
Following Lemma \ref{exptight2}, we can write that for every $n \in \N$, $A >0$ there is $B >0$ such that 

\[ \limsup_N \frac{1}{N} \log \Pp[ (M_{n})^{-1} ||X_N^{(\beta),n} - X^{(\beta)}_N|| > B ] \leq - A .\]
For $n_0 \in\N$ such that $M_n B \leq \epsilon$ for all $n \geq n_0$, our upper bound is verified. 

\end{proof}

Therefore, since both $I_{\beta}^{(n)}(x)$ and $\tilde{I}^{(n)}_{\beta}(x)$ converge toward $I_{\beta}(x)$, we have a weak large deviation principle with rate function $I_{\beta}$ . Furthermore since we also have  exponential tightness, we have that Theorem \ref{maintheowBM} holds.

It only remains to relax the positivity assumption \ref{Pos} for the piecewise constant case. Let $\sigma$ be a piecewise variance profile. We can approximate $\sigma$ by $\sigma^n := \sqrt{ \sigma^2 + \frac{1}{n+1}}$. We notice then that with this choice of $\sigma^n$:
\[ \vec{\Psi}(\theta,\sigma^n,\vec{\psi}) = \frac{\theta^2}{(n+1)\beta} + \vec{\Psi}(\theta,\sigma ,\vec{\psi}) \]
 so that if \ref{ArgMaxDis} is verified for $\sigma$, it is verified for $\sigma^n$. And so, as we have just done for the continuous case, we can prove the same way that the rate functions $I(\sigma^n,.)$ converges to $I(\sigma,.)$ and that the large deviation principle holds with $I(\sigma,.)$.

\section{The case of matrices with $2 \times 2$ block variance profiles} \label{2by2}

In this section, we will discuss the case of piecewise constant variance profiles with 4 blocks (which are not necessarily of equal sizes) and determine what are the cases where the Assumption \ref{ArgMaxDis} holds. In particular, we will provide examples where the maximum argument of Assumption \ref{ArgMaxDis} can be taken continuous without the need for the concavity assumption. 

Let's take a piecewise constant variance profile defined by $\vec{\alpha} = ( \alpha, 1- \alpha)$ and $\sigma_{1,1} = a, \sigma_{2,2} = b, \sigma_{1,2} = \sigma_{2,1}= c$. In order to apply Theorem \ref{maintheowBM} we need to study the maximum argument for $\theta$ fixed of: 
\[ \psi(x, \theta) = \vec{\Psi}(\sigma,\theta,(x,(1-x)))= \frac{\theta^2}{\beta} [ a^2 x^2 + b^2 (1- x)^2 + 2 c^2 x (1- x) ] + \frac{\beta}{2} [ \alpha \log x + (1- \alpha) \log(1 - x)] .\]

Since we can change $\alpha$ in $1 - \alpha$ by switching $a$ and $b$, we can suppose without loss of generality that $\alpha \leq  1/2$.

We have 

\[ \partial_x \psi(x, \theta) := \frac{2\theta^2}{\beta} [ a^2 x  - b^2 (1- x) +  c^2 (1 - 2x) ] + \frac{\beta}{2} \left(\frac{ \alpha}{x} -  \frac{1- \alpha} {1 - x} \right) .\]

Let $\Phi(x,\theta) := x(1-x)\partial_x \psi(x, \theta)$, so that $\Phi(.,\theta)$ vanishes at the critical points of $\psi(.,\theta)$. We have that: 
\[ \Phi(x,\theta)  = (a^2 + b^2 - 2 c^2) \frac{2\theta^2}{\beta}x(x - x_{min})(1-x) + \frac{\beta}{2}(\alpha -x) .\]
where

\[ x_{\min} := \frac{ c^2 - b^2 }{ 2 c^2 - a^2 - b^2} \]

\[ x_{\theta} = \text{argmax}_{x \in [0,1]} \psi(x,\theta). \]

\subsection{ Case with $ (a^2 + b^2 - 2 c^2) \leq  0$ }

In the case $(a^2 + b^2 - 2 c^2) \leq  0$ we have the $\psi(., \theta)$ is strictly concave and therefore $\theta \mapsto x_{\theta}$ is analytical and assumption \ref{ArgMaxDis} is satisfied and the large deviation principle applies. 

From now on, we assume $ (a^2 + b^2 - 2 c^2) > 0$.

\subsection{ Case $x_{\min} \geq 1/2$ }
 
We look for the zeros of $\Phi(.,\theta)$ in $[0,1]$. To this end, we look for the intersection points of the curve of equation $y = x(1-x)(x- x_{\min})$ and the line $y = A_{\theta}(x - \alpha)$ where $A_{\theta} =  \frac{\beta^2}{4\theta^2(a^2 + b^2 - 2 c^2)}$. 
\begin{figure}
\begin{center}
\includegraphics[scale=8]{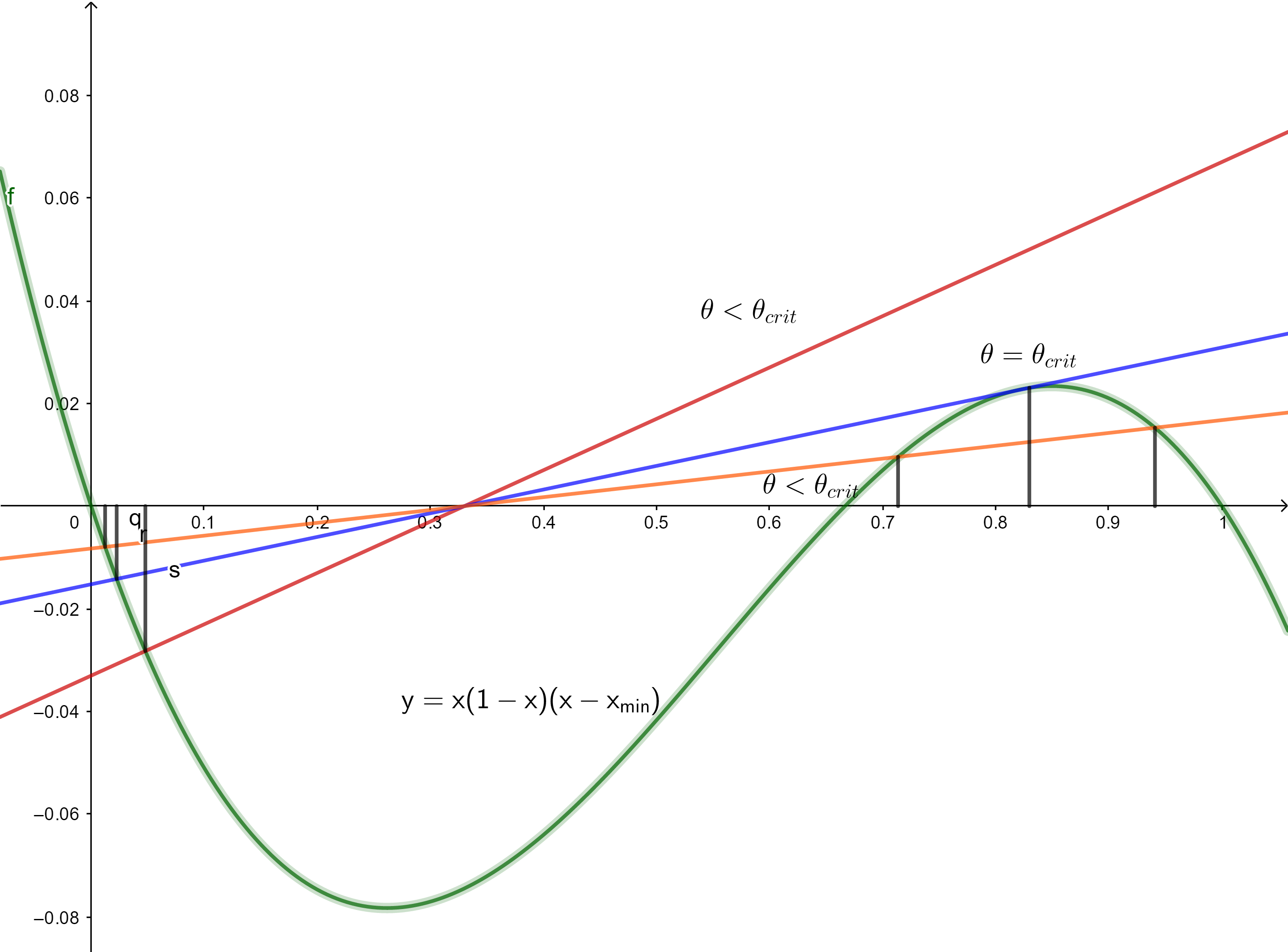}
\caption{ Case $x_{\min} \geq 1/2$ }
\end{center}
\end{figure}

We notice that there is a critical value $\theta_{crit}$ such that for $\theta \leq \theta_{crit}$, there is only one critical point $x_1^{\theta}$ which is on $]0,1/2[$. For $\theta > \theta_{crit}$ we have the apparition of two other critical points $x_2^{\theta}$ and $x_3^{\theta}$ that are such that $1/2 <x_2^{\theta} < x_3^{\theta}$ with $\psi(x_2^{\theta},\theta)$ being a local minimum and $\psi(x_3^{\theta},\theta)$ a local maximum. For $x \in]0,1[$, we have:

\begin{multline*} \psi(x, \theta) - \psi(1-x,\theta) =  \\ \frac{\beta}{2} (1 - 2 \alpha) ( \log (1-x) - \log x)+\frac{\theta^2}{\beta}(a^2 + b^2 - 2 c^2)[ (x - x_{min})^2 - (1 -x - x_{min})^2] .
\end{multline*}

For $x < 1/2$, we have $\psi(x, \theta) > \psi (1-x,\theta)$ and so the absolute maximum of $\psi(.,\theta)$ is attained at $x = x_1^{\theta}$. Furthermore, we notice the line $y = A_{\theta}( x - \alpha)$ is never tangent to the graph $y = x(x - x_{min})(1-x)$ in the point of first coordinate $x_1^{\theta}$, we have $\partial_x \Phi(x_1^{(\theta)},\theta ) \neq 0$. Now using the implicit function theorem, we have $\theta \mapsto x_1^{\theta}$ is analytical and so Assumption \ref{ArgMaxDis} is verified.

\subsection{ Case $x_{min} \leq \alpha$}

\begin{figure}
\begin{center}
\includegraphics[scale=8]{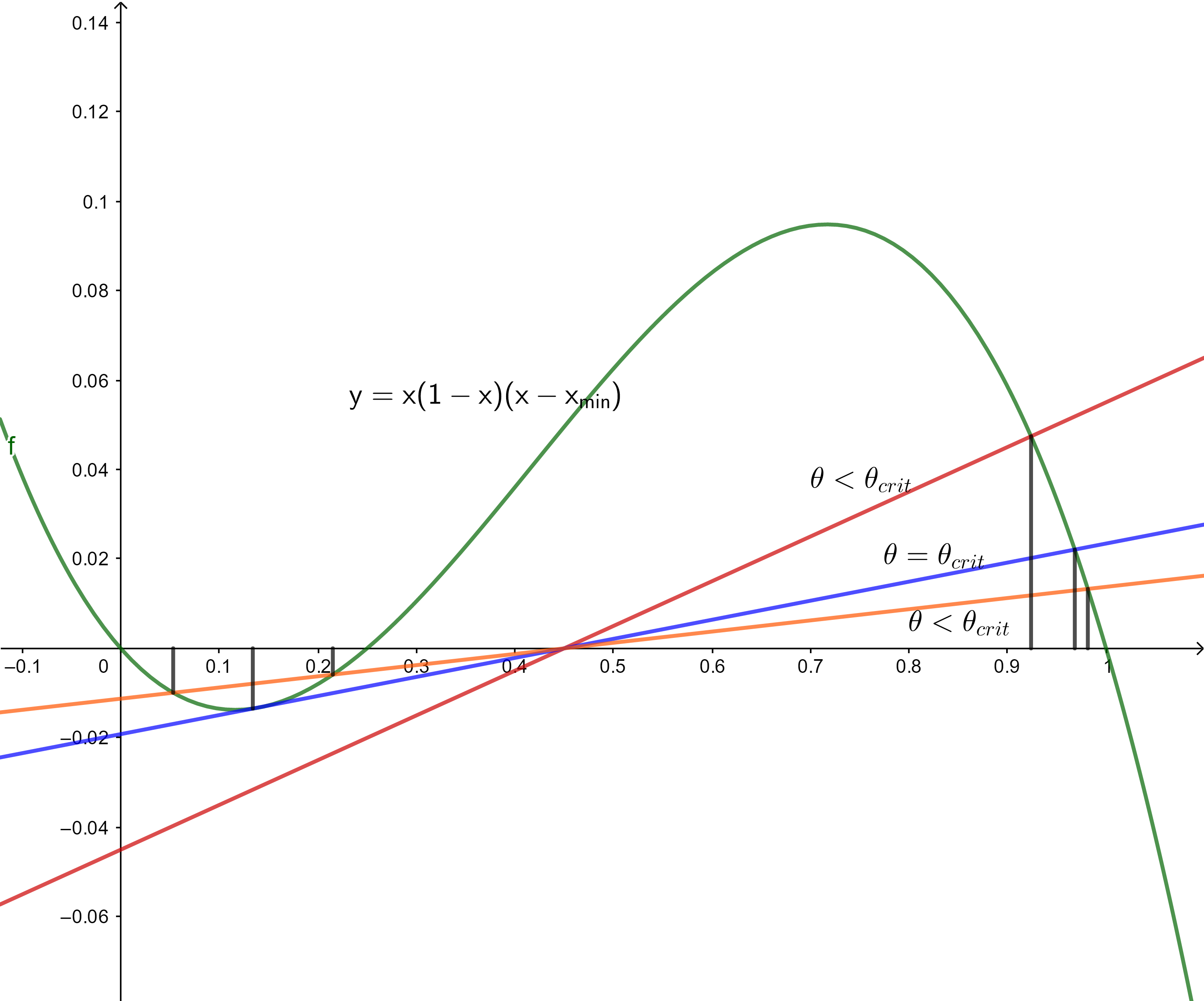}
\caption{Case $x_{min} \leq \alpha$}
\end{center}
\end{figure}
There is again a critical value $\theta_{crit}$ such that for $\theta \leq \theta_{crit}$, there is only one critical point $x_1^{\theta}$ which is on $]\alpha,1[$ and for $\theta > \theta_{crit}$ we have the apparition of two other critical points $x_2^{\theta}$ and $x_3^{\theta}$ such that $x_3^{\theta} < x_2^{\theta}\leq \alpha$. We have furthermore: 

\begin{multline*}
 \psi(x, \theta) - \psi(2 \alpha -x,\theta) = \frac{\beta}{2} [ \alpha \log x - (1- \alpha) \log (1 - 2 \alpha +x)] + \frac{\beta}{2} [ (1 - \alpha) \log(1-x) - \alpha \log (2 \alpha - x) ] \\ + \frac{\theta^2}{2}(a^2 + b^2 - 2 c^2)[ (x - x_{min})^2 - (2 \alpha -x - x_{min})^2] .
\end{multline*}

For $ x< \alpha$, $\psi(x,\theta) < \psi(2 \alpha-x,\theta)$ and there the absolute maximum of $\psi(.,\theta)$ is attained on $]\alpha, 1[$, so for $x_1^{\theta}$. Since $\theta \mapsto x_1^{\theta}$ is analytical, Assumption \ref{ArgMaxDis} is verified.

\subsection{Case $\alpha =1/2$ and $x_{\min} = 1/2$}

Then $x \mapsto \psi(x, \theta)$ is symmetrical in $1/2$. Looking at the zeros of $\Phi(., \theta)$ we have that if we set $\theta_{crit} := \beta \sqrt{2 / ( a^2 + b^2 - 2 c^2)}$ for $\theta \leq \theta_{crit}$ there is only one zero a $x = 1/2$ and for $\theta \geq \theta_{crit}$ there is three zeros in $ x= 1/2$ and $x = \frac{1}{2}  \pm \delta(\theta)$ where $\delta(\theta) = \frac{\beta}{2 \theta} \sqrt{\frac{2}{(a^2 + b^2 - 2 c^2) }}$. Furthermore, for $\theta \leq \theta_{crit}$, $\psi(., \theta)$ has its maximum in $x=1/2$ and for $\theta \geq \theta_{crit}$, $\psi(., \theta)$ has its maximum at both points $x=1/2 \pm \delta(\theta)$ where $\delta(\theta) = \frac{1}{2}\sqrt{1 - \frac{2 \beta^2}{\theta^2(a^2 + b^2 - 2 c^2)}}$. Therefore the function $\theta \mapsto 1/2$ for $\theta \leq \theta_{crit}$ and $\theta \mapsto 1/2 + \delta(\theta)$ for $\theta \geq \theta_{crit}$ is a continuous determination of the maximal argument of $\psi(.,\theta)$ and so Assumption \ref{ArgMaxDis} is verified and the large deviation principle holds. This gives an example where the maximum argument in Assumption \ref{ArgMaxDis} is neither unique nor $\mathcal{C}^1$ but where we can still derive a large deviation principle.

\subsection{ Case $x_{min} \in ]\alpha,1/2[$ and pathological cases}

The graph we obtain is similar to the graph of the first case. In this case, we also have a $\theta_{crit}$ such that for $\theta \leq \theta_{crit}$, there is only one critical point $x_1^{\theta}$ which is in $]0,\alpha[$ and then the apparition of two other critical points $x_2^{\theta}$ and $x_3^{\theta}$ that are such that $1/2 <x_2^{\theta} < x_3^{\theta}$, $\psi(x_2^{\theta},\theta)$ being a local minimum and $\psi(x_3^{\theta},\theta)$ a local maximum. But in this case for high values of $\theta$, we have that the maximum is attained near $1$ and so for these high values $x_3^{\theta}$ is the maximum argument. We have a discontinuity in the maximum argument and so Assumption \ref{ArgMaxDis} is not verified.

Let us now show that if $x_{min} \in ]\alpha,1/2[$ and $c =0$, the largest eigenvalue satisfies a large deviation principle but with a rate function $J$ different from $I$.

Our matrix $X_N^{(\beta)}$ then looks like:
\[ \begin{pmatrix}
\sqrt{a\alpha} T_{\alpha N}^{(\beta)} & 0 \\
0 & \sqrt{b(1- \alpha)} U_{(1-\alpha) N}^{(\beta)} \\
\end{pmatrix} \]

where $(T_N^{(\beta)})_N$ and $(U_N^{(\beta)})_N$ are independent Wigner matrices. We have: 

\[ \lambda_{\max}(X_N^{(\beta)}) = \max \{ \sqrt{a\alpha} \lambda_{\max}(T_{\alpha N}^{(\beta)}), \sqrt{b(1- \alpha)} \lambda_{\max}(U_{(1- \alpha) N}^{(\beta)}) \} .\]

But both these quantities satisfy large deviation principles, more precisely, if $I_{\beta}$ is the rate function for the large deviation principle of \cite{GuHu18} for a Wigner matrix, $\lambda_{\max}(\sqrt{a \alpha} T_{\alpha N}^{(\beta)})$ follows a large deviation principle with rate function $\alpha I_{\beta}( \frac{x}{\sqrt{ a  \alpha}})$ and  $\lambda_{\max}(\sqrt{b(1- \alpha)} U_{(1-\alpha) N}^{(\beta)})$ follows a large deviation principle with rate function$(1-\alpha) I_{\beta}( \frac{x}{\sqrt{ b (1 -\alpha)}})$. Now $\lambda_{\max}(X_N^{(\beta)})$ follows a large deviation principle with rate function $J^{\beta}$ which is: 

\[ J^{\beta}(x) := \min \{ \alpha I_{\beta}( \frac{x}{\sqrt{ a \alpha}}), (1-\alpha) I_{\beta}( \frac{x}{\sqrt{ b (1 - \alpha)}})  \}\]

In particular, if we choose $\alpha, a,b$ such that $a \alpha > b(1- \alpha)$ and $b >a$, we notice that $J^{\beta}(x) = \alpha I_{\beta}( \frac{x}{\sqrt{ a \alpha}})$ for $x$ near $a \alpha$ and  $J^{\beta}(x) = (1-\alpha) I_{\beta}( \frac{x}{\sqrt{ b (1 - \alpha)}})$ for large $x$. In this case one can notice that $J^{\beta}$ is not a convex function and therefore cannot by obtained as $\sup_{\theta} \{ J(x,\mu_{\sigma}, \theta)- F(\theta) \}$ since it is a $\sup$ of convex functions. We have $J \neq I$. 
\begin{figure}
\begin{center}
\includegraphics{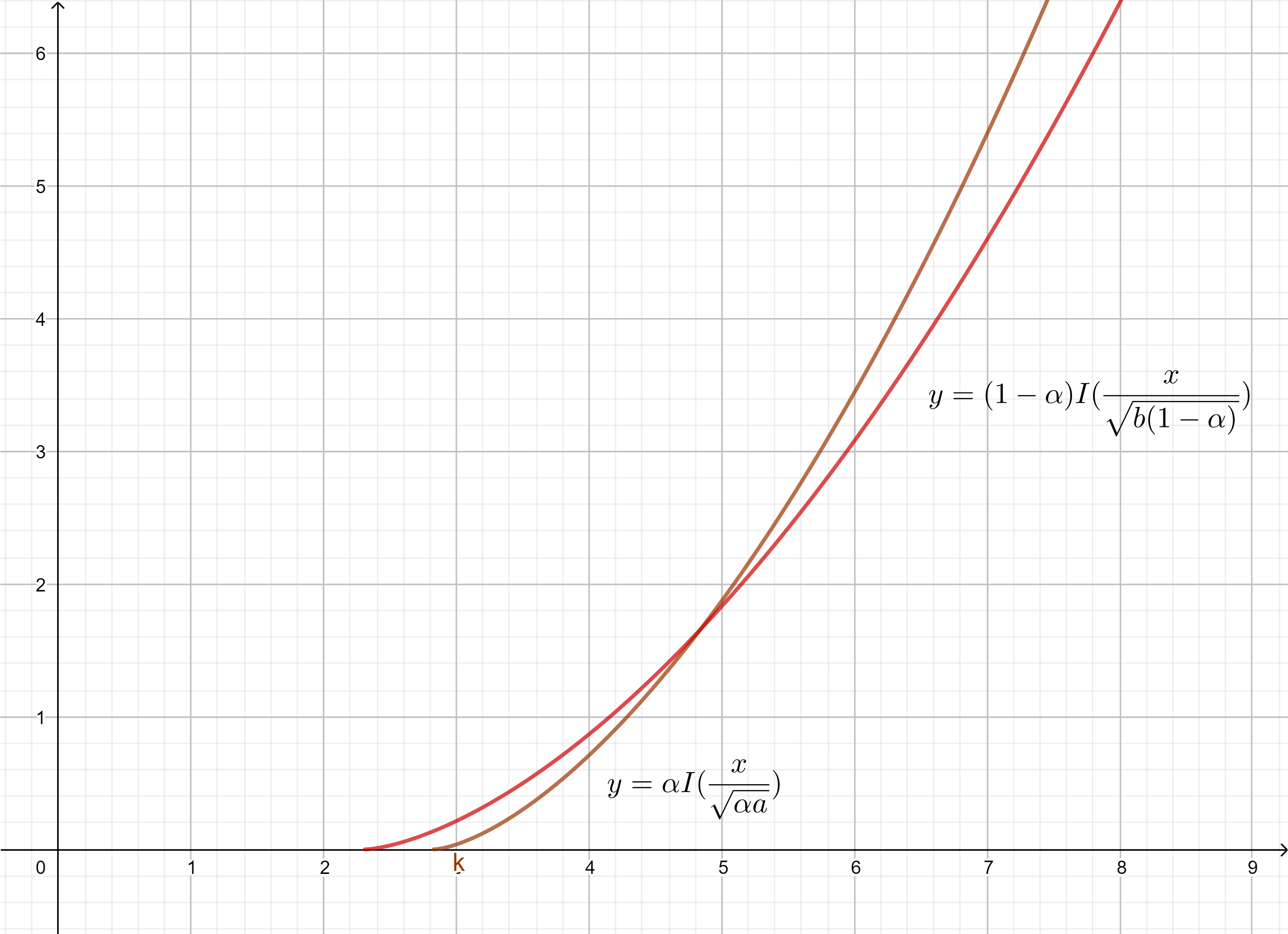}
\caption{ Graph of the rate function $\alpha I(x/ \sqrt{a \alpha})$ and $(1-\alpha) I(x/ \sqrt{b(1 - \alpha)})$ with $\alpha= 2/3$, $a = 3$, $b =4$}
\end{center}
\end{figure}
For $c>0$ but small enough we can also conclude that the large deviation principle still does not hold. Indeed, if we denote $I_c$ the rate function we expect using the formula of section \ref{rate}. Since $I_0$ still provides a large deviation upper bound, we have $J \geq I_0$ and so let $x_0\in \R^+$ such that $J(x_0)\geq  I_0(x_0) + \eta$ for some $\eta>0$ ($x_0$ does exists since $J \neq I_0$). Using the same approximation arguments as in section \ref{Approx}, we have that there exists $\epsilon > 0$ such that for $c < \epsilon$, $ I_c(x_0) < I_0(x_0) + \eta/3 $ and: 

\[ \lim_{\delta \to 0} \limsup_N - \frac{1}{N} \log \Pp[ \lambda_{\max}(X_N) \in [x_0 - \delta, x_0 + \delta]] \geq J(x_0) - \eta/3 \geq I_0(x_0) + 2\eta/3 \geq I_c(x_0) + \eta/3 \]

Since $I_c$ is continuous in $x_0$, we have that there cannot be a large deviation principle with the rate function $I_c$. 
\section{Looking for an expression of the rate function}\label{ratefunc}
In this section we will present a method to explicitly compute the rate function $I$ in the piecewise constant case under some hypothesis on the behavior of $F$. First, let us describe $F$ in a neighbourhood of $\theta = 0$. 

\begin{theo}
Let $\sigma$ be a continuous or piecewise constant variance profile, there is $\theta_0 > 0$ such that for $\theta \leq \theta_0$: 
\[ F(\sigma,\theta) = \frac{\beta}{2} \int_0^{\frac{2}{\beta} \theta} R(w) dw \]
Where $R$ is the $R$-transform of the measure $\mu_{\sigma}$.
\end{theo}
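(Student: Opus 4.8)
The plan is to solve the variational problem \eqref{EqMax} explicitly for $\theta$ small. First I would guess the optimizer. Write $\mathfrak{m}(\cdot,z)$ for the solution of the Girko equation, so that $\int_0^1 \mathfrak{m}(x,z)\,dx = G_{\mu_\sigma}(z)$. I claim that for $\theta$ small the maximum in \eqref{EqMax} is attained at the measure $\psi^{\theta}$ with density $p^{\theta}(x) = \frac{\beta}{2\theta}\mathfrak{m}(x,z_\theta)$, where $z_\theta := K_{\mu_\sigma}\!\left(\tfrac{2\theta}{\beta}\right)$. Indeed, writing $\Psi(\theta,\sigma,\psi)=\frac{\theta^2}{\beta}\int\!\int\sigma^2(x,y)p(x)p(y)\,dx\,dy+\frac{\beta}{2}\int\log p(x)\,dx$ for $\psi$ with density $p$, the Euler--Lagrange equation (with Lagrange multiplier $c$ for $\int p=1$) reads $\frac{2\theta^2}{\beta}\int_0^1\sigma^2(x,y)p(y)\,dy+\frac{\beta}{2p(x)}=c$. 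Substituting $p=a\,\mathfrak{m}(\cdot,z)$ and using the Girko identity $\int_0^1\sigma^2(x,y)\mathfrak{m}(y,z)\,dy=z-1/\mathfrak{m}(x,z)$, the left-hand side equals $\frac{2a\theta^2 z}{\beta}+\frac{1}{\mathfrak{m}(x,z)}\big(\frac{\beta}{2a}-\frac{2a\theta^2}{\beta}\big)$, which is constant in $x$ exactly when $a=\frac{\beta}{2\theta}$, and then $c=\theta z$. The normalization $\int p^\theta=\frac{\beta}{2\theta}G_{\mu_\sigma}(z)=1$ forces $z=z_\theta$, and $p^\theta>0$ because $\mathfrak{m}(x,z)>0$ for real $z>r_\sigma$; both require $\tfrac{2\theta}{\beta}<G_{\mu_\sigma}(r_\sigma)$, i.e. $\theta<\theta_c:=\tfrac{\beta}{2}G_{\mu_\sigma}(r_\sigma)>0$. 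The piecewise constant case is identical after replacing $\mathfrak{m}(\cdot,z)$ by its vector of values $(\mathfrak{m}_k(z))_k$ and $p^\theta$ by the weights $\psi_k^\theta=\frac{\beta}{2\theta}\alpha_k\mathfrak{m}_k(z_\theta)$, the optimality conditions then being those of \eqref{EqMaxDis}.

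The main point, and the main obstacle, is to show that $\psi^\theta$ is the \emph{unique} maximizer for $\theta$ small, so that $F(\sigma,\theta)=\Psi(\theta,\sigma,\psi^\theta)$. A maximizer exists: in the continuous case by the existence lemma proved just after \eqref{EqMax}, and in the piecewise case by compactness together with the fact that the objective tends to $-\infty$ on the boundary $\{\exists k:\psi_k=0\}$. Any maximizer has finite relative entropy, hence a positive density $p$ solving $p(x)=\frac{\beta/2}{\,c-\frac{2\theta^2}{\beta}\int\sigma^2(x,y)p(y)\,dy\,}$; since $0\le\int\sigma^2(x,y)p(y)\,dy\le\|\sigma^2\|_\infty$, the constraint $\int p=1$ forces $c\in[\tfrac{\beta}{2},\tfrac{\beta}{2}+\tfrac{2\theta^2}{\beta}\|\sigma^2\|_\infty]$ and therefore $\|p-\mathbf{1}\|_\infty\le C\theta^2$ for a constant $C=C(\|\sigma^2\|_\infty,\beta)$; the same confinement holds for $p^\theta$. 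On the ball $\{\|p-\mathbf{1}\|_\infty\le C\theta^2\}$ the second variation of $\Psi(\theta,\sigma,\cdot)$ along $\delta p$ with $\int\delta p=0$ equals $\frac{2\theta^2}{\beta}\int\!\int\sigma^2\,\delta p\,\delta p-\frac{\beta}{2}\int\frac{(\delta p)^2}{p^2}\,dx$; by Cauchy--Schwarz $\int\!\int\sigma^2\,\delta p\,\delta p\le\|\sigma^2\|_\infty\big(\int|\delta p|\big)^2\le\|\sigma^2\|_\infty\big(\int p^2\big)\int\frac{(\delta p)^2}{p^2}\,dx$, and $\int p^2$ is bounded on the ball, so the second variation is strictly negative for $\theta$ small. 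Hence $\Psi(\theta,\sigma,\cdot)$ is strictly concave there, and since every critical point lies in the ball, $\psi^\theta$ is the unique maximizer (the finite-dimensional computation is the same in the piecewise case).

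It remains to evaluate $F(\sigma,\theta)=\Psi(\theta,\sigma,\psi^\theta)$. The map $\theta\mapsto F(\sigma,\theta)$ is convex, being a supremum of the convex functions $\theta\mapsto\frac{\theta^2}{\beta}P(\sigma,\psi)-\frac{\beta}{2}D(\text{Leb}||\psi)$, hence locally absolutely continuous; comparing $F(\sigma,\theta')\ge\Psi(\theta',\sigma,\psi^\theta)=F(\sigma,\theta)+\frac{\theta'^2-\theta^2}{\beta}P(\sigma,\psi^\theta)$ with the analogous inequality with $\theta$ and $\theta'$ swapped, and using the continuity of $\theta\mapsto P(\sigma,\psi^\theta)$ (which follows from uniqueness and a standard compactness argument), one gets the envelope formula $\frac{d}{d\theta}F(\sigma,\theta)=\frac{2\theta}{\beta}P(\sigma,\psi^\theta)$. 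Now $P(\sigma,\psi^\theta)=\int p^\theta(x)\big(\int\sigma^2(x,y)p^\theta(y)\,dy\big)dx=\big(\tfrac{\beta}{2\theta}\big)^2\big(z_\theta G_{\mu_\sigma}(z_\theta)-1\big)=\frac{\beta z_\theta}{2\theta}-\frac{\beta^2}{4\theta^2}$, using the Girko identity, $\int p^\theta=1$ and $G_{\mu_\sigma}(z_\theta)=\tfrac{2\theta}{\beta}$. Therefore $\frac{d}{d\theta}F(\sigma,\theta)=z_\theta-\frac{\beta}{2\theta}=K_{\mu_\sigma}\!\left(\tfrac{2\theta}{\beta}\right)-\frac{1}{2\theta/\beta}=R\!\left(\tfrac{2\theta}{\beta}\right)$, with $R=R_{\mu_\sigma}$. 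Finally $F(\sigma,0)=-\frac{\beta}{2}\inf_\psi D(\text{Leb}||\psi)=0$, so integrating and substituting $w=2s/\beta$ gives $F(\sigma,\theta)=\int_0^\theta R\!\left(\tfrac{2s}{\beta}\right)ds=\frac{\beta}{2}\int_0^{2\theta/\beta}R(w)\,dw$ for $0\le\theta\le\theta_0$, where $\theta_0:=\min(\theta_c,c_0)$ and $c_0>0$ is the threshold from the concavity estimate of the previous paragraph.

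As a consistency check, the same differentiation shows that $\frac{\beta}{2}\int_0^{2\theta/\beta}R(w)\,dw$ coincides with $J(\mu_\sigma,\theta,r_\sigma)$ on this range — both vanish at $\theta=0$ and have $\theta$-derivative $R(2\theta/\beta)$ once one uses $G_{\mu_\sigma}(K_{\mu_\sigma}(t))=t$ — which matches $I^{(\beta)}(\sigma,r_\sigma)=0$, i.e. $\max_{\theta\ge0}\big(J(\mu_\sigma,\theta,r_\sigma)-F(\sigma,\theta)\big)=0$.
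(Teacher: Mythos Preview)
Your proof is correct and takes a genuinely different route from the paper's. The paper argues probabilistically: it uses the identity $F(\sigma,\theta)=\lim_N N^{-1}\log\E_{X_N,e}[\exp(N\theta\langle e,X_N e\rangle)]$ from Theorem \ref{rftheoBM}. For the lower bound it restricts to $\{\lambda_{\max}(X_N)<M\}$ and invokes the Guionnet--Ma\"ida asymptotics (Theorem \ref{mylBM}), which give $J_N(X_N,\theta)\to \frac{\beta}{2}\int_0^{2\theta/\beta}R(w)\,dw$ whenever $2\theta/\beta\le G_{\mu_\sigma}(M)$; for the upper bound it slices over $\{(n-1)M\le\lambda_{\max}<nM\}$ and kills the tail with the exponential tightness of $\lambda_{\max}$ (Lemma \ref{exptightBM}). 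The variational formula \eqref{EqMax} is never touched in that argument.

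You instead solve \eqref{EqMax} directly: you exhibit the maximizer $p^\theta=\frac{\beta}{2\theta}\mathfrak m(\cdot,z_\theta)$ via the Dyson--Girko equation, show it is the unique critical point for small $\theta$ by a second-variation/confinement estimate, and then read off $F$ through the envelope formula $\partial_\theta F=\frac{2\theta}{\beta}P(\sigma,\psi^\theta)=R(2\theta/\beta)$. This is more self-contained --- it needs neither Theorem \ref{mylBM} nor Theorem \ref{rftheoBM} nor the exponential tightness --- and it yields strictly more information, namely the explicit optimizer and the pleasant structural fact that the Girko solution, suitably normalized, \emph{is} the optimal tilt measure. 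The paper's route is shorter given the machinery already in place, but it gives no such identification. One small caveat: in the continuous case the derivation of the Euler--Lagrange equation for an arbitrary maximizer requires a moment's care (a priori the density $p$ is only a.e.\ positive, so one should perturb along convex combinations $(1-t)\psi+t\,\mathrm{Leb}$ rather than additively); your fixed-point bound $\|p-\mathbf 1\|_\infty\le C\theta^2$ then follows and the rest of the argument goes through.
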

\begin{proof}
This result was proved in the case of a linearisation of a Wishart matrix in section 4.2 of $\cite{GuHu18}$. For the sake of completeness, we will reproduce here this proof. For the lower bound, we have for $M >r_{\sigma}$ and $2\theta/ \beta \leq G(M)$ (where $G$ is the Stieltjes transform of the measure $\mu_{\sigma}$): 

\[ F(\sigma,\theta) \geq \liminf_{N \to \infty} \frac{1}{N}\E_{X_N}[ \mathds{1}_{\lambda_{\max}(X_N) < M } I_N(X_N,\theta)] \geq \frac{\beta}{2} \int_0^{\frac{2}{\beta} \theta} R(w) dw \]
 This is due to the fact that for $2 \theta/ \beta \leq G(x)$, $J(\theta,x, \mu) = \frac{\beta}{2} \int_{0}^{\frac{2 \theta}{\beta}} R(\theta) d\theta$. For the upper bound, we write:

\begin{align*}
\E_{X_N}[ I_N(X_N,\theta)]  & \leq \sum_{n=1}^{+ \infty} \E_{X_N}[  \mathds{1}_{(n-1) M \leq \lambda_{\max}(X_N) < nM} I_N(X_N,\theta)] \\
& \leq  \E_{X_N}[ \mathds{1}_{\lambda_{\max}(X_N) < M} I_N(X_N,\theta)] + \sum_{n=2}^{+ \infty} \E_{X_N}[  \mathds{1}_{(n-1) M \leq  \lambda_{\max}(X_N) < nM} I_N(X_N,\theta)] \\
& \leq  \E_{X_N}[ \mathds{1}_{\lambda_{\max}(X_N) < M} I_N(X_N,\theta)] + \sum_{n=2}^{+ \infty} \exp( -K(n-1)M + nNM \theta) \\
& \leq  \E_{X_N}[ \mathds{1}_{\lambda_{\max}(X_N) < M} I_N(X_N,\theta)] + e^{N\theta M}\frac{\exp( M (\theta - K)N)}{ 1- \exp((M\theta - K)N)}
\end{align*}
Where we used that for $N$ large enough, we have for every $N$, $\Pp[ \lambda_{\max}(X_N) \geq M] \leq \exp(-KM)$ for some $K>0$ and that for $\lambda_{\max}(X_N) \leq M$, $I_N(X_N,\theta) \leq e^{M\theta}$. Now, by choosing $\theta$ small enough such that $( 2 \theta - K) < 0$, we have the upper bound. 
\end{proof}

The main results of this section is the following: 
\begin{theo}\label{thRateF}
If the function $\theta \mapsto F(\sigma, \theta)$ is analytic, then the $R$ transform of $\mu_{\sigma}$ has an analytic extension on $\R^+$ and then the rate function $I(\sigma,.)$ only depends on $\mu_{\sigma}$. 
\end{theo}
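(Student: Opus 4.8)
The plan is to bootstrap the local formula for $F$ near $\theta=0$ provided by the previous theorem into a global identity on all of $\R^+$, using the analyticity hypothesis together with the identity theorem, and then simply read off that $I$ depends on $\sigma$ only through $\mu_\sigma$.

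First I would invoke the preceding theorem: there is $\theta_0>0$ such that
\[ F(\sigma,\theta)=\frac{\beta}{2}\int_0^{\frac{2}{\beta}\theta}R(w)\,dw,\qquad 0\le\theta\le\theta_0, \]
where $R=R_{\mu_\sigma}$ is the $R$-transform of $\mu_\sigma$, which is real-analytic on its natural domain $]0,G_{\mu_\sigma}(r_\sigma)[$ (a nondegenerate interval since $G_{\mu_\sigma}(r_\sigma)>0$, possibly $+\infty$). Differentiating in $\theta$ and using the fundamental theorem of calculus, $\partial_\theta F(\sigma,\theta)=R(2\theta/\beta)$ on $[0,\theta_0]$, i.e. $R(w)=\partial_\theta F(\sigma,\beta w/2)$ for $0\le w\le 2\theta_0/\beta$. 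Since by hypothesis $F(\sigma,\cdot)$ extends holomorphically to an open connected neighbourhood of $[0,\infty)$ in $\C$, so does $\partial_\theta F(\sigma,\cdot)$, and hence $\widetilde R(w):=\partial_\theta F(\sigma,\beta w/2)$ defines an analytic function on $\R^+$ that agrees with $R$ on $[0,2\theta_0/\beta]$. By uniqueness of analytic continuation this $\widetilde R$ is the desired analytic extension to $\R^+$ of the $R$-transform of $\mu_\sigma$, which is the first assertion.

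For the second assertion I would compare the two functions $\theta\mapsto F(\sigma,\theta)$ and $\theta\mapsto\frac{\beta}{2}\int_0^{2\theta/\beta}\widetilde R(w)\,dw$. Both are analytic on a connected open neighbourhood of $[0,\infty)$ in $\C$ (the latter because a primitive of an analytic function is analytic), and by the displayed identity they coincide on $[0,\theta_0]$, a set with an accumulation point. The identity theorem then yields
\[ F(\sigma,\theta)=\frac{\beta}{2}\int_0^{\frac{2}{\beta}\theta}\widetilde R(w)\,dw \qquad\text{for all }\theta\ge 0. \]
Since $\widetilde R$ is determined by $\mu_\sigma$ alone, so is $F(\sigma,\cdot)$. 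Recalling that $I^{(\beta)}(\sigma,x)=\max_{\theta\ge 0}\big(J(\mu_\sigma,\theta,x)-F(\sigma,\theta)\big)$ for $x\ge r_\sigma$ (and $=-\infty$ below $r_\sigma$), and that both $J(\mu_\sigma,\cdot,\cdot)$ and $r_\sigma$ depend on $\sigma$ only through $\mu_\sigma$, I conclude that $I^{(\beta)}(\sigma,\cdot)$ depends on $\sigma$ only through $\mu_\sigma$.

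The main point requiring care — essentially the only one — is pinning down the meaning of ``analytic'' so that the identity theorem applies: it must be read as ``admits a holomorphic extension to an open connected subset of $\C$ containing $[0,\infty)$'' (in particular, analytic up to the boundary point $0$). Granting this, the theorem is exactly the statement that analyticity of $F$ is what propagates the near-$0$ formula of the previous theorem to all of $\R^+$, thereby removing any dependence of $F$, and hence of $I$, on $\sigma$ beyond the limiting spectral measure $\mu_\sigma$.
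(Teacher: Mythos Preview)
Your proof is correct and follows essentially the same route as the paper: differentiate the local identity $F'(\sigma,\theta)=R(2\theta/\beta)$ obtained from the previous theorem, use analyticity of $F$ to extend $R$ to all of $\R^+$, and then invoke uniqueness of analytic continuation to conclude that $F$ (and hence $I$) depends only on $\mu_\sigma$. You spell out the identity-theorem step and the precise meaning of ``analytic'' more carefully than the paper's terse version, but the argument is the same.
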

\begin{proof}
Since $F(\sigma,.)$ is analytic and so is $R$ and since we have $F'(\sigma, \theta) = R(\frac{2 \theta}{\beta})$ for small $\theta$, $F$ depends only on $R$ that is on $\mu_{\sigma}$ and $F'( \frac{ \beta x}{2})$ extends $R$ on $\R^+$. Then, looking at the expression of $I(\sigma,.)$, it only depends on $\mu_{\sigma}$. 
\end{proof}
\begin{rem}
Without any condition on the variance profile $\sigma$, we do not have that $I(\sigma,.)$ depends only on $\mu_{\sigma}$. For instance if we take $X_N$ and $X'_N$ independent matrices both with the same variance profile $\sigma$, $\alpha,\beta >0$ such that $\alpha > \beta$ and $\alpha + \beta = 1$, then the following matrix has a variance profile: 

\[ Y_N = \begin{pmatrix} X_{\alpha N} & 0 \\
0 & X'_{\beta N} 
\end{pmatrix}.
\]
And then $\lambda_{\max}(Y_N) = \max( \lambda_{\max}(X_{\alpha N}), \lambda_{\max}(X_{\beta N}))$. We have that $\lambda_{\max}(Y_N)$ satisfy a large deviation principle with rate function $\beta I(\sigma,.)$ whereas this matrix has for limit measure $\mu_{\sigma}$ whatever the choice of $\beta$.  
\end{rem}
In the case of a piecewise constant variance profile, the same concavity hypothesis as before implies the analyticity of the function $F(\sigma,.)$ (this is due to the fact that with the implicit function theorem, the maximum argument is indeed analytic in $\theta$). 
\begin{prop}
If the Assumption \ref{ConcaveDis} holds in the case of a piecewise constant case then the function $\theta \mapsto F(\sigma,\theta)$ is analytic. 
\end{prop}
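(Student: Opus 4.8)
The plan is to upgrade the implicit function theorem argument that was already used to derive Assumption~\ref{ArgMaxDis} from Assumption~\ref{ConcaveDis}: instead of invoking the $C^1$ implicit function theorem merely to get continuity of the maximizer, I would invoke its real-analytic version to get analyticity of the maximizer, and then conclude that $F(\sigma,\cdot)$ is analytic by composition. Concretely, recall from the proof that Assumption~\ref{ConcaveDis} implies Assumption~\ref{ArgMaxDis} that, under Assumption~\ref{ConcaveDis}, the map $\vec\psi\mapsto\vec\Psi(\theta,\sigma,\vec\psi)$ is strictly concave on the open simplex $\mathcal S=\{\vec\psi\in(\R^{+,*})^n:\psi_1+\dots+\psi_n=1\}$ and diverges to $-\infty$ at its boundary, hence admits a unique maximizer $\psi^\theta\in\mathcal S$ which is the unique interior critical point, characterized by $\Pi f_\theta(\psi)=0$ with $f_\theta(\psi)=\frac{2\theta^2}{\beta}S\psi+\frac{\beta}{2}(\alpha_i/\psi_i)_{i=1,\dots,n}$, $S=(\sigma_{i,j}^2)_{1\le i,j\le n}$, and $\Pi$ the orthogonal projection onto $\text{Vect}(1,\dots,1)^{\bot}$. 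That proof also shows that $\langle u,(df_\theta)_\psi(u)\rangle=\frac{2\theta^2}{\beta}\langle u,Su\rangle-\frac{\beta}{2}\sum_j u_j^2/\psi_j^2<0$ for every nonzero $u\in\text{Vect}(1,\dots,1)^{\bot}$, so the differential of $\psi\mapsto\Pi f_\theta(\psi)$, regarded as an endomorphism of $\text{Vect}(1,\dots,1)^{\bot}$, is invertible at $\psi^\theta$.

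Next I would observe that the map $(\theta,\psi)\mapsto\Pi f_\theta(\psi)$ is in fact \emph{real-analytic} on $\R\times\mathcal S$: it is polynomial in $\theta$ and analytic in $\psi$ on the open orthant $\{\psi_i>0\}$ (through the terms $\alpha_i/\psi_i$). Combined with the invertibility just recalled, the analytic implicit function theorem applies at each point $(\theta,\psi^\theta)$ and yields that $\theta\mapsto\psi^\theta$ is real-analytic on $\R^+$, with values staying in $\mathcal S$. Finally, by \eqref{EqMaxDis} we have $F(\sigma,\theta)=\vec\Psi(\theta,\sigma,\psi^\theta)$, and $\vec\Psi$ is real-analytic in $(\theta,\psi)$ wherever all $\psi_i>0$, being a polynomial in $\theta$ plus $\frac{\beta}{2}\bigl(\sum_i\alpha_i\log\psi_i-\sum_i\alpha_i\log\alpha_i\bigr)$. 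Composing the analytic curve $\theta\mapsto(\theta,\psi^\theta)$ with $\vec\Psi$ therefore shows that $F(\sigma,\cdot)$ is real-analytic.

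The one point that needs a word of care — but is not a real obstacle — is the endpoint $\theta=0$: there $\psi^0=\vec\alpha$ is still in the interior of the simplex, and the invertibility computation persists because the $\theta^2$-term in $\langle u,(df_\theta)_\psi(u)\rangle$ drops out while the strictly negative term $-\frac{\beta}{2}\sum_j u_j^2/\psi_j^2$ survives, so the implicit function theorem is valid on a full neighbourhood of $0$ in $\R$. Alternatively, one can simply note that both $\psi^\theta$ and $F(\sigma,\theta)$ depend on $\theta$ only through $\theta^2\ge 0$, which makes the behaviour at $0$ automatic. Since all the substantive content was already established in the proof that Assumption~\ref{ConcaveDis} implies Assumption~\ref{ArgMaxDis}, I expect this proposition to amount to little more than a corollary of that argument.
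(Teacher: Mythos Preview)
Your proposal is correct and follows exactly the route the paper indicates: the paper gives no separate proof of this proposition, merely the parenthetical remark ``this is due to the fact that with the implicit function theorem, the maximum argument is indeed analytic in $\theta$'', and in the earlier proof that Assumption~\ref{ConcaveDis} implies Assumption~\ref{ArgMaxDis} it already states that the goal of the implicit function theorem there is to show $\theta\mapsto\psi^\theta$ analytic. You have simply written out carefully (including the real-analytic version of the implicit function theorem and the endpoint $\theta=0$) what the paper leaves implicit.
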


We will now shortly discuss how we can obtain an explicit expression of the rate function in the context of a piecewise constant variance profile which satisfies the hypothesis of Theorem \ref{thRateF}. For this, we will need the following proposition: 

\begin{prop}
If the hypothesis of Theorem \ref{thRateF} are verified and if $\theta \mapsto R(\theta) + \frac{1}{\theta}$ is strictly increasing on $[G(r_{\sigma}), + \infty[$, then we have: 
\[I(\sigma,x) = \frac{\beta}{2} \int_{r_{\sigma}}^x ( \overline{G}(u) - G(u)) du .\]
where we analytically extended $R$ on $\R^+$, where $G(r_{\sigma}) = \lim_{x \to r_{\sigma}^+} G(x)$ and where $\overline{G}$ is the inverse function of $\theta \mapsto R(\theta) + \frac{1}{\theta}$ on $[r_{\sigma}, + \infty[$. 
\end{prop}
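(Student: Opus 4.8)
The plan is to reduce everything to an explicit one–variable optimisation. Since by hypothesis $\theta \mapsto F(\sigma,\theta)$ is analytic, Theorem \ref{thRateF} (and the theorem at the start of this section) gives that the $R$-transform $R$ of $\mu_\sigma$ extends analytically to $\R^+$ and that $F(\sigma,\theta) = \frac{\beta}{2}\int_0^{2\theta/\beta}R(w)\,dw$ for \emph{all} $\theta \ge 0$: the identity holds near $0$ by the first theorem of this section, hence everywhere by analytic continuation. Write $K(w) := R(w) + \frac1w$ for the analytically continued $K$-transform, so that the hypothesis says $K$ is strictly increasing on $[G(r_\sigma),+\infty[$, with $K(G(r_\sigma)) = r_\sigma$, and $\overline{G}$ is its inverse; here $G = G_{\mu_\sigma}$ is the Stieltjes transform. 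Fix $x > r_\sigma$ and set $g(\theta) := J(\mu_\sigma,\theta,x) - F(\sigma,\theta)$, so that $I(\sigma,x) = \sup_{\theta\ge 0} g(\theta)$.

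First I would treat the regime $0 \le 2\theta/\beta \le G(x)$: there $v(\theta,\mu_\sigma,x) = R(2\theta/\beta)$, and as recalled in the proof of the first theorem of this section $J(\mu_\sigma,\theta,x) = \frac{\beta}{2}\int_0^{2\theta/\beta}R(w)\,dw = F(\sigma,\theta)$, so $g \equiv 0$ on $[0,\tfrac{\beta}{2}G(x)]$. In the complementary regime $2\theta/\beta > G(x)$ one has $v = x - \frac{\beta}{2\theta}$, so $1 + \frac{2}{\beta}\theta v - \frac{2}{\beta}\theta y = \frac{2\theta}{\beta}(x-y)$ and a direct computation gives
\[ g(\theta) = \theta x - \frac{\beta}{2} - \frac{\beta}{2}\log\frac{2\theta}{\beta} - \frac{\beta}{2}\int \log(x-y)\,d\mu_\sigma(y) - \frac{\beta}{2}\int_0^{2\theta/\beta}R(w)\,dw, \]
whence $g'(\theta) = x - \frac{\beta}{2\theta} - R(2\theta/\beta) = x - K(2\theta/\beta)$. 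Since $K = K_{\mu_\sigma}$ on $]0,G(r_\sigma)]$, where $K_{\mu_\sigma}$ is decreasing from $x$ at $G(x)$ down to $r_\sigma$ at $G(r_\sigma)$, and $K$ is strictly increasing on $[G(r_\sigma),+\infty[$, the equation $K(2\theta/\beta) = x$ has a unique root with $2\theta/\beta \ge G(x)$, namely $\theta_x := \frac{\beta}{2}\overline{G}(x)$; moreover $g' > 0$ on $]\tfrac{\beta}{2}G(x),\theta_x[$ and $g' < 0$ on $]\theta_x,+\infty[$. Together with $g \equiv 0$ on $[0,\tfrac{\beta}{2}G(x)]$ this shows the supremum is attained and $I(\sigma,x) = g(\theta_x)$.

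It then remains to evaluate $g(\theta_x)$, which I would do by differentiating in $x$ rather than computing directly. The map $x\mapsto\theta_x$ is smooth, being $\frac{\beta}{2}$ times the inverse of a strictly increasing analytic function, so by the envelope (Danskin) theorem $\frac{d}{dx}I(\sigma,x) = \partial_x g(\theta_x,x) = \partial_x J(\mu_\sigma,\theta_x,x)$, as $F$ is independent of $x$. From the expression above, in the branch $2\theta/\beta \ge G(x)$ one gets $\partial_x J(\mu_\sigma,\theta,x) = \theta - \frac{\beta}{2}\int\frac{d\mu_\sigma(y)}{x-y} = \theta - \frac{\beta}{2}G(x)$, so $\frac{d}{dx}I(\sigma,x) = \theta_x - \frac{\beta}{2}G(x) = \frac{\beta}{2}\big(\overline{G}(x) - G(x)\big)$. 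Since the rate function vanishes at the typical value, $I(\sigma,r_\sigma) = 0$ (consistently, $K(G(r_\sigma)) = r_\sigma$ forces $\overline{G}(r_\sigma) = G(r_\sigma)$, so the integrand vanishes at $r_\sigma$), and integrating from $r_\sigma$ to $x$ yields $I(\sigma,x) = \frac{\beta}{2}\int_{r_\sigma}^x\big(\overline{G}(u)-G(u)\big)\,du$.

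The main obstacle is the case analysis around the two regimes of $v(\theta,\mu_\sigma,\cdot)$: one must check that the critical point $\theta_x$ genuinely lies in the branch $2\theta/\beta \ge G(x)$ (this is exactly where the monotonicity hypothesis on $K$ is used), that $\theta_x$ is a global and not merely local maximum of $g$, and that $\overline{G}(x)$ is defined on the whole range $[r_\sigma,x]$ — the latter uses that $K$ increases to $+\infty$ on $[G(r_\sigma),+\infty[$, which is implicit in the statement of the proposition. Everything else reduces to the two routine differentiations of $g$ and of $J$ and to the standard envelope argument, which applies cleanly because $g$ has a unique maximiser depending smoothly on $x$.
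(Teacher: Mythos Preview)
Your proof is correct and follows essentially the same route as the paper: split $g(\theta)=J(\mu_\sigma,\theta,x)-F(\sigma,\theta)$ into the two regimes of $v$, differentiate in $\theta$ to locate the maximiser $\theta_x=\frac{\beta}{2}\overline{G}(x)$, then differentiate $I(\sigma,x)=g(\theta_x)$ in $x$ via the envelope argument to obtain $\frac{d}{dx}I(\sigma,x)=\frac{\beta}{2}(\overline{G}(x)-G(x))$ and integrate from $r_\sigma$. One small imprecision: the equation $K(2\theta/\beta)=x$ actually has \emph{two} roots with $2\theta/\beta\ge G(x)$, namely $G(x)$ itself and $\overline{G}(x)$, but your sign analysis of $g'$ is correct and shows that only the latter is the global maximiser, so this does not affect the argument.
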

\begin{proof}
for $ x \geq r_{\sigma}$, we have that: 
\[ J(\mu_{\sigma},\theta,x) - F(\sigma,\theta) = \begin{cases} 0 \text{ for } 2\theta/\beta \leq  G(x) \\
 \theta x - \frac{\beta}{2} - \frac{\beta}{2} \Big( \log \Big(  \frac{2 \theta}{\beta} \Big)\Big) - \frac{\beta}{2} \int \log ( x- y) d\mu_{\sigma}(y) - \frac{\beta}{2} \int_0^{\frac{2\theta}{\beta}} R(w) dw \text{ for } 2\theta/\beta \geq  G(x)\\
\end{cases} . \] 
Differentiating in $\theta$, we have: 
\[ \frac{\partial}{\partial \theta} \Big( J(\mu_{\sigma},\theta,x) - F(\sigma,\theta) \Big) = 
\begin{cases} 0 \text{ for } 2\theta/\beta \leq  G(x) \\
x - \frac{\beta}{2 \theta} - R \Big( \frac{2\theta}{\beta} \Big) \text{ for } 2\theta/\beta \geq  G(x) \\
\end{cases} .\]
And so, the maximum is realized for $\theta_x > \beta G(x)/2$ such that $ x = \frac{\beta}{2 \theta_x} + R \Big( \frac{2\theta_x}{\beta} \Big)$. By hypothesis, this is equivalent to $\theta_x = \frac{\beta}{2} \overline{G}(x)$. And so we have for $x > r_{\sigma}$
\[ \frac{\partial}{\partial x} I(\sigma,x) = \theta_x - \frac{\beta}{2}G(x) \]
We deduce the result by integrating. 
\end{proof}

\begin{rem}
In practice, in the case of a piecewise constant variance profile the equations of section \ref{Emp} give that $G(z)$ is a algebraic function, that is a root of some polynomial $P(.,z)$. So we have, for $\theta \leq G( r_{\sigma})$, $P(\theta, R(\theta)+ \theta^{-1}) = 0$. Using the analytical extension of $R$ on $\R^+$, this stays true for any $\theta> 0$ and therefore $P( \overline{G}(z),z)=0$. So $\overline{G}$ naturally presents itself as a conjugate root of $G(z)$. For example, in the Wigner case we have $G(z) = \frac{z- \sqrt{z^2 - 4}}{2}$ and $\overline{G}(z) = \frac{z+ \sqrt{z^2 - 4}}{2}$, and we we end up with $I(x) = \frac{\beta}{2} \int_2^x \sqrt{u^2 - 4} du $. In the case of the linearisation of of Wishart matrix (see section 4.2 of \cite{GuHu18}), we have:

\[ G(x) = \frac{2 \alpha}{1+\alpha} \frac{x^2 +1 - \alpha - \sqrt{ (x^2- 1 -\alpha)^2 - 4 \alpha }}{2x} + \frac{1-\alpha}{1+ \alpha} \frac{1}{x} \]

and  

\[ \overline{G}(x) = \frac{2 \alpha}{1+\alpha} \frac{x^2 +1 - \alpha + \sqrt{ (x^2- 1 -\alpha)^2 - 4 \alpha }}{2x} + \frac{1-\alpha}{1+ \alpha} \frac{1}{x} \]
 and so we have $I(x) = \frac{\beta \alpha}{1+ \alpha} \int_{r_{\sigma}}^x \frac{\sqrt{ (u^2- 1 -\alpha)^2 - 4 \alpha }}{2u}du $.
\end{rem}

\appendix

\section{Appendix: The limit of the empirical measure} \label{Emp}
	In this section, we describe the limit of the empirical measure $\mu_{\sigma}$ of the matrices $X_N$. We will also discuss the stability of this measure in function of $\sigma$. Under assumptions of positivity for the variance profile, we will prove that the largest eigenvalue converges toward the rightmost point of the support of $\mu_{\sigma}$. We denote $\mathbb{H}^+$ the complex upper half-plane $\{ z \in \C : \Im z > 0 \}$ and $\mathbb{H}^-$ the complex lower half-plane $\{ z \in \C : \Im z < 0 \}$.
To describe the limit of the empirical measure we need the following definition for the so-called canonical equation (also called quadratic vector equation). The following definition takes into account both the piecewise constant and the continuous case:

\begin{defi}
Let $ \sigma :[0,1]^2 \to \R^+$ be a bounded symmetric  measurable function. We call canonical equation $K_{\sigma}$ the following equation where $m$ is a function from $\mathbb{H}^+$ into $\mathcal{H}$, where $\mathcal{H}$ is the set of measurable and bounded functions $m$ from $[0,1]$ to $\mathbb{H}^-$,

\begin{equation}\label{Ksigma}
\tag{$K_{\sigma}$}
 \frac{1}{m(z)} = z - Sm(z)
\end{equation}.

Here $S$ is the following kernel operator for $ f \in \mathcal{H}$: 

\[ S f(x) := \int_0^1 \sigma^2(x,y) f(y) dy \].

\end{defi}

If $w$ is a function from $[0,1]$ to $\C$, we denote $||w|| = \sup_{x \in[0,1]}|w_x|$. If $S$ is an operator on the space of functions from  $[0,1]$ to $\C$, we denote $||S||$ operator norm corresponding to the previous norm. If $m$ is a function from $\mathbb{H}^+$ to $\mathcal{H}$, we denote $m_x$ the function $z \mapsto m(z)(x)$. We then have the following result concerning the solution of the equation: 

\begin{theo}
The equation $K_{\sigma}$ has a unique solution $m$ which is analytic in $z$. Moreover for every $x \in [0,1]$, the function $m_x = m(.)(x)$ is the Stieltjes transform of a probability measure $p_x$ on $\R$.

\end{theo}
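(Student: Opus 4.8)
The plan is to read $K_{\sigma}$ as a fixed point equation, solve it first for $\Im z$ large by a contraction argument, propagate the solution to all of $\mathbb{H}$, and then identify each slice $m_x$ via the Nevanlinna (Herglotz) representation.

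\textbf{Step 1 (contraction for large $\Im z$).} For $z\in\mathbb{H}$ set $\mathcal{F}_z(m):=-(z+Sm)^{-1}$, the inversion taken pointwise in $x\in[0,1]$. If $m\in\mathcal{H}$ then $Sm$ is again $\mathbb{H}$-valued since $\sigma^{2}\ge 0$, so $\Im(z+Sm)(x)\ge\Im z$; hence $\mathcal{F}_z(m)\in\mathcal{H}$ with $\|\mathcal{F}_z(m)\|\le 1/\Im z$, and $\mathcal{F}_z$ maps the set of $\mathbb{H}$-valued functions into $\{\|m\|\le 1/\Im z\}\cap\{\Im m\ge c_z\}$ for some $c_z>0$, which is a complete metric space for $\|\cdot\|$. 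From the identity
\[ \mathcal{F}_z(m)-\mathcal{F}_z(\tilde m)=(z+Sm)^{-1}\,\big(S(m-\tilde m)\big)\,(z+S\tilde m)^{-1} \]
one gets $\|\mathcal{F}_z(m)-\mathcal{F}_z(\tilde m)\|\le \|S\|\,(\Im z)^{-2}\,\|m-\tilde m\|$, so $\mathcal{F}_z$ is a strict contraction as soon as $\Im z>\|S\|^{1/2}$. Banach's fixed point theorem yields a unique solution $m(z)$ of $K_{\sigma}$ there, and, since the Picard iterates $\mathcal{F}_z^{\,k}(0)$ are analytic in $z$ and converge uniformly on compacts, $z\mapsto m(z)$ is analytic on $\{\Im z>\|S\|^{1/2}\}$.

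\textbf{Step 2 (propagation to $\mathbb{H}$ and uniqueness).} For any bounded $\mathbb{H}$-valued solution and any $z\in\mathbb{H}$, taking imaginary parts in $-1/m_x=z+(Sm)_x$ gives $\Im m_x/|m_x|^{2}=\Im z+(S\,\Im m)_x\ge\Im z$, hence the a priori bound $\|m(z)\|\le 1/\Im z$. The stability operator is invertible on $\mathbb{H}$: the linearisation of $K_\sigma$ is $\mathrm{Id}-m^{2}S$, and the vector $u:=\Im m/|m|>0$ satisfies $(|m|\,S\,|m|\,u)(x)=|m(x)|\,(S\,\Im m)(x)=u(x)-|m(x)|\,\Im z<u(x)$ pointwise, so by the Collatz--Wielandt/Perron--Frobenius bound the positive operator with kernel $|m(x)|\sigma^{2}(x,y)|m(y)|$ has spectral radius $<1$, whence so does $m^{2}S$. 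The same mechanism gives uniqueness on all of $\mathbb{H}$: subtracting the equations for two solutions $m,\tilde m$ at a fixed $z$ gives $m_x-\tilde m_x=m_x\tilde m_x\,(S(m-\tilde m))_x$, hence pointwise $|m-\tilde m|\le|m|\,|\tilde m|\,S|m-\tilde m|$, and the operator $f\mapsto|m|\,|\tilde m|\,Sf$ has spectral radius $<1$ by the analogous Perron--Frobenius estimate (using $|m|\,|\tilde m|\le\tfrac12(\Im z)^{-1}(\Im m+\Im\tilde m)$), so $m=\tilde m$; this is exactly the stability theory of the quadratic vector equation, see \cite{Girko2001} and \cite{AEK2}. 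Given uniqueness, let $\Omega\subseteq\mathbb{H}$ be the open, nonempty (by Step 1) set of points admitting a bounded $\mathbb{H}$-valued analytic solution in a neighbourhood; if $z_n\to z_\star\in\mathbb{H}$ with $z_n\in\Omega$, the uniform bound $|m_x(z_n)|\le 2/\Im z_\star$ and a normal-families argument applied coordinatewise produce a subsequential limit solving $K_\sigma$ at $z_\star$, which the implicit function theorem (via invertibility of $\mathrm{Id}-m(z_\star)^{2}S$) extends to a neighbourhood of $z_\star$. Thus $\Omega$ is closed in $\mathbb{H}$, so $\Omega=\mathbb{H}$, proving existence, uniqueness and analyticity.

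\textbf{Step 3 (Stieltjes representation).} Fix $x$. By Step 2, $m_x$ is analytic on $\mathbb{H}$ with $\Im m_x>0$, i.e. $m_x$ is Herglotz; the bound $|m_x(z)|\le 1/\Im z$ gives $\sup_{y\ge1}|\,iy\,m_x(iy)\,|<\infty$, and feeding $\|m(iy)\|\le 1/y$ back into $-1/m_x(iy)=iy+(Sm(iy))_x$ shows $(Sm(iy))_x\to 0$, so that $iy\,m_x(iy)\to-1$ as $y\to\infty$. By the Nevanlinna representation theorem, a Herglotz function with these properties is of the form $m_x(z)=\int_{\R}\frac{dp_x(t)}{t-z}$ with $p_x$ a finite positive measure; the asymptotics force $p_x(\R)=1$, so $m_x$ is the Stieltjes transform of a probability measure $p_x$ on $\R$. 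This is the claim.

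\textbf{Main obstacle.} The delicate step is Step 2: the contraction of Step 1 degenerates as $\Im z\to0$ (the range of $w\mapsto-(z+w)^{-1}$ ceases to be relatively compact in $\mathbb{H}$), so upgrading existence and, above all, uniqueness to the whole of $\mathbb{H}$ requires the genuine stability theory of the quadratic vector equation; alternatively one may simply quote \cite[Theorem 1.1]{Girko2001}, as is done for the convergence of the empirical measure.
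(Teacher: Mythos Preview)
Your argument is essentially correct, but the paper takes a much shorter route: it does not prove the theorem at all and simply invokes \cite[Theorem~2.1]{Aja15}, which covers the quadratic vector equation $-1/m = z + Sm$ in the general setting of a probability space $\mathfrak{X}$ and a symmetric positivity-preserving operator $S$; our $[0,1]$ and kernel $\sigma^{2}$ fit that framework verbatim.

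What you do instead is to reproduce, in outline, the very proof of that cited result: Banach contraction for large $\Im z$ (your Step~1), the Perron--Frobenius/stability mechanism to push uniqueness and existence down to all of $\mathbb{H}$ (Step~2), and the Herglotz/Nevanlinna identification (Step~3). Steps~1 and~3 are clean. Step~2 is the genuine content and is where your write-up is thinnest: the ``normal families applied coordinatewise'' plus implicit function theorem extension needs a little care (one must extract a single subsequence valid for almost every $x$ and preserve measurability of the limit), and the uniqueness argument via the bound $|m||\tilde m|\le (2\Im z)^{-1}(\Im m+\Im\tilde m)$ is correct in spirit but the passage from the pointwise inequality $|m-\tilde m|\le |m||\tilde m|\,S|m-\tilde m|$ to $m=\tilde m$ deserves one more line (e.g.\ comparison with the symmetric operator $|m|^{1/2}|\tilde m|^{1/2}S|m|^{1/2}|\tilde m|^{1/2}$, or the hyperbolic-metric contraction that the paper itself uses later in the proof of Theorem~\ref{Stab1}). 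None of this is wrong, and you explicitly flag the difficulty and point to the literature; but since the paper is content to quote \cite{Aja15}, your self-contained sketch is strictly more than what is required here.
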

This theorem is in fact a direct application of Theorem 2.1 from \cite{Aja15} which states that the equation $K_{\sigma}$ always has a solution in a more general context where we replace $[0,1]$ by a probability space $\mathfrak{X}$ and $S$ is a symmetric, positivity preserving operator on the space of uniformly bounded complex functions on $\mathfrak{X}$. 
 
\begin{rem}
If $\sigma$ is a piecewise constant variance profile with parameters $\alpha_1,...,\alpha_n$ and $(\sigma_{i,j})_{1 \leq i,j \leq n}$, then the solution of $(K_{\sigma})$ is piecewise constant on the intervals $I_{j}$. This can be viewed directly from $K_{\sigma}$ by noticing that $Sf$ is always piecewise constant on the intervals $I_{j}$. 
\end{rem}
We will denote $\mu_{\sigma} := \int_0^1 p_x dx$ where $p_x$ is given by the preceding theorem. And so we have that the Stieltjes transform of $\mu_{\sigma}$ is $\int_0^1 m_x dx$. This measure $\mu_{\sigma}$ will be the limit of the empirical measures $\mun$ of the matrices $X_N^{(\beta)}$. To prove this, we will use the following result which is a reformulation of Girko's result \cite[Theorem 1.1]{Girko2001}. 

%The following theorem links the behaviour of the spectrum of $X_N^{(\beta)}$ and $K_{\sigma}$: 

%\begin{defi}
%If $\sigma :[0,1] \to \R^+$ is a variance profile, for $N \in \N$, let us denote $\sigma_N$ its $N$th dimensionnal approximation, that is for $i,j \in [0,N]$ : 

%\[ \sigma_N(i,j) := \sigma(Ni,Nj) \]

%a We denote for $z \in \mathbb{H}$ and $i \in [1,N]$,  $m_{N,i}(z)$ the solution to the following : 

%\[ - \frac{1}{m_{N,i}(z)} = z + \frac{ \sum_{j=1}^N \sigma^2_N(i,j) m_{N,j}(z)}{N} \]

%\end{defi}

\begin{theo} \label{Girko}
Let us denote $\sigma_N : [0,1]^2 \to \R^+$ the function $(x,y) \mapsto \Sigma_N(\lceil Nx \rceil , \lceil Nx \rceil)$.
When $N$ tends to infinity, for almost every $x$ we have that with probability 1: 

\[ \lim_{N \to \infty} \Big| \mun (] - \infty, x]) - \int_{- \infty}^x d \mu_{\sigma_N}(x) \Big| = 0 \].

\end{theo}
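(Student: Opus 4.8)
The statement is, up to a change of notation, the content of Girko's Theorem~1.1 in \cite{Girko2001} for our model, so the plan is to check that $X_N^{(\beta)}$ lies in its scope and to match the deterministic equivalent with $\mu_{\sigma_N}$. First I would verify the hypotheses: the sub-diagonal entries of $X_N^{(\beta)}$ are independent and centred, their rescaled variances $N\,\E[|X_N^{(\beta)}(i,j)|^2]=\Sigma_N(i,j)^2$ are uniformly bounded (immediately in the piecewise constant case, and for $N$ large in the continuous case since $\sigma$ is bounded), and the Lindeberg-type smallness condition Girko requires is automatic here from the sharp sub-Gaussian tails of Assumption~\ref{A0BM} (or from Assumption~\ref{ACBM}). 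Next I would identify Girko's ``canonical equation'' with $K_{\sigma_N}$: his equation for the array $(c_i(z))_i$ of approximate diagonal resolvent entries is $-1/c_i(z)=z+\frac1N\sum_j\Sigma_N(i,j)^2c_j(z)$, and setting $c_{\lceil Nx\rceil}(z)=m(z)(x)$ turns this into exactly $K_{\sigma_N}$, because $\sigma_N^2$ is constant on each square $](i-1)/N,i/N]\times](j-1)/N,j/N]$. Uniqueness of the solution of $K_{\sigma_N}$ (the Theorem quoted from \cite{Aja15}) identifies it with the function whose $x$-slices are the Stieltjes transforms $m_x^{\sigma_N}$ of the $p_x^{\sigma_N}$; since these slices are piecewise constant on the intervals $](i-1)/N,i/N]$, one has $\int_0^1 m_x^{\sigma_N}(z)\,dx=\frac1N\sum_i m^{\sigma_N}_i(z)=G_{\mu_{\sigma_N}}(z)$. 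Girko's conclusion — almost sure convergence of $\mun(]-\infty,x])-\mu_{\sigma_N}(]-\infty,x])$ to $0$ for Lebesgue-almost every $x$ — is then literally the assertion.

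For a self-contained proof one reproduces the content of Girko's theorem by the resolvent method, which I would run in four steps with $G_N(z)=(z-X_N^{(\beta)})^{-1}$, $z\in\mathbb H$. (i) \emph{Concentration:} $X\mapsto\frac1N\tr(z-X)^{-1}$ is Lipschitz in the entries with constant $O(N^{-1/2}(\Im z)^{-2})$, so under Assumption~\ref{ACBM} one gets $\Pp(|G_{\mun}(z)-\E G_{\mun}(z)|>t)\le\exp(-cNt^2(\Im z)^4)$ and, by Borel--Cantelli, $G_{\mun}(z)-\E G_{\mun}(z)\to0$ almost surely, simultaneously for all $z$ in a fixed countable dense subset $D$ of $\mathbb H$. (ii) \emph{Self-consistent equation:} the Schur complement formula gives $G_{ii}(z)^{-1}=z-X_{ii}-q_i(z)$ with $q_i(z)$ the quadratic form in the $i$-th row of $X_N^{(\beta)}$ against the resolvent $G^{(i)}(z)$ of the $i$-th minor; concentration of $q_i$ (independence of the entries plus a martingale estimate), together with the identity $G^{(i)}_{jk}=G_{jk}-G_{ij}G_{ik}/G_{ii}$, shows that the piecewise constant function $x\mapsto\E[G_{\lceil Nx\rceil\lceil Nx\rceil}(z)]$ satisfies $K_{\sigma_N}$ up to an error tending to $0$. (iii) \emph{Stability:} $K_{\sigma_N}$ is uniquely solvable and stable on $\mathbb H$, with stability constants controlled only through $\Im z$ and $\sup_{i,j,N}\Sigma_N(i,j)^2<\infty$ (the quadratic vector equation theory of \cite{Aja15}, cf.\ Theorem~\ref{Stab1}), so the approximate solution is $o(1)$-close to $m^{\sigma_N}$, i.e.\ $\E[G_{ii}(z)]=m^{\sigma_N}_i(z)+o(1)$ uniformly in $i$. (iv) \emph{Averaging:} $\E[G_{\mun}(z)]=\frac1N\sum_i\E[G_{ii}(z)]\to\frac1N\sum_i m^{\sigma_N}_i(z)=G_{\mu_{\sigma_N}}(z)$, so with (i) one obtains $G_{\mun}(z)-G_{\mu_{\sigma_N}}(z)\to0$ almost surely on $D$, hence for all $z\in\mathbb H$ by a normal families argument (both families being uniformly bounded on compact subsets of $\mathbb H$ via $\|X_N\|\le C$ almost surely and the uniform support bound on $\mu_{\sigma_N}$).

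Finally one passes from Stieltjes transforms to distribution functions: $G_{\mun}(z)-G_{\mu_{\sigma_N}}(z)\to0$ on $\mathbb H$ gives weak convergence of the signed measures $\mun-\mu_{\sigma_N}$ to $0$, and therefore $\mun(]-\infty,x])-\mu_{\sigma_N}(]-\infty,x])\to0$ for every $x$ outside the Lebesgue-null set of points that are an atom of some $\mu_{\sigma_N}$ — which is exactly the ``almost every $x$'' in the statement. I expect the main difficulty in the self-contained route to be step~(iii), the stability of $K_{\sigma_N}$ uniformly in $N$: away from the real axis it is standard, but it genuinely uses the boundedness of $\sigma$ (equivalently the uniform control on $\Sigma_N$), and it is precisely to avoid reproving it that one either invokes \cite{Aja15} or, more economically, quotes Girko's \cite{Girko2001} directly, as we do. The only other subtlety is the possibility of atoms of $\mu_{\sigma_N}$ whose position moves with $N$, which is why the conclusion holds for almost every $x$ and not for all $x$.
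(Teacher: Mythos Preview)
Your first paragraph is essentially the paper's own proof: check centring, uniform boundedness of $N\E[|X_N^{(\beta)}(i,j)|^2]=\Sigma_N(i,j)^2$, the Lindeberg condition from the sub-Gaussian tails, and identify Girko's finite system for the $c_i(z)$ with $K_{\sigma_N}$ via the piecewise constant structure. That is exactly what the paper does, and nothing more is needed.

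The self-contained resolvent route you sketch afterwards is not in the paper; the paper simply quotes \cite{Girko2001} and moves on. Your outline is sound in spirit, but a couple of points would need tightening if you actually wanted to run it: in step~(iv) the target $\frac1N\sum_i m_i^{\sigma_N}(z)$ still depends on $N$, so the statement should be that the \emph{difference} goes to $0$, not that one side ``$\to$'' the other; and the claim ``$\|X_N\|\le C$ almost surely'' is too strong --- you only have this with overwhelming probability, which is enough for the normal-families argument after intersecting with a full-probability event. These are cosmetic in a sketch, and the paper avoids them entirely by invoking Girko directly.
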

\begin{proof}

%This is in fact a reformulation of \cite[Theorem 1.1]{Girko2001} (with $a_{i,j}$ =0). 
First, since for all $N$, $\Sigma_N$ is bounded and since the coefficients of $X_N^{(\beta)}$ are centered, hypothesis (1.1) and (1.2) are verified. Then, since we have a sharp sub-Gaussian bound on the entries of $X_N^{(\beta)}$, we can easily verify the Lindeberg's condition (1.3). 
Furthermore, since $\sigma_N$ is piecewise constant, the solution $m$ of $K_{\sigma_N}$ is piecewise constant on the intervals $[i/N, (i+1)/N]$. Making the change of variables $c_i(z) = m_{(2i -1)/2}(z)$ we have that the equation $K_{\sigma_N}$ is equivalent, up to the sign since our Stieltjes transform convention is different, to the matricial equation given in \cite[Theorem 1.1]{Girko2001}.
\end{proof}

And so we are left with determining the convergence of the measure $\mu_{\sigma_N}$ when $N$ tends to $+\infty$. To that end, we will need the following rough stability results. 

\begin{theo}\label{Stab1}
Let $\sigma :[0,1]^2 \to \R^+$ be a bounded measurable function. For every open neighbourhood $\mathcal{V}$ of $\mu_{\sigma}$ in $\mathcal{P}([0,1])$ for the vague topology, there is $\eta > 0$ such that for every $\tilde{\sigma}$ bounded measurable function such that $\sup_{x,y \in [0,1]^2} | \sigma^2(x,y) - \tilde{\sigma}^2(x,y)| \leq \eta$, $\mu_{\tilde{\sigma}} \in \mathcal{V}$.
\end{theo}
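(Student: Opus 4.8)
The plan is to prove that the map $\sigma^{2}\mapsto\mu_{\sigma}$ is continuous from the space of bounded measurable functions on $[0,1]^{2}$, equipped with the sup norm, into $\mathcal{P}(\R)$ with the weak topology; since the latter space is metrizable this is precisely the assertion. I would argue by contradiction: if the conclusion failed, there would be an open set $\mathcal{V}\ni\mu_{\sigma}$ and bounded measurable functions $(\tilde{\sigma}_{n})_{n\in\N}$ with $\sup_{x,y}|\sigma^{2}(x,y)-\tilde{\sigma}_{n}^{2}(x,y)|\to 0$ but $\mu_{\tilde{\sigma}_{n}}\notin\mathcal{V}$ for all $n$; it then suffices to show $\mu_{\tilde{\sigma}_{n}}\to\mu_{\sigma}$ weakly to reach a contradiction.

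The heart of the argument is a contraction estimate at a fixed $z\in\mathbb{H}$. Writing $m$ and $\tilde{m}_{n}$ for the solutions of $K_{\sigma}$ and $K_{\tilde{\sigma}_{n}}$ evaluated at $z$, viewed as elements of $\mathcal{H}$, and $S$, $S_{n}$ for the corresponding kernel operators, I would subtract the two canonical equations to obtain
\[
m_{x}-\tilde{m}_{n,x}=m_{x}\tilde{m}_{n,x}\Big[\big(S(m-\tilde{m}_{n})\big)_{x}+\big((S-S_{n})\tilde{m}_{n}\big)_{x}\Big].
\]
Since $m_{x}$ and $\tilde{m}_{n,x}$ are Stieltjes transforms of probability measures, $|m_{x}(z)|,|\tilde{m}_{n,x}(z)|\le(\Im z)^{-1}$ uniformly in $x$; combining this with $\|Sf\|_{\infty}\le\|\sigma^{2}\|_{\infty}\|f\|_{\infty}$ and $\|(S-S_{n})f\|_{\infty}\le\|\sigma^{2}-\tilde{\sigma}_{n}^{2}\|_{\infty}\|f\|_{\infty}$, and setting $\delta_{n}:=\sup_{x\in[0,1]}|m_{x}-\tilde{m}_{n,x}|$, gives
\[
\delta_{n}\le\frac{\|\sigma^{2}\|_{\infty}}{(\Im z)^{2}}\,\delta_{n}+\frac{\|\sigma^{2}-\tilde{\sigma}_{n}^{2}\|_{\infty}}{(\Im z)^{3}}.
\]
Hence, for every $z$ with $(\Im z)^{2}>\|\sigma^{2}\|_{\infty}$, one has $\delta_{n}\le C_{z}\,\|\sigma^{2}-\tilde{\sigma}_{n}^{2}\|_{\infty}$ with $C_{z}:=\big(\Im z\,((\Im z)^{2}-\|\sigma^{2}\|_{\infty})\big)^{-1}$, so $\delta_{n}\to0$; integrating in $x$ then yields $G_{\mu_{\tilde{\sigma}_{n}}}(z)=\int_{0}^{1}\tilde{m}_{n,x}(z)\,dx\to\int_{0}^{1}m_{x}(z)\,dx=G_{\mu_{\sigma}}(z)$ for all such $z$. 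Note that this requires only that the $\tilde{\sigma}_{n}$ be bounded measurable (so that $\mu_{\tilde{\sigma}_{n}}$ is defined via $K_{\tilde{\sigma}_{n}}$), not any uniform bound, because the a priori estimate on $\tilde{m}_{n,x}$ is automatic from its Stieltjes representation.

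It remains to upgrade convergence on the half-plane $\{\Im z>\sqrt{\|\sigma^{2}\|_{\infty}}\}$ to weak convergence of the measures, which I expect to be the only genuinely delicate point. The functions $G_{\mu_{\tilde{\sigma}_{n}}}$ are analytic on $\mathbb{H}$ with $|G_{\mu_{\tilde{\sigma}_{n}}}(z)|\le(\Im z)^{-1}$, hence form a normal family there; any locally uniform limit of a subsequence is analytic and coincides with $G_{\mu_{\sigma}}$ on that half-plane, hence on all of $\mathbb{H}$ by the identity theorem, so $G_{\mu_{\tilde{\sigma}_{n}}}\to G_{\mu_{\sigma}}$ locally uniformly on $\mathbb{H}$. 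Since $\mu_{\sigma}$ is a probability measure, the standard continuity theorem for Stieltjes transforms then gives $\mu_{\tilde{\sigma}_{n}}\to\mu_{\sigma}$ weakly, contradicting $\mu_{\tilde{\sigma}_{n}}\notin\mathcal{V}$. The whole substance of the proof is thus concentrated in the elementary fixed-point bound above, which in turn relies entirely on the bound $|m_{x}(z)|\le(\Im z)^{-1}$ furnished by the structure theorem for solutions of $K_{\sigma}$.
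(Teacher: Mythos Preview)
Your proof is correct and takes a genuinely different, more elementary route than the paper's. The paper works with the hyperbolic metric $d=\arcosh(1+D)$ on $\mathbb{H}$, following \cite{AEK2}: it shows that the maps $\Phi(u)=-\,(z+Su)^{-1}$ and $\tilde\Phi(u)=-\,(z+\tilde Su)^{-1}$ are strict contractions of a suitable invariant set $\mathcal{B}_\eta$ for this metric, bounds $d(\Phi(u),\tilde\Phi(u))$ in terms of $\delta=\|\sigma^2-\tilde\sigma^2\|_\infty$, and iterates $\tilde\Phi$ starting from $m$ to obtain a quantitative estimate on $d_{\mathbb{H}_\eta}(m,\tilde m)$ valid for \emph{every} $z$ in the strip $\mathbb{H}_\eta$. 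Your argument instead subtracts the two equations directly and uses only the crude bound $|m_x(z)|\le (\Im z)^{-1}$; this yields a clean linear estimate, but only on the region $\{\Im z>\sqrt{\|\sigma^2\|_\infty}\}$, and you then propagate the conclusion to all of $\mathbb{H}$ by a normal-family/identity-theorem step.

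What each buys: the paper's hyperbolic-contraction proof is uniform in $z$ over compacta of $\mathbb{H}$ and gives an explicit modulus of continuity for $m$ as a function of $\sigma^2$ at every fixed height; this is the kind of quantitative stability that feeds into the companion result Theorem~\ref{Stab2} and into the anisotropic local-law arguments of Section~\ref{LDPlbBM}. Your approach is shorter and avoids the hyperbolic machinery entirely, at the price of being qualitative (the normal-family step loses any explicit rate) and of not directly controlling $m-\tilde m$ near the real axis. For the bare statement of Theorem~\ref{Stab1}, which concerns only weak convergence of $\mu_\sigma$, your proof is perfectly adequate and arguably cleaner.
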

\begin{proof}
This proof is inspired from the proof of \cite[Proposition 2.1]{AEK2}. We let $\tilde{S}$ be the kernel operator with kernel $\tilde{\sigma}^2$. Let $\mathbb{H}^+_{\eta} = \{ z \in \mathbb{C} : \Im z \geq \eta, |z| \leq \eta^{-1} \}$, $\mathbb{H}^-_{\eta} = \{ z \in \mathbb{C} : \Im z \leq - \eta, |z| \leq \eta^{-1} \}$ and $D$ the function defined on $(\mathbb{H}^-)^2$ by 
 \[ D( \zeta, \omega) = \frac{ | \zeta - \omega|^2}{\Im \zeta \Im \omega} \]
then $d := \arcosh ( 1 + D)  $ is the hyperbolic distance on $\mathbb{H}^-$. For $u$ a function from $\mathbb{H}_{\eta}^+$ to $ \mathcal{H}$ we let $\Phi(u)$ and $\tilde{\Phi}(u)$ be functions from $\mathbb{H}_{\eta}^+ $ to $\mathcal{H}$ defined as follows for $z \in \mathbb{H}_{\eta}^+$: 
\[ \Phi(u)(z) :=  \frac{1}{ z -S u(z)} \]
\[ \tilde{\Phi}(u)(z) :=  \frac{1}{ z - \tilde{S} u(z)} \].

If $\mathcal{B}_{\eta} := \{ u : \mathbb{H}^+_{\eta} \to \mathcal{H} : \sup_{x \in [0,1], z \in \mathbb{H}^+_{\eta}} \Im u_x(z) \leq  -\frac{ \eta^3}{ (2 + \min\{||S||,||\tilde{S}||\})^2},  \sup_{x \in [0,1],z \in \mathbb{H}^+_{\eta}} | u_x(z) | \leq \eta^{-1}\}$, then following the proof of \cite[Proposition 2.1]{AEK2}, if $u$ is such that for all $z \in \mathbb{H}^+_{\eta} $, and all $x \in [0,1]$, $ u_x(z) \in \mathbb{H}_{\eta}^-$, then $\Phi(u) \in \mathcal{B}_{\eta}$. Then, if $\eta < 2 + \min\{||S||,||\tilde{S}||\}$, $\frac{ \eta^3}{ (2 + \min\{||S||,||\tilde{S}||\})^2} \leq \eta$, $\Phi$ maps $\mathcal{B}_{\eta}$ onto itself and so on for $\tilde{\Phi}$. 

For $x \in [0,1]$, $z \in \mathbb{H}^+_{\eta}$ and $\delta \geq \sup_{x,y} |\sigma^2(x,y) - \tilde{\sigma}^2(x,y)|$

\begin{align*}
D( \Phi(u)_x(z),\tilde{\Phi}(u)_x(z)) &= D( (Su)_x(z) - z,  (\tilde{S}u)_x(z) -z) \\
&\leq \frac{ |(S - \tilde{S})u_x(z)|^2 }{\Im z^2} \\
&\leq \frac{ \delta^2 \sup_{x\in [0,1]} |u_x(z)|^2}{\eta^2} \\
&\leq \frac{\delta^2}{\eta^4}.
\end{align*}

Let $m$ be the solution of $K_{\sigma}$, that is the fixed point of $\Phi$. For every $n \in \N$ let $v^{(n)} = \tilde{\Phi}^{(n)}(m)$. We have for $z \in\mathbb{H}_{\eta}^+$: 

\[ \sup_{x \in[0,1]} D( m_x(z) , v^{(1)}_x(z)) \leq \frac{\delta^2}{\eta^4} \]
and following again \cite{AEK2},

\begin{align*}
\sup_{x \in[0,1]} D( v^{(n+1)}_x(z) , v^{(n)}_x(z))  &= \sup_{x \in[0,1]} D( \tilde{\Phi}(v^{(n)})_x(z) , v^{(n)}_x(z)) \\
& \leq \left( 1 + \frac{\eta^2}{||S||} \right)^{-2} \sup_{x \in[0,1]} D( v^{(n)}_x(z) , v^{(n -1)}_x(z))
\end{align*}
and so we have:

\[ \sup_{x \in[0,1]} D( v^{(n+1)}_x(z) , v^{(n)}_x(z)) \leq \frac{\delta^2}{\eta^4} \left( 1 + \frac{\eta^2}{||\tilde{S}||} \right)^{-2n}. \]

And so for $\delta$ small enough, $(v^{(n)})_{n \in \N}$ is a Cauchy sequence for the distance \newline
$d_{\mathbb{H}^+_{\eta}}(u,v) = \sup_{x \in [0,1],z \in \mathbb{H}^+_{\eta}} \arcosh (1 + D( u_x(z),v_x(z)))$ which is converging toward $\tilde{m}$ the fixed point of $\tilde{\Phi}$ and 

\[ d_{\mathbb{H}^+_{\eta}}(m , \tilde{m}) \leq  \sum_{n=0}^{+ \infty} \arcosh \left( 1 + \frac{\delta^2}{\eta^4} \left( 1 + \frac{\eta^2}{||\tilde{S}||} \right)^{-2n} \right). \]

Therefore for every $\epsilon>0$ and $\eta > 0$, there is $\delta > 0$  small enough such that $\sup_{x,y} |\sigma^2(x,y) - \tilde{\sigma}^2(x,y)| \leq \delta$ implies $ d_{\mathbb{H}_{\eta}^+}(m , \tilde{m}) \leq \epsilon$. 
Since a base of neighbourhood of $\mu_{\sigma}$ for the vague topology is given by:

\[ \mathcal{V}_{\eta} := \{ \lambda \in \mathcal{P}(\R) : \forall z \in \mathbb{H}^+_{\eta}, | G_{\mu_{\sigma}}(z) - G_{\lambda}(z)| \leq \eta \} \]
and because the vague topology and the weak topology are equal on the set $\mathcal{P}(\R)$ of probability measure on $\R$, we have our result since $G_{\mu_{\sigma}} = \int_0^1 m_x dx$ and $G_{\mu_{\tilde{\sigma}}} = \int_0^1 \tilde{m}_x dx$

\end{proof}

\begin{cor}
In the case of a continuous variance profile $\sigma$, $\mun$ converges in probability towards $\mu_{\sigma}$. 
\end{cor}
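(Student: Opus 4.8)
The plan is to deduce the corollary from Theorem \ref{Girko} and the stability statement Theorem \ref{Stab1}, the bridge between them being that the piecewise-constant profiles $\sigma_N$ appearing in Theorem \ref{Girko} converge \emph{uniformly} to $\sigma$ because $\sigma$ is continuous on the compact $[0,1]^2$. First I would check this uniform convergence. Recalling that $\sigma_N(x,y)=\Sigma_N(\lceil Nx\rceil,\lceil Ny\rceil)$, one writes
\[ |\sigma_N(x,y)-\sigma(x,y)| \le \Big|\Sigma_N(\lceil Nx\rceil,\lceil Ny\rceil)-\sigma\big(\tfrac{\lceil Nx\rceil}{N},\tfrac{\lceil Ny\rceil}{N}\big)\Big| + \Big|\sigma\big(\tfrac{\lceil Nx\rceil}{N},\tfrac{\lceil Ny\rceil}{N}\big)-\sigma(x,y)\Big|. \]
The first term is bounded by $\sup_{i,j}|\Sigma_N(i,j)-\sigma(i/N,j/N)|$, which tends to $0$ in the continuous case by definition; the second term tends to $0$ uniformly in $(x,y)$ since $|\lceil Nx\rceil/N-x|\le 1/N$ (and likewise in $y$) and $\sigma$ is uniformly continuous. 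Hence $\sup_{x,y}|\sigma_N-\sigma|\to 0$, and since $\sigma$ and the $\sigma_N$ (for $N$ large) are uniformly bounded, also $\sup_{x,y}|\sigma_N^2-\sigma^2|\to 0$.

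Next I would feed this into Theorem \ref{Stab1}. Fix an open neighbourhood $\mathcal V$ of $\mu_\sigma$ in the weak topology; Theorem \ref{Stab1} provides $\eta>0$ such that $\sup_{x,y}|\sigma^2-\tilde\sigma^2|\le\eta$ forces $\mu_{\tilde\sigma}\in\mathcal V$. Taking $\tilde\sigma=\sigma_N$ and using the previous step, $\mu_{\sigma_N}\in\mathcal V$ for $N$ large, so $\mu_{\sigma_N}\to\mu_\sigma$ weakly; in particular $\mu_{\sigma_N}(]-\infty,x])\to\mu_\sigma(]-\infty,x])$ at every continuity point $x$ of the cumulative distribution function of $\mu_\sigma$. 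Then I would combine with Theorem \ref{Girko}: there is a Lebesgue-null set $\mathcal N\subset\R$ such that for $x\notin\mathcal N$, with probability one $|\hat\mu_N(]-\infty,x])-\mu_{\sigma_N}(]-\infty,x])|\to 0$. Enlarging $\mathcal N$ by the (countable) set of atoms of $\mu_\sigma$ keeps it null, so $\R\setminus\mathcal N$ is dense; pick a countable dense $D\subset\R\setminus\mathcal N$. For each $x\in D$, with probability one $\hat\mu_N(]-\infty,x])=\mu_{\sigma_N}(]-\infty,x])+o(1)\to\mu_\sigma(]-\infty,x])$, using that $x$ is a continuity point of $F_{\mu_\sigma}$; since $D$ is countable this holds simultaneously for all $x\in D$ on an event of probability one.

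Finally, on that event, a sequence of probability measures whose distribution functions converge on a dense set of points to the distribution function of a probability measure converges weakly to that measure (sandwich the value at a continuity point between values at nearby points of $D$); hence $\hat\mu_N\to\mu_\sigma$ weakly almost surely, a fortiori in probability. The analysis is soft throughout; the only step requiring some care is this last bookkeeping in combining the ``for a.e.\ $x$, a.s.'' form of Theorem \ref{Girko} with the ``at continuity points'' weak convergence of $\mu_{\sigma_N}$ into an honest statement of weak convergence of the random measures, which is why one restricts to a countable dense set of good continuity points before passing to the limit.
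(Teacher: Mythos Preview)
Your proof is correct and follows essentially the same approach as the paper: combine Theorem \ref{Girko} with Theorem \ref{Stab1} via the observation that $\sup_{x,y}|\sigma_N^2-\sigma^2|\to 0$ by uniform continuity of $\sigma$. The paper's proof is a one-line remark to this effect; you have simply spelled out the details (the uniform convergence of $\sigma_N$ to $\sigma$, and the bookkeeping needed to pass from the ``for a.e.\ $x$, a.s.'' conclusion of Theorem \ref{Girko} to weak convergence of the random measures), which is entirely appropriate.
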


\begin{proof}
This is a consequence from Theorem \ref{Girko} and Theorem \ref{Stab1} by noticing \newline that $\lim_{N \to \infty} \sup_{x,y \in [0,1]} | (\sigma_N)^2(x,y) - \sigma^2(x,y)| = 0$.
\end{proof}

We will also need a similar result for the piecewise constant case. 
\begin{theo}\label{Stab2}
Let $s =(s_{i,j})_{i,j=1}^n \in \mathcal{S}_n(\R^+)$ and $\vec{\alpha}, \vec{\beta} \in (\R^{+,*})^n$ be two vectors of positive coordinates summing to one and let $\gamma_i = \sum_{j=0}^i \alpha_j$ and $\tilde{\gamma}_i = \sum_{j=0}^i \beta_j$. Let $\sigma$ and $\tilde{\sigma}$ and the two piecewise constant variance profiles associated respectively with the couples $(s, \vec{\alpha})$ and $(s, \vec{\beta})$ and $v$ and $\tilde{v}$ the solutions respectively of $K_{\sigma}$ and $K_{\tilde{\sigma}}$. For $i\in \llbracket 1,n \rrbracket$ let also $m_i$ and $\tilde{m}_i$ be the holomorphic functions given by $v_x = \sum_{i=1}^n \mathds{1}_{ \gamma_{i-1} \leq x < \gamma_i}  m_i$ and $\tilde{v}_x = \sum_{i=1}^n \mathds{1}_{ \tilde{\gamma}_{i-1} \leq x < \tilde{\gamma}_i}  \tilde{m}_i$. Then for every $\eta >0$ there is $\epsilon > 0$ such that if $\sup_{i} |\alpha_i - \beta_i| \leq \epsilon$, then for all $z \in \mathbb{H}^+_{\eta}$ , we have $\sup_{i} | m_i(z) - \tilde{m}_i(z)| \leq \eta$. 
\end{theo}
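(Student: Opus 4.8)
The plan is to reduce the equation $K_{\sigma}$ for a piecewise constant profile to a finite-dimensional fixed-point problem and then to repeat, almost verbatim, the perturbation argument used in the proof of Theorem \ref{Stab1}. By the Remark following the existence and uniqueness theorem for $K_{\sigma}$, the solution $v$ of $K_{\sigma}$ is constant on each interval $I_i = [\gamma_{i-1},\gamma_i)$; writing $v = \sum_i m_i\,\mathds{1}_{I_i}$ and computing $(Sv)(x) = \sum_l s_{k,l}\alpha_l m_l$ for $x \in I_k$ turns $K_{\sigma}$ into the $n$-dimensional system
\[
-\frac{1}{m_k(z)} = z + \sum_{l=1}^n s_{k,l}\,\alpha_l\, m_l(z), \qquad k=1,\dots,n,
\]
with $\tilde m = (\tilde m_1,\dots,\tilde m_n)$ solving the same system with the $\alpha_l$ replaced by the $\beta_l$. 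Equivalently, $\vec m$ is the unique fixed point in $\mathbb{H}^n$ of $\Phi_{\vec\alpha}(\vec w)_k := -\bigl(z + \sum_l s_{k,l}\alpha_l w_l\bigr)^{-1}$ and $\vec{\tilde m}$ the unique fixed point of $\Phi_{\vec\beta}$, uniqueness being inherited from the uniqueness of the solution of $K_{\sigma}$ (resp.\ $K_{\tilde\sigma}$).

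I would then transport the machinery of the proof of Theorem \ref{Stab1} to this finite setting. Fix $\eta>0$. Because $\vec\alpha$ and $\vec\beta$ are probability vectors, the $L^{\infty}$ operator norms of $S$ and $\tilde S$ are both at most $\|s\|_\infty := \max_{k,l} s_{k,l}$, \emph{uniformly} in $\vec\alpha,\vec\beta$; hence one can fix a single compact set
\[
\mathcal{B}^{(n)}_\eta := \Bigl\{\,\vec w \in \mathbb{H}^n : \min_k \Im w_k \ge c(\eta),\ \max_k |w_k| \le \eta^{-1}\,\Bigr\},
\]
with $z$ ranging over the compact $\mathbb{H}_\eta$ and $c(\eta)>0$ depending only on $\eta$ and $\|s\|_\infty$, into which both $\Phi_{\vec\alpha}$ and $\Phi_{\vec\beta}$ map, and on which $\Phi_{\vec\beta}$ is a strict contraction of the metric $\widetilde d(\vec w,\vec w') := \max_k \operatorname{arcosh}\bigl(1 + D(w_k,w_k')\bigr)$ with a factor $\kappa<1$ independent of $\vec\beta$; these are exactly the computations of \cite{AEK2} and of the proof of Theorem \ref{Stab1}, applied to the matrix $(s_{k,l}\alpha_l)_{k,l}$. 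One also checks $\vec m \in \mathcal{B}^{(n)}_\eta$: from $\Im m_k = \Im w_k/|w_k|^2$ with $w_k = z + \sum_l s_{k,l}\alpha_l m_l$ one gets $\Im w_k \ge \Im z \ge \eta$ and $|w_k| \le |z| + \|s\|_\infty/\eta$ bounded on $\mathbb{H}_\eta$, while $|m_k| \le 1/\eta$ is automatic; likewise $\vec{\tilde m} \in \mathcal{B}^{(n)}_\eta$.

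The perturbation of $\vec\alpha$ enters only through one estimate: for $\vec w \in \mathcal{B}^{(n)}_\eta$ and $z \in \mathbb{H}_\eta$, since $s_{k,l},\alpha_l,\beta_l \ge 0$ the two arguments below both have imaginary part $\ge \Im z \ge \eta$, so
\[
D\bigl(\Phi_{\vec\alpha}(\vec w)_k,\Phi_{\vec\beta}(\vec w)_k\bigr) = D\Bigl(z + \sum_l s_{k,l}\alpha_l w_l,\ z + \sum_l s_{k,l}\beta_l w_l\Bigr) \le \frac{\bigl|\sum_l s_{k,l}(\alpha_l-\beta_l)w_l\bigr|^2}{\eta^{2}} \le \frac{n^2\,\|s\|_\infty^2}{\eta^{4}}\Bigl(\sup_i|\alpha_i-\beta_i|\Bigr)^2,
\]
hence $\widetilde d\bigl(\Phi_{\vec\alpha}(\vec w),\Phi_{\vec\beta}(\vec w)\bigr) \to 0$ as $\sup_i|\alpha_i-\beta_i|\to 0$, uniformly over $\mathcal{B}^{(n)}_\eta$ and $\mathbb{H}_\eta$. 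The conclusion then follows by the ``close contractions have close fixed points'' argument of Theorem \ref{Stab1}: with $\vec v^{(0)} = \vec m$ and $\vec v^{(p+1)} = \Phi_{\vec\beta}(\vec v^{(p)})$, invariance of $\mathcal{B}^{(n)}_\eta$ makes $\widetilde d(\vec v^{(1)},\vec v^{(0)})$ small, the contraction property gives $\widetilde d(\vec v^{(p+1)},\vec v^{(p)}) \le \kappa^{p}\,\widetilde d(\vec v^{(1)},\vec v^{(0)})$, so $\vec v^{(p)} \to \vec{\tilde m}$ and $\widetilde d(\vec m,\vec{\tilde m}) \le (1-\kappa)^{-1}\widetilde d(\vec v^{(1)},\vec v^{(0)})$ is small. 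Since on $\mathbb{H}_\eta$ the imaginary parts stay bounded below and the moduli bounded above, a small hyperbolic distance forces a small Euclidean distance $\max_k |m_k(z) - \tilde m_k(z)|$; picking $\epsilon$ small enough that $\sup_i|\alpha_i-\beta_i|\le\epsilon$ makes this $\le\eta$, which is the claim.

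I expect the only genuine work to be the verification that $\Phi_{\vec\beta}$ is an invariance-preserving contraction on $\mathcal{B}^{(n)}_\eta$ with constants \emph{uniform in $\vec\beta$} near $\vec\alpha$ --- but this is precisely the computation already carried out in the proof of Theorem \ref{Stab1} (following \cite{AEK2}), the one new observation being that all the constants there depend only on $\|s\|_\infty$ and $\eta$, since $\vec\alpha$ and $\vec\beta$ being probability vectors forces $\|S\|,\|\tilde S\|\le\|s\|_\infty$ regardless of the weights. Passing to the common $n$-dimensional equation above is what sidesteps the fact that the partitions $\{I_i\}$ attached to $\vec\alpha$ and to $\vec\beta$ differ, and it is the reason the statement is phrased in terms of the constant values $m_i,\tilde m_i$ rather than as a distance between the functions $v,\tilde v$ themselves.
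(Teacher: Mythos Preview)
Your proposal is correct and follows essentially the same approach as the paper's own proof: reduce $K_\sigma$ to the finite $n$-dimensional fixed-point system, define the two maps $\Phi_{\vec\alpha},\Phi_{\vec\beta}$, show they preserve a common compact $\mathcal{B}_\eta^{(n)}$, bound $D(\Phi_{\vec\alpha}(\vec w)_k,\Phi_{\vec\beta}(\vec w)_k)$ by a constant times $(\sup_i|\alpha_i-\beta_i|)^2/\eta^4$, and then rerun the contraction/Cauchy argument from Theorem~\ref{Stab1}. One small slip: since the kernel of $S$ is $\sigma^2$, the finite system should read $-1/m_k = z + \sum_l s_{k,l}^{2}\alpha_l m_l$ (and likewise in $\Phi_{\vec\alpha}$), not $s_{k,l}$; this is cosmetic and does not affect the argument.
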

Note that if we impose $s_{i,j} >0$ for all $i,j$, this result is a particular case of Proposition 10.1 \cite{AltErKr}. However, since we would like for $s_{i,j}$ to be potentially $0$, we present the following proof which while not at the same level of generality and quantitative bounds will be sufficient enough. 
\begin{proof}
We use the same notations as in the previous proof. 
Since $v$ is the solution of $K_{\sigma}$, the $m_i$ satisfy the following system:
\[ m_i = \frac{1}{ z- \sum_{j=1}^n \alpha_j s^2_{i,j} m_j  }, \]
For the $\tilde{m}_i$, we have: 
\[ \tilde{m}_i = \frac{1}{ z - \sum_{j=1}^n \beta_j s^2_{i,j} \tilde{m}_j  } . \]
 For u a function from $\mathbb{H}_{\eta}^+$ to $(\mathbb{H}^-)^n$, we let $\Phi(u)$ and $\tilde{\Phi}(u)$ be two functions from $\mathbb{H}_{\eta}^+$ to $(\mathbb{H}^-)^n$ defined for $z \in \mathbb{H}_{\eta}^+$ and $i=1,\dots,n$ by: 

\[ (\Phi(u)(z))_i = \frac{1}{ z - (Su)_i(z) } \]
and 
\[ (\tilde{\Phi}(u)(z))_i = \frac{1}{z - (\tilde{S}u )_i(z) }\]
where $S$ and $\tilde{S}$ are the linear applications defined by

\[ \forall i =1,...,n ,(Su)_i = \sum_{i=1}^n \alpha_i s^2_{i,j} u_j \text{ and } (\tilde{S}u)_i = \sum_{i=1}^n \beta_i s^2_{i,j} u_j\].

As in the previous proof, if $\mathcal{B}_{\eta} := \{ u : \mathbb{H}^+_{\eta} \to (\mathbb{H}^-)^n : \sup_{z \in \mathbb{H}^+_{\eta}} \Im u(z) \leq \frac{- \eta^3}{ (2 + \min\{||S||,||\tilde{S}||\})^2},  \sup_{z \in \mathbb{H}^+_{\eta}} || u(z) || \leq \eta^{-1}\}$, $\Phi$ and $\tilde{\Phi}$ maps $\mathcal{B}_{\eta}$ onto itself for $\eta$ small enough. For $u \in \mathcal{B}_{\eta}$, we have as before if $\delta \geq (\sup_{i,j} s^2_{i,j})(\sup_{k} | \alpha_k - \beta_k| )$, for all $i$: 

\begin{align*} 
D( \Phi(u)_i(z), \tilde{\Phi}(u)_i(z)) & \leq \frac{ |(S - \tilde{S})(u)_i(z)|}{\Im z^2} \\
&\leq \frac{\delta^2}{\eta^4}
\end{align*}
Then, using the same reasoning as in the previous case, we have that for every $\eta> 0$, there is $\delta' > 0$ such that if $\sup_{i} | \alpha_i - \beta_i| \leq \delta'$ then $ \sup_{z \in \mathbb{H}^+_{\eta}} \sup_{i} | m_i(z) - \tilde{m}_i(z)| \leq \eta $.

\end{proof}

\begin{rem}\label{Moment}
	A more elementary proof of the convergence of the measure can also be obtained since we have bounds on the moments of the entries in our case via a classical moments methods. 
	Let us consider for any $k \in \N$, we consider  $W_{k}$ the set of  words $ w = (w_1,...,w_{2k +1})$ on $\llbracket 1, k+1 \rrbracket$ such that $\{ w_1,...,w_{2k +1} \} = \llbracket 1, k+1 \rrbracket$, $w_1 = w_{2k+1}$ and such that for any $i \in \llbracket 1, k+ 1 \rrbracket$, there is exactly one $j \in \llbracket 1, k+ 1 \rrbracket \setminus \{i \}$ such that $\{ w_i, w_{i+1} \} = \{ w_j, w_{j+1}\}$. For such words $w$, we define $E_{w} := \{ \{w_i,w_{i+1} \} : i \in \llbracket 1, 2k +1 \rrbracket \}$. On this set, we can define a relation of equivalence $\sim$ by letting $w \sim w'$ if there is a permutation $f$ of $\llbracket 1 , k+1 \rrbracket$ such that $f(w_i) = w'_i$ for every $i$. We can then define $\mathcal{W}_k$ an arbitrary set of representative of $W_k$ for the equivalence relation $\sim$. Then using classical arguments for the computation of moments of $\mun$, one can see, using that the $k$-th moment of the entries of $X_N^{(\beta)}$ is bounded uniformly in $N$, that we have that for $k \geq 0$: 
	
	\[ \E[\mun(x^{k})] = \frac{1}{N} \E[ Tr( X_N^{(\beta)})^k ] = c^N _k + O(N^{-1/2}) \]
	and 
	\[ Var(\mun(x^{k})) = O(N^{-1/2}) \]
	where $c^N_k = 0$ for $k$ odd and 
	\[ c^N_{2k} = N^{- k -1} \sum_{w \in \mathcal{W}_{k}} \sum_{i_1,...,i_{k+1} =1 \atop \text{ pairwise distinct} }^N \prod_{\{k,l\} \in E_w}  \Sigma^2_N(i_k, i_l)\]
	We redirect the reader to \cite[Section 2]{AGZ} for instance to get an overview of such methods in the case of classical Wigner matrices. 
	One can then find that in the piecewise constant case : 
	\[ \lim_{N \to \infty}c^N_{2k} =\sum_{w \in \mathcal{W}_{k}} \sum_{i_1,...,i_{k+1} =1 }^n \prod_{j=1}^{k+1} \alpha_{i_j}\prod_{\{k,l\} \in E_w}  \sigma^2_{i_k, i_l} \]
	and in the continuous case:
	\[ \lim_{N \to \infty}c^N_{2k} =\sum_{w \in \mathcal{W}_{k}}  \int_{[0,1]^{k+1}}\prod_{\{k,l\} \in E_w}  \sigma^2(x_k,x_{k+1}) dx_1...dx_l. \]
	So if we denote $c_k = \lim_{N \to \infty}c_k^N$, we have that in probability $\mun$ converges toward a measure $\mu_{\sigma}$ whose moments are the $c_k$. 
	\end{rem}

In order to apply the full results of \cite{Aji17}, we will need the positivity assumption \ref{Pos} for the piecewise constant variance profile. We then have the following convergence result: 

\begin{theo} \label{Convergence}
If we are in the piecewise constant case with Assumption \ref{Pos} satisfied, 
%we have that for any $D >0$ and $\tau \in \R$
%\[ \Pp[  \left| \mun ([- \infty ; \tau]) - \mu_{\sigma}([- \infty ; \tau]) \right| \geq  N^{-3/4} ] \leq N^{- D} \]
%for $N \geq N(D)$ (where $N(D)$ does not depend on $\tau$).
%Furthermore, 
if we let $l_N$ and $r_N$ be respectively the left and right edge of the support of $\mun$, that is respectively the smallest eigenvalue and largest eigenvalue of $X_N^{(\beta)}$ and $l_{\sigma}$ and $r_{\sigma}$ the left and right edges of the support of $\mu_{\sigma}$, we have for every $\delta >0$, $D >0$,
\[ \Pp[ r_N \geq  r_{\sigma} + \delta \text{ or } l_N \leq l_{\sigma}- \delta] \leq N^{-D} \]
for $N$ large enough. 
 
\end{theo}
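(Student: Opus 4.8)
The plan is to derive the statement from the optimal (anisotropic) local law for random matrices with a strictly positive variance profile, together with its ``no eigenvalue outside the self-consistent spectrum'' companion, in the form proved by Ajanki, Erdős and Krüger; concretely I would invoke \cite[Corollary 2.10]{Aji17}, with the flatness input supplied by \cite[Theorem 6.1]{Aja15}. Throughout, $\sigma_N$ denotes the step profile $(x,y)\mapsto \Sigma_N(\lceil Nx\rceil,\lceil Ny\rceil)$ of Theorem \ref{Girko}, $m^{(N)}$ is the solution of $K_{\sigma_N}$, $\mu_{\sigma_N}$ is the measure whose Stieltjes transform is $\int_0^1 m^{(N)}_x\,dx$, and $r(\mu_{\sigma_N})$, $l(\mu_{\sigma_N})$ are the edges of its support.

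First I would check that the hypotheses of the local law hold for $X_N^{(\beta)}$; this is the same verification carried out later in Section \ref{LDPlbBM}. The uniform boundedness of $\Sigma_N$ gives the upper bound on the variances; Assumption \ref{Pos} gives the required uniform lower bound (primitivity) of the variance profile; the flatness/regularity condition is obtained from \cite[Theorem 6.1]{Aja15}; and the moment condition on the entries holds because $\E[|a^{(\beta)}_{i,j}|^{k}]\le r_k$ for some fixed sequence $(r_k)$, which follows from the sharp sub-Gaussian Assumption \ref{A0BM}. With these inputs, \cite[Corollary 2.10]{Aji17} yields, for every $\eta>0$, every $D>0$ and all $N$ large,
\[ \Pp\Bigl[\exists\, i:\ \lambda_i(X_N^{(\beta)})\notin \mathrm{supp}(\mu_{\sigma_N})+[-\eta,\eta]\Bigr]\le N^{-D}, \]
and since $r_N=\lambda_{\max}(X_N^{(\beta)})$ and $l_N=\lambda_{\min}(X_N^{(\beta)})$ this gives $\Pp[\,r_N\ge r(\mu_{\sigma_N})+\eta\ \text{or}\ l_N\le l(\mu_{\sigma_N})-\eta\,]\le N^{-D}$.

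It then remains to replace $\mu_{\sigma_N}$ by $\mu_\sigma$ at the price of an arbitrarily small shift of the edges. Here $\sigma_N$ is the piecewise constant profile attached to the fixed symmetric matrix $s=(\sigma_{k,l})$ and to the proportions $\vec{\alpha}^{\,N}=(|I^{(N)}_k|/N)_k$, which converge to $\vec{\alpha}$; Theorem \ref{Stab2} (applied with $\vec\beta=\vec{\alpha}^{\,N}$) gives $\sup_k|m^{(N)}_k(z)-m_k(z)|\to0$ uniformly for $z$ in each $\mathbb H_{\eta'}$, and since the relevant Stieltjes transforms are $z\mapsto\sum_k\alpha^N_k m^{(N)}_k(z)$ and $z\mapsto\sum_k\alpha_k m_k(z)$ this forces their uniform convergence on compacts of $\mathbb H$, hence $\mu_{\sigma_N}\to\mu_\sigma$ weakly. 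To upgrade this to convergence of the edges I would use that in the piecewise constant case $G_{\mu_{\sigma_N}}$ is an algebraic function of $z$ whose coefficients depend continuously on $(s,\vec{\alpha}^{\,N})$, so that its real branch points — among which lie the edges of the support — vary continuously with $\vec{\alpha}^{\,N}$ (alternatively, one may run the coupling argument of Lemma \ref{conv}). Hence $r(\mu_{\sigma_N})\to r_\sigma$ and $l(\mu_{\sigma_N})\to l_\sigma$, so for $N$ large $r(\mu_{\sigma_N})\le r_\sigma+\delta/2$ and $l(\mu_{\sigma_N})\ge l_\sigma-\delta/2$; applying the displayed bound with $\eta=\delta/2$ concludes the proof.

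The genuinely delicate step is the middle one: one must ensure that the present model — rectangular (not necessarily square) blocks, real symmetric or Hermitian entries, only sub-Gaussian rather than bounded — is covered by the hypotheses of \cite{Aji17,Aja15}, and this is also the place where the anisotropic local law reused in Section \ref{LDPlbBM} enters. The last step is softer, but not completely trivial, since weak convergence by itself does not prevent a small amount of mass of $\mu_{\sigma_N}$ from sitting near an escaping point; the algebraicity of $G_{\mu_{\sigma_N}}$ (or the coupling of Lemma \ref{conv}) is what rules this out.
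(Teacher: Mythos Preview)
Your proposal is correct and follows essentially the same route as the paper: both invoke the ``no eigenvalue outside the self-consistent support'' corollary from \cite{Aji17}, verify the hypotheses (A)--(D) in the same way (boundedness, positivity from Assumption \ref{Pos}, the regularity input from \cite[Theorem 6.1]{Aja15}, and moment bounds from the sub-Gaussian assumption), and then reduce to showing that the edges of $\mu_{\sigma_N}$ converge to those of $\mu_{\sigma}$.

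The only noteworthy difference is in this last reduction. The paper handles edge convergence via the moment expressions of Remark \ref{Moment}: since $\sigma_N$ and $\sigma$ share the same block values $(\sigma_{k,l})$ and differ only in the proportions, one gets two-sided bounds $(1-\epsilon)^{2k}\mu_{\sigma_N}(x^{2k})\le \mu_{\sigma}(x^{2k})\le (1+\epsilon)^{2k}\mu_{\sigma_N}(x^{2k})$ for $N$ large, and since the measures are symmetric the edges are recovered as $\lim_k \mu(x^{2k})^{1/2k}$. Your alternatives --- continuous dependence of the algebraic branch points of $G_{\mu_{\sigma_N}}$ on the proportions, or adapting the coupling of Lemma \ref{conv} --- are also valid, but note that the latter would require some care since Lemma \ref{conv} as stated compares approximations of a continuous profile rather than varying $\vec{\alpha}$ with fixed $(\sigma_{k,l})$; the moment argument is arguably the cleanest here because the combinatorial formula for $c_{2k}$ makes the dependence on the $\alpha_i$ transparent.
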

\begin{proof}
This is in fact an application of corollary 1.10 from \cite{Aji17} which states that the extreme eigenvalues cannot leave the neighborhood of the support of $\mu_{\sigma_N}$ where $\sigma_N$ is the same as in Theorem \ref{Girko}. We need only to check the hypothesis (A) to (D). Up to multiplication by a scalar, our matrix model satisfies the boundedness condition (A) and the Assumption \ref{Pos} gives us the positivity hypothesis (B). The sharp-sub Gaussian hypothesis gives (D). For the boundedness condition on the Stieltjes transform (C) we can use Theorem 6.1 from \cite{Aja15}. Our kernel operator $S$ satisfy assumptions A1,A2 and B1. Particularly we can use remark 6.2 and 6.3 to bound $m$ respectively away and near $0$. Then, we need only to prove that $r_{\sigma_N}$ and $l_{\sigma_N}$ converges toward $r_{\sigma}$ and $l_{\sigma}$. This can be done for instance by looking at the expression on the moments of $\mu_{\sigma_N}$ and $\mu_{\sigma}$  given in Remark \ref{Moment}. $\sigma_N$ and $\sigma$ are both piecewise constant functions with $\sigma$ being associated with the parameters $(\sigma_{i,j})_{i,j \in \llbracket 1, n \rrbracket}$ and $(\alpha_i)_{i \in \llbracket 1, n \rrbracket }$ and $\sigma_N$ being associated with the parameters $(\sigma_{i,j})_{i,j \in \llbracket 1, n \rrbracket}$ and $(\alpha_i^N)_{i \in \llbracket 1, n \rrbracket }$ where the $\alpha_{i}^N$ are defined by 

\[ \alpha_i^N := \frac{ | I_i^{(N)}|}{N} \]

where we remind that the $I_i^{(N)}$ are defined in subsection \ref{VarProf}. Since $\lim_{N\to \infty} \alpha_i^N = \alpha_i$ for any $\epsilon > 0$ we have that for $N$ large enough: 

\[ \forall i \in \llbracket 1,n \rrbracket, \  (1 - \epsilon) \alpha^N_i \leq \alpha_i \leq (1+ \epsilon) \alpha^N_i \]

Then using the formula of Remark \ref{Moment} which gives the moments of $\mu_{\sigma_N}$ and $\mu_{\sigma}$ in terms of the $\sigma_{i,j}$ and the $\alpha_i$ and $\alpha_i^N$ we see that for every $\epsilon > 0$, for $N$ large enough, we have that for every $k \in \N$, $(1- \epsilon)^{k+1} \mu_{\sigma_N}(x^{2k} )\leq \mu_{\sigma}(x^{2k}) \leq (1+ \epsilon)^{k+1} \mu_{\sigma_N}(x^{2k} )$. We conclude using that since the $\mu_{\sigma}$ and $\mu_{\sigma_N}$ are symmetric, $r_{\sigma_N} = - l_{\sigma_N} = \lim_{k \to \infty}  \mu_{\sigma_N}(x^{2k})^{1/2k}$
and $r_{\sigma} = - l_{\sigma} = \lim_{k \to \infty}  \mu_{\sigma}(x^{2k})^{1/2k}$. 
\end{proof}

%Regarding this appendix, the author thinks there should be more elementary proofs of the results of convergence for the empirical measure and for the largest eigenvalue in the literature, in the spirit of the proofs by Wigner, Füredi and Koml\' os for instance with some assumptions on the moments of the entries. However his search has proven inconclusive. 

\section{Appendix: Proof of Lemma \ref{convmunBM}}\label{proof}

This section is devoted to the proof of Lemma \ref{convmunBM}. For this, we will use a concentration results respectively from \cite{GZ} and Theorem \ref{Girko}

\begin{theo}\label{concentrationmu} By    \cite[(Corollary 1.4 a)] {GZ} (for the compact case) and   \cite[Corollary 1.4 b)] {GZ} (for the logarithmic Sobolev case), we have for  $\beta=1,2$,  and for  $N$ large enough
$$ \limsup_{N\ra\infty}\frac{1}{N^{7/6}}\log  \Pp[ d( \mun, \E[ \mun ]) > N^{-1/6}]  <0$$
where $d$ is the Wasserstein distance distance defined for two measure $\mu,\nu \in \mathcal{P}(\R)$ by 
\[ d( \mu, \nu ) =\sup_{f \in \mathcal{F}_{\text{Lip}} }\Big| \int fd \mu - \int fd \nu \Big| \]
with $\mathcal{F}_{\text{Lip}}$ being the set of bounded Lipschitz real function  $f$ of $\R$, such that $||f||_{\infty} + ||f||_{\text{Lip}} \leq 1$, where 
	\[ ||f||_{\text{Lip}} = \sup_{x,y \in \R \atop x \neq y} \frac{ | f(x) - f(y)|}{|x -y|} \]
\end{theo}

Therefore we only need to show that 
\[ \lim_{N \to \infty} d(\E[ \mun], \mu_{\sigma}) =0 \]

Using Theorem \ref{Stab2}, we have that $\lim_{N \to + \infty}d(\mu_{\sigma_N}, \mu_{\sigma}) = 0 $ and therefore, using Theorem \ref{Girko} we have in probability $\lim_{N \to + \infty}d(\mun, \mu_{\sigma}) = 0$. From this, we can deduce that $\lim_{N \to \infty} d( \E[\mun], \mu_{\sigma}) = 0$. Indeed, if $f$ is a continuous function bounded in absolute norm by $1$, We have in probability that $\lim_{N \to \infty}\mun(f) = \mu_{\sigma}(f)$ and so since $|\mun(f)| \leq 1$, we have $\lim_{N \to \infty}\E[\mun](f) = \mu_{\sigma}(f)$. 
%We therefore only need to show the following theorem:

%\begin{theo} \label{concetrationmu2}
%If we let for every $N$:
%
%\[ F_{X_N^{(\beta)}} (x) = \mun( ] - \infty, x]) \]
%\[ F_{\mu_{\sigma}}(x) = \mu_{\sigma}( ] - \infty, x] ) \]
%we have that 
%
%\[ \sup_{x \in\R} | F_{\mu_{\sigma}}(x) - \E[F_{X_N^{(\beta)}}(x)]| = O( N ^{-1/10})\,.\]
%\end{theo}
%\begin{proof}
%Following the Theorem \ref{Convergence} and the fact that $N(D)$ does not depend on $\tau$ we have that for $N\geq N(D)$ and $x \in \R$, 
%\[  \Pp[ | F_{\mu_{\sigma}}(x) - F_{X_N^{(\beta)}}(x)| \geq N^{-3/4} ] \leq N^{-D} \]
%
%
%
%Finally for $x \in \R$ and $N \geq N(3/4)$
%
%\begin{align*}
%| F_{\mu_{\sigma}}(x) - \E[F_{X_N^{(\beta)}}(x)]| &\leq \E[ |F_{X_N^{(\beta)}}(x) - F_{\mu_{\sigma}}(x)|] \\
%&\leq  N^{-3/4} + 2 \Pp[ | F_{\mu_{\sigma}}(x) - F_{X_N^{(\beta)}}(x)| \geq N^{-3/4}]\\
%&\leq 3 N^{-3/4}
%\end{align*}
%and so for $N \geq N(3/4)$
%\[ | F_{\mu_{\sigma}}(x) - \E[F_{X_N^{(\beta)}}(x)]| \leq 3 N^{-3/4} \]
%
%\end{proof}
%In order to conclude, we need only to use Lemma \ref{exptightBM} to see that $F_{X_N^{(1)}}(-M)$ and $1-F_{X_N^{(1)}}(M)$ decay in expectation exponentially fast in $N$ for some fixed $M$ so that
%$$ d(\E[ \mun ],\mu_{\sigma})\le 4e^{-N} + 2M  \sup_{x \in\R} | F(x) - \E[F_{X_N^{(\beta)}}(x)]| =o(N^{-1/6})\,.$$
%

\bibliographystyle{plain}
\bibliography{biblioRad}

\begin{thebibliography}{10}

\bibitem{AEK2}
Oskari Ajanki, Torben Kr{\"u}ger, and L{\'a}szl{\'o} Erd{\H{o}}s.
\newblock Singularities of solutions to quadratic vector equations on the
  complex upper half-plane.
\newblock {\em Communications on Pure and Applied Mathematics},
  70(9):1672--1705, 2017.

\bibitem{Aji17}
Oskari~H. Ajanki, L{\'a}szl{\'o} Erd{\H{o}}s, and Torben Kr{\"u}ger.
\newblock Universality for general {W}igner-type matrices.
\newblock {\em Probability Theory and Related Fields}, 169(3):667--727, 2017.

\bibitem{AEK18}
Oskari~H. Ajanki, László Erdős, and Torben Krüger.
\newblock Stability of the matrix { D}yson equation and random matrices with
  correlations.
\newblock {\em Probability Theory and Related Fields}, 173(1-2):293–373,
  2018.

\bibitem{Aja15}
Oskari~H. Ajanki and Torben Kruger.
\newblock {\em Quadratic Vector Equations on Complex Upper Half-plane}.
\newblock Memoirs of the American Mathematical Society. American Mathematical
  Society, 2019.

\bibitem{ErdosEdge}
Johannes {Alt}, L{\'a}szl{\'o} {Erd{\H{o}}s}, Torben {Kr{\"u}ger}, and Dominik
  {Schr{\"o}der}.
\newblock {Correlated Random Matrices: Band Rigidity and Edge Universality}.
\newblock {\em arXiv e-prints}, page arXiv:1804.07744, April 2018.

\bibitem{AltErKr}
Johannes Alt, Laszlo Erdos, and Torben Krüger.
\newblock The dyson equation with linear self-energy: spectral bands, edges and
  cusps.
\newblock {\em Documenta Mathematica}, 25:1421--1539, 04 2018.

\bibitem{AltLocation}
Johannes Alt, László Erdős, Torben Krüger, and Yuriy Nemish.
\newblock {Location of the spectrum of Kronecker random matrices}.
\newblock {\em Annales de l'Institut Henri Poincaré, Probabilités et
  Statistiques}, 55(2):661 -- 696, 2019.

\bibitem{AndZei}
Greg Anderson and Ofer Zeitouni.
\newblock A {CLT} for a band matrix model.
\newblock {\em Probability Theory and Related Fields}, 134, Feb 2005.

\bibitem{AGZ}
Greg~W. Anderson, Alice Guionnet, and Ofer Zeitouni.
\newblock {\em An introduction to random matrices}, volume 118 of {\em
  Cambridge Studies in Advanced Mathematics}.
\newblock Cambridge University Press, Cambridge, 2010.

\bibitem{arous2017landscape}
Gerard~Ben Arous, Song Mei, Andrea Montanari, and Mihai Nica.
\newblock The landscape of the spiked tensor model.
\newblock {\em Communications on Pure and Applied Mathematics},
  72(11):2282--2330, 2019.

\bibitem{fanny}
Fanny Augeri.
\newblock Large deviations principle for the largest eigenvalue of {W}igner
  matrices without {G}aussian tails.
\newblock {\em Electron. J. Probab.}, 21:32--49, 2016.

\bibitem{Fa18}
Fanny Augeri.
\newblock Nonlinear large deviation bounds with applications to traces of
  {W}igner matrices and cycles counts in {E}rdos-{R}enyi graphs.
\newblock {\em ArXiv:1810.01558}, 2018.

\bibitem{AuGuHu}
Fanny Augeri, Alice Guionnet, and Jonathan Husson.
\newblock Large deviations for the largest eigenvalue of sub-{G}aussian
  matrices.
\newblock {\em arXiv preprint arXiv:1911.10591}, 2019.

\bibitem{BBP}
Jinho Baik, G{\'e}rard Ben~Arous, and Sandrine P{\'e}ch{\'e}.
\newblock Phase transition of the largest eigenvalue for nonnull complex sample
  covariance matrices.
\newblock {\em Ann. Probab.}, 33(5):1643--1697, 2005.

\bibitem{BADG01}
G\'{e}rard Ben~Arous, Amir Dembo, and Alice Guionnet.
\newblock Aging of spherical spin glasses.
\newblock {\em Probab. Theory Related Fields}, 120(1):1--67, 2001.

\bibitem{BAG}
G\'{e}rard {Ben Arous} and Alice Guionnet.
\newblock Large deviations for {W}igner's law and {V}oiculescu's
  non-commutative entropy.
\newblock {\em Probability Theory and Related Fields}, 108(4):517--542, Aug
  1997.

\bibitem{BhGa}
Bhaswar~B Bhattacharya and Shirshendu Ganguly.
\newblock Upper tails for edge eigenvalues of random graphs.
\newblock {\em SIAM Journal on Discrete Mathematics}, 34(2):1069--1083, 2020.

\bibitem{bianchi2009performance}
Pascal Bianchi, Merouane Debbah, Myl{\`e}ne Ma{\"\i}da, and Jamal Najim.
\newblock Performance of statistical tests for single-source detection using
  random matrix theory.
\newblock {\em IEEE Transactions on Information theory}, 57(4):2400--2419,
  2011.

\bibitem{BordCap}
Charles Bordenave and Pietro Caputo.
\newblock A large deviation principle for {W}igner matrices without {G}aussian
  tails.
\newblock {\em Ann. Probab.}, 42(6):2454--2496, 2014.

\bibitem{Cusp2}
Giorgio Cipolloni, László Erdős, Torben Krüger, and Dominik Schröder.
\newblock Cusp universality for random matrices, {'II}: The real symmetric
  case.
\newblock {\em Pure and Applied Analysis}, 1(4):615–707, Oct 2019.

\bibitem{CoDe}
Nicholas Cook and Amir Dembo.
\newblock Large deviations of subgraph counts for sparse
  {E}rd{\H{o}}s--{R}{\'e}nyi graphs.
\newblock {\em Advances in Mathematics}, 373:107289, 2020.

\bibitem{Cusp1}
L{\'a}szl{\'o} Erd{\H{o}}s, Torben Kr{\"u}ger, and Dominik Schr{\"o}der.
\newblock Cusp universality for random matrices{ I}: Local law and the complex
  {H}ermitian case.
\newblock {\em Communications in Mathematical Physics}, 378(2):1203--1278,
  2020.

\bibitem{erdos2019matrix}
László Erdős.
\newblock The matrix {D}yson equation and its applications for random matrices.
\newblock {\em arXiv preprint arXiv:1903.10060}, 2019.

\bibitem{Fey_2008}
Anne Fey, Remco van~der Hofstad, and Marten~J. Klok.
\newblock Large deviations for eigenvalues of sample covariance matrices, with
  applications to mobile communication systems.
\newblock {\em Advances in Applied Probability}, 40(4):1048–1071, Dec 2008.

\bibitem{FuKo}
Z.~F\"uredi and J.~Koml\'os.
\newblock The eigenvalues of random symmetric matrices.
\newblock {\em Combinatorica}, 1(3):233--241, 1981.

\bibitem{Girko2001}
Vyacheslav~L. Girko.
\newblock {\em Canonical Equation K1}, pages 1--24.
\newblock Springer Netherlands, Dordrecht, 2001.

\bibitem{GuHu18}
Alice Guionnet and Jonathan Husson.
\newblock Large deviations for the largest eigenvalue of {R}ademacher matrices.
\newblock {\em Annals of Probability}, 48(3):1436--1465, 2020.

\bibitem{GM}
Alice Guionnet and Myl{\`e}ne Ma\"{i}da.
\newblock A {F}ourier view on the {R}-transform and related asymptotics of
  spherical integrals.
\newblock {\em Journal of Functional Analysis}, 222(2):435--490, 2005.

\bibitem{GZ}
Alice Guionnet and Ofer Zeitouni.
\newblock Concentration of the spectral measure for large matrices.
\newblock {\em Electron. Comm. Probab.}, 5:119--136, 2000.

\bibitem{KhoPa}
Alexei~M Khorunzhy and Leonid~A Pastur.
\newblock On the eigenvalue distribution of the deformed wigner ensemble of
  random matrices.
\newblock {\em Advances in Soviet Mathematics}, 19:97--127, 1994.

\bibitem{LY}
Ji~Oon Lee and Jun Yin.
\newblock A necessary and sufficient condition for edge universality of
  {W}igner matrices.
\newblock {\em Duke Math. J.}, 163(1):117--173, 2014.

\bibitem{maillard2019landscape}
Antoine Maillard, G{\'e}rard~Ben Arous, and Giulio Biroli.
\newblock Landscape complexity for the empirical risk of generalized linear
  models.
\newblock In {\em Mathematical and Scientific Machine Learning}, pages
  287--327. PMLR, 2020.

\bibitem{mehta}
Madan~Lal Mehta.
\newblock {\em Random matrices}, volume 142.
\newblock Elsevier, 2004.

\bibitem{Schlya96}
Dimitri Shlyakhtenko.
\newblock {Random Gaussian band matrices and freeness with amalgamation}.
\newblock {\em International Mathematics Research Notices},
  1996(20):1013--1025, Jan 1996.

\bibitem{FreeEntropyV}
Dan Voiculescu.
\newblock The analogues of entropy and of fisher's information measure in free
  probability theory, {V}. {N}oncommutative {H}ilbert transforms.
\newblock {\em Inventiones mathematicae}, 132:189--227, 1994.

\bibitem{vu05}
Van~H Vu.
\newblock Spectral norm of random matrices.
\newblock In {\em Proceedings of the thirty-seventh annual ACM symposium on
  Theory of computing}, pages 423--430, 2005.

\bibitem{Wig58}
Eugene~P. Wigner.
\newblock On the distribution of the roots of certain symmetric matrices.
\newblock {\em Annals Math.}, 67:325--327, 1958.

\end{thebibliography}

\end{document}